\documentclass[12pt]{amsart}

\usepackage[T1]{fontenc}
\usepackage{fullpage}
\usepackage{xspace}
\usepackage{xcolor}
\usepackage{mathtools}
\usepackage{amssymb}
\usepackage{tikz-cd}
\usepackage{booktabs}
\usepackage{cite}

\theoremstyle{plain}
\newtheorem{theorem}{Theorem}[section]
\newtheorem{proposition}[theorem]{Proposition}
\newtheorem{lemma}[theorem]{Lemma}
\newtheorem{corollary}[theorem]{Corollary}

\theoremstyle{definition}
\newtheorem{definition}[theorem]{Definition}
\newtheorem{hypothesis}[theorem]{Hypothesis}

\newtheorem{example}[theorem]{Example}
\theoremstyle{remark}
\newtheorem{remark}[theorem]{Remark}
\usepackage{thmtools}

\usepackage[colorlinks, unicode, pdfencoding=auto, hypertexnames=false]{hyperref}
\usepackage{cleveref}
\crefname{hypothesis}{hypothesis}{hypotheses}
\Crefname{hypothesis}{Hypothesis}{Hypotheses}

\newenvironment{SC}[1][1]
{%
    \addtolength{\leftmargini}{1.25em}%
    \begin{enumerate}
        \setcounter{enumi}{#1}%
        \addtocounter{enumi}{-1}%
}
{%
    \end{enumerate}
}

\DeclareMathAlphabet{\mathds}{U}{bbold}{m}{n}
\DeclareMathOperator{\Hom}{Hom}             
\DeclareMathOperator{\Ext}{Ext}             
\DeclareMathOperator{\Eq}{Eq}               
\DeclareMathOperator{\Ind}{Ind}             
\DeclareMathOperator{\Pro}{Pro}             
\DeclareMathOperator{\AbObj}{Ab}            
\DeclareMathOperator*{\colim}{colim}        
\DeclareMathOperator{\Fun}{Fun}             
\DeclareMathOperator{\Map}{Map}             
\DeclareMathOperator{\id}{id}               
\DeclareMathOperator{\Aut}{Aut}             
\DeclareMathOperator{\Res}{Res}             
\DeclareMathOperator{\Coind}{Coind}         
\DeclareMathOperator{\PSh}{PSh}             
\DeclareMathOperator{\HSh}{HSh}             
\DeclareMathOperator{\Sh}{Sh}               
\DeclareMathOperator{\Spec}{Spec}           
\DeclareMathOperator{\Sym}{Sym}             
\DeclareMathOperator{\charac}{char}         
\DeclareMathOperator{\Frac}{Frac}           
\DeclareMathOperator{\diag}{Diag}           
\DeclareMathOperator{\vect}{Vect}           
\newcommand{\dual}{\vee}                    
\newcommand{\pr}{\mathrm{pr}}               

\newcommand{\zar}{\mathrm{zar}}             
\newcommand{\an}{\mathrm{an}}               
\newcommand{\et}{\mathrm{\Acute{e}t}}       
\newcommand{\fet}{\mathrm{f\Acute{e}t}}     
\newcommand{\bt}{\mathrm{bt}}               
\newcommand{\dR}{\mathrm{dR}}               
\newcommand{\alg}{\mathrm{alg}}             
\newcommand{\uni}{\mathrm{uni}}             
\newcommand{\N}{\mathrm{N}}                 
\newcommand{\rat}{\mathrm{rat}}             
\newcommand{\grp}{\mathrm{grp}}             
\newcommand{\cnt}{\mathrm{cnt}}             
\newcommand{\fm}{\mathrm{fm}}               
\newcommand{\op}{\mathrm{op}}               
\newcommand{\fin}{\mathrm{fin}}             
\newcommand{\rs}{\mathrm{rs}}               
\newcommand{\un}{\mathrm{un}}               
\newcommand{\diff}{\mathrm{diff}}           
\newcommand{\crys}{\mathrm{crys}}           
\newcommand{\mt}{\mathrm{mt}}               

\newcommand{\Set}{\mathbf{Set}}             
\newcommand{\Ab}{\mathbf{Ab}}               
\newcommand{\Vect}{\mathbf{Vec}}            
\newcommand{\Mod}{\mathbf{Mod}}             
\newcommand{\Rep}{\mathbf{Rep}}             
\newcommand{\QCoh}{\mathbf{QCoh}}           
\newcommand{\FSet}{\mathbf{FSet}}           
\newcommand{\FRep}{\mathbf{FRep}}           
\newcommand{\FC}{\mathbf{FC}}               
\newcommand{\AC}{\mathbf{AC}}               
\newcommand{\ASch}{\mathbf{ASch}}           
\newcommand{\RAlg}{\mathbf{RAlg}}           
\newcommand{\Gpd}{\mathbf{Gpd}}             
\newcommand{\PfGpd}{\mathbf{PfGpd}}         
\newcommand{\AfGpd}{\mathbf{AfGpd}}         
\newcommand{\FtGpd}{\mathbf{FtGpd}}         
\newcommand{\AMod}{\mathbf{AMod}}           
\newcommand{\UMod}{\mathbf{UMod}}           
\newcommand{\MMod}{\mathbf{MMod}}           
\newcommand{\FMod}{\mathbf{FMod}}           
\newcommand{\MIC}{\mathbf{MIC}}             
\newcommand{\FConn}{\mathbf{FConn}}         
\newcommand{\Dcat}{\mathbf{D}}              
\newcommand{\Rcat}{\mathbf{R}}              
\newcommand{\Open}{\mathbf{Open}}           
\newcommand{\Cov}{\mathbf{Cov}}             
\newcommand{\EtCov}{\mathbf{\Acute{E}tCov}} 
\newcommand{\FTor}{\mathbf{FTor}}           
\newcommand{\E}{\mathrm{E}}                 
\newcommand{\W}{\mathrm{W}}                 

\makeatletter
\newcommand{\colim@}[2]{%
    \vtop{\m@th\ialign{##\cr
        \hfil$#1\operator@font colim$\hfil\cr
        \noalign{\nointerlineskip\kern1.5\ex@}#2\cr
        \noalign{\nointerlineskip\kern-\ex@}\cr}}%
}
\renewcommand{\varinjlim}{%
    \mathop{\mathpalette\colim@{\rightarrowfill@\textstyle}}\nmlimits@
}
\makeatother

\begin{document}

\title{A Uniform Construction of Cohomology Theories of Varieties via Fundamental Groupoids}

\author{Hyuk Jun Kweon}
\address{Department of Mathematical Sciences, Seoul National University, South Korea}
\email{kweon7182@snu.ac.kr}

\subjclass[2020]{Primary 14F20; Secondary 14F30, 14F35, 14F40, 18F20, 20J06}

\begin{abstract}
    We prove that the cohomology of a variety is fully recovered from the fundamental groupoids of its Zariski open subsets in various settings. This follows from a new sheaf theory that replaces the target category of Zariski sheaves with a fibered category while retaining the Zariski site as the source. Given a fundamental groupoid functor and a suitable abelian coefficient category, this theory produces the corresponding cohomology theory on varieties, providing a uniform construction of several cohomology theories.

    The topological fundamental groupoid with abelian groups recovers singular cohomology. The \'etale fundamental groupoid with discrete abelian groups recovers \'etale cohomology. The pro-algebraic fundamental groupoid with vector spaces recovers algebraic de Rham cohomology. The pro-algebraic fundamental groupoid with commutative formal groups unifies \'etale and algebraic de Rham cohomology. In each case, we prove a comparison theorem with the corresponding classical theory or theories. We conjecture that the Nori fundamental groupoid with commutative formal groups unifies \'etale and $p$-adic cohomology.
\end{abstract}

\maketitle

\section{Introduction}

Let $X$ be a variety over a field $k$. The goal of this paper is to introduce a new sheaf theory that gives a uniform construction of several cohomology theories of $X$. For a Zariski open subset $U \subseteq X$, let $\mathfrak{G}(U)$ denote a certain fundamental groupoid of $U$. For example, $\mathfrak{G}(U)$ may be the topological, \'etale, pro-algebraic, or Nori fundamental groupoid. A sheaf $\mathcal{M}$ in our theory sends each Zariski open subset $U \subseteq X$ to a $\mathfrak{G}(U)$-module $\mathcal{M}(U)$. The associated cohomology functors are defined as the right derived functors of
\[ \mathcal{M} \mapsto \Gamma(X, \mathcal{M})^{\mathfrak{G}(X)}, \]
the composite of the global section functor and the $\mathfrak{G}(X)$-invariants functor. By varying the type of fundamental groupoid and the coefficient category for its groupoid modules, we obtain the corresponding cohomology theories of $X$, as summarized below.

\begin{table}[htpb]
\centering
\begin{tabular}{ccc}
\toprule
\textbf{Fundamental Groupoid} & \textbf{Coefficients} & \textbf{Cohomology Theory} \\
\midrule
topological & abelian groups & singular \\
\'etale & discrete abelian groups & \'etale \\
pro-algebraic & vector spaces over $k$ & de Rham \\
pro-algebraic & commutative formal groups & \'etale $+$ de Rham \\
Nori & commutative formal groups & \'etale $+$ $p$-adic \\
\bottomrule
\end{tabular}
\end{table}

The precise hypotheses on $X$ and $k$ are stated in the comparison theorems below. For the first four rows, we prove comparison theorems showing that the resulting cohomology theory recovers the indicated classical cohomology theory or theories. In particular, the fourth row shows that \'etale and algebraic de Rham cohomology can be recovered from a single algebraic cohomology theory with coefficients in commutative formal groups. This is worth noting because these two theories are traditionally constructed in rather different ways. Here, the pro-algebraic fundamental groupoid means the Tannaka groupoid of the category of regular singular integrable connections. The last row, however, is conjectural. We recall the definitions of the fundamental groupoids used in the table in \Cref{sec:appendix_groupoids}.

Classically, a sheaf on a topological space $X$ is a contravariant functor
\[ \mathcal{F} \colon \Open(X)^\op \to \Ab, \]
subject to a gluing condition. In arithmetic geometry, however, the Zariski topology is often too coarse for cohomological purposes. Grothendieck's remedy was to replace the category of open subsets by a site. For example, the \'etale site $X_\et$, whose objects are \'etale morphisms to $X$, gives sheaves of the form
\[ \mathcal{F} \colon X_\et^\op \to \Ab. \]
This construction led to \'etale cohomology \cite{SGA4_1,Milne}, culminating in the proof of the Weil conjectures \cite{Weil,Grothendieck2,SGA5,Deligne2,Deligne3}.

However, this source-replacing approach leads to heterogeneous constructions for the other cohomology theories of arithmetic geometry. Algebraic de Rham cohomology is traditionally constructed from the de Rham complex \cite{Grothendieck1,Hartshorne}, while crystalline cohomology is constructed from the crystalline site \cite{Berthelot}. These constructions are philosophically quite different from \'etale cohomology. In particular, they are not obtained by further refining the \'etale site.

The goal of this paper is to show that this heterogeneity can be resolved by replacing the target category instead. We retain the Zariski site as the source and replace the fixed target category by a fibered category built from fundamental groupoids and a coefficient category. With sufficiently large fundamental groupoids and a sufficiently rich coefficient category, this construction even produces a single unified algebraic cohomology theory that recovers both \'etale and algebraic de Rham cohomology. We conjecture that the same framework also unifies \'etale and $p$-adic cohomology in characteristic $p > 0$. This leads to the following notion of sheaf.

\begin{definition}
    Given a topological space $X$, let $p \colon \mathcal{T}^\op \to \Open(X)$ be a Grothendieck fibration equipped with a chosen cleavage. A \emph{presheaf} on $X$ with values in $\mathcal{T}$ is a functor
    \[ \mathcal{M} \colon \Open(X)^\op \to \mathcal{T} \]
    satisfying $p \circ \mathcal{M}^\op = \id_{\Open(X)}$. Such a presheaf $\mathcal{M}$ is a \emph{sheaf} if, for every $U \in \Open(X)$ and every open cover $\mathcal{U}$ of $U$, the diagram
    \[ \mathcal{M}(U) \to \prod_{V \in \mathcal{U}} \mathcal{M}(V) \rightrightarrows \prod_{V, W \in \mathcal{U}} \mathcal{M}(V \cap W) \]
    is an equalizer diagram in $\mathcal{T}$. We denote the category of presheaves and its full subcategory of sheaves by $\PSh(X, \mathcal{T})$ and $\Sh(X, \mathcal{T})$, respectively.
\end{definition}

Requiring $\mathcal{M}^\op$ to be a section of $p$ is our only modification to the classical definitions of presheaves and sheaves. A satisfactory sheaf theory should admit a sheafification functor $(-)^\# \colon \PSh(X, \mathcal{T}) \to \Sh(X, \mathcal{T})$, left adjoint to the forgetful functor $\Sh(X, \mathcal{T}) \to \PSh(X, \mathcal{T})$. To define cohomology by right derived functors, the category $\Sh(X, \mathcal{T})$ should also have enough injectives. We prove both properties under the following four mild conditions on the fibers and restriction functors. For every inclusion $j \colon V \to U$ in $\Open(X)$, write $j^* \colon \mathcal{T}(U) \to \mathcal{T}(V)$ for the corresponding restriction functor.

\begin{SC}
    \item The functor $j^*$ admits a right adjoint $j_*$. 
    \item The category $\mathcal{T}(U)$ is locally finitely presentable.
    \item The functor $j^*$ is left exact.
    \item The category $\mathcal{T}(U)$ is an abelian category.
\end{SC}

An important example arises from a fundamental groupoid functor $\mathfrak{G}$. Let $\mathcal{A}$ be a suitable abelian category, called the \emph{coefficient category}, such that for every $U \in \Open(X)$, we can define $\mathfrak{G}(U)$-module structures on objects of $\mathcal{A}$. As $U$ varies, the resulting categories of $\mathfrak{G}(U)$-modules assemble into a fibration $p \colon \mathcal{T}^\op \to \Open(X)$, whose fiber $\mathcal{T}(U)$ is the category of $\mathfrak{G}(U)$-modules.

For example, let $X$ be a complex variety, and let $\Pi_1$ denote the topological fundamental groupoid functor. Taking $\mathfrak{G} = \Pi_1$ and $\mathcal{A} = \Ab$ gives a fibered category $\mathcal{T}$ over $\Open(X)$. This fibered category can equivalently be defined without reference to $\Pi_1$. For every $U \in \Open(X)$, define the category by
\[ \Cov(U) \coloneqq \text{\{topological coverings of $U^\an$\}}. \]
Then $\mathcal{T}$ is the fibered category with fibers $\mathcal{T}(U) \simeq \AbObj(\Cov(U))$ and restriction functors induced by pullback of coverings.
The cohomology functor $H^i_\bt(X, -) \colon \Sh(X, \mathcal{T}) \to \Ab$ is the $i$-th right derived functor of $\mathcal{M} \mapsto \Gamma(X, \mathcal{M})^{\Pi_1(X)}$. For an abelian group $M$, the constant sheaf $\underline{M}$ sends each Zariski open subset $U$ to the trivial $\Pi_1(U)$-module associated to $M$.

\begin{theorem}[see \Cref{thm:betti_comparison}]\label{thm:intro_betti_comparison}
    Let $X$ be a smooth complex variety, and let $M$ be an abelian group. Then the natural map
    \[ H^i_\bt(X, \underline{M}) \to H^i(X^\an, M) \]
    is an isomorphism for all $i \ge 0$.
\end{theorem}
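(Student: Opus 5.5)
We plan to factor the functor $\mathcal{M} \mapsto \Gamma(X,\mathcal{M})^{\Pi_1(X)}$ as a composite of two functors, each with a right derived functor we can identify, and then compare with singular cohomology through a Leray-type argument. Write $\pi \colon X^\an \to X$ for the continuous map to the Zariski space. To a sheaf $\mathcal{M} \in \Sh(X,\mathcal{T})$ we attach an ordinary sheaf of abelian groups on $X^\an$. Each $\mathcal{M}(U)$ is an abelian group object of $\Cov(U)$, that is, a covering of $U^\an$ with fiberwise group structure. Its sheaf of continuous local sections is a locally constant sheaf $L(\mathcal{M}(U))$ on $U^\an$. Compatibility with restriction along $V \subseteq U$ and the Zariski gluing condition should let us glue these into a sheaf $\Phi(\mathcal{M})$ on $X^\an$. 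By construction $\Gamma(X^\an, \Phi(\mathcal{M})) = \Gamma(X,\mathcal{M})^{\Pi_1(X)}$, since invariants of a $\Pi_1$-module are exactly global sections of the associated local system. Moreover $\Phi(\underline{M})$ is the constant sheaf $M$. The natural map in the theorem is then the edge map $R^i(\Gamma\circ\Phi)(\underline M) \to H^i(X^\an, \Phi(\underline{M}))$.

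The first step is to check that $\Phi$ is exact and sends injectives to $\Gamma(X^\an,-)$-acyclic objects, so that $H^i_\bt(X,\mathcal{M}) \cong H^i(X^\an, \Phi\mathcal{M})$. Exactness should be a stalkwise check. A cleaner route is to construct a right adjoint $\Psi$ to $\Phi$: a sheaf $\mathcal{F}$ on $X^\an$ is sent to $U \mapsto$ the largest locally constant subsheaf of $\mathcal{F}|_{U^\an}$, or more precisely via $j_*$-type coinduction in each fiber using conditions (1)--(4). If $\Phi$ is exact, $\Psi$ preserves injectives, and one can try to show $\Phi\Psi \to \id$ is suitable on injectives. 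I am not confident this works directly, so the fallback is a Grothendieck spectral sequence argument: compute $H^i_\bt$ by a Čech-to-derived spectral sequence on Zariski covers, and in each fiber use group cohomology $H^q(\Pi_1(U), M)$.

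The heart of the proof is therefore the following. For $U$ ranging over a basis of small affine opens (Artin neighborhoods, i.e.\ good neighborhoods in the sense of Artin's SGA4 XI), $U^\an$ is a $K(\pi,1)$: it is an iterated fibration by punctured curves, so its universal cover is contractible. Hence $H^q(\Pi_1(U), M) \cong H^q(U^\an, M)$. Smoothness is used exactly here. Artin neighborhoods form a basis of the Zariski topology closed enough under intersection that Čech hypercovers can be refined to them. Both $H^*_\bt(X,\underline M)$ and $H^*(X^\an,M)$ are then computed by the same Čech-type spectral sequence
\[ E_1^{p,q} = \prod H^q(U_{i_0}\cap\cdots\cap U_{i_p}) \Rightarrow H^{p+q}, \]
and the map on $E_1$ terms is an isomorphism. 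To justify this on the $\bt$ side, we will show that the sheaf theory gives local cohomology $R^q$ of $\mathcal{M}\mapsto \mathcal{M}(U)^{\Pi_1(U)}$ equal to group cohomology of $\Pi_1(U)$. This should follow from injectives in $\Sh(X,\mathcal{T})$ restricting to injective $\Pi_1(U)$-modules, using the right adjoints $j_*$ from condition (1).

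The main obstacle is this last point. We need the sections functor on $U$, restricted to injective sheaves, to give injective $\Pi_1(U)$-modules. We also need Zariski-local vanishing of higher $\bt$-cohomology sheaves to reduce to the $K(\pi,1)$ statement. Intersections of Artin neighborhoods need not be Artin neighborhoods, so I would work with hypercovers and induction on dimension. Each nonempty Zariski open has a cofinal system of $K(\pi,1)$ neighborhoods, so the higher cohomology presheaves sheafify to zero on both sides.
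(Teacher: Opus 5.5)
\textbf{Your first route.} It does not work as described. The restriction maps $\mathcal{M}(U)|_V \to \mathcal{M}(V)$ are not isomorphisms, so there is nothing to glue. If you instead take for $\Phi$ the natural exact functor (inverse image to the analytic side), the claimed identity $\Gamma(X^\an,\Phi\mathcal{M}) = \Gamma(X,\mathcal{M})^{\Pi_1(X)}$ fails in general. Invariants equal global sections for a \emph{single} local system, but $\Phi(\mathcal{M})$ is not one. Already for $\mathcal{M}$ coming from a skyscraper sheaf on $\mathbb{G}_m^\an$, the left side is much larger than the right. So everything rests on your fallback, and there the decisive step is missing.

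\textbf{Your fallback.} It needs, for an Artin neighborhood $U$, that $H^q_\bt(U,\underline{M}) \cong H^q(\Pi_1(U), M)$. The point you flag as the main obstacle is actually easy: $\Gamma(U,-)$ has the exact left adjoint $(-)_U$, so it preserves injectives. But this only produces
\[ E_2^{p,q} = H^p\bigl(\Pi_1(U), R^q\Gamma(U,\underline{M})\bigr) \Rightarrow H^{p+q}_\bt(U,\underline{M}), \]
which collapses only if $R^q\Gamma(U,\underline{M}) = 0$ for $q > 0$. Evaluated at $x$, this module is the cohomology of the universal cover $P_x$ of $U$ for the topology generated by the (pulled-back) Zariski opens of $U$ and coverings. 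Its vanishing is therefore itself a comparison statement. In the paper this local acyclicity is deduced \emph{from} the comparison theorem, not used to prove it.

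Zariski localization does not help either. A $K(\pi,1)$ is not acyclic. For a curve and $U' \subseteq U$, the map $H^1(U^\an, M) \to H^1(U'^\an, M)$ is injective, so the presheaves $U \mapsto H^q$ do not sheafify to zero in the Zariski topology on either side. Your hypercover comparison of $E_1$-terms thus only reduces the theorem to the same theorem for Artin neighborhoods.

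\textbf{The missing idea.} You must localize along covering spaces as well as Zariski opens. The paper works on the Betti site $X_\bt$ (Zariski opens plus topological coverings). It compares this site, via $\delta \colon X_\bt^\an \to X_\bt$, with the analytic Betti site, whose topos is equivalent to $\Sh(X^\an, \Ab)$ by the comparison lemma. It then shows $R^q\delta_*\mathcal{F} = 0$ for locally constant $\mathcal{F}$, because every object of $X_\bt$ is covered by universal covers of Artin neighborhoods, which are contractible. The Leray spectral sequence for $\delta$ then gives the isomorphism.
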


Similarly, let $X$ be a scheme, and let $\Pi_1^\et$ denote the \'etale fundamental groupoid functor. Taking $\mathfrak{G} = \Pi_1^\et$ and discrete abelian groups as coefficients, we obtain a fibered category $\mathcal{T}$. Equivalently, for $U \in \Open(X)$, define $\mathcal{T}(U) \coloneqq \AbObj(\Ind(\EtCov(U)))$, where
\[ \EtCov(U) \coloneqq \text{\{finite \'etale covers of $U$\}}. \]
We obtain a cohomology functor $H^\bullet_\fet(X, -)$. For a discrete abelian group $M$, the constant sheaf $\underline{M}$ sends each Zariski open subset $U$ to the trivial $\Pi_1^\et(U)$-module associated to $M$.

\begin{theorem}[see \Cref{thm:profinite_etale_comparison}]\label{thm:intro_etale_comparison}
    Let $X$ be a smooth variety over a field of characteristic $0$, or any Noetherian scheme over $\mathbb{F}_p$. Let $M$ be a torsion abelian group. The natural map
    \[ H^i_\fet(X, \underline{M}) \to H^i(X_\et, M) \]
    is an isomorphism for all $i \ge 0$.
\end{theorem}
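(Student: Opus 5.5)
The plan is to compare both sides through a Leray-type spectral sequence for the Zariski site. The natural map should factor through a morphism of "sites": an étale sheaf $F$ on $X$ gives a sheaf in $\Sh(X,\mathcal{T})$ by sending $U$ to the discrete $\Pi_1^\et(U)$-module $\colim_{V\to U} F(V)$, the colimit taken over finite étale covers $V \to U$. Equivalently, we restrict $F$ to the finite étale site of $U$, regarded as an object of $\AbObj(\Ind(\EtCov(U)))$. Call this functor $\rho_*$. It is left exact and has an exact left adjoint $\rho^*$, since an object of $\AbObj(\Ind(\EtCov(U)))$ extends to the étale site of $U$ as a locally constant ind-sheaf, and these glue Zariski-locally. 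Hence $\rho_*$ preserves injectives. Moreover $\Gamma(X,\rho_*F)^{\Pi_1^\et(X)} = F(X)$, so we get a Grothendieck spectral sequence
\[ E_2^{p,q} = H^p_\fet(X, R^q\rho_* M) \Rightarrow H^{p+q}(X_\et, M). \]
We also have $\rho_*M = \underline{M}$. Thus it suffices to prove that $R^q\rho_* M = 0$ for $q > 0$ and $M$ torsion.

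Next I would compute $R^q\rho_*M$ as a sheaf on the Zariski site. It is the sheafification of the presheaf sending $U$ to the discrete $\Pi_1^\et(U)$-module $\colim_{V \to U \text{ fét}} H^q(V_\et, M)$. This uses the fact that $\Pi_1^\et(U)$-module structures, as discrete modules, are computed by filtered colimits over finite étale covers, and that sheafification in $\Sh(X,\mathcal{T})$ is computed stalkwise via the conditions (1)--(4). So the key claim is local. For every $x \in X$ and every class $\alpha \in H^q(U_\et, M)$ with $q>0$ and $U \ni x$, there should be a Zariski neighbourhood $U' \subseteq U$ of $x$ and a finite étale cover $V \to U'$ on which $\alpha$ dies. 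That is, étale cohomology classes should be killed Zariski-locally by finite étale covers. This is an "$\acute{e}$tale $K(\pi,1)$ locally" statement.

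The main obstacle is this local statement, and I would treat the two cases separately. In characteristic $0$ with $X$ smooth, I would use Artin's good neighbourhoods (SGA4, Exp.~XI). Every point has a Zariski neighbourhood that is an iterated elementary fibration, and such a neighbourhood is a $K(\pi,1)$ for torsion coefficients. The comparison with the topological case, or directly the profinite completion argument of Artin, then shows that $H^q(\pi_1^\et(U'), M) \cong H^q(U'_\et, M)$. Consequently every class is effaced on a finite étale cover. The obstacle is the non-separably-closed base field. There I would apply Hochschild--Serre over $\bar k/k$ and note that finite extensions of $k$ are themselves finite étale covers.

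For a Noetherian $\mathbb{F}_p$-scheme, I would use Achinger's theorem that affine Noetherian $\mathbb{F}_p$-schemes are $K(\pi,1)$ for $p$-torsion coefficients. I would handle $\ell$-torsion by a different route, namely by reducing to the local question at each Zariski point. I expect this $\ell$-torsion part to be the hardest step. If it cannot be done directly, I would cite Achinger's full result (every point has an affine $K(\pi,1)$ neighbourhood for all torsion coefficients prime to nothing, in characteristic $p$). Finally, I would check that the resulting edge map agrees with the natural map of the theorem.
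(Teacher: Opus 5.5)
Your argument is essentially the paper's. Your $\rho_*$ is $\Phi\circ\delta_*$ for the morphism of sites $\delta\colon X_\et\to X_\fet$; its left adjoint is exact because the inclusion $X_\fet\hookrightarrow X_\et$ is continuous and preserves finite limits, not because locally constant ind-sheaves glue. The paper likewise runs the Leray spectral sequence, writes torsion coefficients as filtered colimits of finite ones, and kills $R^q\delta_*$ for $q>0$ using a basis of \'etale $K(\pi,1)$ neighbourhoods together with Achinger's effaceability criterion \cite[Proposition~4.2]{Achinger}.

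One correction in characteristic $p$: the new content of Achinger's theorem is exactly the $\ell$-torsion case ($\ell\neq p$); the $p$-torsion case for affine $\mathbb{F}_p$-schemes is classical. Passing to local rings at Zariski points does not make the $\ell$-torsion case any easier, so citing his full theorem is not a fallback but precisely what the paper does.
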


Let $X$ be a smooth variety over an algebraically closed field $k$ of characteristic $0$, and let $\Pi_1^\alg$ denote the pro-algebraic fundamental groupoid functor. Taking $\mathfrak{G} = \Pi_1^\alg$ and $k$-vector spaces as coefficients, we obtain a fibered category $\mathcal{T}$. Equivalently, for $U \in \Open(X)$, define $\mathcal{T}(U) \coloneqq \Ind(\MIC^\rs(U))$, where
\[ \MIC^\rs(U) \coloneqq \text{\{vector bundles with regular singular integrable connections on $U$\}}. \]
We obtain a cohomology functor $H^\bullet_\alg(X, -)$. The constant sheaf $\underline{k}$ sends each Zariski open subset $U$ to the trivial one-dimensional $\Pi_1^\alg(U)$-representation.

\begin{theorem}[see \Cref{thm:alg_dR_comparison}]\label{thm:intro_dR_comparison}
    Let $X$ be a smooth variety over an algebraically closed field $k$ of characteristic $0$. Then the natural map
    \[ H^i_\alg(X, \underline{k}) \to H^i_\dR(X) \]
    is an isomorphism for all $i \ge 0$.
\end{theorem}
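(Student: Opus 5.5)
Fix $X$ smooth over $k$ algebraically closed of characteristic $0$, and take $\mathcal{T}(U)=\Ind(\MIC^\rs(U))$. The plan has three steps: identify the cohomology as derived functors of a composite, compute one factor locally via a Leray-type spectral sequence, and glue the local results by Zariski descent on both sides.

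\textbf{Step 1: factor $\Gamma(X,-)^{\Pi_1^\alg(X)}$.} First define a presheaf of vector spaces $\mathcal{M}\mapsto \big(U\mapsto \mathcal{M}(U)^{\Pi_1^\alg(U)}\big)$. Here $\mathcal{M}(U)^{\Pi_1^\alg(U)}=\Hom_{\Ind(\MIC^\rs(U))}(\mathcal{O}_U,\mathcal{M}(U))$, which for an actual connection is the space of horizontal sections. This presheaf is a Zariski sheaf, because the invariants functors commute with the equalizers defining the sheaf condition. The global functor therefore factors as $\Gamma(X,-)\circ \pi_*$, where $\pi_*\colon \Sh(X,\mathcal{T})\to \Sh(X_\zar,\Vect_k)$ takes invariants sectionwise. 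The functor $\pi_*$ has an exact left adjoint, namely the sheafification of $F\mapsto (U\mapsto F(U)\otimes \mathcal{O}_U)$ with the trivial connection. Hence $\pi_*$ preserves injectives, and we get a Grothendieck spectral sequence
\[E_2^{p,q}=H^p(X_\zar, R^q\pi_*\underline{k})\Rightarrow H^{p+q}_\alg(X,\underline{k}).\]
The sheaf $R^q\pi_*\underline{k}$ is the Zariski sheafification of $U\mapsto H^q_\alg(U,\underline{k})$.

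\textbf{Step 2: compare with the de Rham side.} By Grothendieck's comparison (or the Hodge-to-de Rham setup), $H^\bullet_\dR(X)=H^\bullet(X_\zar,\Omega^\bullet_X)$, and there is the Leray-type spectral sequence $H^p(X_\zar,\mathcal{H}^q_\dR)\Rightarrow H^{p+q}_\dR(X)$, where $\mathcal{H}^q_\dR$ is the sheafification of $U\mapsto H^q_\dR(U)$. The natural map will be constructed compatibly with both spectral sequences, for instance via a map of resolutions realizing $\underline{k}\to\Omega^\bullet$ in $\Sh(X,\mathcal{T})$ after applying $\pi_*$. It then suffices to show that the stalks agree: for $x\in X$ and $q\ge0$,
\[\varinjlim_{x\in U} H^q_\alg(U,\underline{k})\xrightarrow{\ \sim\ }\varinjlim_{x\in U} H^q_\dR(U).\]

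\textbf{Step 3: the local comparison, which is the main obstacle.} On a small affine $U$ I will relate $H^q_\alg(U,\underline{k})$ to $\Ext^q_{\Ind(\MIC^\rs(U))}(\mathcal{O}_U,\mathcal{O}_U)$, which is group cohomology of $\Pi_1^\alg(U)$. This relation itself needs a local-to-global argument, so I will run a second spectral sequence and argue by induction on $q$ with the colimit over neighborhoods. The key input is then that Yoneda $\Ext$ in the category of regular singular connections computes de Rham cohomology: $\Ext^q_{\MIC^\rs}(\mathcal{O},\mathcal{O})\cong H^q_\dR(U)$. For $q\le1$ this holds because extensions of regular singular connections stay regular singular. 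For higher $q$ I expect to need effaceability in the colimit: every de Rham class on $U$ dies on a smaller Zariski neighborhood after passing to extensions. I would prove this effaceability using Artin neighborhoods (good $K(\pi,1)$ fibrations into curves) together with the Riemann–Hilbert correspondence, reducing to the analogous statement for $\Pi_1$ and local systems. That analogous statement is exactly the topological comparison of \Cref{thm:intro_betti_comparison}, and on Artin neighborhoods $U^\an$ is a $K(\pi,1)$ with good group. Comparing the regular singular category with algebraic local systems through Riemann–Hilbert and Deligne's comparison of algebraic and analytic de Rham cohomology should then give the local isomorphism, since both sides become group cohomology of the same good group with $\mathbb{C}$-coefficients. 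Over general $k$, I would finish by a Lefschetz-principle reduction to $k=\mathbb{C}$.
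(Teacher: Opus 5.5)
Your Step 1 is fine; it is the second spectral sequence of \Cref{rem:spectral_sequences}. The gap is that nothing connects $\Sh(X,\Rep^\alg_X)$ to de Rham theory, so Step 2 never produces the comparison map. Your suggestion to realize $\underline{k}\to\Omega^\bullet_X$ by resolutions inside $\Sh(X,\mathcal{T})$ is not available. The $\Omega^p_X$ are not objects of $\Sh(X,\Rep^\alg_X)$, and whether the de Rham complex resolves $\underline{k}$ there is exactly the open ``algebraic Poincar\'e lemma'' question at the end of the paper.

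Step 3 has the same problem. Identifying $H^q_\alg(U,\underline{k})$ with $\Ext^q_{\Rcat(U)}(\mathcal{O}_U,\mathcal{O}_U)$ needs $R^q\Gamma(U,\underline{k})=0$ for $q>0$. Since $\Sh(X,\Rep^\alg_X)$ comes from no site, you have no independent handle on this, and the proposed induction over neighborhoods supplies no mechanism.

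The missing idea is the functor $\delta_*\colon\Mod(\mathcal{D}_X)\to\Sh(X,\Rep^\alg_X)$ given by $(\delta_*\mathcal{F})(U)=\Coind_{\Dcat(U)}^{\Rcat(U)}(\mathcal{F}|_U)$, the largest ind-regular singular sub-$\mathcal{D}_U$-module. It has the following properties:
\begin{itemize}
\item it is a sheaf;
\item it has an exact left adjoint, so it preserves injectives;
\item $\delta_*\mathcal{O}_X=\underline{k}$;
\item $\Gamma(X,-)^{\Pi_1^\alg(X)}\circ\delta_*\cong\Hom_{\Dcat(X)}(\mathcal{O}_X,-)$.
\end{itemize}
Since $\Ext^i_{\Dcat(X)}(\mathcal{O}_X,-)\cong H^i_\dR(X,-)$, this gives a spectral sequence $H^p_\alg(X,R^q\delta_*\mathcal{O}_X)\Rightarrow H^{p+q}_\dR(X)$ whose edge map is the natural map. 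Everything then reduces to $R^q\delta_*\mathcal{O}_X=0$ for $q>0$. Restriction to $U$ preserves injective $\mathcal{D}$-modules, so $R^q\delta_*\mathcal{O}_X$ is the sheafification of $U\mapsto R^q\Coind_{\Dcat(U)}^{\Rcat(U)}\mathcal{O}_U$. This is a purely fiberwise statement, to be checked on a basis.

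With $\delta_*$ in hand your own route also goes through. Your Step 2 map becomes $R\pi_*\underline{k}\to R\pi_*R\delta_*\mathcal{O}_X\simeq\Omega^\bullet_X$, and the vanishing of $R^q\Gamma(U,\underline{k})$ follows from $R\Gamma(U,\underline{k})\cong R\Coind_{\Dcat(U)}^{\Rcat(U)}\mathcal{O}_U$.

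Your local input is also too weak as stated, and its proof sketch skips the real inputs.
\begin{itemize}
\item \emph{Twisted coefficients.} To kill $R^q\Coind_{\Dcat(U)}^{\Rcat(U)}\mathcal{O}_U$ you test against every $V\in\MIC^\rs(U)$. So you need $\Ext^q_{\Rcat(U)}(\mathcal{O}_U,M)\to\Ext^q_{\Dcat(U)}(\mathcal{O}_U,M)$ to be bijective for all regular singular $M$ (at least $M=V^\vee$), not only for $M=\mathcal{O}_U$. This is the statement that Artin neighborhoods are algebraic $K(\pi,1)$ varieties (\Cref{thm:alg_kpi1}).
\item \emph{The right notion of goodness.} Over $\mathbb{C}$, the $K(\pi,1)$ property and Riemann--Hilbert identify $H^q_\dR(U,M)$ with $H^q_\grp(\pi_1(U,x),M_x)$. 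Identifying that further with $H^q_\rat(\pi_1^\alg(U,x),M_x)$ is \emph{algebraic} goodness in the sense of Katzarkov--Pantev--To\"en (\Cref{prop:artin_nbd_alg_good}). This is not Serre goodness, and it does not follow from the Betti comparison theorem, which only concerns the discrete group.
\item \emph{Effaceability.} It should be understood on a fixed Artin neighborhood, within $\Rcat(U)$. Shrinking alone does not kill classes: for a curve, $H^1_\dR$ injects into $H^1_\dR$ of any dense open. The paper proves surjectivity for finite-dimensional coefficients, extends it by filtered colimits, and upgrades it to bijectivity by a universal $\delta$-functor argument.
\item \emph{Passage from $\mathbb{C}$ to general $k$.} This is not a formal Lefschetz principle, because regular singularity and Yoneda classes in $\Ind(\MIC^\rs)$ are not first-order data. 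The paper spreads out the Yoneda extensions, together with logarithmic extensions of their middle terms, over a finitely generated algebra and specializes at a closed point (\Cref{lem:alg_ext_surjectivity_base_change}).
\end{itemize}
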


Let $\FC$ denote the category of commutative formal groups over a field $k$ \cite[Ch.~II, \S4]{Demazure}. We first assume that $k$ is algebraically closed of characteristic $0$. For an abelian group $D$, let $\underline{D} \in \FC$ denote the constant formal group associated to $D$. Let $\mathbb{G}_a^\vee \in \FC$ denote the formal additive group. For $M \in \FC$, let $\mathrm{D}(M) \coloneqq \Hom_\FC(\underline{\mathbb{Z}}, M)$ denote its \emph{discrete part}, and let $\mathrm{V}(M) \coloneqq \Hom_\FC(\mathbb{G}_a^\vee, M)$ denote its \emph{vector space part}.

Now let $X$ be a smooth variety over $k$. Taking $\mathfrak{G} = \Pi_1^\alg$ and $\FC$ as the coefficient category, we obtain a cohomology functor $H^\bullet_\uni(X, -)$. For $M \in \FC$, the constant sheaf $\underline{M}$ sends each Zariski open subset $U$ to the trivial $\Pi_1^\alg(U)$-module associated to $M$.

\begin{theorem}[see \Cref{thm:uni_comparison}]\label{thm:intro_unified_comparison}
    Let $X$ be a smooth variety over an algebraically closed field $k$ of characteristic $0$. Let $D$ be a torsion abelian group. Then the natural maps
    \[ \mathrm{D}(H^i_\uni(X, \underline{\underline{D}})) \to H^i(X_\et, D) \quad\text{and}\quad \mathrm{V}(H^i_\uni(X, \underline{\mathbb{G}_a^\vee})) \to H^i_\dR(X) \]
    are isomorphisms for all $i \ge 0$.
\end{theorem}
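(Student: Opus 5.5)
We reduce \Cref{thm:intro_unified_comparison} to \Cref{thm:intro_etale_comparison} and \Cref{thm:intro_dR_comparison}, using the structure of $\FC$ over an algebraically closed field of characteristic $0$. By Cartier duality, $\FC$ is anti-equivalent to the category of commutative affine group schemes over $k$. In characteristic $0$, every such group scheme splits canonically as a product of its multiplicative part and its unipotent part. Dually, $\FC$ decomposes into étale formal groups, equivalent to discrete abelian groups via $\mathrm{D}$, and infinitesimal unipotent formal groups, equivalent to $k$-vector spaces via $\mathrm{V}$. Here we use that infinitesimal formal groups in characteristic $0$ are of the form $\mathrm{Spf}$ of a completed symmetric algebra, i.e.\ copies of $\mathbb{G}_a^\vee$ tensored with a vector space. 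Hence $\FC \simeq \FC^{\et} \times \FC^{\mathrm{inf}}$, and the functors $\mathrm{D}$ and $\mathrm{V}$ are exact projections onto the two factors.

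The first step is to show that this decomposition is compatible with $\Pi_1^\alg(U)$-actions and with restriction along $j\colon V\to U$. Then the fibered category $\mathcal{T}$ splits as $\mathcal{T}^{\et}\times\mathcal{T}^{\mathrm{inf}}$, and $\Sh(X,\mathcal{T})$ splits accordingly. An action of $\Pi_1^\alg(U)$ on an étale formal group $\underline{D}$ factors through its profinite completion, which is $\Pi_1^\et(U)$ by the Riemann--Hilbert correspondence and Grothendieck's comparison in characteristic $0$. The action must factor through a profinite quotient because it acts continuously on discrete data. So $\mathcal{T}^{\et}(U)$ is identified with discrete $\Pi_1^\et(U)$-modules. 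On the other side, $\mathcal{T}^{\mathrm{inf}}(U)$ is identified with $\Ind(\MIC^\rs(U))$ via $\mathrm{V}$. The restriction functors match because both are induced by the maps $\Pi_1^\alg(V)\to\Pi_1^\alg(U)$.

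Since the splitting is a product of abelian categories, injectives, sections and invariants split componentwise, and so do the right derived functors. We get
\[ H^i_\uni(X,\mathcal{M})\cong H^i_\fet(X,\mathcal{M}^{\et})\times H^i_\alg(X,\mathcal{M}^{\mathrm{inf}}), \]
where we identify discrete groups and vector spaces with formal groups. Applying this to the constant sheaves $\underline{\underline{D}}$ and $\underline{\mathbb{G}_a^\vee}$, we find that each has only one nonzero component. We then apply $\mathrm{D}$, respectively $\mathrm{V}$, and invoke \Cref{thm:intro_etale_comparison} for the torsion group $D$ and \Cref{thm:intro_dR_comparison}. Finally, we check that the composite isomorphisms agree with the natural maps in the statement, which is a naturality check on the comparison maps.

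The main obstacle is the first step, namely the splitting at the level of groupoid modules. We must verify that a $\Pi_1^\alg(U)$-module structure on an arbitrary formal group in $\FC$, possibly an infinite pro/ind object, respects the canonical étale--infinitesimal decomposition. We must also verify that actions on the étale part factor through $\Pi_1^\et(U)$ compatibly with restriction. We would first try to use functoriality: the connected--étale sequence is functorial and canonically split in characteristic $0$, so any action by automorphisms preserves both factors. The identification of the étale factor then reduces to the statement that the maximal pro-finite quotient of $\Pi_1^\alg(U)$ is $\Pi_1^\et(U)$.
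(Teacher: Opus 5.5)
Your outline follows the paper's proof. You split the fibered category $\FMod^\alg_X$ as $\Mod^\et_X\times\Rep^\alg_X$ (\Cref{cor:decomposition_X}), deduce the splitting of $\Sh(X,\FMod^\alg_X)$ and of the derived functors \eqref{eq:unified_cohomology_decomposition}, note that each constant sheaf has one nonzero component, and apply the finite \'etale and de Rham comparison theorems.

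\textbf{The gap.} The argument you sketch for the step you rightly call the crux does not work as stated. A $\Pi_1^\alg(U)$-module structure is scheme-theoretic. After reducing to a vertex group $G=\pi_1^\alg(U,x)$, which itself needs a descent statement such as \Cref{thm:base_change} because $\Pi_1^\alg(U)$ is a groupoid acting on $U$, the structure amounts to compatible actions of $G(R)$ on $M_R$ for all $k$-algebras $R$. Over non-reduced $R$ the splitting is \emph{not} preserved by $R$-automorphisms. For example, over $R=k[\epsilon]/(\epsilon^2)$ the map $x\mapsto 1+\epsilon x$ is a nonzero homomorphism $\mathbb{G}_{a,R}\to\mathbb{G}_{m,R}$. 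Hence $(t,x)\mapsto(t(1+\epsilon x),x)$ is an automorphism of $\mathbb{G}_{m,R}\times\mathbb{G}_{a,R}$ that moves the unipotent factor. Dually, $\underline{\mathbb{Z}}_R\times(\mathbb{G}_a^\vee)_R$ has automorphisms moving the \'etale factor. Over such $R$ only the connected--\'etale sequence is functorial, not its splitting.

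\textbf{The missing ingredients.} These are exactly \Cref{lem:orth_un_mt,lem:orth_mt_un} together with the reducedness of $G$:
\begin{enumerate}
\item Homomorphisms from the multiplicative part to the unipotent part vanish over every $R$.
\item Homomorphisms in the other direction vanish over reduced $R$.
\item $G$ is reduced in characteristic $0$ (Cartier). So the action is determined by its value at the universal point of $G$, whose coordinate ring is reduced.
\end{enumerate}
This is how \Cref{thm:decomposition_group} obtains $\underline{\Aut}(M)\cong\underline{\Aut}(M_\mt)\times\underline{\Aut}(M_\un)$ as functors on $\RAlg$.

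\textbf{Two smaller points.} First, the Riemann--Hilbert correspondence identifies the profinite completion of $\Pi_1^\alg(U)$ with $\Pi_1^\et(U)$ only over $\mathbb{C}$. For an arbitrary algebraically closed $k$ of characteristic $0$ you would need a Lefschetz-principle argument, including invariance of both sides under extension of algebraically closed fields. The paper instead works directly over $k$. It passes through the groupoid $\Pi_1^\fin$ of finite connections, which are regular singular, and uses Esnault--Hai's identification of its fundamental pro-object with the universal profinite \'etale cover (\Cref{prop:profinite_completion}).

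Second, the factorization of the action on the \'etale part through finite quotients is not a continuity statement, since no topology is given a priori. It follows from quasi-compactness of $G$, which forces every orbit $G\cdot d\subseteq D$ to be finite (\Cref{lem:mod_mmod}).
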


For simplicity, the four comparison theorems above are stated only for constant sheaves. Each is a special case of a comparison theorem for locally constant sheaves proved in the corresponding section.

Finally, let $X$ be a reduced variety over a perfect field $k$ of characteristic $p > 0$, and let $\Pi_1^\N$ denote the Nori fundamental groupoid functor. Taking $\mathfrak{G} = \Pi_1^\N$ and $\FC$ as the coefficient category, we obtain a fibered category $\mathcal{T}$. Equivalently, for $U \in \Open(X)$, define $\mathcal{T}(U) \coloneqq \AbObj(\Ind(\FTor(U)))$, where
\[ \FTor(U) \coloneqq \text{\{torsors over $U$ under finite $k$-group schemes\}}. \]
We obtain a cohomology functor $H^\bullet_\N(X, -)$. Let $\mathrm{CW} \in \FC$ denote the formal $p$-group representing Fontaine's Witt covectors \cite[Ch.~II, \S1, and Ch.~III, \S I.1]{Fontaine}. Let $\W(k)$ denote the ring of $p$-typical Witt vectors of $k$.

\begin{restatable*}{conjecture}{padicCohomologyConjecture}\label{conj:padic_cohomology}
    Let $X$ be a smooth projective variety over a perfect field $k$ of characteristic $p > 0$. For every $i \ge 0$, let
    \[ H^i(X) \coloneqq \left( \varprojlim_n H^i_\N(X, \underline{\mathrm{CW}})[p^n](k) \right) \otimes_{\W(k)} \Frac \W(k). \]
    Then $H^\bullet(X)$ is a $p$-adic Weil cohomology theory.
\end{restatable*}

One caveat is that $H^i_\N(X, \underline{\mathrm{CW}})$ may contain summands with no counterpart in classical crystalline cohomology. Besides $\mathrm{CW}$ and $\W_n$, possible factors include $\mathbb{Z}/p\mathbb{Z}$, $\boldsymbol{\alpha}_p$, and $\boldsymbol{\mu}_p$. Here, $\W_n \in \FC$ denotes the formal group representing the truncated $p$-typical Witt vector group functor of length $n$. It is unclear whether these additional factors would remedy some of the pathologies caused by torsion in existing integral $p$-adic cohomology theories or instead introduce further complications.

The cohomology theories constructed here share some features. For example, their defining functors admit two factorizations, each yielding a Grothendieck spectral sequence; see \Cref{rem:spectral_sequences}. Moreover, each cohomology theory has a corresponding groupoid cohomology theory; see \Cref{def:sheaf_cohomology}. For $H^\bullet_\bt$, $H^\bullet_\fet$, and $H^\bullet_\alg$, the corresponding cohomology theories for groups are the well-studied ordinary cohomology of abstract groups, continuous cohomology of profinite groups \cite[Ch.~I, \S2]{Serre}, and rational cohomology of affine group schemes \cite[Ch.~4]{Jantzen}, respectively. Those corresponding to $H^\bullet_\uni$ and $H^\bullet_\N$ instead use commutative formal groups as coefficients and appear not to have been studied. In particular, $H^\bullet_\N$ suggests a notion of $p$-adic group cohomology for profinite group schemes over a perfect field of characteristic $p > 0$.

In \Cref{sec:notation}, we fix the notation used throughout the paper. \Cref{sec:analytic} introduces the Betti site $X_\bt$, describes its sheaves in terms of the topological fundamental groupoid, and proves that the resulting cohomology theory $H^\bullet_\bt$ recovers analytic sheaf cohomology. \Cref{sec:formalization} develops the general formalism of sheaves valued in a fibered category, including sheafification, enough injectives, sheaf cohomology, and a comparison theorem. \Cref{sec:profinite} applies this formalism to the \'etale fundamental groupoid, defines $H^\bullet_\fet$, and proves comparison theorems with analytic sheaf cohomology and \'etale cohomology. \Cref{sec:algebraic} applies this formalism to the pro-algebraic fundamental groupoid, defines $H^\bullet_\alg$, and proves comparison theorems with analytic sheaf cohomology and algebraic de Rham cohomology. \Cref{sec:unified} applies this formalism to the pro-algebraic fundamental groupoid with coefficients in commutative formal group schemes, defines $H^\bullet_\uni$, and proves that this theory recovers both \'etale and algebraic de Rham cohomology. \Cref{sec:further_remarks} defines $H^\bullet_\N$ using the Nori fundamental groupoid with coefficients in commutative formal groups and discusses its conjectural relation to $p$-adic cohomology. It also discusses an algebraic Poincar\'e lemma and the universal cohomology theory. Finally, \Cref{sec:appendix_groupoids} reviews background material on groupoids, profinite groupoids, and affine groupoid schemes.

\section{Notation}\label{sec:notation}

Throughout this paper, $k$ denotes the base field. We denote by $\Set$, $\Ab$, and $\Vect(k)$ the categories of sets, abelian groups, and $k$-vector spaces, respectively. For categories $\mathcal{C}$ and $\mathcal{A}$, we denote by $\Fun(\mathcal{C}, \mathcal{A})$ the category of functors from $\mathcal{C}$ to $\mathcal{A}$. We denote by $\Ind(\mathcal{C})$ and $\Pro(\mathcal{C})$ the ind- and pro-completions of $\mathcal{C}$, respectively, and by $\AbObj(\mathcal{C})$ the category of abelian group objects in $\mathcal{C}$.

Throughout, by a \emph{fibration} $p \colon \mathcal{T}^\op \to \mathcal{B}$, we mean a Grothendieck fibration equipped with a chosen cleavage. We refer to $\mathcal{T}$ as a \emph{fibered category} over $\mathcal{B}$. For a morphism $f \colon V \to U$ in $\mathcal{B}$, we call the opposite $f^* \colon \mathcal{T}(U) \to \mathcal{T}(V)$ of the pullback functor determined by the cleavage the \emph{restriction functor}. A \emph{bifibration} is a fibration for which $p^\op$ is also a fibration. All fibrations below are understood in this sense.

All limits and colimits are small unless otherwise specified. We say that a diagram
\[ A \to \prod_{i \in I} B_i \rightrightarrows \prod_{i, j \in I} C_{i,j} \]
is an \emph{equalizer diagram} if the induced morphism from $A$ to the limit of the diagram formed by the morphisms $B_i \to C_{i,j}$ and $B_j \to C_{i,j}$ for all $i, j \in I$ is an isomorphism. The products are merely notational shorthand and are not assumed to exist in the ambient category.

We denote by $\Gpd$ the category of groupoids. For $G \in \Gpd$, we denote by $\Set(G)$, $\Mod(G)$, and $\Rep(G)$ the categories of $G$-sets, $G$-modules, and complex $G$-representations, respectively. All actions are left unless otherwise specified. We denote the full subcategories of finite $G$-sets and finite-dimensional complex $G$-representations by $\FSet(G)$ and $\FRep(G)$, respectively.

Given $M$ in any of these categories and an object $x$ of $G$, we regard $M$ as a functor and write $M_x$ for $M(x)$. For a morphism of groupoids $H \to G$, restriction and coinduction form an adjoint pair
\[ \Res_H^G : \Set(G) \rightleftarrows \Set(H) : \Coind_H^G. \]
Let $\mathds{1}$ denote the terminal groupoid, which has a single object and a single morphism, so that $\Set(\mathds{1}) \cong \Set$. The \emph{trivial $G$-set functor} $\Delta_G \coloneqq \Res_G^{\mathds{1}}$ and the \emph{$G$-invariants functor} $(-)^G \coloneqq \Coind_G^{\mathds{1}}$ form an adjoint pair
\[ \Delta_G : \Set \rightleftarrows \Set(G) : (-)^G. \]
The same constructions and notation apply to $\Mod(G)$ and $\Rep(G)$.

For an abstract group $G$, we denote its profinite completion by $\widehat{G}$ and its pro-algebraic completion over $k$ by $G^\alg$. The latter is defined as the Tannaka group of $\FRep(G)$ with respect to the forgetful fiber functor. We write $H^\bullet_\grp(G, -)$ for the group cohomology of $G$. For a profinite group $G$, we write $H^\bullet_\cnt(G, -)$ for its continuous cohomology \cite[Ch.~I, \S2]{Serre}. For an affine group scheme $G$ over $k$, we write $H^\bullet_\rat(G, -)$ for its rational cohomology \cite[Ch.~4]{Jantzen}. We use the same subscripts for the corresponding groupoid cohomology.

Let $X$ be a topological space. We denote by $\Open(X)$ the category of open subsets of $X$ and generally regard $X$ as a site via $\Open(X)$. When $X$ is a complex variety, we denote by $X^\an$ the space $X$ equipped with the analytic topology. For a scheme $X$ over $k$, we denote its Zariski and \'etale sites by $X^\zar$ and $X_\et$, respectively. We often omit the superscript $\zar$. Unless otherwise specified, all open subsets of a scheme are assumed to be Zariski open.

For a site $\mathcal{C}$ and a category $\mathcal{A}$, we denote the categories of presheaves and sheaves on $\mathcal{C}$ with values in $\mathcal{A}$ by $\PSh(\mathcal{C}, \mathcal{A})$ and $\Sh(\mathcal{C}, \mathcal{A})$, respectively. If $\mathcal{A}$ is a suitable abelian category, we write $H^i(\mathcal{C}, -)$ for the $i$-th sheaf cohomology functor. In particular, we write analytic and \'etale sheaf cohomology as $H^i(X^\an, -)$ and $H^i(X_\et, -)$, respectively.

For appropriate $X$, we denote the topological fundamental groupoid, the \'etale fundamental groupoid \cite[Exp.~V, \S7]{SGA1}, the pro-algebraic fundamental groupoid \cite[Proposition~10.32(a)]{Deligne1}, and the Nori fundamental groupoid of $X$ by $\Pi_1(X)$, $\Pi_1^\et(X)$, $\Pi_1^\alg(X)$, and $\Pi_1^\N(X)$, respectively.

The name and notation for the pro-algebraic fundamental groupoid $\Pi_1^\alg(X)$ vary considerably in the literature. In this paper, $\Pi_1^\alg(X)$ is the Tannaka groupoid of the category
\[ \{ \text{vector bundles on $X$ with regular singular integrable connections} \} \]
with respect to the fiber functor over $X$ that forgets the connection.
Our superscript follows \cite[Proposition~10.32(a)]{Deligne1}. Over $\mathbb{C}$, our terminology and notation agree with the conventions of algebraic topology and complex geometry. Some references denote our $\Pi_1^\alg(X)$ by $\Pi_1^{\alg,\rs}(X)$ and reserve $\Pi_1^\alg(X)$ for the Tannaka groupoid of all integrable connections. We refer to \Cref{sec:appendix_groupoids} for background on the groupoids and fundamental groupoids used in this paper, including the existence of coinduction functors.

\section{Analytic Theory}\label{sec:analytic}

Throughout this section, let $X$ be a complex variety. We first define the Betti site $X_\bt$, whose objects are topological coverings over Zariski open subsets of $X$. We then describe $\Sh(X_\bt, \Set)$ using only the Zariski topology and the topological fundamental groupoid. Finally, we compare sheaf cohomology on $X_\bt$ with analytic sheaf cohomology on $X^\an$.

The alternative description of $\Sh(X_\bt, \Set)$ is particularly significant because it admits natural generalizations obtained by substituting other types of fundamental groupoids, such as the \'etale, pro-algebraic, or Nori fundamental groupoid, in place of the topological one. These substitutions yield cohomology theories that are, in general, difficult to obtain from Grothendieck topologies.

Before proceeding, we recall the well-known equivalences used throughout this section. For $U \in \Open(X)$, let $\Cov(U)$ denote the site of topological covering spaces over $U^\an$, equipped with the jointly surjective topology. There are natural equivalences
\begin{equation}\label{eq:topological_covering_groupoid_equivalence}
    \Phi_U : \Sh(\Cov(U), \Set) \rightleftarrows \Set(\Pi_1(U)) : \Psi_U.
\end{equation}
To give an explicit description, for $x \in U^\an$, let $P_x$ denote the universal cover of the connected component of $U^\an$ containing $x$, based at $x$. We view $P_x$ as the set of homotopy classes of paths in $U^\an$ starting at $x$. Then
\begin{align*}
    \Phi_U \colon \Sh(\Cov(U), \Set) &\xrightarrow{\sim} \Set(\Pi_1(U)), \\
    \mathcal{F} &\mapsto \left(x \mapsto \mathcal{F}(P_x)\right).
\end{align*}
Moreover, for $E \in \Cov(U)$ and $x \in U^\an$, let $E_x$ denote the fiber of $E$ over $x$, and write $E_{(-)}$ for the corresponding $\Pi_1(U)$-set. Then
\begin{align*}
    \Psi_U \colon \Set(\Pi_1(U)) &\xrightarrow{\sim} \Sh(\Cov(U), \Set), \\
    M &\mapsto \left(E \mapsto \Hom_{\Set(\Pi_1(U))}\left(E_{(-)}, M\right)\right).
\end{align*}
Under these equivalences, groupoid cohomology of $\Pi_1(U)$ coincides with sheaf cohomology on $\Cov(U)$. When $U$ is connected, groupoid cohomology coincides with group cohomology of $\pi_1(U, x)$ with locally constant coefficients.

\subsection{Betti Site}\label{sec:betti_site}

To motivate the construction, assume temporarily that $X$ is smooth and connected. Zariski cohomology with constant coefficients does not yield a Weil cohomology theory. For example, no nonempty Zariski open subset of $\mathbb{A}^1_{\mathbb{C}} \setminus \{0\}$ is contractible in the analytic topology. Nevertheless, as Artin observed, every smooth complex variety is Zariski locally a $K(\pi, 1)$ space \cite[Exp.~XI, Proposition~3.3 and Variante~4.6]{SGA4_3}. A connected manifold is a \emph{$K(\pi, 1)$ space} if its universal cover is contractible, or equivalently, if its higher homotopy groups $\pi_i$ vanish for all $i > 1$.

For locally constant coefficients, the cohomology of a $K(\pi, 1)$ space coincides with the group cohomology of its topological fundamental group \cite{Hurewicz,Eilenberg}. This suggests that the global cohomology of $X$ can be recovered by gluing the cohomological data of the topological fundamental groups of its Zariski open subsets. Thus, combining $X^\zar$ and $\Cov(U)$ for every $U \in \Open(X)$ into a single site yields a cohomology theory that glues these local group-cohomological data. From another perspective, the resulting site is locally contractible. This principle explains why the site of local homeomorphisms as well as its variant, the \'etale site, yields the correct cohomology.

Among the sites suggested by this principle, we deliberately choose the coarsest one. This choice not only clarifies the essential role of the site but also provides a framework that extends beyond Grothendieck topologies, as we explain in the next subsection. We now drop the temporary assumption and again let $X$ be an arbitrary complex variety.

\begin{definition}\label{def:category_Bt}
    The \emph{Betti site} $X_\bt$ of $X$ is defined as follows.
    \begin{enumerate}
        \item \textbf{Objects:} A continuous map $p_E\colon E \to X^\an$ such that the image $p_E(E) \subseteq X$ is a Zariski open subset and the induced map $E \to p_E(E)$ is an analytic covering.
        \item \textbf{Morphisms:} A continuous map $f\colon E \to F$ making the following diagram commute.
        \[
        \begin{tikzcd}[column sep=1em, row sep=2.5em]
            E \arrow[rr, "f"] \arrow[dr, "p_E"'] & & F \arrow[dl, "p_F"] \\
            & X^\an &
        \end{tikzcd}
        \]
    \end{enumerate}
    We endow $X_\bt$ with the Grothendieck topology generated by Zariski open covers in $X^\zar$ and jointly surjective families in $\Cov(U)$ for every $U \in \Open(X)$. Objects of $X^\zar$ are regarded as trivial coverings.
\end{definition}

The assignment $E \mapsto p_E(E)$ defines a fibration
\[ p\colon X_\bt \to \Open(X), \]
whose fiber over $U$ is $\Cov(U)$. For $V \subseteq U$ and $E \in \Cov(U)$, its cleavage is given by the projection $E \times_X V \to E$. In the sense of Abbes--Gros \cite[VI.5.1 and VI.5.3]{AbbesGros}, the topology above makes $X_\bt$ the covanishing site associated with the fibered site $p$.

For brevity, we denote an object $p_E\colon E \to X^\an$ simply by $E$, so $p(E) = p_E(E)$. We write $E_\bt$ for the site obtained by restricting $X_\bt$ to $E$. We use $E^\an$ when we want to emphasize the underlying topological space of $E$. A morphism $f\colon E \to F$ satisfies $p(E) \subseteq p(F)$ and induces a morphism of covering spaces $E \to F \times_X p(E)$ over $p(E)$.

In the remainder of this section, we describe the sheaf condition on $X_\bt$ in terms of the sheaf conditions on $X^\zar$ and $\Cov(U)$ for every $U \in \Open(X)$.

\begin{definition}\label{def:half_sheaf}
    For a presheaf $\mathcal{F} \in \PSh(X_\bt, \Set)$ and $U \in \Open(X)$, define the restricted presheaf $\mathcal{F}_U \in \PSh(\Cov(U), \Set)$ by
    \[ \mathcal{F}_U(E) \coloneqq \mathcal{F}(E) \quad \text{for all } E \in \Cov(U). \]
    We say that $\mathcal{F}$ is \emph{sheafified along coverings} if $\mathcal{F}_U \in \Sh(\Cov(U), \Set)$ for every $U \in \Open(X)$. We call such a presheaf a \emph{half-sheaf} and denote the full subcategory of half-sheaves by $\HSh(X_\bt, \Set)$.
\end{definition}

Clearly, $\Sh(X_\bt, \Set)$ is a full subcategory of $\HSh(X_\bt, \Set)$.

\begin{definition}\label{def:zariski_sheafified}
    For a presheaf $\mathcal{F} \in \PSh(X_\bt, \Set)$ and an object $E \in \Cov(U)$ with $U \in \Open(X)$, define the presheaf $\mathcal{F}^E \in \PSh(U, \Set)$ by
    \[ \mathcal{F}^E(V) \coloneqq \mathcal{F}(E \times_X V) \quad \text{for all } V \subseteq U. \]
    We say that $\mathcal{F}$ is \emph{sheafified along the Zariski topology} if $\mathcal{F}^E \in \Sh(U, \Set)$ for every $E \in X_\bt$.
\end{definition}

By \cite[Proposition~VI.5.10]{AbbesGros}, we obtain the following criterion.

\begin{proposition}\label{prop:sheaf_criterion}
    A presheaf $\mathcal{F} \in \PSh(X_\bt, \Set)$ is a sheaf if and only if it is sheafified along coverings and along the Zariski topology.
\end{proposition}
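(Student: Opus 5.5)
The plan is to verify the sheaf condition directly on a generating family of covers, and not only quote \cite[Proposition~VI.5.10]{AbbesGros}. Both directions reduce to one fact: the topology on $X_\bt$ is generated by two kinds of covering families. The first kind consists of Zariski covers $\{E \times_X V_i \to E\}$, where $E \in \Cov(U)$ and $\{V_i\}$ is a Zariski open cover of $U$. The second kind consists of jointly surjective families $\{E_i \to E\}$ inside a single fiber $\Cov(U)$. The paper says ``generated by'' Zariski covers of objects of $X^\zar$. I would first check that pulling back along the cleavage turns these into covers of the first kind, so that this class is stable under base change.

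The forward direction is immediate. If $\mathcal{F}$ is a sheaf, it satisfies descent for every covering family in $X_\bt$. Every family of the second kind is a covering in $\Cov(U)$, and morphisms and fiber products in $\Cov(U)$ agree with those computed in $X_\bt$. Hence $\mathcal{F}_U$ is a sheaf. Likewise, a Zariski cover $\{V_i\}$ of $V \subseteq U$ gives a covering $\{E\times_X V_i \to E\times_X V\}$ with $(E\times_X V_i)\times_{E\times_X V}(E\times_X V_j) = E\times_X(V_i\cap V_j)$, so $\mathcal{F}^E$ is a sheaf.

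For the converse, I use the standard fact that a presheaf is a sheaf for the topology generated by a pretopology once it satisfies descent for each covering family of that pretopology. So I would take the pretopology consisting of composites of the two kinds of families and check that it is stable under pullback and composition. The key step is to show that an arbitrary covering family $\{F_j \to E\}$ with $E\in \Cov(U)$ can be refined in two stages. First, the images $p(F_j)$ form a Zariski cover $\{V_i\}$ of $U$. Second, over each $V_i$ the maps $F_j \to E\times_X p(F_j)$ are coverings and jointly surjective onto $E\times_X V_i$. This yields a composite of a type-one cover followed by type-two covers. Descent for composites then follows from descent for each stage: glue first within each $\Cov(V_i)$ using that $\mathcal{F}_{V_i}$ is a sheaf, then glue across the $V_i$ using that $\mathcal{F}^E$ is a sheaf. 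Finally, I would check that the local sections agree on overlaps by restricting to $E\times_X(V_i\cap V_j)$.

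The main obstacle is this refinement and stability argument. A general morphism $F\to E$ in $X_\bt$ lands over a smaller Zariski open $p(F)$. Fiber products such as $F\times_E F'$ live over $p(F)\cap p(F')$, which need not coincide with any chosen cover, so the bookkeeping has to be handled carefully. I would first reduce to the case where every $F_j$ lies over a single $V_i$, using the cleavage to factor $F_j \to E\times_X p(F_j) \to E$. I would then verify separatedness and gluing by diagram chasing. This is exactly the content packaged in the Abbes--Gros covanishing-site result, so if the direct check becomes unwieldy I would fall back on citing it.
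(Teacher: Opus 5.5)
You handle the forward direction correctly. Your argument that descent for a two-stage family (a Zariski cover $\{V_i\}$ of $p(E)$ followed by jointly surjective families in each $\Cov(V_i)$) follows from the two hypotheses is also correct.

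The gap is the step you call the key step. It is false that every covering family of $E$ in $X_\bt$ can be refined by such a two-stage family. Correspondingly, two-stage families are not closed under composition, so they do not form a pretopology. Covering families may alternate the two kinds, and different vertical pieces may need incompatible Zariski refinements. For example, let $X=\mathbb{A}^1_{\mathbb{C}}$, and let $E=\coprod_{n\ge 1}X^\an$ be the trivial covering with sheets $E_n$. Cover each $E_n$ by $E_n\times_X(X\setminus\{n\})$ and $E_n\times_X(X\setminus\{0\})$; the composite family covers $E$. Suppose a Zariski cover $\{V_i\}$ of $X$ and jointly surjective families in the $\Cov(V_i)$ refined it, and pick $V_i\ni 0$. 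The point of $E_n$ over $0$ is hit by some $H\in\Cov(V_i)$ whose map to $E$ factors through a member containing that point. That member must be $E_n\times_X(X\setminus\{n\})$, so $V_i=p(H)\subseteq X\setminus\{n\}$ for every $n\ge 1$. This is impossible for a nonempty Zariski open subset of $\mathbb{A}^1$. Common Zariski refinements work only when the vertical families are finite, and in general they cannot be made finite.

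The fix is to drop composition altogether. By \cite[Exp.~II]{SGA4_1}, a presheaf is a sheaf for the topology generated by a class of covering families once it satisfies descent for every base change of every family in the class. Closure under composition is not required, so you can take just the two basic kinds.
\begin{enumerate}
\item Let $F\to E$ be a morphism with $p(F)=W$. The base change of $\{E\times_X V_i\to E\}$ along it is $\{F\times_X(V_i\cap W)\to F\}$, again of the first kind.
\item The base change of a jointly surjective family in $\Cov(U)$ consists of coverings of unions of connected components of $W$. Discard the empty members and split $W$ into its connected components, which are Zariski open. Descent then follows from the Zariski condition for this disjoint cover together with the sheaf condition on each $\Cov$ of a component.
\end{enumerate}
So descent for all base changes is exactly your two hypotheses, and neither the refinement nor the two-stage gluing is needed. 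The paper itself gives no direct argument: it cites \cite[Proposition~VI.5.10]{AbbesGros}, which is your fallback.
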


\subsection{Sheaf Theory via Fundamental Groupoids}\label{sec:sheaf_via_groupoid}

This subsection provides an alternative description of the category $\Sh(X_\bt, \Set)$. In this framework, sheaves are characterized solely in terms of the Zariski topology and the topological fundamental groupoid.

The key observation is that given $\mathcal{F} \in \HSh(X_\bt, \Set)$, the family $\{ \mathcal{F}_U \}_{U \in \Open(X)}$ determines all sections of $\mathcal{F}$. Thus, via the equivalences \eqref{eq:topological_covering_groupoid_equivalence}, a half-sheaf or a sheaf on $X_\bt$ corresponds to a family $\{ \mathcal{M}(U) \}_{U \in \Open(X)}$, where $\mathcal{M}(U) \in \Set(\Pi_1(U))$, subject to some compatibility conditions. This motivates the definitions below of the fibered categories $\Set_X$, $\Mod_X$, and $\Rep_X$ as target categories for sheaves on $X$.

\begin{definition}\label{def:set_X}
    The fibration $p \colon \Set_X^\op \to \Open(X)$ is defined by $\Set_X(U) \coloneqq \Set(\Pi_1(U))$ with the usual restriction functors.
    Concretely, for $V \subseteq U$ in $\Open(X)$, the fibered category $\Set_X$ is described as follows.
    \begin{enumerate}
        \item \textbf{Objects:} A $\Pi_1(U)$-set $M$.
        \item \textbf{Morphisms:} A groupoid-set morphism
        \[ \Res_{\Pi_1(V)}^{\Pi_1(U)} M \to N, \text{ or equivalently } M \to \Coind_{\Pi_1(V)}^{\Pi_1(U)} N. \]
        \item \textbf{Fibration:} The forgetful functor $M \mapsto U$.
        \item \textbf{Cleavage:} The choice $M \to \Res_{\Pi_1(V)}^{\Pi_1(U)} M$ induced by the identity morphism.
    \end{enumerate}
    In a similar way, the fibrations $\Mod_X^\op \to \Open(X)$ and $\Rep_X^\op \to \Open(X)$ are defined by $\Mod_X(U) \coloneqq \Mod(\Pi_1(U))$ and $\Rep_X(U) \coloneqq \Rep(\Pi_1(U))$, respectively, with the usual restriction functors.
\end{definition}

For simplicity, let $\Res_V^U \coloneqq \Res_{\Pi_1(V)}^{\Pi_1(U)}$ and $\Coind_V^U \coloneqq \Coind_{\Pi_1(V)}^{\Pi_1(U)}$ for $V \subseteq U$ in $\Open(X)$.

\begin{remark}
    The subsequent constructions and results apply, mutatis mutandis, to $\Mod_X$ and $\Rep_X$. Until cohomology is defined, we therefore treat only $\Set_X$.
\end{remark}

\begin{definition}\label{def:presheaf_set_X}
    A \emph{presheaf} on $X$ with values in $\Set_X$ is a functor
    \[ \mathcal{M} \colon \Open(X)^\op \to \Set_X \]
    satisfying $p \circ \mathcal{M}^\op = \id_{\Open(X)}$. We denote by $\PSh(X, \Set_X)$ the full subcategory of $\Fun(\Open(X)^\op, \Set_X)$ consisting of such presheaves. The categories $\PSh(X, \Mod_X)$ and $\PSh(X, \Rep_X)$ are defined analogously.
\end{definition}

We now show that $\PSh(X, \Set_X)$ is equivalent to $\HSh(X_\bt, \Set)$. For $V \subseteq U$ in $\Open(X)$, the continuous functor
\begin{align*}
    \jmath_V^U \colon \Cov(U) &\to \Cov(V), \\
    E &\mapsto E \times_X V
\end{align*}
induces the direct image functor
\begin{align*}
    \kappa_V^U \coloneqq (\jmath_V^U)_* \colon \Sh(\Cov(V), \Set) &\to \Sh(\Cov(U), \Set), \\
    \mathcal{F} &\mapsto (E \mapsto \mathcal{F}(E \times_X V)).
\end{align*}
Recall that a half-sheaf $\mathcal{F}$ on $X_\bt$ is a collection of sheaves $\mathcal{F}_U \in \Sh(\Cov(U), \Set)$ for every $U \in \Open(X)$, with compatibility conditions between them. The presheaf structure of $\mathcal{F}$ induces natural restriction maps relating the sheaves in the family $\{\mathcal{F}_U\}_{U \in \Open(X)}$. Specifically, for $V \subseteq U$ in $\Open(X)$ and $E \in \Cov(U)$, the restriction map $\mathcal{F}(E) \to \mathcal{F}(E \times_X V)$ corresponds to a morphism
\[ \rho_V^U \colon \mathcal{F}_U \to \kappa_V^U \mathcal{F}_V \]
in $\Sh(\Cov(U), \Set)$. These morphisms satisfy the conditions
\[ \rho_U^U = \id_{\mathcal{F}_U} \quad\text{and}\quad \rho_W^U = \kappa_V^U(\rho_W^V) \circ \rho_V^U \]
for all $W \subseteq V \subseteq U$ in $\Open(X)$. Conversely, these data determine the presheaf $\mathcal{F}$.

\begin{lemma}\label{lem:equiv_hsh}
    The assignment $\mathcal{F} \mapsto (\{ \mathcal{F}_U \}_U, \{ \rho_V^U \}_{V \subseteq U})$ establishes an equivalence of categories between $\HSh(X_\bt, \Set)$ and the category of data consisting of the following two pieces.
    \begin{enumerate}
        \item\label{item:hsh_sheaves} Sheaves $\mathcal{F}_U \in \Sh(\Cov(U), \Set)$ for every $U \in \Open(X)$.
        \item\label{item:hsh_cocycle} Morphisms $\rho_V^U \colon \mathcal{F}_U \to \kappa_V^U \mathcal{F}_V$ satisfying
        \[ \rho_U^U = \id_{\mathcal{F}_U} \quad\text{and}\quad \rho_W^U = \kappa_V^U(\rho_W^V) \circ \rho_V^U \]
        for all $W \subseteq V \subseteq U$ in $\Open(X)$.
    \end{enumerate}
\end{lemma}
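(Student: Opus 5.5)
The proof is an explicit bookkeeping argument: we build the functor in both directions and check that they are mutually inverse. The only input is that every object of $X_\bt$ lies in a unique fiber $\Cov(U)$, with $U = p(E)$. Every morphism $f \colon E \to F$ in $X_\bt$ also factors uniquely as a morphism $E \to F \times_X p(E)$ in $\Cov(p(E))$ followed by the cleavage projection $F \times_X p(E) \to F$, as remarked after \Cref{def:category_Bt}.

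\textbf{Forward direction.} Given a half-sheaf $\mathcal{F}$, the sheaves $\mathcal{F}_U$ are those of \Cref{def:half_sheaf}. The morphism $\rho_V^U$ has component at $E \in \Cov(U)$ equal to $\mathcal{F}$ applied to the projection $E \times_X V \to E$. Naturality of $\rho_V^U$ in $E$ follows from functoriality of $\mathcal{F}$, since for $g \colon E' \to E$ in $\Cov(U)$ the square formed by $g$, $g \times_X V$ and the two projections commutes in $X_\bt$. The identity $\rho_U^U = \id$ holds because $E \times_X U = E$ and the projection is the identity. The cocycle identity holds because the composite $E \times_X W \to E \times_X V \to E$ is the projection $E \times_X W \to E$, using $(E\times_X V)\times_X W = E \times_X W$, which is a strict equality for subspaces. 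A morphism of half-sheaves $\mathcal{F} \to \mathcal{G}$ restricts to morphisms $\mathcal{F}_U \to \mathcal{G}_U$ compatible with the $\rho$'s. This gives a functor.

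\textbf{Backward direction.} Given data $(\{\mathcal{F}_U\}, \{\rho_V^U\})$, define $\mathcal{F}(E) \coloneqq \mathcal{F}_{p(E)}(E)$. For $f \colon E \to F$ with $V = p(E)$ and $U = p(F)$, write $f = \pi \circ f'$, where $f' \colon E \to F \times_X V$ lies in $\Cov(V)$ and $\pi$ is the projection. Then define $\mathcal{F}(f)$ as the composite
\[ \mathcal{F}_U(F) \xrightarrow{(\rho_V^U)_F} \mathcal{F}_V(F \times_X V) \xrightarrow{\mathcal{F}_V(f')} \mathcal{F}_V(E). \]

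\textbf{Main obstacle: functoriality.} The key step is to check that $\mathcal{F}(g \circ f) = \mathcal{F}(f) \circ \mathcal{F}(g)$ for $E \xrightarrow{f} F \xrightarrow{g} G$ over $W \subseteq V \subseteq U$. I would expand both sides. I would use naturality of $\rho_W^V$ with respect to $g' \colon F \to G \times_X V$ to move $\mathcal{F}_V(g')$ past $\rho_W^V$; the result is $\mathcal{F}_W(g' \times_X W)$. Then the cocycle identity $\rho_W^U = \kappa_V^U(\rho_W^V)\circ\rho_V^U$ collapses the two $\rho$'s into one. Finally, the factorization of $g\circ f$ has $W$-part $(g' \times_X W)\circ f'$, so the two sides agree. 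Identities are handled by $\rho_U^U = \id$. The result is automatically a half-sheaf, since $\mathcal{F}|_{\Cov(U)} = \mathcal{F}_U$. On morphisms, a compatible family $\mathcal{F}_U \to \mathcal{G}_U$ assembles into a natural transformation: naturality for the two kinds of morphisms, those in a fiber and the cleavage projections, is exactly the two given compatibilities.

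\textbf{Mutual inverses.} Composing backward then forward recovers the data on the nose. Composing forward then backward recovers $\mathcal{F}$ on the nose too, since $\mathcal{F}(f) = \mathcal{F}(f')\circ\mathcal{F}(\pi)$ by functoriality. Full faithfulness follows from the explicit correspondence on morphisms. So the assignment is an isomorphism of categories, hence in particular an equivalence.
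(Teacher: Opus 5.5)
Your proposal is correct and follows the paper's proof. Both reconstruct $\mathcal{F}$ by setting $\mathcal{F}(E) = \mathcal{F}_{p(E)}(E)$, factoring each morphism through the cleavage projection, and defining $\mathcal{F}(f)$ as $\mathcal{F}_V(f') \circ (\rho_V^U)_F$, with functoriality coming from naturality of $\rho$ together with the cocycle identity. You additionally spell out the composition computation and the strictness of the cleavage (taking $E \times_X V = p_E^{-1}(V)$), both of which the paper leaves implicit.
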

\begin{proof}
    The only non-trivial part of the proof is to fully recover the restriction maps of $\mathcal{F}$ from the given data $(\{ \mathcal{F}_U \}, \{ \rho_V^U \})$. For an object $E \in X_\bt$ with $p(E) = U$, we define $\mathcal{F}(E) \coloneqq \mathcal{F}_U(E)$. Given a morphism $f \colon E \to F$ in $X_\bt$, we necessarily have $U \coloneqq p(E) \subseteq p(F) \eqqcolon V$. The map $f$ factors uniquely as
    \[ E \xrightarrow{g} F \times_X U \to F, \]
    where $g$ is a morphism in $\Cov(U)$. We define the restriction map $\mathcal{F}(f) \colon \mathcal{F}(F) \to \mathcal{F}(E)$ by the composition
    \[ \mathcal{F}_V(F) \xrightarrow{\rho_U^V(F)} \mathcal{F}_U(F \times_X U) \xrightarrow{\mathcal{F}_U(g)} \mathcal{F}_U(E). \]
    The cocycle condition, together with the naturality of the maps $\rho_V^U$, ensures that the assignment preserves composition, making $\mathcal{F}$ a well-defined presheaf. This construction provides the inverse to the original assignment.
\end{proof}

For all $V \subseteq U$ in $\Open(X)$, the restriction maps of $\mathcal{M} \in \PSh(X, \Set_X)$ factor uniquely as
\[ \mathcal{M}(U) \xrightarrow{r_V^U} \Coind_{\Pi_1(V)}^{\Pi_1(U)} \mathcal{M}(V) \to \mathcal{M}(V). \]
The presheaf axioms for $\mathcal{M}$ are then equivalent to
\[ r_U^U = \id_{\mathcal{M}(U)} \quad\text{and}\quad r_W^U = \Coind_{\Pi_1(V)}^{\Pi_1(U)}(r_W^V) \circ r_V^U \]
for all $W \subseteq V \subseteq U$ in $\Open(X)$.

\begin{lemma}\label{lem:kappa_coind}
    There is a natural isomorphism
    \[ \kappa_V^U \circ \Psi_V \cong \Psi_U \circ \Coind_{\Pi_1(V)}^{\Pi_1(U)}. \]
\end{lemma}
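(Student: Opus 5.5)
Both sides are functors $\Set(\Pi_1(V)) \to \Sh(\Cov(U), \Set)$, and the direct image functors are right adjoints. I would therefore compare left adjoints and conclude by uniqueness of adjoints. The functor $\jmath_V^U$ is continuous and preserves the relevant finite limits (fiber products of coverings), so $\kappa_V^U = (\jmath_V^U)_*$ has a left adjoint $(\jmath_V^U)^*$, the inverse image. The equivalences $\Psi$ have quasi-inverses $\Phi$, and $\Coind_{\Pi_1(V)}^{\Pi_1(U)}$ has left adjoint $\Res_{\Pi_1(V)}^{\Pi_1(U)}$. Consequently the left adjoint of $\kappa_V^U \circ \Psi_V$ is $\Phi_V \circ (\jmath_V^U)^*$, and the left adjoint of $\Psi_U \circ \Coind$ is $\Res \circ \Phi_U$. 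It thus suffices to produce a natural isomorphism
\[ \Phi_V \circ (\jmath_V^U)^* \cong \Res_{\Pi_1(V)}^{\Pi_1(U)} \circ \Phi_U . \]

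Rather than going through left adjoints, I expect it to be cleaner to verify the isomorphism directly on sections using the explicit formula for $\Psi$. For $N \in \Set(\Pi_1(V))$ and $E \in \Cov(U)$, we have
\[ (\kappa_V^U \Psi_V N)(E) = \Hom_{\Set(\Pi_1(V))}\bigl((E\times_X V)_{(-)}, N\bigr). \]
Two observations then give the result.
\begin{enumerate}
\item The fiber functor commutes with base change: the $\Pi_1(V)$-set $(E\times_X V)_{(-)}$ is canonically $\Res_{\Pi_1(V)}^{\Pi_1(U)} E_{(-)}$. Indeed, the fiber of $E\times_X V$ over $x\in V$ is $E_x$, and monodromy along a path in $V^\an$ is computed by lifting that path in $E$, which is the same as lifting it in $E \times_X V$.
\item The adjunction $\Res \dashv \Coind$ then gives a bijection
\[ \Hom_{\Set(\Pi_1(V))}\bigl(\Res E_{(-)}, N\bigr) \cong \Hom_{\Set(\Pi_1(U))}\bigl(E_{(-)}, \Coind N\bigr) = (\Psi_U \Coind N)(E). \]
\end{enumerate}

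It remains to check naturality in $E$ (so that we obtain an isomorphism of sheaves on $\Cov(U)$) and in $N$. Both follow because the identification in the first observation is natural in morphisms of coverings over $U$, and the adjunction bijection is natural in both variables. I expect the only real care to be needed in that first identification: showing that the canonical map is $\Pi_1(V)$-equivariant, i.e.\ that the path-lifting action on fibers is compatible with the map of groupoids $\Pi_1(V)\to\Pi_1(U)$ induced by the inclusion $V^\an \subseteq U^\an$. This is a straightforward unwinding of path lifting, since a path in $V^\an$ is in particular a path in $U^\an$ and its unique lift to $E$ starting at a given point lies over $V^\an$.
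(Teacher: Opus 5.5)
Your proposal is correct, and once you set aside the left-adjoint detour in your first paragraph it is exactly the paper's proof: evaluate at $E \in \Cov(U)$, identify $(E\times_X V)_{(-)}$ with $\Res_{\Pi_1(V)}^{\Pi_1(U)} E_{(-)}$, and apply the $\Res \dashv \Coind$ adjunction. Your path-lifting remark justifies the equivariance of that identification, which the paper leaves implicit.
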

\begin{proof}
    Given $M \in \Set(\Pi_1(V))$ and $E \in \Cov(U)$, we have
    \begin{align*}
        \kappa_V^U (\Psi_V(M))(E)
        &= \Psi_V(M)(E \times_X V) \\
        &= \Hom_{\Set(\Pi_1(V))}((E \times_X V)_{(-)}, M)\\
        &\cong \Hom_{\Set(\Pi_1(V))}\left(\Res_{\Pi_1(V)}^{\Pi_1(U)}E_{(-)}, M\right)\\
        &\cong \Hom_{\Set(\Pi_1(U))}\left(E_{(-)}, \Coind_{\Pi_1(V)}^{\Pi_1(U)}(M)\right) \\
        &= \Psi_U\left(\Coind_{\Pi_1(V)}^{\Pi_1(U)}(M)\right)(E). \qedhere
    \end{align*}
\end{proof}

By \Cref{lem:kappa_coind}, the presheaf axioms for $\mathcal{M}$ are merely a reformulation of the condition in \Cref{lem:equiv_hsh}(\ref{item:hsh_cocycle}). Therefore, via the section-wise equivalences $\Phi_U$ and $\Psi_U$, we identify $\PSh(X, \Set_X)$ with the category described in \Cref{lem:equiv_hsh}. We thus obtain the following.

\begin{proposition}\label{prop:half_sheaf_equivalence}
    The construction above establishes an equivalence of categories
    \[ \Phi : \HSh(X_\bt, \Set) \rightleftarrows \PSh(X, \Set_X) : \Psi. \]
\end{proposition}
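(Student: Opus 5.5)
The plan is to compose two equivalences already available. By \Cref{lem:equiv_hsh}, $\HSh(X_\bt, \Set)$ is equivalent to the category $\mathcal{D}$ of data $(\{\mathcal{F}_U\}_U, \{\rho_V^U\}_{V \subseteq U})$ with $\mathcal{F}_U \in \Sh(\Cov(U), \Set)$ and $\rho_V^U \colon \mathcal{F}_U \to \kappa_V^U \mathcal{F}_V$ satisfying the cocycle conditions. Morphisms in $\mathcal{D}$ are families of morphisms $\mathcal{F}_U \to \mathcal{G}_U$ commuting with the $\rho$'s; these correspond to morphisms of half-sheaves. On the other side, $\PSh(X, \Set_X)$ is equivalent to the category $\mathcal{D}'$ of data $(\{\mathcal{M}(U)\}_U, \{r_V^U\})$, where $\mathcal{M}(U) \in \Set(\Pi_1(U))$ and $r_V^U \colon \mathcal{M}(U) \to \Coind_V^U \mathcal{M}(V)$ satisfy the analogous cocycle conditions. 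This uses the unique factorization of restriction maps through the cleavage recorded just before \Cref{lem:kappa_coind}. It remains to show $\mathcal{D} \simeq \mathcal{D}'$.

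First I define $\Psi \colon \mathcal{D}' \to \mathcal{D}$ by $\mathcal{F}_U \coloneqq \Psi_U(\mathcal{M}(U))$. I set $\rho_V^U$ to be the composite of $\Psi_U(r_V^U)$ with the isomorphism $\theta_V^U \colon \Psi_U \Coind_V^U \cong \kappa_V^U \Psi_V$ of \Cref{lem:kappa_coind}. Symmetrically, I define $\Phi$ using $\Phi_U$, the adjoint equivalence inverse to $\Psi_U$, together with the mate of $\theta$, namely $\Coind_V^U \Phi_V \cong \Phi_U \kappa_V^U$. Functoriality on morphisms is immediate from the naturality of $\Psi_U$ and $\theta$. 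The unit and counit isomorphisms of the sectionwise equivalences $\Phi_U \dashv \Psi_U$ then assemble into natural isomorphisms $\Phi\Psi \cong \id$ and $\Psi\Phi \cong \id$. Each component is a morphism in $\mathcal{D}$ or $\mathcal{D}'$ by naturality of the unit and counit with respect to the $r$'s and $\rho$'s.

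The main obstacle is checking that $\Psi$ carries the cocycle condition $r_W^U = \Coind_V^U(r_W^V) \circ r_V^U$ to $\rho_W^U = \kappa_V^U(\rho_W^V) \circ \rho_V^U$. This requires $\theta$ to be coherent with composition of inclusions, i.e.\ that
\[ \theta_W^U = \kappa_V^U(\theta_W^V) \circ \theta_V^U\Coind_W^V \]
under the identifications $\Coind_W^U \cong \Coind_V^U\Coind_W^V$ and $\kappa_W^U = \kappa_V^U\kappa_W^V$. The identity $\kappa_W^U = \kappa_V^U\kappa_W^V$ holds because the cleavage gives $(E\times_X V)\times_X W = E \times_X W$; strictly this is only a canonical isomorphism, harmless after fixing the cleavage. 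Similarly, $\theta_U^U$ must be the identity, matching $r_U^U=\id$.

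To verify the coherence, I will unwind $\theta$ from its proof. The isomorphism $\theta$ is the composite of two identifications. The first is $(E\times_X V)_{(-)} \cong \Res_V^U E_{(-)}$, the fiber of the pullback. The second is the adjunction bijection $\Res \dashv \Coind$. Both are compatible with composition: fibers of iterated pullbacks are the iterated restrictions, and the adjunction for a composite $\Pi_1(W)\to\Pi_1(V)\to\Pi_1(U)$ is the composite of the adjunctions, which is how $\Coind_W^U \cong \Coind_V^U\Coind_W^V$ is defined. The coherence is therefore a pasting of these two compatibilities, evaluated on an arbitrary $E\in\Cov(U)$ and a morphism $E_{(-)} \to \Coind_W^U M$. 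Granting this, $\Phi$ and $\Psi$ are mutually inverse equivalences. Composing with \Cref{lem:equiv_hsh} gives the proposition.
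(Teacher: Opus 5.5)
Your proposal is correct and follows the paper's argument. You describe half-sheaves as cocycle data $(\{\mathcal{F}_U\},\{\rho_V^U\})$ via \Cref{lem:equiv_hsh}, describe $\PSh(X,\Set_X)$ by the factored restriction maps $r_V^U$, and transport sectionwise along $\Phi_U,\Psi_U$ using \Cref{lem:kappa_coind}. Your explicit check that the isomorphism of \Cref{lem:kappa_coind} is compatible with composites $W \subseteq V \subseteq U$ (via fibers of iterated pullbacks and composite adjunctions) fills in a detail the paper leaves implicit when it calls the presheaf axioms a mere reformulation of the cocycle condition.
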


We call $\mathcal{M} \in \PSh(X, \Set_X)$ a \emph{sheaf} if it satisfies the classical sheaf condition.

\begin{definition}\label{def:sheaf}
    A presheaf $\mathcal{M} \in \PSh(X, \Set_X)$ is called a \emph{sheaf} if for every $U \in \Open(X)$ and every open cover $\mathcal{U}$ of $U$, the sequence
    \[ \mathcal{M}(U) \to \prod_{V \in \mathcal{U}} \mathcal{M}(V) \rightrightarrows \prod_{V, W \in \mathcal{U}} \mathcal{M}(V \cap W) \]
    is an equalizer diagram in $\Set_X$. We denote by $\Sh(X, \Set_X)$ the full subcategory of sheaves.
\end{definition}

\begin{remark}\label{rem:sheaf_categorical_products}
    As a warning, the products and the equalizer in the sheaf condition are category-theoretic limits. Namely,
    \begin{align*}
        &\Eq\left(\prod_{V \in \mathcal{U}} \mathcal{M}(V) \rightrightarrows \prod_{V, W \in \mathcal{U}} \mathcal{M}(V \cap W)\right) \\
        \cong{} &\Eq\left(\prod_{V \in \mathcal{U}} \Coind_V^U \mathcal{M}(V) \rightrightarrows \prod_{V, W \in \mathcal{U}} \Coind_{V \cap W}^U \mathcal{M}(V \cap W)\right).
    \end{align*}
\end{remark}

We now show that the equivalence in \Cref{prop:half_sheaf_equivalence} restricts to an equivalence between $\Sh(X_\bt, \Set)$ and $\Sh(X, \Set_X)$. By \Cref{prop:sheaf_criterion}, it suffices to prove that $\mathcal{M} \in \PSh(X, \Set_X)$ is a sheaf if and only if $\Psi(\mathcal{M}) \in \HSh(X_\bt, \Set)$ is sheafified along the Zariski topology.

\begin{lemma}\label{lem:local_equivalence}
    Let $\mathcal{M} \in \PSh(X, \Set_X)$ and set $\mathcal{F} \coloneqq \Psi(\mathcal{M}) \in \HSh(X_\bt, \Set)$. For an open cover $\mathcal{U}$ of $U \in \Open(X)$, the equalizer conditions for the following diagrams are equivalent.
    \begin{enumerate}
        \item\label{axiom:sheaf_set_X} $\displaystyle \mathcal{M}(U) \to \prod_{V \in \mathcal{U}} \mathcal{M}(V) \rightrightarrows \prod_{V, W \in \mathcal{U}} \mathcal{M}(V \cap W)$
        \item\label{axiom:sheaf_betti} $\displaystyle \mathcal{F}^E(U) \to \prod_{V \in \mathcal{U}} \mathcal{F}^E(V) \rightrightarrows \prod_{V, W \in \mathcal{U}} \mathcal{F}^E(V \cap W)$ \quad (for all $E \in \Cov(U)$)
    \end{enumerate}
\end{lemma}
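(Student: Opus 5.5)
The plan is to reduce both equalizer conditions to statements inside the single fiber $\Set(\Pi_1(U))$, and then apply the equivalence $\Psi_U$ together with Yoneda.

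\textbf{Step 1: condition (\ref{axiom:sheaf_set_X}) as a fiber equalizer.} By \Cref{rem:sheaf_categorical_products}, the diagram in (\ref{axiom:sheaf_set_X}) is an equalizer in $\Set_X$ if and only if
\[ \mathcal{M}(U) \to \prod_{V \in \mathcal{U}} \Coind_V^U \mathcal{M}(V) \rightrightarrows \prod_{V, W \in \mathcal{U}} \Coind_{V \cap W}^U \mathcal{M}(V \cap W) \]
is an equalizer in $\Set(\Pi_1(U))$. Here the maps are $r_V^U$ and the maps $\Coind_V^U(r_{V\cap W}^V)$, using the cocycle identity for $r$.

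To justify the remark, I will identify the limit in the fibered category with this fiberwise limit. The key input is the universal property of cartesian morphisms: a morphism from any $N$ over $U' \subseteq U$ to the diagram is the same as a morphism $\Res^U_{U'} N \to \mathcal{M}(V)$ over each $V$. This is again adjoint data, $N \to \Coind$, provided we work over $U$. Objects over a smaller $U'$ are handled the same way, since coinduction is compatible with composition. I expect this identification of the categorical limit in $\Set_X$, with its base-change bookkeeping, to be the main obstacle. The natural route is to invoke the remark directly.

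\textbf{Step 2: transport along $\Psi_U$.} Since $\Psi_U$ is an equivalence, it preserves and reflects equalizers. By \Cref{lem:kappa_coind}, $\Psi_U(\Coind_V^U \mathcal{M}(V)) \cong \kappa_V^U \Psi_V(\mathcal{M}(V)) = \kappa_V^U \mathcal{F}_V$. Thus the fiber condition is equivalent to
\[ \mathcal{F}_U \to \prod_V \kappa_V^U \mathcal{F}_V \rightrightarrows \prod_{V,W} \kappa_{V\cap W}^U \mathcal{F}_{V \cap W} \]
being an equalizer in $\Sh(\Cov(U),\Set)$. By \Cref{prop:half_sheaf_equivalence}, the transported maps are the $\rho$'s. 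I will check this by naturality of the isomorphism in \Cref{lem:kappa_coind}.

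\textbf{Step 3: evaluate at each $E$.} Limits in $\Sh(\Cov(U),\Set)$ are computed sectionwise, since sheaves are closed under limits in presheaves. So the displayed diagram is an equalizer if and only if it is one after evaluating at every $E \in \Cov(U)$. Evaluating gives
\[ \mathcal{F}(E) \to \prod_V \mathcal{F}(E\times_X V) \rightrightarrows \prod_{V,W} \mathcal{F}(E \times_X (V\cap W)), \]
which is precisely the diagram in (\ref{axiom:sheaf_betti}) for $\mathcal{F}^E$. The restriction maps agree by the construction in \Cref{lem:equiv_hsh}, because $\mathcal{F}(E\times_X V \to E)$ is $\rho_V^U(E)$ followed by the identity.

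Combining Steps 1 to 3 gives the equivalence of the two conditions.
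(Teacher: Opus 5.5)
Your proposal is correct and follows the paper's proof essentially step for step: reduce via \Cref{rem:sheaf_categorical_products} to an equalizer in $\Set(\Pi_1(U))$, transport along $\Psi_U$ using \Cref{lem:kappa_coind} and \Cref{prop:half_sheaf_equivalence}, and finish by evaluating sectionwise at each $E \in \Cov(U)$. One small slip in your optional justification of the remark: morphisms in $\Set_X$ go from larger to smaller opens, so the apex $N$ of a cone over the diagram lies over some $U' \supseteq U$ (not $U' \subseteq U$), and the cone corresponds to a cone from $\Res_U^{U'} N$ in the fiber over $U$, exactly as in \Cref{lem:limit_coind_cover}.
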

\begin{proof}
    By \Cref{rem:sheaf_categorical_products}, the equalizer condition for~(\ref{axiom:sheaf_set_X}) is equivalent to that of
    \[ \mathcal{M}(U) \to \prod_{V \in \mathcal{U}} \Coind_V^U \mathcal{M}(V) \rightrightarrows \prod_{V, W \in \mathcal{U}} \Coind_{V \cap W}^U \mathcal{M}(V \cap W) \]
    in $\Set(\Pi_1(U))$. By \Cref{prop:half_sheaf_equivalence}, this is equivalent to the equalizer condition of
    \[ \mathcal{F}_U \to \prod_{V \in \mathcal{U}} \kappa_V^U \mathcal{F}_V \rightrightarrows \prod_{V, W \in \mathcal{U}} \kappa_{V \cap W}^U \mathcal{F}_{V \cap W} \]
    in $\Sh(\Cov(U), \Set)$. Since limits in $\Sh(\Cov(U), \Set)$ are computed sectionwise and
    \[ (\kappa_V^U \mathcal{F}_V)(E) = \mathcal{F}(E \times_X V) = \mathcal{F}^E(V), \]
    evaluation at every $E \in \Cov(U)$ identifies this with the equalizer condition for~(\ref{axiom:sheaf_betti}).
\end{proof}

\begin{theorem}\label{thm:sheaf_equivalence}
    The equivalence in \Cref{prop:half_sheaf_equivalence} restricts to
    \[ \Phi : \Sh(X_\bt, \Set) \rightleftarrows \Sh(X, \Set_X) : \Psi. \]
\end{theorem}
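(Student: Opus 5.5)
The plan is to reduce the statement to the results already established. Since $\Sh(X_\bt, \Set)$ is a full subcategory of $\HSh(X_\bt, \Set)$, and $\Sh(X, \Set_X)$ is a full subcategory of $\PSh(X, \Set_X)$, the equivalence $\Phi \dashv \Psi$ of \Cref{prop:half_sheaf_equivalence} restricts to an equivalence of these full subcategories as soon as both conditions hold:
\begin{itemize}
    \item $\Psi$ sends sheaves to sheaves;
    \item an object of $\HSh(X_\bt,\Set)$ is a sheaf exactly when its image under $\Phi$ is a sheaf.
\end{itemize}
Sheafhood is invariant under isomorphism on both sides, and $\Phi$ and $\Psi$ are quasi-inverse. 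It therefore suffices to prove one thing: for $\mathcal{M} \in \PSh(X, \Set_X)$, $\mathcal{M}$ is a sheaf if and only if $\mathcal{F} \coloneqq \Psi(\mathcal{M})$ is a sheaf on $X_\bt$.

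Because $\mathcal{F}$ is already a half-sheaf, \Cref{prop:sheaf_criterion} says that $\mathcal{F}$ is a sheaf if and only if it is sheafified along the Zariski topology. Concretely, $\mathcal{F}^E \in \Sh(U', \Set)$ must hold for every $U' \in \Open(X)$ and every $E \in \Cov(U')$. The sheaf condition for $\mathcal{F}^E$ on $U'$ requires, for every open $U \subseteq U'$ and every open cover $\mathcal{U}$ of $U$, that
\[ \mathcal{F}^E(U) \to \prod_{V \in \mathcal{U}} \mathcal{F}^E(V) \rightrightarrows \prod_{V,W \in \mathcal{U}} \mathcal{F}^E(V \cap W) \]
be an equalizer.

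Here $\mathcal{F}^E$ restricted to opens of $U$ equals $\mathcal{F}^{E \times_X U}$, with $E \times_X U \in \Cov(U)$. The family of conditions therefore amounts to the following: for every $U$, every cover $\mathcal{U}$ of $U$, and every $E \in \Cov(U)$, the displayed diagram is an equalizer. Quantifying over $U'$ and $E \in \Cov(U')$ covers all $E\in\Cov(U)$ (take $U'=U$), and conversely each such condition arises this way. So this quantification matches exactly item (\ref{axiom:sheaf_betti}) of \Cref{lem:local_equivalence}, taken over all $U$ and all covers.

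Applying \Cref{lem:local_equivalence} for each $U$ and each cover $\mathcal{U}$, condition (\ref{axiom:sheaf_betti}) is equivalent to the equalizer condition (\ref{axiom:sheaf_set_X}) for $\mathcal{M}$. Ranging over all $U$ and $\mathcal{U}$, this is precisely \Cref{def:sheaf}. Hence $\mathcal{M}$ is a sheaf if and only if $\Psi(\mathcal{M})$ is a sheaf, and the equivalence restricts as claimed.

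The only step I expect to need care is the bookkeeping in the second paragraph: matching the quantifier structure of "$\mathcal{F}^E$ is a Zariski sheaf for every $E \in X_\bt$" with "for every $U$, every cover of $U$, and every $E \in \Cov(U)$". The key point is the identification $\mathcal{F}^E|_U = \mathcal{F}^{E\times_X U}$, which follows from $(E \times_X U)\times_X V = E \times_X V$ for $V \subseteq U$.
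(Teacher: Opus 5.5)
Your proposal is correct and follows the paper's argument. The paper likewise uses \Cref{prop:sheaf_criterion} to reduce to showing that $\mathcal{M}$ is a sheaf if and only if $\Psi(\mathcal{M})$ is sheafified along the Zariski topology, and then concludes by \Cref{lem:local_equivalence}; your identification $\mathcal{F}^E(V) = \mathcal{F}^{E \times_X U}(V)$ for $V \subseteq U$ just makes explicit the quantifier bookkeeping that the paper leaves implicit.
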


By \Cref{prop:half_sheaf_equivalence} and \Cref{thm:sheaf_equivalence}, the classical sheafification on the site $X_\bt$ yields the following \emph{sheafification functor} $(-)^{\#}$ \cite[\href{https://stacks.math.columbia.edu/tag/00WB}{Tag~00WB}, \href{https://stacks.math.columbia.edu/tag/00WJ}{Tag~00WJ}]{StacksProject}.

\begin{corollary}\label{cor:sheafification}
    There is an adjunction
    \[ (-)^{\#} : \PSh(X, \Set_X) \rightleftarrows \Sh(X, \Set_X) : (-) \]
    in which $(-)$ is the forgetful functor and $(-)^{\#}$ is exact.
\end{corollary}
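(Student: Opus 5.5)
The plan is to transport the classical sheafification on the site $X_\bt$ across the equivalences already established. Let $\iota \colon \Sh(X_\bt, \Set) \to \HSh(X_\bt, \Set)$ and $\iota' \colon \HSh(X_\bt, \Set) \to \PSh(X_\bt, \Set)$ denote the inclusions. By the Stacks Project references (Tag~00WB, Tag~00WJ), the inclusion $\iota' \circ \iota$ has an exact left adjoint $a \colon \PSh(X_\bt, \Set) \to \Sh(X_\bt, \Set)$.

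Since $\iota$ is fully faithful and $\iota'$ is fully faithful, the restriction $a|_{\HSh} \coloneqq a \circ \iota'$ is left adjoint to $\iota$. Indeed, for $\mathcal{F} \in \HSh$ and $\mathcal{G} \in \Sh$ we have
\[
\Hom_{\Sh}(a\iota'\mathcal{F}, \mathcal{G}) \cong \Hom_{\PSh}(\iota'\mathcal{F}, \iota'\iota\mathcal{G}) \cong \Hom_{\HSh}(\mathcal{F}, \iota\mathcal{G}).
\]

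Next I define
\[
(-)^{\#} \coloneqq \Phi \circ a \circ \iota' \circ \Psi \colon \PSh(X, \Set_X) \to \Sh(X, \Set_X),
\]
using \Cref{prop:half_sheaf_equivalence} for $\Psi$ on presheaves and \Cref{thm:sheaf_equivalence} for $\Phi$ on sheaves. The restriction in \Cref{thm:sheaf_equivalence} is compatible with the one in \Cref{prop:half_sheaf_equivalence}. Hence, under these equivalences, the forgetful functor $\Sh(X, \Set_X) \to \PSh(X, \Set_X)$ corresponds to $\iota$. Adjunctions transport along equivalences, so $(-)^{\#}$ is left adjoint to the forgetful functor.

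For exactness, a left adjoint preserves colimits automatically, so only finite limits need checking. Equivalences preserve all limits, and $a$ preserves finite limits. The remaining point, which I expect to be the only subtle step, is that $\iota'$ preserves finite limits. Limits in $\PSh(X_\bt, \Set)$ are computed sectionwise. A sectionwise finite limit of half-sheaves is again a half-sheaf, because each restriction $\mathcal{F}_U$ is a sheaf on $\Cov(U)$ and finite limits of sheaves are computed sectionwise. Therefore $\HSh$ is closed under finite limits in $\PSh$, and $\iota'$ preserves them.

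It follows that the composite $(-)^{\#}$ preserves finite limits, and so it is exact. The same argument applies to $\Mod_X$ and $\Rep_X$ by the earlier remark.
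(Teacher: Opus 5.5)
Your proposal is correct and is essentially the paper's argument: the paper obtains $(-)^{\#}$ by transporting the classical sheafification on $X_\bt$ (Stacks Tags~00WB and~00WJ) along the equivalences of \Cref{prop:half_sheaf_equivalence} and \Cref{thm:sheaf_equivalence}. You also make explicit two points the paper leaves implicit. First, composing sheafification with the inclusion $\HSh(X_\bt,\Set) \subseteq \PSh(X_\bt,\Set)$ still gives the left adjoint. Second, half-sheaves are closed under finite limits computed in $\PSh(X_\bt,\Set)$, so exactness survives that composition.
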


The analogous statements for $\Mod_X$ and $\Rep_X$ follow from \cite[\href{https://stacks.math.columbia.edu/tag/00YR}{Tag~00YR}]{StacksProject}. For $\mathcal{M}$ in $\Sh(X, \Mod_X)$ or $\Sh(X, \Rep_X)$, there is a natural isomorphism
\begin{equation}\label{eq:betti_global_sections}
    \Gamma(X, \mathcal{M})^{\Pi_1(X)} \cong \Gamma(X, \Psi(\mathcal{M})).
\end{equation}
We therefore define their cohomology as follows.

\begin{definition}\label{def:betti_cohomology}
    We define the \emph{$i$-th Betti cohomology functor} by
    \[ H^i_\bt(X, -) \coloneqq R^i\!\left(\Gamma(X, -)^{\Pi_1(X)}\right) \colon \Sh(X, \Mod_X) \to \Ab, \]
    and the \emph{$i$-th Betti cohomology functor with $\mathbb{C}$-coefficients} by
    \[ H^i_{\bt,\mathbb{C}}(X, -) \coloneqq R^i\!\left(\Gamma(X, -)^{\Pi_1(X)}\right) \colon \Sh(X, \Rep_X) \to \Vect(\mathbb{C}). \]
\end{definition}

Taking right derived functors in \eqref{eq:betti_global_sections} gives the following corollary.

\begin{corollary}\label{cor:betti_site_identification}
    For all $i \ge 0$, there are natural isomorphisms of functors
    \[ H^i_\bt(X, -) \cong H^i(X_\bt, \Psi(-)) \quad\text{and}\quad H^i_{\bt,\mathbb{C}}(X, -) \cong H^i(X_\bt, \Psi(-)). \]
\end{corollary}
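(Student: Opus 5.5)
The isomorphism \eqref{eq:betti_global_sections} is natural in $\mathcal{M}$, and $\Psi$ is an equivalence of abelian categories by \Cref{thm:sheaf_equivalence}. So the plan is to transport injective resolutions along $\Psi$ and compare the two derived functors directly. I treat $\Mod_X$; the case of $\Rep_X$ is identical, with $\mathbb{C}$-vector spaces in place of abelian groups.

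\textbf{Step 1: $\Psi$ is exact and preserves injectives.} The equivalence of \Cref{thm:sheaf_equivalence} for $\Set$ carries over verbatim to $\Mod$. One identifies $\Sh(X, \Mod_X)$ with the abelian group objects of $\Sh(X, \Set_X)$, and $\Sh(X_\bt, \Ab)$ with the abelian group objects of $\Sh(X_\bt, \Set)$. An equivalence preserves finite products, hence abelian group objects. The result is an equivalence $\Psi \colon \Sh(X, \Mod_X) \to \Sh(X_\bt, \Ab)$. Any equivalence of categories between abelian categories is additive and exact, and it sends injective objects to injective objects. Since $\Sh(X_\bt, \Ab)$ has enough injectives, so does $\Sh(X, \Mod_X)$, and the derived functors in \Cref{def:betti_cohomology} are therefore defined.

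\textbf{Step 2: identify the global sections functors.} Next I check that the isomorphism $\Gamma(X, \mathcal{M})^{\Pi_1(X)} \cong \Gamma(X, \Psi(\mathcal{M}))$ is natural in $\mathcal{M}$. The object $X$ of $X_\bt$, viewed as the trivial covering, is terminal in $X_\bt$. Hence
\[ \Gamma(X_\bt, \Psi(\mathcal{M})) = \Psi(\mathcal{M})(X) = \Psi_X(\mathcal{M}(X))(X) = \Hom_{\Mod(\Pi_1(X))}(\underline{\mathbb{Z}}, \mathcal{M}(X)) = \mathcal{M}(X)^{\Pi_1(X)}. \]
Here the fiber of the trivial covering is the trivial one-point set, so $\Psi_X$ evaluated on it is the $\Hom$ from the free module on it, which is the trivial module $\underline{\mathbb{Z}}$. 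Every identification above is functorial, so the isomorphism is natural.

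\textbf{Step 3: compare the derived functors.} Take an injective resolution $\mathcal{M} \to \mathcal{I}^\bullet$ in $\Sh(X, \Mod_X)$. By Step 1, $\Psi(\mathcal{M}) \to \Psi(\mathcal{I}^\bullet)$ is an injective resolution in $\Sh(X_\bt, \Ab)$. Applying Step 2 termwise gives an isomorphism of complexes
\[ \Gamma(X, \mathcal{I}^\bullet)^{\Pi_1(X)} \cong \Gamma(X_\bt, \Psi(\mathcal{I}^\bullet)). \]
Taking cohomology yields $H^i_\bt(X, \mathcal{M}) \cong H^i(X_\bt, \Psi(\mathcal{M}))$. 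Naturality in $\mathcal{M}$ follows from the usual lifting of morphisms to resolutions, unique up to homotopy. Equivalently, since $\Psi$ is exact, the collection $\{\Gamma \circ \Psi$ composed with $R^i\}$ forms a universal $\delta$-functor, which gives naturality directly.

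The only step that needs care is Step 1, specifically checking that the abelian-group version of the equivalence holds. I expect this to be routine: the additive structure is defined sectionwise on both sides, and both $\Phi$ and $\Psi$ are compatible with it.
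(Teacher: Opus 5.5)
Your proposal is correct and takes essentially the same approach as the paper, which obtains the corollary by deriving the natural isomorphism \eqref{eq:betti_global_sections} along the exact equivalence $\Psi$ of \Cref{thm:sheaf_equivalence}. Your Steps 1--3 supply the details the paper leaves implicit: since $X$ is terminal in $X_\bt$, global sections on $X_\bt$ are $\mathcal{M}(X)^{\Pi_1(X)}$, and $\Psi$ carries injective resolutions to injective resolutions.
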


\subsection{Comparison Theorem}\label{sec:analytic_comparison}

In the remainder of this section, we assume in addition that $X$ is smooth. We prove the comparison theorem between $H^i_\bt(X, -)$ and analytic sheaf cohomology $H^i(X^\an, -)$. By \Cref{thm:sheaf_equivalence}, this reduces to comparing sheaf cohomology on $X_\bt$ with sheaf cohomology on $X^\an$. The proof is adapted from Artin's comparison theorem between \'etale and singular cohomology \cite[Exp.~XI, \S4]{SGA4_3}.

The \emph{analytic Betti site} $X_\bt^\an$ is the site obtained from \Cref{def:category_Bt} by replacing every Zariski open subset with an analytic open subset. In particular, $X_\bt^\an$ is finer than $X^\an$, and there is a natural continuous morphism
\[ \epsilon\colon X_\bt^\an \to X^\an. \]
For $E \in X_\bt^\an$, we write $E_\bt^\an$ for the site obtained by restricting $X_\bt^\an$ to $E$.

\begin{lemma}\label{prop:epsilon_equivalence}
    For every $E \in X_\bt^\an$, the direct image functor
    \[ \epsilon_*\colon \Sh(E_\bt^\an, \Set) \xrightarrow{\sim} \Sh(E^\an, \Set) \]
    is an equivalence of categories. In particular, the natural map
    \[ H^i(E_\bt^\an, -) \to H^i(E^\an, \epsilon_*(-)) \]
    is an isomorphism for all $i \ge 0$.
\end{lemma}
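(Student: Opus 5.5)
The plan is to show that the analytic open subsets of $E^\an$ form a dense, or ``generating,'' subcategory of the site $E_\bt^\an$ in the sense of the comparison lemma. The comparison lemma (SGA4, Exp.~III, Th\'eor\`eme~4.1; or the Stacks Project, Tag~03A0) then identifies $\Sh(E_\bt^\an,\Set)$ with $\Sh(E^\an,\Set)$. First note that $E$ is itself a covering of an analytic open subset of $X^\an$, so $E^\an$ is a complex manifold, or at least an analytic space, over $X^\an$. The site $E_\bt^\an$ is the localized site $X_\bt^\an/E$. Its objects are maps $F \to E$ over $X^\an$, where $F$ covers an analytic open $p(F)$.

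The first step is to build the comparison functor. We send an analytic open $W \subseteq E^\an$ to the object $W \to E \to X^\an$ of $E_\bt^\an$. This needs $W$ to be an object of $X_\bt^\an$. That is true only after shrinking, because $W \to X^\an$ is a local homeomorphism but need not be a covering of its image. So I will take as the dense subcategory $\mathcal{D}$ the small opens $W \subseteq E^\an$ that map homeomorphically onto an analytic open of $X^\an$ contained in an evenly covered set. Such opens form a basis of $E^\an$, so $\Sh(\mathcal{D},\Set)\simeq\Sh(E^\an,\Set)$. Each such $W$ is a trivial one-sheeted covering of $p(W)$, hence an object of $X_\bt^\an$ lying over $E$. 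Morphisms in $X_\bt^\an$ between such objects over $E$ are exactly inclusions. Indeed, a map of coverings over $X^\an$ compatible with the maps to $E$ is forced, since $W \to E$ is injective. So $\mathcal{D}$ is a full subcategory. The induced topology on $\mathcal{D}$ is the usual one: a family in $\mathcal{D}$ covers in $E_\bt^\an$ if and only if it is jointly surjective.

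The main step is density: every object $F \to E$ of $E_\bt^\an$ must admit a covering by objects of $\mathcal{D}$. Given $F$, a covering of the analytic open $p(F)$, I choose around each point of $F$ a small open $F_\alpha \subseteq F$. It should map homeomorphically onto an evenly covered simply connected ball $B$ in $p(F)$. The composite $F_\alpha \to E$ then also maps homeomorphically onto an open $W_\alpha$ of $E^\an$, since $E \to X^\an$ is a local homeomorphism. Shrinking if needed, $W_\alpha \in \mathcal{D}$ and $F_\alpha \cong W_\alpha$ over $E$. The family $\{F_\alpha \to F\}$ is a covering in $X_\bt^\an$. It combines an analytic open cover of $p(F)$ by the balls $B$ with the jointly surjective families of sections of the trivialized coverings $F\times_X B \to B$. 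The hypotheses of the comparison lemma are now satisfied, and $\epsilon_*$ coincides with the resulting restriction to $\mathcal{D}$. This gives the equivalence. Here $\epsilon$ is realized at the level of $E$ by the inclusion $\mathcal{D}\subseteq E_\bt^\an$, with $\epsilon_*\mathcal{F}(W)=\mathcal{F}(W)$.

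The obstacle I expect is bookkeeping: checking that $\epsilon_*$, as defined via the morphism of sites $X_\bt^\an \to X^\an$ localized at $E$, really agrees with restriction to $\mathcal{D}$. This amounts to the identification $E^\an \simeq$ the analytic opens of $E$ via local sections, and I would verify it directly on sections. For the cohomology statement, the equivalence of topoi preserves injective abelian sheaves and global sections. The equivalence of set-valued sheaf categories induces one on abelian group objects, and $\Gamma(E_\bt^\an,-)=\Gamma(E^\an,\epsilon_*-)$ because $E$ is the final object of both sites. Deriving this identity gives the isomorphism $H^i(E_\bt^\an,-)\cong H^i(E^\an,\epsilon_*(-))$ for all $i\ge 0$.
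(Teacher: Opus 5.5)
Your proposal is correct and takes the same route as the paper: both apply the Comparison Lemma (SGA4, Exp.~III, Th\'eor\`eme~4.1) after observing that every object of $E_\bt^\an$ is locally isomorphic to an analytic open subset of $E^\an$, regarded as a one-sheeted covering of its image. Your restriction to opens $W \subseteq E^\an$ that map homeomorphically onto their image in $X^\an$ is a worthwhile precision that the paper's terse proof leaves implicit, since an arbitrary open subset of $E^\an$ need not be a covering of its image and hence need not be an object of the site.
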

\begin{proof}
    Every object of $E_\bt^\an$ is locally isomorphic to an analytic open subset of $E^\an$, viewed as an object of $E_\bt^\an$ by the identity covering. Hence, $\epsilon_*$ is an equivalence by the Comparison Lemma \cite[Exp.~III, Theorem~4.1]{SGA4_1}.
\end{proof}

Since $X_\bt^\an$ is also finer than $X_\bt$, the natural continuous morphism
\[ \delta\colon X_\bt^\an \to X_\bt \]
induces the direct image functor
\[ \delta_*\colon \Sh(X_\bt^\an, \Set) \to \Sh(X_\bt, \Set). \]
This functor is generally not an equivalence. However, we will show that for every locally constant $\mathcal{F} \in \Sh(X_\bt^\an, \Ab)$ and every $E \in X_\bt$, the canonical map
\[ H^i(E_\bt, \delta_*\mathcal{F}) \to H^i(E_\bt^\an, \mathcal{F}) \]
is an isomorphism.

\begin{lemma}\label{lem:higher_direct_image}
    For $\mathcal{F} \in \Sh(X_\bt^\an, \Ab)$, the sheaf $R^i \delta_* \mathcal{F} \in \Sh(X_\bt, \Ab)$ is associated to the presheaf
    \[ E \mapsto H^i(E_\bt^\an, \mathcal{F}). \]
\end{lemma}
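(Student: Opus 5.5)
This is the site-theoretic analogue of the classical fact that $R^i f_*\mathcal{F}$ is the sheafification of $U \mapsto H^i(f^{-1}U, \mathcal{F})$. The plan is to reproduce that proof for the continuous morphism $\delta\colon X_\bt^\an \to X_\bt$, which is defined by the continuous functor $u\colon X_\bt \to X_\bt^\an$ regarding a Zariski-based covering as an analytic-based one. The first step is to check that this $u$ is well behaved. It sends an object $E$ to the same space $E$. It preserves covering families, since Zariski covers are analytic covers and jointly surjective families of coverings are unchanged. It commutes with the fiber products used in the topology, namely $E \times_X V$ and fiber products of coverings. Consequently $\delta_*\mathcal{F}(E) = \mathcal{F}(u(E))$, and the localized site $(X_\bt^\an)_{/u(E)}$ is exactly $E_\bt^\an$. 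The identification $u(E)=E$ should be stated explicitly, since it is what makes the restriction of $\mathcal{F}$ to $E_\bt^\an$ meaningful.

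Next, choose an injective resolution $\mathcal{F} \to \mathcal{I}^\bullet$ in $\Sh(X_\bt^\an, \Ab)$. By definition, $R^i\delta_*\mathcal{F}$ is the $i$-th cohomology sheaf of $\delta_*\mathcal{I}^\bullet$. Cohomology sheaves in $\Sh(X_\bt, \Ab)$ are the sheafifications of the presheaf cohomology. Indeed, kernels are computed sectionwise, and cokernels are the sheafifications of the presheaf cokernels; sheafification is exact and the forgetful functor is left exact, which gives the claim. Therefore $R^i\delta_*\mathcal{F}$ is the sheaf associated to
\[ E \mapsto H^i\bigl(\mathcal{I}^\bullet(E)\bigr) = H^i\bigl(\Gamma(E_\bt^\an, \mathcal{I}^\bullet|_{E})\bigr). \]

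It remains to show that $H^i(\Gamma(E_\bt^\an, \mathcal{I}^\bullet|_E)) = H^i(E_\bt^\an, \mathcal{F})$, naturally in $E$. The key input is that restricting an injective abelian sheaf to a localized site $E_\bt^\an$ yields an injective sheaf. This holds because restriction $j_E^{*}$ has an exact left adjoint, namely extension by zero $j_{E!}$. By the same argument, restriction is exact, so $\mathcal{I}^\bullet|_E$ is an injective resolution of $\mathcal{F}|_E$, and the identification follows. Naturality in $E$ holds because a morphism $E \to F$ in $X_\bt$ induces the restriction $\Gamma(F, \mathcal{I}^\bullet) \to \Gamma(E, \mathcal{I}^\bullet)$, which is the map on derived functors.

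I expect the main obstacle to be the first step: verifying carefully that $u$ is continuous and that $\delta_*$ is given by composition with $u$. Once the compatibility of $u$ with covers and fiber products is established, everything else is the standard Stacks Project argument for morphisms of sites.
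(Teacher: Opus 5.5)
Your proposal is correct and follows essentially the same route as the paper. Both take an injective resolution, identify $R^i\delta_*\mathcal{F}$ with the sheafification of sectionwise cohomology of $\delta_*\mathcal{I}^\bullet$, and use that restriction to the localized site $E_\bt^\an$ preserves injectives (Stacks Tag~03F3) to get $H^i(\Gamma(E_\bt^\an,\mathcal{I}^\bullet)) \cong H^i(E_\bt^\an,\mathcal{F})$; your extra checks on the continuous functor $u$ just spell out what the paper leaves implicit.
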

\begin{proof}
    Let $0 \to \mathcal{F} \to \mathcal{I}^\bullet$ be an injective resolution in $\Sh(X_\bt^\an, \Ab)$. Restriction to $E_\bt^\an$ preserves injective sheaves \cite[\href{https://stacks.math.columbia.edu/tag/03F3}{Tag~03F3}]{StacksProject}, so $R^i \delta_* \mathcal{F}$ is the sheaf associated to the presheaf
    \[ E \mapsto H^i(\delta_* \mathcal{I}^\bullet(E)) = H^i(\Gamma(E_\bt^\an, \mathcal{I}^\bullet)) \cong H^i(E_\bt^\an, \mathcal{F}). \qedhere \]
\end{proof}

We now require the fact that $X$ admits a basis consisting of \emph{Artin neighborhoods}, also called good neighborhoods in \cite[Exp.~XI, \S3]{SGA4_3}. For their definition and construction, we refer to the same reference and recall only the properties necessary for our discussion.

\begin{theorem}[Artin, {\cite[Exp.~XI, Proposition~3.3]{SGA4_3}}]\label{prop:artin_good_neighborhood}
    The variety $X$ admits a basis of open sets consisting of Artin neighborhoods.
\end{theorem}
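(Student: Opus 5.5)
This is Artin's theorem from SGA~4, so I will not redo its proof from scratch. I will sketch the classical construction and isolate the properties used later. An \emph{Artin neighborhood} over $\mathbb{C}$ is built by induction on dimension. It is a smooth variety $U$ with a sequence of elementary fibrations
\[ U = U_n \to U_{n-1} \to \cdots \to U_0 = \operatorname{Spec} \mathbb{C}. \]
Each map $U_i \to U_{i-1}$ is the complement, in a smooth projective morphism with geometrically connected fibers of dimension one, of a relative divisor that is finite \'etale over $U_{i-1}$. Given a point $x$ of a smooth $X$ and a Zariski neighborhood $W$ of $x$, the goal is to find such a $U$ with $x \in U \subseteq W$.

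The plan follows SGA~4, Exp.~XI, \S3, in three steps.

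First, shrink $W$ to an affine open, embed it in projective space, and take its closure $\bar W$. Choose a generic linear projection from a suitable center. Via Bertini-type and generic smoothness arguments, this yields an open $W' \ni x$ in $W$ and an open $S$ of a variety of dimension $\dim X - 1$, together with a morphism $W' \to S$. After blowing up the base locus, this morphism extends to a proper flat family $\bar W' \to S$ of curves. Its boundary $\bar W' \setminus W'$ is finite \'etale over $S$, and $\bar W' \to S$ is smooth with connected fibers near $x$. This is the \emph{elementary fibration} lemma (Exp.~XI, Proposition~3.3). Its proof needs $X$ smooth and $x$ a closed point; non-closed points are then handled by specialization, since a basis only needs to be checked at closed points of a Jacobson scheme.

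Second, apply induction to $S$. This gives an Artin neighborhood $S' \subseteq S$ of the image of $x$, and I replace $W'$ by its preimage over $S'$.

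Third, use the topological input. Over $\mathbb{C}$, an elementary fibration is a locally trivial fibration of analytic spaces whose fibers are punctured Riemann surfaces, hence $K(\pi,1)$ spaces. The long exact homotopy sequence, together with the induction hypothesis, then shows that every Artin neighborhood is a $K(\pi,1)$ space. This property, Exp.~XI, Variante~4.6, is the one the comparison argument will actually use.

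The main obstacle is the first step: producing a projection whose fibers are smooth and connected near $x$ and whose boundary divisor is \'etale over the base. The boundary condition requires controlling the finitely many points at infinity uniformly over an open of $S$. I would handle it as Artin does. I would choose the projection center generically so that the discriminant of the boundary avoids the image of $x$, and then restrict to the complement of that discriminant.
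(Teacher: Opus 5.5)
Your proposal matches the paper, which also just invokes Artin's theorem from SGA~4, Exp.~XI, \S3. Your remark that closed points suffice, because $X$ is Jacobson and any open containing a closed specialization of $y$ contains $y$, is exactly what turns Artin's pointwise statement into a basis statement; your Step~3 is the separate $K(\pi,1)$ result (Variante~4.6) and is not needed here. If you ever write out Step~1, note two things. First, Bertini and generic smoothness alone do not make the compactified curves smooth at infinity, because the projective closure can be singular along a divisor at infinity; Artin first passes to the normalization, whose singular locus has codimension at least $2$ and is therefore avoided by generic curve sections. Second, an elementary fibration also requires the boundary divisor to meet every fiber, and your Step~3 tacitly uses this.
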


\begin{theorem}[Serre, {\cite[Exp.~XI, Variante~4.6]{SGA4_3}}]\label{thm:top_kpi1}
    Let $X$ be an Artin neighborhood over $\mathbb{C}$. Then $X^\an$ is a $K(\pi, 1)$ space.
\end{theorem}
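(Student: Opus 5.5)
The statement is cited from SGA~4 (Exp.~XI, Variante~4.6), so the plan is to reconstruct Artin's argument. The proof will go by induction on the relative dimension, using the structure of Artin neighborhoods. Recall that an Artin neighborhood $X$ of dimension $n$ comes with a smooth morphism $f \colon X \to Y$ with the following properties: $Y$ is an Artin neighborhood of dimension $n-1$, $f$ has geometrically connected fibers that are smooth affine curves obtained by removing a nonempty finite étale divisor from a smooth projective curve, and $f$ extends to a smooth projective $\bar f \colon \bar X \to Y$ with $\bar X \setminus X$ finite étale over $Y$. The base case $n = 0$ is a point, which is trivially $K(\pi,1)$. For $n=1$, $X$ is a smooth affine curve; its analytification is a noncompact Riemann surface, homotopy equivalent to a wedge of circles, hence $K(\pi,1)$.

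For the inductive step, I will first show that $f^\an \colon X^\an \to Y^\an$ is a locally trivial fibration of topological spaces. By Ehresmann's theorem applied to the proper submersion $\bar f^\an$, relative to the étale (hence also proper submersive) boundary divisor $\bar X \setminus X$, the pair $(\bar X^\an, (\bar X\setminus X)^\an)$ is locally trivial over $Y^\an$. Hence $X^\an \to Y^\an$ is a fiber bundle whose fiber $F$ is a compact Riemann surface minus a nonempty finite set of points. In particular $F$ is connected and $K(\pi,1)$, with $\pi_1(F)$ free. Next, I will apply the long exact homotopy sequence
\[ \cdots \to \pi_i(F) \to \pi_i(X^\an) \to \pi_i(Y^\an) \to \pi_{i-1}(F) \to \cdots. \]
For $i \ge 2$ we have $\pi_i(F)=0$ by the curve case and $\pi_i(Y^\an)=0$ by induction, so $\pi_i(X^\an)=0$. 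Connectedness of $X^\an$ follows from the connectedness of $F$ and $Y^\an$. So $X^\an$ is $K(\pi,1)$, with $\pi_1$ an extension of $\pi_1(Y^\an)$ by the free group $\pi_1(F)$.

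The main obstacle is the local triviality of the fibration. Ehresmann's theorem requires a relative version for the complement of a divisor that is étale over the base. I would handle this by choosing a vector field lift on $\bar X^\an$ that is tangent to the boundary, built with partitions of unity locally on $Y^\an$ using the étaleness of the boundary, and then integrating it; properness guarantees the flows exist. I would also check carefully that the basic curve case gives a nonempty puncture set, so that $F$ is not compact of genus $\ge 1$ with nontrivial $\pi_2$ issues. A compact surface of genus $\ge1$ is still aspherical, but $\mathbb{P}^1$ is not, so the nonemptiness of the divisor is exactly what is needed.
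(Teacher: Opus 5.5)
Your proposal is correct and takes essentially the same route as the argument the paper relies on. The paper gives no proof of its own and only cites SGA~4, Exp.~XI, Variante~4.6, whose argument is exactly this: induct along the elementary fibrations, use relative Ehresmann to make $X^\an \to Y^\an$ a locally trivial fibration whose fiber is a compact Riemann surface minus a nonempty finite set, and apply the homotopy long exact sequence to kill $\pi_i$ for $i \ge 2$.
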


\begin{lemma}\label{lem:bt_delta_vanishing}
    If $\mathcal{F} \in \Sh(X_\bt^\an, \Ab)$ is locally constant, then for all $i > 0$,
    \[ R^i \delta_* \mathcal{F} = 0. \]
\end{lemma}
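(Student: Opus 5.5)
By \Cref{lem:higher_direct_image}, $R^i\delta_*\mathcal{F}$ is the sheafification on $X_\bt$ of the presheaf $E \mapsto H^i(E_\bt^\an, \mathcal{F})$. It therefore suffices to show that every class in $H^i(E_\bt^\an, \mathcal{F})$ with $i>0$ dies after passing to a covering of $E$ in $X_\bt$. The plan is to reduce to computing cohomology on $E^\an$, then refine $E$ Zariski-locally to a $K(\pi,1)$ and kill classes by a further topological covering.

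\emph{Step 1: pass to $E^\an$.} By \Cref{prop:epsilon_equivalence}, $H^i(E_\bt^\an,\mathcal{F}) \cong H^i(E^\an, \epsilon_*\mathcal{F})$. Here $\epsilon_*\mathcal{F}$ is a locally constant sheaf of abelian groups on the manifold $E^\an$.

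\emph{Step 2: refine Zariski-locally.} Let $U = p(E)$. By \Cref{prop:artin_good_neighborhood}, $U$ is covered by Artin neighborhoods $V$. The pullbacks $E \times_X V$ form a covering of $E$ in $X_\bt$, so we may assume that $U$ is an Artin neighborhood. We may also assume $U$ is connected and, after shrinking, that $E$ is connected.

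\emph{Step 3: use the $K(\pi,1)$ property.} By \Cref{thm:top_kpi1}, $U^\an$ is a $K(\pi,1)$. Since $E \to U^\an$ is a covering, $E^\an$ is also a $K(\pi,1)$: its universal cover $\widetilde{E}$ is the universal cover of $U^\an$, hence contractible. For a locally constant sheaf $\mathcal{L}$ on a $K(\pi,1)$ manifold, the classical result of Hurewicz and Eilenberg gives
\[ H^i(E^\an,\mathcal{L}) \cong H^i_\grp(\pi_1(E), \mathcal{L}_x). \]
Equivalently, via \eqref{eq:topological_covering_groupoid_equivalence}, it agrees with sheaf cohomology on $\Cov(E)$. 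Concretely, $\mathcal{L}$ pulled back to $\widetilde{E}$ is constant, so its higher cohomology there vanishes. The Cartan--Leray spectral sequence for the Galois covering $\widetilde{E}\to E$ then gives the identification with group cohomology.

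\emph{Step 4: kill classes by a covering.} Take a class $c \in H^i(\pi_1(E), \mathcal{L}_x)$ with $i>0$. It is represented by a cocycle, and the goal is to find a covering of $E$ in $\Cov(U)$ on which $c$ restricts to zero. This is the main obstacle. The universal cover $\widetilde{E} \to E$ is itself a single object of $\Cov(U)$ covering $E$, since it need not be finite. Restricting $c$ to $\widetilde{E}$ gives
\[ H^i(\widetilde{E}, \mathcal{L}) = H^i(\{1\}, \mathcal{L}_x) = 0, \]
because $\widetilde{E}$ is contractible and $\mathcal{L}$ is constant on it. So the class dies on the covering $\{\widetilde{E} \to E\}$ in $X_\bt$.

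\emph{Conclusion.} The class is Zariski-locally, and then covering-locally, zero, so the sheafified presheaf vanishes and $R^i\delta_*\mathcal{F}=0$ for $i>0$. Step 4 rests on two checks. First, the identification in Step 1 must be compatible with restriction along $\widetilde{E}\to E$; this holds because $\epsilon$ commutes with localization. Second, $\widetilde{E}$ must be a legitimate object of $X_\bt$. This holds because \Cref{def:category_Bt} allows arbitrary analytic coverings of a Zariski open set.
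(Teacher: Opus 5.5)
Your argument is correct and is essentially the paper's proof. You cover $p(E)$ by Artin neighborhoods, pass to connected components, and replace $E$ by its universal cover, which is contractible by \Cref{thm:top_kpi1}; there $\epsilon_*\mathcal{F}$ becomes constant, so its higher cohomology vanishes via \Cref{prop:epsilon_equivalence}. The one difference is that your Step~3 detour through group cohomology (Hurewicz--Eilenberg, Cartan--Leray) is unnecessary, since only the contractibility of $\widetilde{E}$ is used.
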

\begin{proof}
    By \Cref{lem:higher_direct_image}, it suffices to prove that for every $E \in X_\bt$, there exists a covering family $\{E_\lambda \to E\}$ such that $H^i((E_\lambda)_\bt^\an, \mathcal{F}) = 0$ for each $\lambda$. Since $X$ admits a basis of Artin neighborhoods by \Cref{prop:artin_good_neighborhood}, we may cover $p(E)$ by Artin neighborhoods $U_\lambda \subseteq X$. Taking the fiber products $E \times_X U_\lambda$ and passing to their connected components, it suffices to assume that $p(E)$ is an Artin neighborhood and that $E$ is connected. Replacing $E$ by its universal cover, it further suffices to assume that $E$ is contractible by \Cref{thm:top_kpi1}. Hence, $\epsilon_*\mathcal{F}$ is a constant sheaf on $E^\an$. By \Cref{prop:epsilon_equivalence}, for all $i > 0$, we have
    \[ H^i(E_\bt^\an, \mathcal{F}) \cong H^i(E^\an, \epsilon_*\mathcal{F}) = 0. \qedhere \]
\end{proof}

\begin{theorem}[Comparison Theorem]\label{thm:betti_comparison}
    Let $\mathcal{F} \in \Sh(X_\bt^\an, \Ab)$ be a locally constant sheaf. Let $E \in X_\bt$. Then the natural map
    \[ H^i(E_\bt, \delta_*\mathcal{F}) \to H^i(E_\bt^\an, \mathcal{F}) \]
    is an isomorphism for all $i \ge 0$. In particular, the natural map
    \[ H^i_\bt(X, \Phi(\delta_*\mathcal{F})) \to H^i(X^\an, \epsilon_*\mathcal{F}) \]
    is an isomorphism for all $i \ge 0$.
\end{theorem}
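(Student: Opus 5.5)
The plan is to use the Leray spectral sequence for the continuous morphism $\delta\colon X_\bt^\an \to X_\bt$, restricted to $E$. Restriction to $E_\bt^\an$ preserves injectives (as in the proof of \Cref{lem:higher_direct_image}), and $\delta_*$ commutes with restriction to $E$. So there is a spectral sequence
\[ E_2^{p,q} = H^p(E_\bt, R^q\delta_*\mathcal{F}) \Rightarrow H^{p+q}(E_\bt^\an, \mathcal{F}). \]
Its edge map $H^p(E_\bt, \delta_*\mathcal{F}) \to H^p(E_\bt^\an, \mathcal{F})$ is the natural map in the statement. By \Cref{lem:bt_delta_vanishing}, $R^q\delta_*\mathcal{F} = 0$ for $q>0$. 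The restriction of a locally constant sheaf to $E_\bt^\an$ is again locally constant, and $R^q\delta_*$ commutes with localization, so the vanishing holds over $E_\bt$ as well. Hence the spectral sequence degenerates and the edge map is an isomorphism for all $i$. The only care needed is to check that localization at $E$ is compatible with $\delta$, in the sense that $\delta$ induces $E_\bt^\an \to E_\bt$ with the same direct image. This follows since $E_\bt$ and $E_\bt^\an$ are slice sites and $\delta$ is given by the inclusion functor.

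For the second assertion, take $E = X$, viewed as the trivial covering of $X$ itself. By \Cref{cor:betti_site_identification} and \Cref{thm:sheaf_equivalence},
\[ H^i_\bt(X, \Phi(\delta_*\mathcal{F})) \cong H^i(X_\bt, \Psi\Phi(\delta_*\mathcal{F})) \cong H^i(X_\bt, \delta_*\mathcal{F}). \]
The first part then gives $H^i(X_\bt, \delta_*\mathcal{F}) \cong H^i(X_\bt^\an, \mathcal{F})$. By \Cref{prop:epsilon_equivalence} applied to $E = X$, this is isomorphic to $H^i(X^\an, \epsilon_*\mathcal{F})$. Finally, we must check that the composite agrees with the natural map of the statement; this follows because each isomorphism is induced by the evident morphisms of sites and global section functors.

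I expect the main subtlety to be bookkeeping rather than substance. The points to verify are the restriction compatibility of $R^q\delta_*$, and the fact that the identification in \Cref{cor:betti_site_identification} (coming from \eqref{eq:betti_global_sections}) is compatible with the edge maps, so that "the natural map" is indeed the composite described. The genuine geometric input, Artin neighborhoods and the $K(\pi,1)$ property, is already packaged in \Cref{lem:bt_delta_vanishing}.
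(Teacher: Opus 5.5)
Your proposal is correct and follows the same route as the paper. The paper also uses the Leray spectral sequence for $\delta|_E\colon E_\bt^\an \to E_\bt$, gets $R^q\delta_*\mathcal{F}=0$ for $q>0$ from \Cref{lem:bt_delta_vanishing} to make the edge map an isomorphism, and then takes $E = X^\an$ with \Cref{cor:betti_site_identification} and \Cref{prop:epsilon_equivalence}. Your explicit use of $\Psi\Phi \cong \id$ from \Cref{thm:sheaf_equivalence} is a step the paper leaves implicit.
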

\begin{proof}
    The Leray spectral sequence for $\delta|_E\colon E_\bt^\an \to E_\bt$ is
    \[ E_2^{p,q} = H^p(E_\bt, R^q \delta_* \mathcal{F}) \Rightarrow H^{p+q}(E_\bt^\an, \mathcal{F}). \]
    By \Cref{lem:bt_delta_vanishing}, we have $R^q \delta_* \mathcal{F} = 0$ for all $q > 0$. Therefore, the edge map gives the isomorphism $H^i(E_\bt, \delta_*\mathcal{F}) \xrightarrow{\sim} H^i(E_\bt^\an, \mathcal{F})$. Applying \Cref{cor:betti_site_identification} and \Cref{prop:epsilon_equivalence} together with $E = X^\an$ gives the stated particular case.
\end{proof}

Before concluding this section, we give a lemma that will be needed later. We say $\mathcal{M} \in \Sh(X, \Mod_X)$ is \emph{locally constant} if $\Psi(\mathcal{M}) \in \Sh(X_\bt, \Ab)$ is locally constant.

\begin{lemma}\label{lem:betti_kpi1}
    Let $U \in \Open(X)$, and let $\mathcal{M} \in \Sh(X, \Mod_X)$ be a locally constant sheaf. If $U^\an$ is a $K(\pi, 1)$ space, then for all $i > 0$,
    \[ R^i \Gamma(U, \mathcal{M}) = 0. \]
\end{lemma}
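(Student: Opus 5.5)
The plan is to transport the question to the Betti site via $\Psi$ and to treat $\Gamma(U,-)$ as a restriction functor to $\Cov(U)$. Under \Cref{thm:sheaf_equivalence} and the equivalence $\Phi_U$ of \eqref{eq:topological_covering_groupoid_equivalence}, the functor $\Gamma(U,-)\colon \Sh(X,\Mod_X)\to\Mod(\Pi_1(U))$ corresponds to
\[ \Sh(X_\bt,\Ab)\to\Sh(\Cov(U),\Ab),\qquad \mathcal{F}\mapsto \mathcal{F}_U, \]
which sends $E \mapsto \mathcal{F}(E)$ for $E\in\Cov(U)$. Since $\Phi_U$ is an exact equivalence, it suffices to show that the derived functors of $\mathcal{F}\mapsto\mathcal{F}_U$ vanish on $\Psi(\mathcal{M})$ in positive degrees.

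First I compute these derived functors exactly as in \Cref{lem:higher_direct_image}. Take an injective resolution $\Psi(\mathcal{M})\to\mathcal{I}^\bullet$ in $\Sh(X_\bt,\Ab)$. Restriction to $E_\bt$ preserves injectives \cite[\href{https://stacks.math.columbia.edu/tag/03F3}{Tag~03F3}]{StacksProject}, so $H^i(\mathcal{I}^\bullet(E)) = H^i(E_\bt,\Psi(\mathcal{M}))$. Taking cohomology of $(\mathcal{I}^\bullet)_U$ in $\Sh(\Cov(U),\Ab)$ then shows that $R^i\Gamma(U,\mathcal{M})$ corresponds to the sheafification on $\Cov(U)$ of the presheaf
\[ E\mapsto H^i(E_\bt,\Psi(\mathcal{M})). \]
Here one must check that $(\mathcal{I}^\bullet)_U$ computes the derived functor. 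This holds because $\mathcal{F}\mapsto \mathcal{F}_U$ is a right adjoint: via \Cref{lem:kappa_coind}, its left adjoint is an extension-type functor. Hence it preserves injectives, and the above is legitimate.

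Next I evaluate the presheaf. Since $\mathcal{M}$ is locally constant, $\Psi(\mathcal{M})$ is locally constant on $X_\bt$. Hence $\Psi(\mathcal{M})\cong\delta_*\mathcal{F}$ with $\mathcal{F}\coloneqq\delta^{-1}\Psi(\mathcal{M})$ locally constant on $X_\bt^\an$. The unit $\Psi(\mathcal{M})\to\delta_*\delta^{-1}\Psi(\mathcal{M})$ is an isomorphism: this is local, and for constant sheaves it reduces to connected objects of $X_\bt$ staying connected in $X^\an_\bt$. Then \Cref{thm:betti_comparison} and \Cref{prop:epsilon_equivalence} give
\[ H^i(E_\bt,\Psi(\mathcal{M}))\cong H^i(E^\an,\epsilon_*\mathcal{F}). \]

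Finally I show that the associated sheaf vanishes. Every $E\in\Cov(U)$ is covered in $\Cov(U)$ by the universal covers of its connected components, that is, by copies of the $P_x$. Since $U^\an$ is a $K(\pi,1)$ space, each $P_x$ is contractible. On $P_x$ the locally constant sheaf $\epsilon_*\mathcal{F}$ is therefore constant, so $H^i(P_x^\an,\epsilon_*\mathcal{F})=0$ for $i>0$. Thus every class is killed locally, the sheaf vanishes, and $R^i\Gamma(U,\mathcal{M})=0$.

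I expect the main obstacle to be the bookkeeping in the first step. There one must confirm that $\Gamma(U,-)$, viewed on $X_\bt$, preserves injectives, or at least sends them to $\Gamma$-acyclics. One must also confirm that the transfer through $\Phi_U$ is compatible with derived functors. The identification $\Psi(\mathcal{M})\cong\delta_*\delta^{-1}\Psi(\mathcal{M})$ is the secondary point to verify.
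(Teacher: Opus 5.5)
Your argument is correct and is essentially the paper's proof. Both identify $\Psi(\mathcal{M})\cong\delta_*\mathcal{F}$ and reduce to $H^i((P_x)_\bt,\delta_*\mathcal{F})=0$ via \Cref{thm:betti_comparison} and \Cref{prop:epsilon_equivalence}. The paper reaches $P_x$ through the exact conservative fiber functor $(-)_x$ on $\Mod(\Pi_1(U))$, which under $\Phi_U$ is exactly evaluation at $P_x$; you instead sheafify $E\mapsto H^i(E_\bt,\Psi(\mathcal{M}))$ on $\Cov(U)$ and kill classes by covering with copies of $P_x$.

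The injectivity bookkeeping you flag as the main obstacle is unnecessary. $R^i$ is by definition computed on an injective resolution of $\Psi(\mathcal{M})$. Also, the exact left adjoint you have in mind is $(-)_U$ from \Cref{lem:extension_by_zero_adjunction}, not anything supplied by \Cref{lem:kappa_coind}.
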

\begin{proof}
    Without loss of generality, assume that $U = X$ is connected. Since $\mathcal{M}$ is locally constant, $\Psi(\mathcal{M})$ becomes constant on the universal cover of $X$. Hence, $\Psi(\mathcal{M}) = \delta_*\mathcal{F}$ for some locally constant sheaf $\mathcal{F} \in \Sh(X_\bt^\an, \Ab)$. Fix $x \in X$, and let $P_x \to X^\an$ be the universal cover of $X^\an$ based at $x$, regarded as an object of $X_\bt$. Note that $(-)_x\colon \Mod(\Pi_1(U)) \to \Ab$ is an exact and conservative functor. Since
    \[ \Gamma(U, \mathcal{M})_x = \Gamma((P_x)_\bt, \delta_*\mathcal{F}), \]
    it suffices to show that $H^i((P_x)_\bt, \delta_*\mathcal{F}) = 0$ for $i > 0$. Applying \Cref{thm:betti_comparison} with $E = P_x$, this follows from \Cref{prop:epsilon_equivalence}.
\end{proof}

\begin{remark}\label{rem:rep_comparison}
    The adjunction
    \[ \mathbb{C} \otimes_\mathbb{Z} (-) : \Ab \rightleftarrows \Vect(\mathbb{C}) : (-) \]
    induces adjunctions
    \[ \Mod(G) \rightleftarrows \Rep(G) \quad\text{and}\quad \Sh(X^\an, \Ab) \rightleftarrows \Sh(X^\an, \Vect(\mathbb{C})) \]
    for any groupoid $G$ and complex variety $X$, where all adjoints are exact. Therefore, by the Grothendieck spectral sequence, groupoid cohomology and analytic sheaf cohomology with coefficients in $\Vect(\mathbb{C})$ are compatible with their counterparts with coefficients in $\Ab$. In particular, all results in this section hold, mutatis mutandis, for $\Rep_X$ in place of $\Mod_X$.
\end{remark}

\section{Formalization}\label{sec:formalization}

This section generalizes the sheaf theory developed in \Cref{sec:analytic}. We gave two equivalent descriptions of sheaves. One uses the Grothendieck topology of the Betti site, while the other uses the topological fundamental groupoid functor $\Pi_1$ together with the Zariski topology. The formalism developed here generalizes the latter description by allowing $\Pi_1$ to be replaced by other fundamental groupoid functors, such as $\Pi_1^\et$, $\Pi_1^\alg$, and $\Pi_1^\N$.

As explained in the previous section, the role of the Grothendieck topology in this setting is to glue local group-cohomological data. The relationships between topological fundamental groups and analytic sheaf cohomology, and between \'etale fundamental groups and \'etale cohomology, are well established. Analogous relationships also arise between fundamental group schemes constructed through Tannaka duality and algebraic de Rham cohomology \cite{EsnaultHai, BaoHoDao, HBB, BaoNguyen}. These analogies suggest that the same gluing mechanism should recover algebraic de Rham cohomology.

The obstacle, however, already appears at the level of group cohomology. For an abstract group $G$, the category $\Set(G)$ carries the canonical Grothendieck topology whose sheaf cohomology recovers $H^\bullet_\grp(G, -)$. For a profinite group $G$, the category $\FSet(G)$ similarly carries one recovering $H^\bullet_\cnt(G, -)$. For an affine group scheme $G$, however, it seems unlikely that $\Rep(G)$ can be equipped with a Grothendieck topology in the same way, with the resulting sheaf cohomology recovering $H^\bullet_\rat(G, -)$. This obstacle motivates the new approach to de Rham cohomology developed in this section.

To this end, we formalize the sheaf theory of the previous section in a more general setting. We first define presheaves and sheaves and then identify conditions under which sheafification exists. Finally, we show that the resulting category of sheaves is a Grothendieck category, thereby allowing us to define sheaf cohomology.

Throughout this section, fix a fibration
\[ p \colon \mathcal{T}^\op \to \mathcal{B}. \]
Unless otherwise stated, the base category $\mathcal{B}$ is the category $\Open(X)$ of open subsets of a topological space $X$ or, more generally, a frame\footnote{A frame is a small poset with finite products $U \wedge V$ and arbitrary coproducts $\bigvee U_i$ satisfying the infinite distributive law $U \wedge \bigvee U_i = \bigvee (U \wedge U_i)$. It is a point-free categorical abstraction of topology.} \cite[Ch.~II]{PicadoPultr}. Even if $\mathcal{B}$ is an abstract frame, we use the notation and terminology of open subsets. For example, we write $\mathcal{B} = \Open(X)$, where $X$ is the terminal object of $\mathcal{B}$. We also interpret unions and intersections as coproducts and products, respectively. In \Cref{sec:SCn_categories}, however, we allow $\mathcal{B}$ to be an arbitrary category.

\subsection{Presheaves and Sheaves}\label{sec:sheaf_theory}

This subsection defines presheaves and sheaves with values in $\mathcal{T}$, studies their properties, and gives several examples.

\begin{definition}\label{def:presheaf_abstract}
    A \emph{presheaf} on $X$ with values in $\mathcal{T}$ is a functor
    \[ \mathcal{M} \colon \mathcal{B}^\op \to \mathcal{T} \]
    satisfying $p \circ \mathcal{M}^\op = \id_{\mathcal{B}}$. We denote by $\PSh(X, \mathcal{T})$ the full subcategory of $\Fun(\mathcal{B}^\op, \mathcal{T})$ consisting of such presheaves.
\end{definition}

For $V \subseteq U$ in $\mathcal{B}$, we denote the corresponding restriction functor by
\[ \Res_V^U \colon \mathcal{T}(U) \to \mathcal{T}(V) \]
We assume the following for the remainder of this section.

\begin{SC}
    \item\label{axiom:SC1_pre} Every restriction functor admits a right adjoint.
\end{SC}

Therefore, the smallness of $\mathcal{B}$ allows us to choose adjunctions
\[ \Res_V^U : \mathcal{T}(U) \rightleftarrows \mathcal{T}(V) : \Coind_V^U \]
simultaneously for all $V \subseteq U$ in $\mathcal{B}$. We call $\Coind_V^U$ the \emph{coinduction functor}. The chosen adjunctions equip $p^\op$ with a cleavage \cite[Lemma~9.1.2]{Jacobs}, so $p$ is a bifibration. When the choice of $\mathcal{T}$ is not clear from context, we write
\[ \Res_{\mathcal{T}(V)}^{\mathcal{T}(U)} = \Res_V^U \quad\text{and}\quad \Coind_{\mathcal{T}(V)}^{\mathcal{T}(U)} = \Coind_V^U. \]

For a morphism $f \colon M \to N$ in $\mathcal{T}$, set $U \coloneqq p(M)$ and $V \coloneqq p(N)$. The Cartesian and cocartesian factorization properties imply that $f$ factors uniquely as $M \to \Res_V^U M \to N$ and as $M \to \Coind_V^U N \to N$, respectively.

\begin{lemma}\label{lem:limit_in_T}
    Given $U \in \mathcal{B}$, let $M \colon J \to \mathcal{T}(U)$ be a diagram indexed by a non-empty small category $J$. If the limit (resp.\ colimit) of $M$ exists in $\mathcal{T}(U)$, then it is also the limit (resp.\ colimit) of $M$ in $\mathcal{T}$.
\end{lemma}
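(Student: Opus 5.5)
I would prove this directly from the universal property, using the factorizations through $\Res$ and $\Coind$ stated just above. I treat the limit case first; the colimit case is the formal dual, with $\Res$ in place of $\Coind$. Write $L = \lim M$ in $\mathcal{T}(U)$ with projections $\pi_j \colon L \to M_j$, all lying over $\id_U$.

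\textbf{Limit case.} Let $N \in \mathcal{T}$ with $p(N) = W$, and let $f_j \colon N \to M_j$ be a cone in $\mathcal{T}$. Two observations come first.
\begin{enumerate}
    \item Since $p$ is a functor, all the $f_j$ lie over morphisms of $\mathcal{B}$ between $U$ and $W$ (in the direction dictated by the variance of $p$).
    \item Since $\mathcal{B}$ is a poset, there is at most one such morphism. This is where non-emptiness of $J$ is used: it guarantees that such a morphism exists at all, and then every $f_j$ lies over that same morphism.
\end{enumerate}
By the cocartesian factorization, each $f_j$ factors uniquely as $N \to \Coind_U^W M_j \to M_j$ (or through $\Res$, depending on direction). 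Uniqueness of the factorization makes the family $N \to \Coind M_j$ a cone over $\Coind \circ M$ in the fiber over $W$.

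Because $\Coind$ is a right adjoint, it preserves limits, so $\Coind L = \lim \Coind M_j$. We therefore get a unique map $N \to \Coind L$ in the fiber, and composing with the cocartesian morphism $\Coind L \to L$ gives $N \to L$.

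For uniqueness, any map $g \colon N \to L$ compatible with the $\pi_j$ lies over the same base morphism. Factoring $g$ through $\Coind L$ and using the uniqueness of cocartesian factorizations reduces the question to uniqueness in the fiber.

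In the other direction, where $p(N) \supseteq U$ so that the maps go from a larger open, I would use the cartesian factorization $N \to \Res N \to M_j$ instead. The required limit is then just the limit in $\mathcal{T}(U)$, and no preservation property is needed.

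\textbf{Colimit case.} Dually, a cocone $M_j \to N$ factors through $\Res M_j \to N$ in the target fiber, or through $M_j \to \Coind N$ in $\mathcal{T}(U)$. In the first situation we need $\Res$ to preserve colimits, which holds because $\Res$ is a left adjoint by SC1. In the second situation the colimit in $\mathcal{T}(U)$ applies directly.

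\textbf{Main obstacle.} The main work is bookkeeping of variance: determining which of $\Res$ and $\Coind$ applies for each direction of base morphism, given the convention that $p \colon \mathcal{T}^\op \to \mathcal{B}$. The rest is the standard fact that, in a bifibration over a poset, fiberwise limits whose formation is preserved by the relevant reindexing functors are global limits.
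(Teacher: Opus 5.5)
Your argument is correct, but the case split is illusory. A morphism $M \to N$ in $\mathcal{T}$ lies over $p(N) \subseteq p(M)$. This is what the factorizations $M \to \Res_V^U M \to N$ and $M \to \Coind_V^U N \to N$, stated just before the lemma, encode. So for non-empty $J$, a cone vertex $N$ must lie over some $W \supseteq U$, and a cocone vertex must lie over some $V \subseteq U$.

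Your ``other direction'' $p(N) \supseteq U$ for limits is therefore exactly the case you already handled through $\Coind_U^W M_j$. Likewise, your two colimit ``situations'' are just two factorizations of the same cocone. In each case both factorizations exist, so either of your two arguments suffices on its own.

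The paper uses only the more economical route in each case. For limits, it factors every $f_j$ through the single cartesian arrow $N \to \Res_U^W N$, which does not depend on $j$. The induced maps $\Res_U^W N \to M_j$ then form a cone in $\mathcal{T}(U)$, and the fiberwise limit applies with no further input. For colimits, it factors through the cocartesian arrow $\Coind_V^U N \to N$, so the cocone lands in $\mathcal{T}(U)$.

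Your lead routes instead attach the (co)cartesian arrows to the varying $M_j$. They therefore also need two further inputs:
\begin{enumerate}
    \item naturality of those arrows, to obtain a (co)cone in the fiber of $N$;
    \item the facts that $\Coind_U^W$ preserves limits and $\Res_V^U$ preserves colimits.
\end{enumerate}
Both hold by the adjunction of SC1, so nothing fails. The paper's route, however, shows that the limit half uses only the fibration structure of $p$ and no preservation property at all.
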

\begin{proof}
    Let $L$ be the limit of $M$ taken in $\mathcal{T}(U)$. Since $J$ is non-empty, for a morphism $N \to M_j$ to exist in $\mathcal{T}$ for all $j \in J$, we must have $U \subseteq p(N)$. It is then straightforward to verify that $L$ satisfies the universal property of the limit of $M$ in $\mathcal{T}$, using the Cartesian property of the restriction functors. The colimit statement follows in the same manner, using the cocartesian factorization property associated with coinduction.
\end{proof}

Note that \Cref{lem:limit_in_T} requires $J$ to be non-empty. Indeed, initial and terminal objects of $\mathcal{T}$ need not lie in $\mathcal{T}(U)$, so the analogous statement for empty $J$ may fail. We now show that the sectionwise limit (resp.\ colimit) of presheaves is the limit (resp.\ colimit) in $\PSh(X, \mathcal{T})$.

\begin{lemma}\label{lem:limits_in_psh}
    Let $\mathcal{M} \colon J \to \PSh(X, \mathcal{T})$ be a diagram indexed by a small category $J$. Suppose that for every $U \in \mathcal{B}$,
    \[ \mathcal{L}(U) \coloneqq \lim_{j \in J} \mathcal{M}_j(U) \quad \left(\text{resp.}\ \colim_{j \in J} \mathcal{M}_j(U)\right) \]
    taken in $\mathcal{T}(U)$ exists. Then $\mathcal{L}$ is the limit (resp.\ colimit) of $(\mathcal{M}_j)_{j \in J}$ in $\PSh(X, \mathcal{T})$.
\end{lemma}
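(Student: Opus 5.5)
First I will make $\mathcal{L}$ into a presheaf. For $V \subseteq U$, I need a morphism $\mathcal{L}(U) \to \mathcal{L}(V)$ in $\mathcal{T}$ lying over the inclusion. By the Cartesian factorization, this is the same as a morphism $\Res_V^U \mathcal{L}(U) \to \mathcal{L}(V)$ in $\mathcal{T}(V)$. By the cocartesian factorization, it is also the same as a morphism $\mathcal{L}(U) \to \Coind_V^U \mathcal{L}(V)$ in $\mathcal{T}(U)$.

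In the limit case I use the second description. The maps $\mathcal{L}(U) \to \mathcal{M}_j(U) \to \Coind_V^U \mathcal{M}_j(V)$ form a cone over the diagram $j \mapsto \Coind_V^U \mathcal{M}_j(V)$. Since $\Coind_V^U$ is a right adjoint by (SC1), it preserves limits, so this diagram has limit $\Coind_V^U \mathcal{L}(V)$. The induced map is the restriction map. In the colimit case I use the first description. $\Res_V^U$ is a left adjoint, so it preserves colimits, and $\Res_V^U \mathcal{L}(U) = \colim_j \Res_V^U \mathcal{M}_j(U)$ maps to $\mathcal{L}(V)$. Functoriality ($\rho_U^U = \id$ and the composition rule) then follows from the uniqueness part of the universal properties, since both sides agree after composing with each projection (resp.\ coprojection). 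This also shows that the projections $\mathcal{L} \to \mathcal{M}_j$ are natural.

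Next I check the universal property. Let $\mathcal{N} \in \PSh(X, \mathcal{T})$ be a presheaf with a compatible family $\mathcal{N} \to \mathcal{M}_j$. Its components $\mathcal{N}(U) \to \mathcal{M}_j(U)$ lie over $\id_U$ because $p \circ \mathcal{N}^\op = \id$, so they are morphisms in $\mathcal{T}(U)$. By the universal property in $\mathcal{T}(U)$, or equivalently by \Cref{lem:limit_in_T} when $J$ is nonempty, they factor uniquely through $\mathcal{L}(U)$. Naturality of the resulting map $\mathcal{N} \to \mathcal{L}$ is again checked after composing with the projections into each $\mathcal{M}_j(V)$, where it reduces to naturality of $\mathcal{N} \to \mathcal{M}_j$. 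The colimit case is dual.

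The one subtle point is empty $J$. Here I do not need \Cref{lem:limit_in_T}, because morphisms of presheaves are componentwise morphisms in the fibers. So $\mathcal{L}(U)$ is the terminal (resp.\ initial) object of $\mathcal{T}(U)$. Existence and uniqueness of maps into $\mathcal{L}$ are then automatic componentwise, and the restriction maps are forced. The main obstacle is keeping track of the fact that the restriction maps of $\mathcal{L}$ are built using the adjunction, not simply taken from $\mathcal{T}$. This is handled uniformly by the limit-preservation of $\Coind_V^U$ (resp.\ colimit-preservation of $\Res_V^U$).
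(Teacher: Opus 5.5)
Your proof is correct, but it takes a more hands-on route than the paper. The paper dismisses empty $J$ as straightforward. For nonempty $J$ it uses \Cref{lem:limit_in_T} to see that each fiberwise limit $\mathcal{L}(U)$ is already the limit of $(\mathcal{M}_j(U))_j$ in the total category $\mathcal{T}$. Since limits in $\Fun(\mathcal{B}^\op, \mathcal{T})$ are pointwise, $\mathcal{L}$, with restriction maps induced by functoriality of limits in $\mathcal{T}$, is the limit there, and hence also in the full subcategory $\PSh(X, \mathcal{T})$ containing it.

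You instead build the restriction maps from the (co)cartesian factorizations and test the universal property only against presheaves. Their components are fiber morphisms, so the universal property in $\mathcal{T}(U)$ suffices and \Cref{lem:limit_in_T} is never needed. The paper's route is shorter once \Cref{lem:limit_in_T} is available, and for nonempty $J$ it proves slightly more, namely a limit in $\Fun(\mathcal{B}^\op, \mathcal{T})$. Yours is self-contained. Moreover, because adjoints preserve terminal and initial objects, your general argument already covers empty $J$, so your separate paragraph on it is just a special case.

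One small simplification: you do not actually need $\Coind_V^U$ (resp.\ $\Res_V^U$) to preserve limits (resp.\ colimits). In the limit case, use the Cartesian description. The cone $\Res_V^U \mathcal{L}(U) \to \Res_V^U \mathcal{M}_j(U) \to \mathcal{M}_j(V)$ in $\mathcal{T}(V)$ then factors directly through $\mathcal{L}(V)$. Dually, in the colimit case the cocone $\mathcal{M}_j(U) \to \Coind_V^U \mathcal{M}_j(V) \to \Coind_V^U \mathcal{L}(V)$ in $\mathcal{T}(U)$ factors through $\mathcal{L}(U)$. This is exactly the mechanism inside the proof of \Cref{lem:limit_in_T}.
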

\begin{proof}
    We treat only the limit case, since the colimit case is analogous. When $J$ is empty, the claim is straightforward, so assume that $J$ is non-empty. By \Cref{lem:limit_in_T} and the functoriality of limits, we may assume that $\mathcal{L}$ is the limit of $\mathcal{M}$ in $\Fun(\mathcal{B}^\op, \mathcal{T})$. Then $\mathcal{L}$ satisfies the universal property of the limit of $\mathcal{M}$ in the full subcategory $\PSh(X, \mathcal{T})$.
\end{proof}

\begin{lemma}\label{lem:limit_coind_cover}
    Let $M \colon J \to \mathcal{T}$ be a diagram indexed by a small category $J$. Let $U_j = p(M_j)$ for each $j \in J$, and let $U = \bigcup_{j \in J} U_j$. If the limit of $\Coind_{U_j}^U M_j$ exists in $\mathcal{T}(U)$, then it is also the limit of $M$ in $\mathcal{T}$, and there is a natural isomorphism
    \[ \lim_{j \in J} \Coind_{U_j}^U M_j \xrightarrow{\sim} \lim_{j \in J} M_j. \]
\end{lemma}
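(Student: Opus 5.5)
We verify directly that $L \coloneqq \lim_{j} \Coind_{U_j}^U M_j$, taken in $\mathcal{T}(U)$, has the universal property of $\lim_j M_j$ in $\mathcal{T}$. It comes with cones
\[ L \to \Coind_{U_j}^U M_j \to M_j, \]
where the second arrow is the counit, i.e.\ the cocartesian lift over $U_j \subseteq U$. For a morphism $u \colon j \to j'$ in $J$, the map $M_u \colon M_j \to M_{j'}$ lies over $U_{j'} \subseteq U_j$. By the cocartesian factorization, $M_u$ induces a morphism $\Coind_{U_j}^U M_j \to \Coind_{U_{j'}}^U M_{j'}$ in $\mathcal{T}(U)$ compatible with the counits. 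These morphisms make $j \mapsto \Coind_{U_j}^U M_j$ a functor $J \to \mathcal{T}(U)$, and the limit in the hypothesis means the limit of this diagram. Consequently the composites $L \to M_j$ form a cone over $M$ in $\mathcal{T}$.

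Now let $N \in \mathcal{T}$ with $W \coloneqq p(N)$, and let $(f_j \colon N \to M_j)_j$ be a cone. Each $f_j$ lies over $U_j \subseteq W$, so $U = \bigcup_j U_j \subseteq W$. This is the key use of the hypothesis $U = \bigcup U_j$; it also covers the case $J$ empty, where $U$ is the initial object and every $N$ lies over something containing it. Factor $N \to \Res_U^W N$ cartesianly. Since $U_j \subseteq U \subseteq W$, each $f_j$ factors uniquely through $\Res_U^W N$. By the cocartesian factorization, the resulting map $\Res_U^W N \to M_j$ corresponds uniquely to a morphism
\[ g_j \colon \Res_U^W N \to \Coind_{U_j}^U M_j \]
in $\mathcal{T}(U)$. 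Uniqueness in both factorizations shows that the $g_j$ form a cone in $\mathcal{T}(U)$. They therefore induce a unique map $\Res_U^W N \to L$, and hence a map $N \to L$ over $U \subseteq W$ that recovers each $f_j$.

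For uniqueness, any $h \colon N \to L$ with the right components lies over $U \subseteq W$. Such an $h$ factors uniquely as $N \to \Res_U^W N \xrightarrow{h'} L$ with $h'$ in $\mathcal{T}(U)$. Its components are forced by the cartesian and cocartesian uniqueness to equal the $g_j$, so $h'$ is unique by the limit property in $\mathcal{T}(U)$.

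Finally, the displayed natural isomorphism $\lim_j \Coind_{U_j}^U M_j \xrightarrow{\sim} \lim_j M_j$ is simply the comparison map of limits induced by the counits $\Coind_{U_j}^U M_j \to M_j$. It is an isomorphism by the preceding argument. The main care needed is bookkeeping: checking that the cocartesian factorizations assemble functorially, and that the strict inclusion cases (morphisms in $\mathcal{T}$ between different fibers) compose correctly. Neither is conceptually hard once the two unique factorization properties are applied in turn.
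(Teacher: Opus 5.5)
Your proof is correct and follows the same route as the paper. The paper simply asserts that, by the cocartesian property of coinduction, $\lim_j \Coind_{U_j}^U M_j$ satisfies the universal property of $\lim_j M_j$ in $\mathcal{T}$. You spell this out: you build the diagram $j \mapsto \Coind_{U_j}^U M_j$, factor a cone cartesianly through $\Res_U^W N$, pass to $\mathcal{T}(U)$ via the adjunction, and handle the empty-$J$ case.
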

\begin{proof}
    By the cocartesian property of the coinduction functors, the limit on the left-hand side satisfies the universal property of the limit on the right-hand side in $\mathcal{T}$.
\end{proof}

\begin{definition}\label{def:sheaf_abstract}
    A presheaf $\mathcal{M} \in \PSh(X, \mathcal{T})$ is called a \emph{sheaf} if for every $U \in \mathcal{B}$ and every open cover $\mathcal{U}$ of $U$, the sequence
    \[ \mathcal{M}(U) \to \prod_{V \in \mathcal{U}} \mathcal{M}(V) \rightrightarrows \prod_{V, W \in \mathcal{U}} \mathcal{M}(V \cap W) \]
    is an equalizer diagram in $\mathcal{T}$. We denote by $\Sh(X, \mathcal{T})$ the full subcategory of such sheaves.
\end{definition}

By \Cref{lem:limit_coind_cover}, the sheaf condition is equivalent to requiring that
\[ \mathcal{M}(U) \to \prod_{V \in \mathcal{U}} \Coind_V^U \mathcal{M}(V) \rightrightarrows \prod_{V, W \in \mathcal{U}} \Coind_{V \cap W}^U \mathcal{M}(V \cap W) \]
is an equalizer diagram in $\mathcal{T}(U)$. We define the \emph{sheafification functor} to be the left adjoint of the forgetful functor $\Sh(X, \mathcal{T}) \to \PSh(X, \mathcal{T})$.

\begin{example}\label{ex:set_X}
    Let $X$ be a complex variety equipped with the Zariski topology. The fibered categories $\Set_X$, $\Mod_X$, and $\Rep_X$ introduced in \Cref{def:set_X} satisfy (\ref{axiom:SC1_pre}). Moreover, their sheafification functors exist by \Cref{cor:sheafification}.
\end{example}

\begin{example}\label{ex:dset_X}
    Let $X$ be a Noetherian scheme. For a profinite groupoid $G$, let $\Set(G)$ and $\Mod(G)$ denote the categories of discrete $G$-sets and discrete $G$-modules, respectively. The fibered categories $\Set^\et_X$ and $\Mod^\et_X$ over $\Open(X)$ are defined by
    \[ \Set^\et_X(U) \coloneqq \Set(\Pi_1^\et(U)) \quad\text{and}\quad \Mod^\et_X(U) \coloneqq \Mod(\Pi_1^\et(U)) \]
    respectively, with the usual restriction functors. They satisfy (\ref{axiom:SC1_pre}), as shown in \Cref{sec:profinite_groupoids}.
\end{example}

\begin{example}\label{ex:rep_X}
    Let $X$ be a smooth variety over a field $k$ of characteristic $0$. For an affine groupoid scheme $G$ over $k$, let $\Rep(G)$ denote the category of representations of $G$. The fibration $(\Rep^\alg_X)^\op \to \Open(X)$ is defined by $\Rep^\alg_X(U) \coloneqq \Rep(\Pi_1^\alg(U))$ with the usual restriction functors. It satisfies (\ref{axiom:SC1_pre}), as shown in \Cref{sec:affine_groupoids}.
\end{example}

\begin{example}\label{ex:qcoh_X}
    Let $X$ be a Noetherian scheme. For $U \in \Open(X)$, let $\QCoh(U)$ denote the category of quasi-coherent sheaves on $U$. The fibration $\QCoh_X^\op \to \Open(X)$ is defined by $\QCoh_X(U) \coloneqq \QCoh(U)$. For $V \subseteq U$, let $j_V^U \colon V \hookrightarrow U$ denote the inclusion. Its restriction and coinduction functors are given by the pullback and pushforward
    \[
    \begin{aligned}
        (j_V^U)^* \colon \QCoh(U) &\to \QCoh(V) \\
        \mathcal{F} &\mapsto \mathcal{F}|_V
    \end{aligned}
    \quad\text{and}\quad
    \begin{aligned}
        (j_V^U)_* \colon \QCoh(V) &\to \QCoh(U) \\
        \mathcal{G} &\mapsto (W \mapsto \mathcal{G}(W \cap V)),
    \end{aligned}
    \]
    respectively. Since $X$ is Noetherian, $j_V^U$ is quasi-compact and quasi-separated. Thus, $(j_V^U)_*$ preserves quasi-coherent sheaves by \cite[\href{https://stacks.math.columbia.edu/tag/01LC}{Tag~01LC}]{StacksProject}.
\end{example}

To develop sheaf theory in this setting, we need a general criterion for the existence of sheafification. A category of sheaves with values in a fibered category need not arise from a natural Grothendieck topology. In particular, $\Sh(X, \Rep^\alg_X)$ does not, so a different approach is required.

\subsection{Sheafification}\label{sec:sheafification}

We now construct the sheafification functor. For the remainder of this subsection, in addition to (\ref{axiom:SC1_pre}), we assume the following condition.

\begin{SC}[2]
    \item\label{axiom:SC2_pre} Each $\mathcal{T}(U)$ is locally finitely presentable.
\end{SC}

An object $A$ of a category $\mathcal{A}$ is \emph{finitely presentable} if $\Hom_{\mathcal{A}}(A, -)$ preserves filtered colimits \cite[Definition~1.1]{AdamekRosicky}. In particular, every morphism
\[ A \to \varinjlim_\alpha M_\alpha \]
factors through some $M_\alpha$. Moreover, two morphisms $f, g \colon A \to M_\alpha$ induce the same morphism to the colimit if their composites with some $M_\alpha \to M_\beta$ agree.

For a set $I$ of objects of $\mathcal{A}$, consider the functor
\begin{align*}
    \lambda \colon \mathcal{A} &\to \Set^I \\
    M &\mapsto (\Hom_{\mathcal{A}}(A, M))_{A \in I}.
\end{align*}
We call $I$ a \emph{strong generating set} if it is a generating set and $\lambda$ is conservative\footnote{A functor $\lambda$ is conservative if a morphism $f$ is an isomorphism whenever $\lambda(f)$ is an isomorphism.} \cite[1.9]{GabrielUlmer}; see \cite[Definition~0.6]{AdamekRosicky} for a reformulation. A category $\mathcal{A}$ is \emph{locally finitely presentable} if it admits all colimits and has a strong generating set consisting of finitely presentable objects \cite[Theorem~1.20 and Definition~1.9]{AdamekRosicky}. If $\mathcal{A}$ is locally finitely presentable, then the following hold.
\begin{enumerate}
    \item The category $\mathcal{A}$ has all limits \cite[Corollary~1.28]{AdamekRosicky}.
    \item\label{item:filtered_colimits_exact} Filtered colimits in $\mathcal{A}$ are exact\footnote{A functor is left exact, right exact, or exact if it preserves finite limits, finite colimits, or both, respectively.} \cite[Proposition~1.59]{AdamekRosicky}.
\end{enumerate}

The following construction is the analogue of the classical plus construction.

\begin{definition}\label{def:plus_construction_T}
    Let $\mathcal{M} \in \PSh(X, \mathcal{T})$. For an open cover $\mathcal{U}$ of $U$, set
    \[ \mathcal{M}(\mathcal{U}) \coloneqq \Eq \left( \prod_{V \in \mathcal{U}} \Coind_V^U \mathcal{M}(V) \rightrightarrows \prod_{V, W \in \mathcal{U}} \Coind_{V \cap W}^U \mathcal{M}(V \cap W) \right). \]
    The \emph{plus construction} $\mathcal{M}^+ \in \PSh(X, \mathcal{T})$ is defined by
    \[ \mathcal{M}^+(U) \coloneqq \varinjlim_{\mathcal{U}} \mathcal{M}(\mathcal{U}), \]
    where the colimit is taken over all open covers $\mathcal{U}$ of $U$ ordered by refinement. This defines a functor $(-)^+ \colon \PSh(X, \mathcal{T}) \to \PSh(X, \mathcal{T})$.
\end{definition}

We cannot use the standard proof that applying the plus construction twice yields a sheaf. The technical difficulty arises because coinduction functors need not commute with filtered colimits. Instead, we iterate the plus construction transfinitely.

\begin{definition}\label{def:transfinite_plus_construction}
    For every ordinal $\alpha$, define $\mathcal{M}^{(\alpha)}$ recursively by
    \[ \mathcal{M}^{(0)} \coloneqq \mathcal{M} \quad\text{and}\quad \mathcal{M}^{(\alpha+1)} \coloneqq \bigl(\mathcal{M}^{(\alpha)}\bigr)^+. \]
    For every limit ordinal $\beta$, define
    \[ \mathcal{M}^{(\beta)} \coloneqq \varinjlim_{\alpha < \beta} \mathcal{M}^{(\alpha)}. \]
    By transfinite induction, this defines a functor $(-)^{(\alpha)} \colon \PSh(X, \mathcal{T}) \to \PSh(X, \mathcal{T})$.
\end{definition}

\begin{lemma}\label{lem:uniform_regular_cardinal}
    There exists a regular cardinal $\kappa > |\mathcal{B}|$ such that, after identifying $\kappa$ with its initial ordinal, colimits indexed by $\kappa$ are preserved by (1) every $\Coind_V^U$ and (2) every $\kappa$-small limit functor in $\mathcal{T}(U)$.
\end{lemma}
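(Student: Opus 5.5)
We use that each $\mathcal{T}(U)$ is locally finitely presentable (\ref{axiom:SC2_pre}) and that $\Coind_V^U$ is a right adjoint (\ref{axiom:SC1_pre}) between locally presentable categories. The standard theory of accessible functors \cite[Ch.~2]{AdamekRosicky} then says that a right adjoint between locally presentable categories is accessible. So for each pair $V \subseteq U$ there is a regular cardinal $\kappa_{V,U}$ such that $\Coind_V^U$ preserves $\kappa_{V,U}$-filtered colimits. Since $\mathcal{B}$ is small, only a set of such pairs occurs. We also note that once a functor preserves $\lambda$-filtered colimits, it preserves $\mu$-filtered colimits for every regular $\mu \ge \lambda$, because a $\mu$-filtered category is $\lambda$-filtered.

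For (2), we use that in a locally finitely presentable category, $\lambda$-small limits commute with $\lambda$-filtered colimits for every regular cardinal $\lambda$ \cite[Proposition~1.59]{AdamekRosicky} and its $\lambda$-analogue. The reason is that $\mathcal{T}(U)$ is locally $\lambda$-presentable for every regular $\lambda$. We then reduce to $\Set$ via the strong generating set of $\lambda$-presentable objects: the hom-functors out of these generators preserve $\lambda$-filtered colimits and all limits, and they are jointly conservative, so the statement follows from the corresponding fact in $\Set$. Hence condition (2) holds automatically for any regular $\kappa$ once we regard $\kappa$, viewed as an ordinal (a well-ordered chain), as a $\kappa$-filtered index category. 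This uses regularity of $\kappa$: every subset of size $<\kappa$ of the chain $\kappa$ is bounded.

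It remains to choose $\kappa$ regular with $\kappa > |\mathcal{B}|$ and $\kappa \ge \kappa_{V,U}$ for all pairs. Take $\kappa$ to be the successor cardinal of $\sup\bigl(\{|\mathcal{B}|\} \cup \{\kappa_{V,U}\}\bigr)$. This is regular, since successor cardinals are regular under the axiom of choice. Then a colimit indexed by the ordinal $\kappa$ is $\kappa$-filtered, hence preserved by every $\Coind_V^U$ and by every $\kappa$-small limit functor in each $\mathcal{T}(U)$.

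The main obstacle is making the accessibility of $\Coind_V^U$ precise. The natural route is the theorem that every functor between locally presentable categories admitting a left adjoint is accessible \cite[Proposition~2.23 and its consequences]{AdamekRosicky}. If that citation proves awkward, we would argue directly. Choose $\lambda$ such that $\Res_V^U$ sends a chosen set of finitely presentable generators of $\mathcal{T}(U)$ to $\lambda$-presentable objects of $\mathcal{T}(V)$; every object is $\lambda$-presentable for some $\lambda$, and the set is small. Then adjunction gives $\Hom(A, \Coind_V^U \varinjlim M_i) \cong \Hom(\Res_V^U A, \varinjlim M_i)$ for $\lambda$-filtered colimits. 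Because the generators form a strong generating set of finitely presentable objects, this shows that the canonical comparison map out of $\varinjlim_i \Coind_V^U M_i$ is an isomorphism.
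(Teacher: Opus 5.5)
Your proof is correct and takes essentially the paper's route. The paper uses the accessibility of each right adjoint $\Coind_V^U$ via Ad\'amek--Rosick\'y's Proposition~2.23, a single regular $\kappa > |\mathcal{B}|$ dominating the set-many accessibility ranks, and Proposition~1.59 for the commutation of $\kappa$-small limits with $\kappa$-filtered colimits; your extra details (monotonicity in the cardinal, the reduction of (2) to $\Set$ via finitely presentable strong generators, the explicit successor-cardinal choice) soundly fill in steps the paper leaves implicit.
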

\begin{proof}
    By \cite[Proposition~2.23]{AdamekRosicky}, every $\Coind_V^U$ is accessible. Since there are only set-many coinduction functors, we may choose a regular cardinal $\kappa > |\mathcal{B}|$ such that they are all $\kappa$-accessible, proving (1). Assertion (2) follows from \cite[Proposition~1.59]{AdamekRosicky}.
\end{proof}

Fix such a $\kappa$ and set
\[ (-)^\# \coloneqq (-)^{(\kappa)} \colon \PSh(X, \mathcal{T}) \to \PSh(X, \mathcal{T}). \]
We show that this functor is the sheafification functor.

\begin{corollary}\label{cor:kappa_matching_object}
    For every open cover $\mathcal{U}$ of $U$, we have the natural isomorphism
    \[ \varinjlim_{\alpha < \kappa} \mathcal{M}^{(\alpha)}(\mathcal{U}) \xrightarrow{\sim} \mathcal{M}^{(\kappa)}(\mathcal{U}). \]
\end{corollary}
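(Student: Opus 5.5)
The plan is to unwind the definition of $\mathcal{M}^{(\kappa)}(\mathcal{U})$ and push the colimit over $\alpha < \kappa$ through each of the operations that define it, using the two preservation properties in \Cref{lem:uniform_regular_cardinal}.

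Since $\kappa$ is a limit ordinal, \Cref{def:transfinite_plus_construction} gives $\mathcal{M}^{(\kappa)} = \varinjlim_{\alpha < \kappa} \mathcal{M}^{(\alpha)}$ in $\PSh(X, \mathcal{T})$. By \Cref{lem:limits_in_psh}, this colimit is computed sectionwise in each fiber, since $\mathcal{T}(V)$ is cocomplete by (\ref{axiom:SC2_pre}). Hence $\mathcal{M}^{(\kappa)}(V) \cong \varinjlim_{\alpha<\kappa} \mathcal{M}^{(\alpha)}(V)$ in $\mathcal{T}(V)$ for every $V \in \mathcal{B}$, naturally in $V$.

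Next, I apply $\Coind_V^U$ for each $V \in \mathcal{U}$ and $\Coind_{V\cap W}^U$ for each pair. By \Cref{lem:uniform_regular_cardinal}(1), these functors preserve colimits indexed by $\kappa$. This gives natural isomorphisms
\[ \Coind_V^U \mathcal{M}^{(\kappa)}(V) \cong \varinjlim_{\alpha<\kappa} \Coind_V^U \mathcal{M}^{(\alpha)}(V). \]
The two parallel arrows defining $\mathcal{M}^{(\kappa)}(\mathcal{U})$ are induced by the restriction maps of the presheaves $\mathcal{M}^{(\alpha)}$, which are compatible in $\alpha$. The whole equalizer diagram is therefore the colimit over $\alpha<\kappa$ of the corresponding diagrams for $\mathcal{M}^{(\alpha)}$.

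Finally, $\mathcal{M}(\mathcal{U})$ is the limit in $\mathcal{T}(U)$ of a diagram whose objects are indexed by $\mathcal{U}$ and $\mathcal{U}\times\mathcal{U}$. Since $|\mathcal{U}| \le |\mathcal{B}| < \kappa$ and $\kappa$ is infinite, this diagram is $\kappa$-small. By \Cref{lem:uniform_regular_cardinal}(2), $\kappa$-small limits in $\mathcal{T}(U)$ commute with colimits indexed by $\kappa$. It follows that the canonical comparison map
\[ \varinjlim_{\alpha<\kappa} \mathcal{M}^{(\alpha)}(\mathcal{U}) \to \mathcal{M}^{(\kappa)}(\mathcal{U}) \]
is an isomorphism.

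The main point needing care is the second step. One must check that the isomorphism obtained by commuting colimits coincides with the canonical map induced by $\mathcal{M}^{(\alpha)} \to \mathcal{M}^{(\kappa)}$, and that the arrows of the diagram are natural in $\alpha$. Both follow because every map in sight is induced by the colimit cocone and by naturality of the coinduction functors.
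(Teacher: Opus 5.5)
Your proposal is correct and is essentially the paper's argument. The paper's proof only notes that the limit defining $\mathcal{M}(\mathcal{U})$ is $\kappa$-small and then invokes \Cref{lem:uniform_regular_cardinal}; you spell out the steps it leaves implicit (sectionwise computation of $\mathcal{M}^{(\kappa)}$, commuting each $\Coind$ with the $\kappa$-indexed colimit, then commuting the $\kappa$-small limit with it), and your bound $|\mathcal{U}| \le |\mathcal{B}| < \kappa$ is exactly what justifies the $\kappa$-smallness the paper asserts.
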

\begin{proof}
    The limit defining $\mathcal{M}(\mathcal{U})$ in \Cref{def:plus_construction_T} is $\kappa$-small. The result now follows from \Cref{lem:uniform_regular_cardinal}.
\end{proof}

\begin{lemma}\label{lem:transfinite_plus_sheaf}
    For every $\mathcal{M} \in \PSh(X, \mathcal{T})$, the presheaf $\mathcal{M}^\#$ is a sheaf.
\end{lemma}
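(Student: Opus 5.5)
The strategy is to use \Cref{cor:kappa_matching_object} to identify the matching object of $\mathcal{M}^{\#}$ over a cover with a $\kappa$-filtered colimit of matching objects of the earlier stages. The canonical maps $\mathcal{M}^{(\alpha)}(U) \to \mathcal{M}^{(\alpha)}(\mathcal{U})$, followed by the coprojections into $\mathcal{M}^{(\alpha+1)}(U)$, then let us argue that $\mathcal{M}^{(\kappa)}(U) \to \mathcal{M}^{(\kappa)}(\mathcal{U})$ is an isomorphism. Throughout, fix $U \in \mathcal{B}$ and an open cover $\mathcal{U}$ of $U$. By the remark after \Cref{def:sheaf_abstract} (using \Cref{lem:limit_coind_cover}), it suffices to show that the canonical morphism
\[ c \colon \mathcal{M}^{(\kappa)}(U) \to \mathcal{M}^{(\kappa)}(\mathcal{U}) \]
in $\mathcal{T}(U)$ is an isomorphism.

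First, for each $\alpha$ there is a canonical morphism $c_\alpha \colon \mathcal{M}^{(\alpha)}(U) \to \mathcal{M}^{(\alpha)}(\mathcal{U})$, natural in $\alpha$. There is also the coprojection $\iota_\alpha \colon \mathcal{M}^{(\alpha)}(\mathcal{U}) \to \mathcal{M}^{(\alpha+1)}(U)$ into the colimit defining the plus construction. I will check two compatibilities. The composite $\iota_\alpha \circ c_\alpha$ is the transition map $\mathcal{M}^{(\alpha)}(U) \to \mathcal{M}^{(\alpha+1)}(U)$; this holds because the trivial cover $\{U\}$ refines $\mathcal{U}$, and $\mathcal{M}(\{U\}) = \mathcal{M}(U)$. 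The composite $c_{\alpha+1} \circ \iota_\alpha$ is the transition map $\mathcal{M}^{(\alpha)}(\mathcal{U}) \to \mathcal{M}^{(\alpha+1)}(\mathcal{U})$. This second identity is checked componentwise after composing into each $\Coind_V^U \mathcal{M}^{(\alpha+1)}(V)$, using the adjunction and the restriction of covers $\mathcal{U}|_V = \{V \cap W\}_{W \in \mathcal{U}}$ as in the classical argument. It is a diagram chase with the unit and counit of $\Res \dashv \Coind$ and the cocycle compatibilities of coinduction, together with the fact that $\mathcal{U}|_V$ has $V \in \mathcal{U}|_V$ whenever $V \in \mathcal{U}$.

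Second, pass to the colimit over $\alpha < \kappa$. Since $\kappa$ is a limit ordinal, $\mathcal{M}^{(\kappa)}(U) = \varinjlim_{\alpha<\kappa} \mathcal{M}^{(\alpha)}(U)$, with colimits computed sectionwise by \Cref{lem:limits_in_psh}. By \Cref{cor:kappa_matching_object}, $\mathcal{M}^{(\kappa)}(\mathcal{U}) \cong \varinjlim_{\alpha<\kappa} \mathcal{M}^{(\alpha)}(\mathcal{U})$, and under this identification $c = \varinjlim c_\alpha$. The maps $\iota_\alpha$ assemble, by cofinality of successor ordinals in $\kappa$, into $\iota \colon \varinjlim_\alpha \mathcal{M}^{(\alpha)}(\mathcal{U}) \to \varinjlim_\alpha \mathcal{M}^{(\alpha+1)}(U) = \mathcal{M}^{(\kappa)}(U)$. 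The two identities of the previous step say exactly that $\iota \circ c$ and $c \circ \iota$ are the identities, being induced by the transition maps of the shifted colimit systems. Hence $c$ is an isomorphism, and $\mathcal{M}^\#$ is a sheaf.

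The main obstacle is the second compatibility, $c_{\alpha+1} \circ \iota_\alpha = $ transition. Coinduction need not commute with filtered colimits, so I cannot compute $\Coind_V^U \mathcal{M}^{(\alpha+1)}(V)$ directly. I would avoid this by working with morphisms \emph{into} the coinductions. By adjunction, a map $\mathcal{M}^{(\alpha)}(\mathcal{U}) \to \Coind_V^U \mathcal{M}^{(\alpha+1)}(V)$ is a map $\Res_V^U \mathcal{M}^{(\alpha)}(\mathcal{U}) \to \mathcal{M}^{(\alpha+1)}(V)$. Both candidate maps factor through $\mathcal{M}^{(\alpha)}(\mathcal{U}|_V)$, namely through the projection to the $V$-component and the coprojection at the cover $\mathcal{U}|_V$ refined by $\{V\}$. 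This needs only the universal properties of equalizers and colimits, and never requires coinduction to preserve colimits. The colimit interchange itself is delegated entirely to \Cref{cor:kappa_matching_object}.
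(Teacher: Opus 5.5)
Your proof is correct, and it takes a different route from the paper's.

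\textbf{The paper's route.} It tests $\sigma\colon\mathcal{M}^{(\kappa)}(U)\to\mathcal{M}^{(\kappa)}(\mathcal{U})$ against a strong generating set $I$ of finitely presentable objects. For $A\in I$, it shows that $\sigma\circ(-)$ is bijective on $\Hom(A,-)$: a morphism into $\mathcal{M}^{(\kappa)}(\mathcal{U})$ lifts to some stage $\mathcal{M}^{(\beta)}(\mathcal{U})$, using finite presentability and \Cref{cor:kappa_matching_object}, and is then pushed into $\mathcal{M}^{(\beta+1)}(U)$. Injectivity is handled the same way. Conservativity of $(\Hom(A,-))_{A\in I}$ then shows that $\sigma$ is an isomorphism.

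\textbf{Your route.} You write down the inverse $\iota=\varinjlim_\alpha\iota_\alpha$ directly, by interleaving the two $\kappa$-indexed systems. Write $t$ for the transition maps. Your two identities are $\iota_\alpha\circ c_\alpha=t_{\alpha,\alpha+1}(U)$ and $c_{\alpha+1}\circ\iota_\alpha=t_{\alpha,\alpha+1}(\mathcal{U})$. These are exactly what the paper uses without comment when it asserts that its displayed composites map to $f$ (which uses the second identity), or equal $f$ and $g$ (which uses the first).

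\textbf{Comparison.} The real content is the same in both. Your version isolates that content and needs no generators or finite presentability beyond what already enters the choice of $\kappa$. The paper's version is the Hom-wise shadow of yours and mirrors the classical element-chasing proof.

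\textbf{Two small points to tidy.}
\begin{enumerate}
\item Cofinality of successors alone does not make the $\iota_\alpha$ assemble into $\iota$; they must also form a cocone. This follows from your two identities and naturality of $c$. For $\alpha<\alpha'$, one has $\iota_{\alpha'}\circ t_{\alpha,\alpha'}(\mathcal{U})=\iota_{\alpha'}\circ c_{\alpha'}\circ t_{\alpha+1,\alpha'}(U)\circ\iota_\alpha=t_{\alpha+1,\alpha'+1}(U)\circ\iota_\alpha$.
\item The refinement runs the other way from how you stated it. $\mathcal{U}$ refines $\{U\}$, not conversely. The induced map $\mathcal{M}^{(\alpha)}(\{U\})=\mathcal{M}^{(\alpha)}(U)\to\mathcal{M}^{(\alpha)}(\mathcal{U})$ is $c_\alpha$, and this is what gives your first identity.
\end{enumerate}
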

\begin{proof}
    Fix $U \in \mathcal{B}$, an open cover $\mathcal{U}$ of $U$, and a strong generating set $I$ for $\mathcal{T}(U)$ consisting of finitely presentable objects. Let
    \[ \sigma \colon \mathcal{M}^{(\kappa)}(U) \to \mathcal{M}^{(\kappa)}(\mathcal{U}) \]
    be the natural morphism. To prove that $\mathcal{M}^{(\kappa)}$ is a sheaf, it suffices to show that $\sigma$ is an isomorphism. Equivalently, for every $A \in I$, it suffices to prove the bijectivity of
    \[ \sigma \circ (-) \colon \Hom_{\mathcal{T}(U)}\!\left(A, \mathcal{M}^{(\kappa)}(U)\right) \to \Hom_{\mathcal{T}(U)}\!\left(A, \mathcal{M}^{(\kappa)}(\mathcal{U})\right). \]

    For surjectivity, let $f \colon A \to \mathcal{M}^{(\kappa)}(\mathcal{U})$. By \Cref{cor:kappa_matching_object} and the finite presentability of $A$, the morphism $f$ has a lift $f^{(\beta)} \colon A \to \mathcal{M}^{(\beta)}(\mathcal{U})$ for some $\beta < \kappa$. The composite
    \[ A \overset{f^{(\beta)}}{\longrightarrow} \mathcal{M}^{(\beta)}(\mathcal{U}) \longrightarrow \mathcal{M}^{(\beta+1)}(U) \longrightarrow \mathcal{M}^{(\kappa)}(U) \]
    maps to $f$ under $\sigma \circ (-)$.

    For injectivity, suppose that $f, g \colon A \to \mathcal{M}^{(\kappa)}(U)$ have the same image under $\sigma \circ (-)$. Since $A$ is finitely presentable, $f$ and $g$ have lifts $f^{(\alpha)}, g^{(\alpha)} \colon A \to \mathcal{M}^{(\alpha)}(U)$ for some $\alpha < \kappa$. By \Cref{cor:kappa_matching_object}, for some $\beta$ satisfying $\alpha \le \beta < \kappa$, the lifts induce the same morphism $h \colon A \to \mathcal{M}^{(\beta)}(\mathcal{U})$ after composing with $\mathcal{M}^{(\alpha)}(U) \to \mathcal{M}^{(\alpha)}(\mathcal{U}) \to \mathcal{M}^{(\beta)}(\mathcal{U})$. Then both $f$ and $g$ are equal to the composite
    \[ A \overset{h}{\longrightarrow} \mathcal{M}^{(\beta)}(\mathcal{U}) \longrightarrow \mathcal{M}^{(\beta+1)}(U) \longrightarrow \mathcal{M}^{(\kappa)}(U). \qedhere \]
\end{proof}

Thus, we obtain a functor $(-)^\# \colon \PSh(X, \mathcal{T}) \to \Sh(X, \mathcal{T})$.

\begin{theorem}\label{thm:sheafification}
    The functor $(-)^\#$ is exact and fits into an adjunction
    \[ (-)^\# : \PSh(X, \mathcal{T}) \rightleftarrows \Sh(X, \mathcal{T}) : (-), \]
    where the right adjoint is the forgetful functor. Thus, $(-)^\#$ is the sheafification functor.
\end{theorem}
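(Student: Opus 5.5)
We split the proof into two parts: the adjunction and the exactness of $(-)^\#$.

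\textbf{Adjunction.} There is a natural map $\mathcal{M}(U) \to \mathcal{M}(\mathcal{U})$ for every cover, with $\mathcal{U}=\{U\}$ giving $\mathcal{M}(\{U\}) \cong \mathcal{M}(U)$ because $\Coind_U^U \cong \id$. Passing to the colimit over covers gives $\theta_{\mathcal{M}}\colon \mathcal{M} \to \mathcal{M}^+$. Composing transfinitely gives a unit $\eta_{\mathcal{M}}\colon \mathcal{M} \to \mathcal{M}^\#$, natural in $\mathcal{M}$.

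First we show that $\eta_{\mathcal{F}}$ is an isomorphism when $\mathcal{F}$ is a sheaf. By the sheaf condition in the coinduced form (\Cref{lem:limit_coind_cover}), every map $\mathcal{F}(U) \to \mathcal{F}(\mathcal{U})$ is an isomorphism. Hence the colimit over covers is a colimit of isomorphisms, and $\theta_{\mathcal{F}}$ is an isomorphism. The covers of $U$ ordered by refinement form a directed set, since common refinements exist. Moreover $\mathcal{F}^+$ is again a sheaf, being isomorphic to $\mathcal{F}$. By transfinite induction, using that colimits along isomorphisms are isomorphisms at limit stages, $\eta_{\mathcal{F}}$ is an isomorphism.

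Next we need the plus construction to be compatible with $\theta$: for every presheaf $\mathcal{M}$, $(\theta_{\mathcal{M}})^+ = \theta_{\mathcal{M}^+}$ as maps $\mathcal{M}^+ \to \mathcal{M}^{++}$. This is the analogue of the classical lemma. One checks it by refining covers: an element of $\mathcal{M}(\mathcal{U})$ restricted to $V \in \mathcal{U}$ becomes a section over the cover $\{V\cap W\}$. Here "element" should be read via maps from finitely presentable generators, or via Yoneda. The same identity then holds for all stages $(-)^{(\alpha)}$ by induction, so $(\eta_{\mathcal{M}})^\# = \eta_{\mathcal{M}^\#}$.

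The standard argument now applies. A map $\mathcal{M}\to\mathcal{F}$ with $\mathcal{F}$ a sheaf gives $\mathcal{M}^\#\to\mathcal{F}^\#\cong\mathcal{F}$. Conversely, precomposition with $\eta$ is a bijection, because $\eta$ is natural and $\eta_{\mathcal{F}}$ is invertible. In fact $(\mathcal{M}\mapsto\mathcal{M}^\#,\eta)$ is an idempotent monad whose algebras are exactly the sheaves, by \Cref{lem:transfinite_plus_sheaf} together with the previous step. This yields the adjunction.

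\textbf{Exactness.} Right exactness is automatic, since $(-)^\#$ is a left adjoint. Colimits in $\Sh$ exist by reflecting colimits computed in $\PSh$ via \Cref{lem:limits_in_psh}.

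For left exactness, finite limits in $\PSh(X,\mathcal{T})$ are sectionwise by \Cref{lem:limits_in_psh}. Finite limits of sheaves are sheaves, because limits commute with limits and $\Coind$ preserves limits as a right adjoint; so finite limits in $\Sh$ agree with those in $\PSh$. It therefore suffices to show that $(-)^+$ commutes with finite limits and that the transfinite colimits do too. Each of these holds for the following reasons:
\begin{enumerate}
\item $\mathcal{M}\mapsto\mathcal{M}(\mathcal{U})$ commutes with all limits, since it is built from $\Coind$ (a right adjoint), products and equalizers.
\item The colimit over covers is filtered, and filtered colimits in a locally finitely presentable $\mathcal{T}(U)$ are exact by property~(\ref{item:filtered_colimits_exact}).
\item The limit ordinal stages are filtered colimits as well.
\end{enumerate}
Hence $(-)^\#$ preserves finite limits.

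The main obstacle is the identity $(\theta_{\mathcal{M}})^+=\theta_{\mathcal{M}^+}$. Without elements, it has to be verified on finitely presentable generators $A$. Filtered colimits of covers do not commute with $\Coind$, so the argument must only use maps landing in $\mathcal{M}(\mathcal{U})$ before the colimit is taken. This is the same lifting technique used in \Cref{lem:transfinite_plus_sheaf}, and it may require enlarging $\kappa$ slightly.
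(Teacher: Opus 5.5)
Your exactness argument and your proof that $\eta_{\mathcal{F}}$ is invertible for a sheaf $\mathcal{F}$ are fine and match the paper. The gap is in the adjunction, at the sentence ``The same identity then holds for all stages $(-)^{(\alpha)}$ by induction, so $(\eta_{\mathcal{M}})^\# = \eta_{\mathcal{M}^\#}$.'' Your injectivity argument rests entirely on this transfinite identity, and no stagewise induction delivers it.

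The stagewise statement does not even typecheck. For infinite $\alpha$ one has $(\mathcal{M}^+)^{(\alpha)} \cong \mathcal{M}^{(1+\alpha)} = \mathcal{M}^{(\alpha)}$, whereas $(\mathcal{M}^{(\alpha)})^+ = \mathcal{M}^{(\alpha+1)}$. So $(\theta_{\mathcal{M}})^{(\alpha)}$ and $\theta_{\mathcal{M}^{(\alpha)}}$ have different targets, and $\mathcal{M}^{(\alpha)} \to \mathcal{M}^{(\alpha+1)}$ is in general not invertible. The cause is exactly what forces the transfinite iteration: $(-)^+$ involves $\Coind$, which does not commute with the colimits at limit ordinals.

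Worse, once $\mathcal{M}^\#$ is known to be a sheaf and $\eta_{\mathcal{F}}$ is invertible for sheaves, the identity $(\eta_{\mathcal{M}})^\# = \eta_{\mathcal{M}^\#}$ is equivalent to uniqueness of extensions along $\eta_{\mathcal{M}}$. To see this, apply uniqueness to $\eta_{\mathcal{M}^\#}^{-1} \circ (\eta_{\mathcal{M}})^\#$ and $\id$, both of which restrict to $\eta_{\mathcal{M}}$. So the identity cannot serve as the input. Like the idempotent-monad structure you invoke, it is a consequence of the adjunction.

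The repair is to use your identity only at the single step and to induct on extensions rather than on identities. Let $\mathcal{F}$ be a sheaf and $g \colon \mathcal{M}^+ \to \mathcal{F}$. Naturality of $\theta$ and $(\theta_{\mathcal{M}})^+ = \theta_{\mathcal{M}^+}$ give $\theta_{\mathcal{F}} \circ g = g^+ \circ \theta_{\mathcal{M}^+} = (g \circ \theta_{\mathcal{M}})^+$. Hence $g = \theta_{\mathcal{F}}^{-1} \circ (g \circ \theta_{\mathcal{M}})^+$ is determined by $g \circ \theta_{\mathcal{M}}$, while $\theta_{\mathcal{F}}^{-1} \circ f^+$ gives existence. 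Thus every $\mathcal{M} \to \mathcal{F}$ factors uniquely through $\mathcal{M} \to \mathcal{M}^+$.

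Apply this at successor stages, and use the universal property of $\varinjlim_{\alpha < \beta} \mathcal{M}^{(\alpha)}$ at limit stages. This yields a unique extension $\mathcal{M}^{(\kappa)} \to \mathcal{F}$, which together with \Cref{lem:transfinite_plus_sheaf} is the adjunction. This is precisely the paper's proof; the paper reads off the single-step unique factorization directly from the sheaf condition of $\mathcal{F}$.

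What you call the main obstacle is not one: the single-step identity is formal. Test both maps on each $\mathcal{M}(\mathcal{U})$, routing $\theta_{\mathcal{M}^+}$ through the refinement map $\mathcal{M}^+(U) \to \mathcal{M}^+(\mathcal{U})$. The monomorphism $\mathcal{M}^+(\mathcal{U}) \to \prod_{V \in \mathcal{U}} \Coind_V^U \mathcal{M}^+(V)$ then reduces everything to one check. Namely, $\mathcal{M}(\mathcal{U}) \to \mathcal{M}^+(U) \to \Coind_V^U \mathcal{M}^+(V)$ must equal the projection to $\Coind_V^U \mathcal{M}(V)$ followed by $\Coind_V^U (\theta_{\mathcal{M}})_V$. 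This holds because, for $V \in \mathcal{U}$, the restricted cover $\{V \cap W\}_{W \in \mathcal{U}}$ contains $V$ and is therefore equivalent to $\{V\}$ under refinement. No finitely presentable generators are needed, and $\kappa$ plays no role in a single plus step, so there is nothing to enlarge.
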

\begin{proof}
    Let $\mathcal{M}$ be a presheaf and $\mathcal{N}$ a sheaf. By the sheaf condition of $\mathcal{N}$, every morphism $\mathcal{M} \to \mathcal{N}$ factors uniquely through the canonical morphism $\mathcal{M} \to \mathcal{M}^+$. Hence, by transfinite induction, every morphism $\mathcal{M} \to \mathcal{N}$ extends uniquely to $\mathcal{M}^{(\alpha)} \to \mathcal{N}$ for every $\alpha \le \kappa$. In particular, it extends uniquely to $\mathcal{M}^\# \to \mathcal{N}$, proving the adjunction.

    Products and equalizers preserve limits, and coinduction functors preserve limits because they are right adjoints. By (\ref{item:filtered_colimits_exact}), filtered colimits in $\mathcal{T}(U)$ are exact. Hence, $(-)^+$ is left exact, and transfinite induction shows that $(-)^\#$ is left exact. As a left adjoint, $(-)^\#$ is right exact. Therefore, it is exact.
\end{proof}

\begin{remark}\label{rem:double_plus_sheafification}
    Suppose further that every coinduction functor commutes with filtered colimits. For $U \in \mathcal{B}$, consider the presheaf $\mathcal{M}^U \in \PSh(U, \mathcal{T}(U))$ defined by
    \[ \mathcal{M}^U(V) \coloneqq \Coind_V^U \mathcal{M}(V). \]
    Since coinduction preserves limits and filtered colimits, the plus construction commutes with coinduction. Hence,
    \[ (\mathcal{M}^{++})^U \cong (\mathcal{M}^U)^{++}. \]
    By \cite[Theorem~8.6.3]{Schapira}, the right-hand side is the sheaf associated to $\mathcal{M}^U$. Since $U$ is arbitrary, $\mathcal{M}^{++}$ is the sheaf associated to $\mathcal{M}$, and therefore $(-)^\# \cong (-)^{++}$.

    This supposition tends to hold for algebraically defined examples. For instance, it holds for $\Set^\et_X$, $\Mod^\et_X$, $\Rep^\alg_X$, and $\QCoh_X$. By contrast, the coinduction functors for $\Set_X$ and $\Mod_X$ need not commute with filtered colimits.
\end{remark}

Before proceeding, we prove the following lemma for later use.

\begin{lemma}\label{lem:morphisms_on_basis}
    Let $\mathcal{M} \in \PSh(X, \mathcal{T})$, let $\mathcal{N} \in \Sh(X, \mathcal{T})$, and let $\mathfrak{U}$ be a basis for $X$. If two morphisms $\eta, \xi \colon \mathcal{M} \to \mathcal{N}$ satisfy $\eta_U = \xi_U$ for every $U \in \mathfrak{U}$, then $\eta = \xi$.
\end{lemma}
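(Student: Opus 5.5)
The plan is to use the sheaf condition for $\mathcal{N}$ in its coinduced form, as reformulated after \Cref{def:sheaf_abstract} via \Cref{lem:limit_coind_cover}. Fix an arbitrary $U \in \mathcal{B}$. We must show $\eta_U = \xi_U$ as morphisms $\mathcal{M}(U) \to \mathcal{N}(U)$ in $\mathcal{T}(U)$. Since $\mathfrak{U}$ is a basis, choose an open cover $\mathcal{U} \subseteq \mathfrak{U}$ of $U$ with every $V \in \mathcal{U}$ contained in $U$. Because $\mathcal{N}$ is a sheaf, the canonical map
\[ \mathcal{N}(U) \to \prod_{V \in \mathcal{U}} \Coind_V^U \mathcal{N}(V) \]
is a monomorphism in $\mathcal{T}(U)$, being the inclusion of an equalizer. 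Equivalently, by \Cref{lem:limit_coind_cover}, the family of restriction morphisms $\mathcal{N}(U) \to \mathcal{N}(V)$ in $\mathcal{T}$, for $V \in \mathcal{U}$, is jointly monic in $\mathcal{T}$.

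It then suffices to check that $\eta_U$ and $\xi_U$ agree after composing with each restriction $r_V \colon \mathcal{N}(U) \to \mathcal{N}(V)$. Naturality of $\eta$ and $\xi$ as morphisms in $\Fun(\mathcal{B}^\op, \mathcal{T})$ gives
\[ r_V \circ \eta_U = \eta_V \circ \mathcal{M}(V \subseteq U) \quad\text{and}\quad r_V \circ \xi_U = \xi_V \circ \mathcal{M}(V \subseteq U). \]
Since $V \in \mathfrak{U}$, the hypothesis gives $\eta_V = \xi_V$, so the two right-hand sides coincide. Joint monicity of the $r_V$ then yields $\eta_U = \xi_U$.

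The only point needing care is the translation between morphisms in the total category $\mathcal{T}$, where the restrictions $r_V$ live and where naturality is stated, and morphisms in the fiber $\mathcal{T}(U)$. I would handle it by invoking the cocartesian factorization: each $r_V$ factors uniquely as $\mathcal{N}(U) \to \Coind_V^U \mathcal{N}(V) \to \mathcal{N}(V)$. Consequently, two morphisms $\mathcal{M}(U) \to \mathcal{N}(U)$ in $\mathcal{T}(U)$ that agree after composing with every $r_V$ also agree after composing with every fiberwise map to $\Coind_V^U \mathcal{N}(V)$. Hence they agree after composing into the product, and therefore agree, because the map into the product is monic. Nothing here uses \ref{axiom:SC2_pre}; only \ref{axiom:SC1_pre} and the sheaf condition are needed.
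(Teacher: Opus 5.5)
Your proposal is correct and is essentially the paper's proof: cover $U$ by members of $\mathfrak{U}$, use that the sheaf map $\mathcal{N}(U) \to \prod_{V \in \mathcal{U}} \Coind_V^U \mathcal{N}(V)$ is monic, and check agreement componentwise. Your use of naturality in $\mathcal{T}$ together with the cocartesian factorization of $r_V$ is the same step the paper carries out via the adjunction $\Res_V^U \dashv \Coind_V^U$, just phrased in the total category.
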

\begin{proof}
    Fix $U \in \mathcal{B}$, and choose an open cover $\mathcal{U}$ of $U$ by elements of $\mathfrak{U}$. Then
    \[ \mathcal{N}(U) \to \prod_{V \in \mathcal{U}} \Coind_V^U \mathcal{N}(V) \rightrightarrows \prod_{V, W \in \mathcal{U}} \Coind_{V \cap W}^U \mathcal{N}(V \cap W) \]
    is an equalizer diagram. For every $V \in \mathcal{U}$, the composites
    \[
    \begin{tikzcd}[column sep=1.5em]
        \mathcal{M}(U) \arrow[r, shift left, "\eta_U"] \arrow[r, shift right, "\xi_U"'] & \mathcal{N}(U) \arrow[r] & \Coind_V^U \mathcal{N}(V)
    \end{tikzcd}
    \]
    are adjoint to
    \[
    \begin{tikzcd}[column sep=1.5em]
        \Res_V^U \mathcal{M}(U) \arrow[r] & \mathcal{M}(V) \arrow[r, shift left, "\eta_V"] \arrow[r, shift right, "\xi_V"'] & \mathcal{N}(V),
    \end{tikzcd}
    \]
    which agree because $\eta_V = \xi_V$. Hence, the two composites $\mathcal{M}(U) \rightrightarrows \Coind_V^U \mathcal{N}(V)$ agree. Since the first arrow in the equalizer diagram is a monomorphism, $\eta_U = \xi_U$. Thus, $\eta = \xi$.
\end{proof}

\subsection{Enough Injectives}\label{sec:enough_injectives}

In this subsection, we show that, under suitable conditions on $\mathcal{T}$, the category $\Sh(X, \mathcal{T})$ is a Grothendieck category\footnote{A Grothendieck category is an AB5 category with a generator.} and hence has enough injectives \cite[Th\'eor\`eme~1.10.1]{Grothendieck}. Consequently, every left exact functor from $\Sh(X, \mathcal{T})$ to an abelian category $\mathcal{A}$ admits right derived functors, which we use to define sheaf cohomology. Together with (\ref{axiom:SC1_pre}) and (\ref{axiom:SC2_pre}), we assume the following conditions.

\begin{SC}[3]
    \item\label{axiom:SC3_pre} Each $\Res_V^U$ is left exact.
    \item\label{axiom:SC4_pre} Each $\mathcal{T}(U)$ is an abelian category.
\end{SC}

Since each $\Res_V^U$ is a left adjoint, it is right exact, and hence exact by (\ref{axiom:SC3_pre}). We first show that both $\PSh(X, \mathcal{T})$ and $\Sh(X, \mathcal{T})$ have a generator.

\begin{definition}\label{def:extension_by_zero}
    Given $U \in \mathcal{B}$ and $M \in \mathcal{T}(U)$, we define $M_U^- \in \PSh(X, \mathcal{T})$ by
    \[
    M_U^-(V) \coloneqq
    \begin{cases}
        \Res_V^U M & \text{if } V \subseteq U, \\
        0 & \text{otherwise.}
    \end{cases}
    \]
    This construction is functorial in $M$, so we have functors
    \[ (-)_U^- \colon \mathcal{T}(U) \to \PSh(X, \mathcal{T}) \quad\text{and}\quad (-)_U \coloneqq (-)^\# \circ (-)_U^- \colon \mathcal{T}(U) \to \Sh(X, \mathcal{T}). \]
\end{definition}

\begin{lemma}\label{lem:extension_by_zero_adjunction}
    Given $U \in \mathcal{B}$, there are adjunctions
    \begin{align*}
        (-)_U^- : \mathcal{T}(U) &\rightleftarrows \PSh(X, \mathcal{T}) : \Gamma(U, -) \quad \text{and} \\
        (-)_U : \mathcal{T}(U) &\rightleftarrows \Sh(X, \mathcal{T}) : \Gamma(U, -).
    \end{align*}
    Moreover, both $(-)_U^-$ and $(-)_U$ are exact.
\end{lemma}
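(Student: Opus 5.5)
First I will establish the presheaf adjunction $(-)_U^- \dashv \Gamma(U,-)$ directly. Then I will deduce the sheaf adjunction by composing with \Cref{thm:sheafification}. Exactness will follow from sectionwise exactness together with exactness of $(-)^\#$.

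For the first adjunction, let $\mathcal{N} \in \PSh(X,\mathcal{T})$ and $M \in \mathcal{T}(U)$. A morphism $\eta \colon M_U^- \to \mathcal{N}$ consists of morphisms $\eta_V \colon M_U^-(V) \to \mathcal{N}(V)$ in $\mathcal{T}(V)$, compatible with restriction. For $V \not\subseteq U$ we have $M_U^-(V)=0$, so $\eta_V = 0$ is forced. Compatibility then holds automatically: any restriction out of $M_U^-(W)$ with $W \not\subseteq U$ starts at $0$. The case $V \subseteq U \subseteq W$ is impossible unless $W=U$.

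For $V \subseteq U$, the restriction maps of $M_U^-$ are the cartesian lifts $M \to \Res_V^U M$. Hence compatibility means exactly that $\eta_V$ is the morphism induced from $\eta_U$ via $\Res_V^U\eta_U$ followed by the restriction $\Res_V^U\mathcal{N}(U) \to \mathcal{N}(V)$. So $\eta$ is determined by $\eta_U \in \Hom_{\mathcal{T}(U)}(M,\mathcal{N}(U))$. Conversely, any such $\eta_U$ defines a compatible family by this formula; the cocycle identities follow from the cleavage being functorial up to the canonical isomorphisms $\Res_W^V\Res_V^U \cong \Res_W^U$. This gives a bijection, natural in both variables. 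I expect this to be the only step requiring care, because the cleavage is merely a chosen one (a pseudofunctor), so the coherence isomorphisms must be tracked. I would phrase everything via the cartesian factorization property to avoid explicit coherence. One also has to treat $V \subseteq U$ versus arbitrary $V$ via the poset structure of $\mathcal{B}$, noting that a morphism $W \to V$ with $V \subseteq U$ forces $W \subseteq U$.

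The sheaf adjunction then follows formally. For $\mathcal{N} \in \Sh(X,\mathcal{T})$,
\[ \Hom_{\Sh}(M_U,\mathcal{N}) \cong \Hom_{\PSh}(M_U^-,\mathcal{N}) \cong \Hom_{\mathcal{T}(U)}(M,\mathcal{N}(U)) \]
by \Cref{thm:sheafification} and the first adjunction.

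For exactness, $(-)_U^-$ is exact because finite limits and colimits in $\PSh(X,\mathcal{T})$ are computed sectionwise by \Cref{lem:limits_in_psh}. These exist since each $\mathcal{T}(V)$ is abelian by (\ref{axiom:SC4_pre}). On each section $V\subseteq U$ the functor is $\Res_V^U$, which is exact by (\ref{axiom:SC3_pre}) and its left adjointness, and on the other sections it is the zero functor, which is exact. Composing with the exact functor $(-)^\#$ of \Cref{thm:sheafification} shows that $(-)_U$ is exact.
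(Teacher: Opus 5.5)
Your proposal is correct and is essentially the paper's proof written out in more detail. The presheaf adjunction comes from the cartesian factorization of the restriction maps, which is what the paper calls the cocycle condition of restriction functors. The sheaf adjunction comes from the universal property of $(-)^\#$, and exactness holds sectionwise by (\ref{axiom:SC3_pre}) and then passes through the exact functor $(-)^\#$.

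Two small points. The uniqueness of maps out of the zero sections over an inclusion $W' \subseteq W$ uses that $\Res_{W'}^{W}$, being a left adjoint, preserves the initial object; you leave this implicit. Your aside that ``$V \subseteq U \subseteq W$ is impossible unless $W=U$'' is not literally true, but it is also unnecessary: for $W \neq U$ one has $M_U^-(W)=0$, and your zero case already covers this.
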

\begin{proof}
    The first adjunction is an immediate consequence of the cocycle condition of restriction functors. The second adjunction follows from the first and the universal property of sheafification. The exactness of $(-)_U^-$ follows from (\ref{axiom:SC3_pre}), and then that of $(-)_U$ from \Cref{thm:sheafification}.
\end{proof}

\begin{lemma}\label{lem:generator}
    The categories $\PSh(X, \mathcal{T})$ and $\Sh(X, \mathcal{T})$ have a generator.
\end{lemma}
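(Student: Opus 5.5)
The plan is to build a generating set from the objects $A_U^-$ and $A_U$, where $U$ ranges over $\mathcal{B}$ and $A$ ranges over a strong generating set $I_U$ of $\mathcal{T}(U)$. Such an $I_U$ consisting of finitely presentable objects exists by (\ref{axiom:SC2_pre}). Since $\mathcal{B}$ is small and each $I_U$ is a set, the resulting family is a set. Both categories are additive and have coproducts; for $\PSh$ this follows from \Cref{lem:limits_in_psh} and (\ref{axiom:SC4_pre}), and for $\Sh$ by sheafifying coproducts, using that $(-)^\#$ is a left adjoint. Hence a generating set can be collapsed into a single generator by taking the coproduct $\bigoplus_{U}\bigoplus_{A\in I_U} A_U^-$, and respectively the same coproduct of the $A_U$.

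The key step is to check the generating property in $\PSh(X,\mathcal{T})$. Let $\eta,\xi\colon\mathcal{M}\to\mathcal{N}$ be distinct morphisms of presheaves. Then $\eta_U\neq\xi_U$ for some $U\in\mathcal{B}$. Because $I_U$ generates $\mathcal{T}(U)$, there is a morphism $a\colon A\to\mathcal{M}(U)$ with $A\in I_U$ such that $\eta_U\circ a\neq\xi_U\circ a$. Under the adjunction of \Cref{lem:extension_by_zero_adjunction}, $a$ corresponds to a morphism $\tilde a\colon A_U^-\to\mathcal{M}$. By naturality of the adjunction, $\eta\circ\tilde a$ and $\xi\circ\tilde a$ correspond to $\eta_U\circ a$ and $\xi_U\circ a$, so they differ. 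Equivalently, since we work in abelian categories, it suffices to test against a single morphism $\eta-\xi\neq0$.

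For $\Sh(X,\mathcal{T})$, the argument is the same, now using the second adjunction $(-)_U\dashv\Gamma(U,-)$ of \Cref{lem:extension_by_zero_adjunction}. Morphisms of sheaves are just morphisms of the underlying presheaves, because $\Sh$ is a full subcategory. Thus two distinct morphisms of sheaves differ at some $\Gamma(U,-)$, and the computation above goes through with $A_U$ in place of $A_U^-$.

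The only step needing care is the existence of the coproducts that turn the generating set into a single generator. The concern is that $\mathcal{T}$ itself may lack initial objects, as the remark after \Cref{lem:limit_in_T} warns. However, \Cref{lem:limits_in_psh} only requires sectionwise colimits in each fiber $\mathcal{T}(V)$, and these exist because each fiber is locally finitely presentable. The empty-index case is also harmless, since $\mathcal{T}(V)$ has a zero object. So this obstacle is minor, and the rest is formal.
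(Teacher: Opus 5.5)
Your proposal is correct and follows the paper's proof. In both, the families $\{A_U^-\}$ and $\{A_U\}$, with $A$ running over a finitely presentable generating set of $\mathcal{T}(U)$, generate $\PSh(X,\mathcal{T})$ and $\Sh(X,\mathcal{T})$ via the adjunctions of \Cref{lem:extension_by_zero_adjunction}, and their coproducts give the generators. You add details the paper leaves implicit; note that arbitrary coproducts come from the cocompleteness in (\ref{axiom:SC2_pre}), not from (\ref{axiom:SC4_pre}), as you yourself say at the end.
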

\begin{proof}
    For every $U \in \mathcal{B}$, choose a generating set $I_U$ for $\mathcal{T}(U)$ consisting of finitely presentable objects, which exists by (\ref{axiom:SC2_pre}). By \Cref{lem:extension_by_zero_adjunction},
    \[ \left\{ M_U^- \;\middle|\; U \in \mathcal{B}, \, M \in I_U \right\} \quad\text{and}\quad \left\{ M_U \;\middle|\; U \in \mathcal{B}, \, M \in I_U \right\} \]
    are generating sets for $\PSh(X, \mathcal{T})$ and $\Sh(X, \mathcal{T})$, respectively. Taking their coproducts gives a generator of the respective category.
\end{proof}

We next show that both $\PSh(X, \mathcal{T})$ and $\Sh(X, \mathcal{T})$ are AB5 categories.

\begin{lemma}\label{lem:PSh_preadditive}
    The category $\PSh(X, \mathcal{T})$ is preadditive.
\end{lemma}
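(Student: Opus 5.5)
To show that $\PSh(X, \mathcal{T})$ is preadditive, I will put an abelian group structure on each hom-set $\Hom_{\PSh(X,\mathcal{T})}(\mathcal{M}, \mathcal{N})$ and then check that composition is bilinear.

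A morphism $\eta \colon \mathcal{M} \to \mathcal{N}$ of presheaves is a natural transformation whose components $\eta_U \colon \mathcal{M}(U) \to \mathcal{N}(U)$ lie over $\id_U$. Since $p \circ \mathcal{M}^\op = \id_{\mathcal{B}} = p \circ \mathcal{N}^\op$, each component is a morphism in the fiber $\mathcal{T}(U)$. By (\ref{axiom:SC4_pre}) this fiber is abelian, so $\Hom_{\mathcal{T}(U)}(\mathcal{M}(U), \mathcal{N}(U))$ is an abelian group. I define $(\eta + \xi)_U \coloneqq \eta_U + \xi_U$, take the zero morphism to have components $0$, and set $(-\eta)_U \coloneqq -\eta_U$.

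The main step is to show that $\eta + \xi$ is again natural. Fix $V \subseteq U$. As in the discussion after \Cref{lem:limit_in_T}, the restriction morphism $\mathcal{M}(U) \to \mathcal{M}(V)$ factors as the cartesian arrow $\mathcal{M}(U) \to \Res_V^U \mathcal{M}(U)$ followed by a fiber morphism $m \colon \Res_V^U\mathcal{M}(U) \to \mathcal{M}(V)$. The restriction of $\mathcal{N}$ factors in the same way, with a fiber morphism $n \colon \Res_V^U\mathcal{N}(U) \to \mathcal{N}(V)$. Using the uniqueness in the cartesian factorization, naturality of $\eta$ at $V \subseteq U$ becomes an identity in $\mathcal{T}(V)$:
\[ \eta_V \circ m = n \circ \Res_V^U(\eta_U). \]
The functor $\Res_V^U$ is exact by (\ref{axiom:SC3_pre}), so it is additive and hence additive on hom-groups. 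Adding the identities for $\eta$ and $\xi$, and using bilinearity of composition in the abelian category $\mathcal{T}(V)$, gives
\[ (\eta_V + \xi_V)\circ m = n \circ \Res_V^U(\eta_U + \xi_U). \]
This is exactly the naturality of $\eta + \xi$. The same argument applies to $-\eta$ and to $0$; the latter uses $\Res_V^U(0) = 0$. The group axioms then hold componentwise.

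Bilinearity of composition also holds componentwise, because composition in each $\mathcal{T}(U)$ is bilinear. Hence $\PSh(X, \mathcal{T})$ is preadditive.

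The one point that needs care is the translation of naturality into fiber identities using the cleavage, and in particular that $\Res_V^U$ is additive. This is the only place where (\ref{axiom:SC3_pre}) enters; (\ref{axiom:SC1_pre}) alone would suffice, since left adjoints between additive categories are additive.
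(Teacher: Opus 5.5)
Your proposal is correct and follows the paper's proof essentially step for step. Both define addition sectionwise, use the cartesian factorization to reduce naturality to the identity $\eta_V \circ m = n \circ \Res_V^U(\eta_U)$ in $\mathcal{T}(V)$, and add these identities using the additivity of $\Res_V^U$. Your side remark is also right: a left adjoint between additive categories preserves finite biproducts and is therefore additive, so (SC1) together with abelian fibers already suffices at that step, whereas the paper cites exactness via (SC3).
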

\begin{proof}
    For $f, g \colon \mathcal{M} \to \mathcal{N}$, define $(f+g)_U \coloneqq f_U + g_U$. To show that $f+g$ is a morphism of presheaves, it suffices to verify that the following diagram commutes for every $V \subseteq U$ in $\mathcal{B}$.
    \[
    \begin{tikzcd}
        \mathcal{M}(U) \arrow[r] \arrow[d, "f_U + g_U"'] 
            & \Res_V^U \mathcal{M}(U) \arrow[r, "\alpha_V^U"] \arrow[d, "\Res_V^U(f_U + g_U)"] 
            & \mathcal{M}(V) \arrow[d, "f_V + g_V"] \\
        \mathcal{N}(U) \arrow[r] 
            & \Res_V^U \mathcal{N}(U) \arrow[r, "\beta_V^U"'] 
            & \mathcal{N}(V)
    \end{tikzcd}
    \]
    The left square commutes by the definition of $\Res_V^U$ on morphisms. Since $\Res_V^U$ is exact, it is additive \cite[\href{https://stacks.math.columbia.edu/tag/0DLP}{Tag~0DLP}]{StacksProject}. Thus,
    \begin{align*}
        (f_V + g_V) \circ \alpha_V^U &= f_V \circ \alpha_V^U + g_V \circ \alpha_V^U \\
        &= \beta_V^U \circ \Res_V^U(f_U) + \beta_V^U \circ \Res_V^U(g_U) \\
        &= \beta_V^U \circ \Res_V^U(f_U + g_U).
    \end{align*}
    Therefore, the right square also commutes. 
    
    The zero morphism and additive inverses are likewise defined sectionwise and are morphisms of presheaves by the same argument. Hence, each Hom-set is an abelian group under sectionwise addition. Since composition is bilinear sectionwise, it is bilinear for morphisms of presheaves. Therefore, $\PSh(X, \mathcal{T})$ is preadditive.
\end{proof}

\begin{lemma}\label{lem:psh_ab5}
    The category $\PSh(X, \mathcal{T})$ is an AB5 category.
\end{lemma}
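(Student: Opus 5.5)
We show that $\PSh(X, \mathcal{T})$ is abelian, has all colimits, and that filtered colimits in it are exact. All of these constructions will be made sectionwise, using \Cref{lem:limits_in_psh} to identify sectionwise limits and colimits with the categorical ones. By (\ref{axiom:SC2_pre}) each fiber $\mathcal{T}(U)$ has all small limits and colimits. Hence \Cref{lem:limits_in_psh} shows that $\PSh(X, \mathcal{T})$ is complete and cocomplete, with limits and colimits computed in each $\mathcal{T}(U)$. It is preadditive by \Cref{lem:PSh_preadditive}, and finite products, being sectionwise biproducts, are biproducts. It is therefore additive and has kernels and cokernels. Concretely, for $f \colon \mathcal{M} \to \mathcal{N}$, the kernel $\ker(f)(U) = \ker(f_U)$ and the cokernel $\operatorname{coker}(f)(U) = \operatorname{coker}(f_U)$ are computed in $\mathcal{T}(U)$.

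The main point is to check that the restriction maps of these sectionwise objects are compatible, so that the sectionwise description really is the one produced by \Cref{lem:limits_in_psh}. Here we use that each $\Res_V^U$ is exact: this follows from (\ref{axiom:SC3_pre}) together with the fact that it is a left adjoint. Exactness means $\Res_V^U$ commutes with the kernels and cokernels appearing in the restriction morphisms $\Res_V^U \mathcal{M}(U) \to \mathcal{M}(V)$, so these morphisms descend to the sectionwise kernels and cokernels. \Cref{lem:limits_in_psh} already settles existence, but exactness of $\Res_V^U$ is what guarantees that the canonical map $\operatorname{coim}(f) \to \operatorname{im}(f)$ is computed sectionwise. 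In each section this map is $\operatorname{coim}(f_U) \to \operatorname{im}(f_U)$, which is an isomorphism because $\mathcal{T}(U)$ is abelian by (\ref{axiom:SC4_pre}). A morphism of presheaves that is an isomorphism in every section is an isomorphism, since its sectionwise inverses automatically commute with the restriction maps. Thus $\PSh(X, \mathcal{T})$ is abelian, and a sequence in it is exact if and only if it is exact in each section.

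For AB5 we argue as follows. Coproducts exist, so the category is AB3. Filtered colimits are computed sectionwise by \Cref{lem:limits_in_psh}, and in each $\mathcal{T}(U)$ filtered colimits are exact by (\ref{item:filtered_colimits_exact}). Since exactness of sequences in $\PSh(X, \mathcal{T})$ is detected sectionwise, filtered colimits in $\PSh(X, \mathcal{T})$ are exact, which is the AB5 condition.

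The only delicate step is the sectionwise computation of cokernels and images. It relies on the exactness of $\Res_V^U$, which is exactly where (\ref{axiom:SC3_pre}) is used. The remaining steps are formal consequences of (\ref{axiom:SC2_pre}), (\ref{axiom:SC4_pre}), \Cref{lem:limits_in_psh} and \Cref{lem:PSh_preadditive}.
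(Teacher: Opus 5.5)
Your proof is correct and follows the paper's argument. Preadditivity comes from \Cref{lem:PSh_preadditive}, sectionwise limits and colimits from \Cref{lem:limits_in_psh}, abelianness from (\ref{axiom:SC4_pre}), and exactness of filtered colimits from (\ref{item:filtered_colimits_exact}); you simply spell out the sectionwise verification of the abelian axioms in more detail. One small correction: exactness of $\Res_V^U$ is not needed to compute kernels, cokernels and $\operatorname{coim}(f) \to \operatorname{im}(f)$ sectionwise. \Cref{lem:limits_in_psh}, via \Cref{lem:limit_in_T}, already identifies these with their sectionwise counterparts, restriction maps included, so your appeal to (\ref{axiom:SC3_pre}) there is harmless but superfluous.
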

\begin{proof}
    By \Cref{lem:PSh_preadditive}, the category $\PSh(X, \mathcal{T})$ is preadditive. Since its limits and colimits are computed sectionwise by \Cref{lem:limits_in_psh}, it is abelian by (\ref{axiom:SC4_pre}), and its filtered colimits are exact by (\ref{item:filtered_colimits_exact}).
\end{proof}

\begin{lemma}\label{lem:sh_ab}
    The category $\Sh(X, \mathcal{T})$ is an abelian category.
\end{lemma}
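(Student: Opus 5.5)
The plan is to follow the classical argument that sheaves form an abelian category: $\Sh(X, \mathcal{T})$ is a full subcategory of the abelian category $\PSh(X, \mathcal{T})$ (\Cref{lem:psh_ab5}), it is closed under finite limits, and it has a left exact left adjoint $(-)^\#$ (\Cref{thm:sheafification}). Kernels can then be taken in presheaves and cokernels obtained by sheafifying the presheaf cokernel.

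\textbf{Additivity.} Since $\Sh(X, \mathcal{T})$ is full in $\PSh(X, \mathcal{T})$, it inherits the preadditive structure of \Cref{lem:PSh_preadditive}. The zero presheaf is a sheaf. For biproducts: by \Cref{lem:limits_in_psh} and \Cref{lem:limit_in_T}, finite products of presheaves are computed sectionwise in $\mathcal{T}(U)$. The sheaf condition is an equalizer condition in $\mathcal{T}(U)$ after applying the limit-preserving functors $\Coind_V^U$, and limits commute with limits. Hence a sectionwise product of sheaves is again a sheaf.

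\textbf{Kernels.} Given $f \colon \mathcal{M} \to \mathcal{N}$ in $\Sh(X, \mathcal{T})$, take the sectionwise kernel $\mathcal{K}$ in $\PSh(X, \mathcal{T})$. The same limit-commutation argument, using that $\Coind_V^U$ is a right adjoint, shows that $\mathcal{K}$ is a sheaf. So $\mathcal{K}$ is the kernel in $\Sh(X, \mathcal{T})$.

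\textbf{Cokernels.} Let $\mathcal{C}$ be the presheaf cokernel. Then $\mathcal{C}^\#$ is the cokernel in $\Sh(X, \mathcal{T})$ by the adjunction of \Cref{thm:sheafification}, since left adjoints preserve colimits and the inclusion is full.

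\textbf{Image equals coimage.} This is the main step. Let $\mathcal{I} = \ker(\mathcal{N} \to \mathcal{C})$ be the presheaf image and $\mathcal{Q} = \operatorname{coker}(\mathcal{K} \to \mathcal{M})$ the presheaf coimage. Because $\PSh(X, \mathcal{T})$ is abelian, the canonical map $\mathcal{Q} \to \mathcal{I}$ is an isomorphism.

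In sheaves, the coimage is $\mathcal{Q}^\#$, the cokernel computed via sheafification. The image is $\ker(\mathcal{N} \to \mathcal{C}^\#)$, which I must compare with $\mathcal{I}^\#$. Exactness of $(-)^\#$ (\Cref{thm:sheafification}) applied to $0 \to \mathcal{I} \to \mathcal{N} \to \mathcal{C}$ gives an exact sequence $0 \to \mathcal{I}^\# \to \mathcal{N}^\# \to \mathcal{C}^\#$. Here $\mathcal{N}^\# \cong \mathcal{N}$ because $\mathcal{N}$ is a sheaf. Checking that the unit is an isomorphism on sheaves is routine: each plus step leaves a sheaf unchanged, since for a sheaf $\mathcal{M}(\mathcal{U}) \cong \mathcal{M}(U)$ for every cover, and so the filtered colimit is constant.

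Since kernels in sheaves are the sectionwise kernels, the sheaf image is $\mathcal{I}^\#$. The canonical map from the sheaf coimage to the sheaf image is then $(\mathcal{Q} \to \mathcal{I})^\#$, which is an isomorphism. The point needing care is that these identifications are compatible with the canonical map, which follows from naturality of the unit $\mathrm{id} \to (-)^\#$.
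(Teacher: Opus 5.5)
Your proof is correct and follows essentially the same route as the paper: additivity from closure under finite products, abelianness of $\PSh(X,\mathcal{T})$, and exactness of the sheafification left adjoint $(-)^\#$. The only difference is that the paper cites the general criterion (Stacks Project, Tag~03A3) that a full additive subcategory of an abelian category whose inclusion has a left exact left adjoint is abelian, whereas you verify the kernels, cokernels, and the coimage-to-image isomorphism by hand.
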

\begin{proof}
    The category $\Sh(X, \mathcal{T})$ is additive because it is closed under finite products in the additive category $\PSh(X, \mathcal{T})$. By \Cref{lem:psh_ab5}, the category $\PSh(X, \mathcal{T})$ is abelian. Since sheafification is exact by \Cref{thm:sheafification}, the result follows from \cite[\href{https://stacks.math.columbia.edu/tag/03A3}{Tag~03A3}]{StacksProject}.
\end{proof}

\begin{lemma}\label{lem:sh_ab5}
    The category $\Sh(X, \mathcal{T})$ satisfies the AB5 condition.
\end{lemma}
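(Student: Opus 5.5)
We must show $\Sh(X,\mathcal{T})$ has all small colimits and that filtered colimits in it are exact. By \Cref{lem:sh_ab} it is abelian, so only these two points need proof.

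\textbf{Step 1: colimits.} Sheafification is a left adjoint to the forgetful functor by \Cref{thm:sheafification}. Hence, for a diagram $(\mathcal{M}_j)_{j\in J}$ in $\Sh(X,\mathcal{T})$, the object $(\colim_j \mathcal{M}_j)^\#$ is its colimit in $\Sh(X,\mathcal{T})$. Here $\colim_j \mathcal{M}_j$ is the sectionwise colimit in $\PSh(X,\mathcal{T})$, which exists by \Cref{lem:limits_in_psh} and (\ref{axiom:SC2_pre}). Thus $\Sh(X,\mathcal{T})$ is cocomplete, and in particular satisfies AB3.

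\textbf{Step 2: filtered colimits are exact.} Let $J$ be filtered and let $0\to\mathcal{M}'_j\to\mathcal{M}_j\to\mathcal{M}''_j\to 0$ be a $J$-indexed system of short exact sequences in $\Sh(X,\mathcal{T})$. Right exactness of $\colim$ is automatic, so it suffices to show that filtered colimits preserve monomorphisms (equivalently, finite limits).

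Consider $\mathcal{K}_j = \ker(\mathcal{M}'_j\to\mathcal{M}_j)$ and the analogous cokernels. Limits in $\Sh(X,\mathcal{T})$ agree with those computed in $\PSh(X,\mathcal{T})$: a kernel of sheaves taken sectionwise is again a sheaf, because the equalizer defining the sheaf condition commutes with kernels, using that coinduction is a right adjoint and hence preserves limits. So each $\mathcal{M}'_j\to\mathcal{M}_j$ is a monomorphism in $\PSh(X,\mathcal{T})$. By \Cref{lem:psh_ab5}, the presheaf colimit $\colim_j\mathcal{M}'_j\to\colim_j\mathcal{M}_j$ is a monomorphism. Sheafification is exact by \Cref{thm:sheafification}, so applying $(-)^\#$ yields a monomorphism. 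By Step 1 this is exactly the colimit map in $\Sh(X,\mathcal{T})$.

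The same argument applied to the whole short exact sequences gives exactness: the sequences remain exact as presheaf sequences (kernels agree, and cokernels differ only by a sheafification that the exact functor $(-)^\#$ reconciles), the presheaf filtered colimit is exact, and $(-)^\#$ is exact.

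\textbf{Main obstacle.} The delicate point is the claim that kernels, and more generally finite limits, in $\Sh(X,\mathcal{T})$ are computed sectionwise. This fails in general for the $\Coind$-twisted sheaf condition only if coinduction does not commute with the relevant limits. It does commute, since $\Coind_V^U$ is a right adjoint and the equalizer formulation after \Cref{def:sheaf_abstract} involves only products, equalizers and coinductions, all of which commute with kernels. With this in hand, the argument reduces to \Cref{lem:psh_ab5} plus exactness of $(-)^\#$.
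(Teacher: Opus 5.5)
Your proof is correct and follows the paper's route: colimits in $\Sh(X,\mathcal{T})$ are sheafifications of sectionwise presheaf colimits, so exactness of filtered colimits follows from \Cref{lem:psh_ab5} together with exactness of $(-)^\#$ from \Cref{thm:sheafification}. Your additional step makes explicit a detail the paper leaves implicit: monomorphisms of sheaves remain monomorphisms of presheaves, because the inclusion is a right adjoint (equivalently, kernels are computed sectionwise), and this reduces AB5 to the preservation of monomorphisms.
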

\begin{proof}
    Filtered colimits of sheaves are obtained by sheafifying the corresponding presheaf colimits. Hence, they are exact by \Cref{lem:psh_ab5,thm:sheafification}.
\end{proof}

Combining \Cref{lem:psh_ab5,lem:sh_ab,lem:sh_ab5,lem:generator} with the fact that every Grothendieck category has enough injectives \cite[Th\'eor\`eme~1.10.1]{Grothendieck}, we obtain the following.

\begin{theorem}\label{thm:sh_grothendieck}
    The categories $\PSh(X, \mathcal{T})$ and $\Sh(X, \mathcal{T})$ are Grothendieck categories and hence have enough injectives.
\end{theorem}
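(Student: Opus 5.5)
The plan is to verify the definition of a Grothendieck category directly: an abelian AB5 category with a generator. Each ingredient has already been established for both categories. Once that is done, the existence of enough injectives follows from Grothendieck's theorem \cite[Th\'eor\`eme~1.10.1]{Grothendieck}.

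For $\PSh(X, \mathcal{T})$, I would argue as follows.
\begin{enumerate}
    \item \Cref{lem:psh_ab5} shows that $\PSh(X, \mathcal{T})$ is abelian and satisfies AB5. The abelian structure comes from the preadditivity of \Cref{lem:PSh_preadditive} together with the sectionwise computation of limits and colimits in \Cref{lem:limits_in_psh}. The AB5 property uses (\ref{axiom:SC4_pre}) and the exactness of filtered colimits in each locally finitely presentable $\mathcal{T}(U)$.
    \item \Cref{lem:generator} supplies the generator $\bigoplus M_U^-$, where $U$ ranges over $\mathcal{B}$ and $M$ over $I_U$. This coproduct exists because $\mathcal{B}$ is small, each $I_U$ is a set, and $\PSh(X, \mathcal{T})$ has all colimits by \Cref{lem:limits_in_psh}.
\end{enumerate}

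For $\Sh(X, \mathcal{T})$, I would proceed in parallel.
\begin{enumerate}
    \item Abelianness is \Cref{lem:sh_ab}, which relies on the exact left adjoint $(-)^\#$ of \Cref{thm:sheafification}.
    \item Colimits exist: I compute them by sheafifying presheaf colimits, since $(-)^\#$ is a left adjoint to the fully faithful inclusion.
    \item AB5 is \Cref{lem:sh_ab5}.
    \item The generator is $\bigoplus M_U$ from \Cref{lem:generator}. Via the adjunction of \Cref{lem:extension_by_zero_adjunction}, the family $\{M_U\}$ detects nonzero morphisms, because $\{M\}_{M \in I_U}$ is generating in $\mathcal{T}(U)$.
\end{enumerate}

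I expect no serious obstacle, since this is essentially an assembly of earlier lemmas. The one point needing care is confirming that the coproducts used to form a single generator exist and that a generating set yields a generator. This holds because both categories are cocomplete additive categories: a coproduct of a generating set is a generator. Having verified this, I apply \cite[Th\'eor\`eme~1.10.1]{Grothendieck} to conclude that both categories have enough injectives.
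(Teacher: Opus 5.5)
Your proposal is correct and matches the paper's proof, which likewise just combines \Cref{lem:psh_ab5,lem:sh_ab,lem:sh_ab5,lem:generator} with Grothendieck's theorem that Grothendieck categories have enough injectives. Your extra remarks on why the coproducts forming the generator exist (sectionwise colimits in $\PSh(X,\mathcal{T})$, sheafified presheaf colimits in $\Sh(X,\mathcal{T})$) only make explicit what the paper leaves implicit in \Cref{lem:generator}.
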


Hence, every left exact functor $\Sh(X, \mathcal{T}) \to \mathcal{A}$ to an abelian category $\mathcal{A}$ admits right derived functors. However, our previous examples carry more structure than a single such functor, as we explain in \Cref{sec:sheaf_cohomology}.

\subsection{SCn Categories}\label{sec:SCn_categories}

In this subsection, we summarize the axioms for a fibration $p \colon \mathcal{T}^\op \to \mathcal{B}$ over an arbitrary category $\mathcal{B}$ and present several examples. For every morphism $f \colon V \to U$ in $\mathcal{B}$, we denote the corresponding restriction functor by $f^* \colon \mathcal{T}(U) \to \mathcal{T}(V)$.

\begin{hypothesis}\label{hyp:SC}
    For every morphism $f \colon V \to U$ in $\mathcal{B}$, we consider the following conditions.
    \begin{SC}
        \item\label{axiom:SC1} The functor $f^*$ admits a right adjoint $f_*$.
        \item\label{axiom:SC2} The category $\mathcal{T}(U)$ is locally finitely presentable.
        \item\label{axiom:SC3} The functor $f^*$ is left exact.
        \item\label{axiom:SC4} The category $\mathcal{T}(U)$ is an abelian category.
        \item\label{axiom:SC5} The functor $f_*$ commutes with filtered colimits.
    \end{SC}
\end{hypothesis}

A morphism $f \colon V \to U$ in $\mathcal{B}$ satisfying (\ref{axiom:SC1})--(\hyperref[hyp:SC]{SCn}) is called an \emph{SCn morphism for $\mathcal{T}$}. If every morphism in $\mathcal{B}$ is an SCn morphism for $\mathcal{T}$, then $p$ and $\mathcal{T}$ are called an \emph{SCn fibration} and an \emph{SCn category}, respectively. Our sheaf cohomology theory requires SC4 categories.

The fibered categories $\Set_\Gpd$, $\Mod_\Gpd$, and $\Rep_\Gpd$ over $\Gpd$ are defined by
\[ \Set_\Gpd(G) \coloneqq \Set(G), \quad \Mod_\Gpd(G) \coloneqq \Mod(G), \quad\text{and}\quad \Rep_\Gpd(G) \coloneqq \Rep(G) \]
respectively, with the usual restriction functors.

\begin{lemma}\label{lem:group_SC}
    Every group homomorphism $\varphi \colon H \to G$ is an SC3 morphism for $\Set_\Gpd$ and an SC4 morphism for both $\Mod_\Gpd$ and $\Rep_\Gpd$.
\end{lemma}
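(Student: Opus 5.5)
We verify the conditions one by one for the restriction functor $\varphi^* = \Res_H^G \colon \Set(G) \to \Set(H)$ and its linear analogues. Throughout, we use that $\Set(G)$, $\Mod(G)$, and $\Rep(G)$ are the functor categories $\Fun(G, \Set)$, $\Fun(G, \Ab)$, and $\Fun(G, \Vect(\mathbb{C}))$. The restriction $\varphi^*$ is precomposition with $\varphi$, so all the structure can be deduced from standard facts about functor categories.

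For (\ref{axiom:SC1}), we take the coinduction $\Coind_H^G$ to be right Kan extension along $\varphi$. Concretely, for a group homomorphism it is $\Coind_H^G N = \Hom_H(G, N)$, with $G$ acting on the source by right multiplication. The Kan extension exists because $\Set$, $\Ab$, and $\Vect(\mathbb{C})$ are complete and $H$, $G$ are small. The adjunction $\Res \dashv \Coind$ is then the universal property of the Kan extension. For (\ref{axiom:SC3}), limits in functor categories are computed pointwise, and precomposition commutes with pointwise limits. Hence $\varphi^*$ preserves all limits, in particular finite ones. For (\ref{axiom:SC4}) in the linear cases, $\Fun(G, \mathcal{A})$ is abelian whenever $\mathcal{A}$ is, with kernels and cokernels computed pointwise. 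Alternatively, $\Mod(G) \simeq \Mod(\mathbb{Z}[G])$ and $\Rep(G) \simeq \Mod(\mathbb{C}[G])$ for a group $G$.

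The remaining point is (\ref{axiom:SC2}), local finite presentability. For $\Set(G)$ with $G$ a group, I would take the single object $G$, acting on itself by left multiplication. It corepresents the forgetful functor $\Hom(G, M) \cong M$. The forgetful functor to $\Set$ preserves filtered colimits, since these are computed on underlying sets. Hence $G$ is finitely presentable. This forgetful functor is also faithful and conservative, since a $G$-equivariant bijection is an isomorphism. So $\{G\}$ is a strong generating set consisting of finitely presentable objects. Cocompleteness holds because colimits are computed pointwise. Thus $\Set(G)$ is locally finitely presentable.

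Similarly, $\mathbb{Z}[G]$ and $\mathbb{C}[G]$ are finitely presentable strong generators of $\Mod(G)$ and $\Rep(G)$. This agrees with the general fact that module categories over a ring are locally finitely presentable. If one wants groupoids rather than groups, the same argument works with the set of representables $\{\Hom_G(x, -)\}_{x}$, which is small because $G$ is small.

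I expect no genuine obstacle. The only delicate point is the exact reading of ``locally finitely presentable'' in the paper, where a strong generating set means that $\lambda$ is conservative. This is handled by noting that each generator corepresents the underlying-set (or underlying-group, underlying-vector-space) functor, which reflects isomorphisms. I would also remark that (\ref{axiom:SC4}) fails for $\Set_\Gpd$, which is why only SC3 is claimed there.
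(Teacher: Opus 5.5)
Your proposal is correct and takes essentially the same route as the paper. For (SC1), (SC3) and (SC4) you use standard facts about restriction. For (SC2) you use the single object $G$, $\mathbb{Z}(G)$, or $\mathbb{C}(G)$, which corepresents the forgetful functor; that functor preserves filtered colimits and is conservative, so the object is a finitely presentable strong generator. Your explicit Kan-extension formula for coinduction and the pointwise-limit argument for left exactness simply spell out what the paper states in one line.
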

\begin{proof}
    First consider $\Mod_\Gpd$. The restriction functor admits a right adjoint and is exact, giving (\ref{axiom:SC1}) and (\ref{axiom:SC3}). Moreover,
    \[ \Hom_{\Mod(G)}(\mathbb{Z}(G), -) \colon \Mod(G) \to \Set \]
    is naturally isomorphic to the forgetful functor. Hence, $\{\mathbb{Z}(G)\}$ is a strong generating set for $\Mod(G)$ consisting of finitely presentable objects, giving (\ref{axiom:SC2}). Finally, $\Mod(G)$ is an abelian category, giving (\ref{axiom:SC4}). The same argument, using $G$ and $k(G)$ in place of $\mathbb{Z}(G)$, gives the assertions for $\Set_\Gpd$ and $\Rep_\Gpd$, respectively.
\end{proof}

\begin{lemma}\label{lem:transitive_groupoid_SC}
    Every morphism $\varphi \colon H \to G$ of transitive groupoids is an SC3 morphism for $\Set_\Gpd$ and an SC4 morphism for both $\Mod_\Gpd$ and $\Rep_\Gpd$.
\end{lemma}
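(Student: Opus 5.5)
The plan is to reduce to \Cref{lem:group_SC} through the equivalence between a transitive groupoid and its vertex group, taking care of the possibly large set of objects. Let $\varphi \colon H \to G$ be a morphism of transitive groupoids. If $H$ is empty, then $\Set(H)$, $\Mod(H)$ and $\Rep(H)$ are trivial categories. In that case every condition is immediate: the right adjoint sends the single object to a terminal object, and everything else is trivial. So I will assume $H$ is nonempty and fix an object $y$ of $H$, with $x \coloneqq \varphi(y)$. The inclusions $\pi_1(H, y) \hookrightarrow H$ and $\pi_1(G, x) \hookrightarrow G$ are equivalences of groupoids, since both groupoids are transitive. Restriction along an equivalence is an equivalence of categories. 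This gives a square of restriction functors that commutes up to natural isomorphism:
\[ \Set(G) \simeq \Set(\pi_1(G, x)), \qquad \Set(H) \simeq \Set(\pi_1(H, y)), \]
with $\Res_H^G$ corresponding to restriction along the group homomorphism $\pi_1(H, y) \to \pi_1(G, x)$ induced by $\varphi$. The same holds for $\Mod$ and $\Rep$.

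All of the conditions (\ref{axiom:SC1})--(\ref{axiom:SC4}) are invariant under equivalence of categories and natural isomorphism of functors. These conditions are: existence of a right adjoint, local finite presentability, left exactness, and being abelian. So they transfer from the group homomorphism case, which is \Cref{lem:group_SC}.

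As an alternative that avoids choosing equivalences, one can verify the conditions directly. (\ref{axiom:SC1}) is the existence of $\Coind_H^G$, given by right Kan extension; it exists because $\Set$, $\Ab$ and $\Vect$ are complete, using the small-groupoid conventions of \Cref{sec:appendix_groupoids}. (\ref{axiom:SC3}) and (\ref{axiom:SC4}) hold because limits and colimits in functor categories are computed objectwise, and restriction is precomposition. For (\ref{axiom:SC2}), use the representable object $G(x, -)$, or $\mathbb{Z}G(x,-)$ or $k G(x,-)$ in the linear cases. By Yoneda, its Hom functor is evaluation at $x$, which preserves filtered colimits. It is conservative because, by transitivity, an isomorphism at $x$ transports to an isomorphism at every object. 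Hence it forms a one-element strong generating set of finitely presentable objects, and cocompleteness holds objectwise.

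The only point needing care is the verification of (\ref{axiom:SC2}): conservativity of evaluation at a single object. This is exactly where transitivity is used. It fails for non-transitive groupoids, where one needs a generator for each connected component. I would handle it by the transport argument above, since the restriction of a natural transformation along $G(x, x') \ne \emptyset$ shows that it is invertible at $x'$ once it is invertible at $x$.
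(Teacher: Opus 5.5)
Your proposal is correct and matches the paper's proof: fix an object $y$ of $H$, use the equivalences $\Mod(G) \simeq \Mod(G_{\varphi(y)})$ and $\Mod(H) \simeq \Mod(H_y)$ (and likewise for $\Set$ and $\Rep$), under which $\Res_H^G$ becomes restriction along the induced vertex-group homomorphism, and then invoke \Cref{lem:group_SC}. Your treatment of the empty case is unnecessary, since the paper defines transitive groupoids to be nonempty; your direct verification is a valid but optional alternative.
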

\begin{proof}
    Choose an object $x$ of $H$ and set $y \coloneqq \varphi(x)$. Restriction to the vertex groups gives equivalences
    \[ \Mod(G) \xrightarrow{\sim} \Mod(G_y) \quad\text{and}\quad \Mod(H) \xrightarrow{\sim} \Mod(H_x). \]
    Under these equivalences, restriction along $\varphi$ corresponds to restriction along the induced group homomorphism $\varphi_x \colon H_x \to G_y$. Hence, $\varphi$ is an SC4 morphism for $\Mod_\Gpd$ by \Cref{lem:group_SC}. The same argument applies to $\Set_\Gpd$ and $\Rep_\Gpd$.
\end{proof}

\begin{proposition}\label{prop:groupoid_SC}
    The fibered category $\Set_\Gpd$ is an SC3 category, while $\Mod_\Gpd$ and $\Rep_\Gpd$ are SC4 categories.
\end{proposition}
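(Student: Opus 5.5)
We must show that every morphism $\varphi \colon H \to G$ of arbitrary groupoids is an SC3 morphism for $\Set_\Gpd$ and an SC4 morphism for $\Mod_\Gpd$ and $\Rep_\Gpd$. The plan is to reduce to \Cref{lem:transitive_groupoid_SC} by decomposing both groupoids into connected components, which are transitive groupoids.

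\textbf{Step 1: decompose into components.} Write $G = \coprod_{b \in B} G_b$ and $H = \coprod_{a \in A} H_a$ as disjoint unions of connected components. Then there are equivalences
\[ \Mod(G) \simeq \prod_{b \in B} \Mod(G_b) \quad\text{and}\quad \Mod(H) \simeq \prod_{a \in A} \Mod(H_a), \]
and similarly for $\Set$ and $\Rep$. The morphism $\varphi$ sends each $H_a$ into a single component $G_{\sigma(a)}$, giving a map $\sigma \colon A \to B$ and transitive morphisms $\varphi_a \colon H_a \to G_{\sigma(a)}$. Under the equivalences, restriction along $\varphi$ sends $(M_b)_b$ to $(\Res_{\varphi_a} M_{\sigma(a)})_a$.

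\textbf{Step 2: check the conditions componentwise.}
\begin{itemize}
\item[(SC1)] The right adjoint is $(N_a)_a \mapsto \bigl(\prod_{a \in \sigma^{-1}(b)} \Coind_{\varphi_a} N_a\bigr)_b$. Here the $\Coind_{\varphi_a}$ exist by \Cref{lem:transitive_groupoid_SC}, and the empty product is the terminal object, namely a singleton or $0$.
\item[(SC3)] Left exactness is checked componentwise, since finite limits in a product category are computed componentwise. Each factor $\Res_{\varphi_a}$ is left exact by \Cref{lem:transitive_groupoid_SC}.
\item[(SC4)] A product of abelian categories is abelian.
\end{itemize}

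\textbf{Step 3: local finite presentability (SC2), the main obstacle.} An infinite product of locally finitely presentable categories is not obviously generated by finitely presentable objects. For example, in $\prod_b \Set(G_b)$ the terminal object is not finitely presentable when $B$ is infinite.

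I would therefore avoid the product description here and argue directly with representables. For each object $x$ of $G$, let $h_x \coloneqq G(x,-)$, together with $\mathbb{Z}(h_x)$ and $k(h_x)$ in the linear cases. By Yoneda, $\Hom(h_x, M) \cong M_x$.
\begin{itemize}
\item Filtered colimits (and all colimits) of $G$-sets and $G$-modules are computed objectwise. Hence each $h_x$ is finitely presentable.
\item A morphism $f$ is an isomorphism exactly when every $f_x$ is bijective. Hence $\{h_x\}_x$ is a strong generating set.
\item The set of objects of $G$ is small, so this is a generating set rather than a proper class.
\end{itemize}
Cocompleteness is immediate from objectwise colimits. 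This gives (SC2) for all three fibered categories, completing the proof.
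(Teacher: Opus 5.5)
Your proposal is correct. For (SC1), (SC3), and (SC4) it is the paper's argument: decompose $G$ and $H$ into connected components, describe $\Res_H^G$ and its right adjoint $(N_D)_D \mapsto \bigl(\prod_{\sigma(D)=C}\Coind_D^C N_D\bigr)_C$ on the product categories, and reduce to \Cref{lem:transitive_groupoid_SC}.

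You diverge only at (SC2), and there the ``obstacle'' you identify does not actually arise. The paper stays in the product description. For each component $C$, it places a finitely presentable generator $g_C$ of $\Mod(C)$ in the $C$-slot and \emph{initial} objects in all other slots. $\Hom$ out of such an object only sees the $C$-component, and filtered colimits in the product are componentwise, so finite presentability is inherited from $\Mod(C)$. Your remark about the terminal object is true but irrelevant, since the terminal object never needs to belong to the generating set.

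Your direct argument with the representables $G(x,-)$ (and $\mathbb{Z}(G(x,-))$, $k(G(x,-))$), via the Yoneda lemma and objectwise colimits, uses the same generators in disguise. The paper's $g_C$ is the regular module of a vertex group $C_x$, which corresponds to $\mathbb{Z}(G(x,-))$ under $\Mod(C) \simeq \Mod(C_x)$. Your route buys self-containedness: with \Cref{lem:coind_exists} for (SC1) and objectwise limits for (SC3) and (SC4), you could drop the component decomposition entirely. The paper's route buys uniformity: the same reduction to vertex groups is reused verbatim for profinite groupoids and affine groupoid schemes. There the generators come from finite $G$-sets or finite-dimensional representations rather than from representable functors.
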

\begin{proof}
    Let $\varphi \colon H \to G$ be a morphism of groupoids, and let
    $ \sigma \colon \pi_0(H) \to \pi_0(G) $
    be the induced map. We have equivalences
    \[ \Mod(G) \xrightarrow{\sim} \prod_{C \in \pi_0(G)} \Mod(C) \quad\text{and}\quad \Mod(H) \xrightarrow{\sim} \prod_{D \in \pi_0(H)} \Mod(D). \]
    Under these equivalences, $\Res_H^G$ corresponds to
    \[ (M_C)_{C \in \pi_0(G)} \mapsto \left(\Res_D^{\sigma(D)} M_{\sigma(D)}\right)_{D \in \pi_0(H)}, \]
    whose right adjoint is
    \[ (N_D)_{D \in \pi_0(H)} \mapsto \left(\prod_{\sigma(D)=C} \Coind_D^C N_D\right)_{C \in \pi_0(G)}. \]
    Thus, (\ref{axiom:SC1}) and (\ref{axiom:SC3}) follow componentwise from \Cref{lem:transitive_groupoid_SC}. For every $C \in \pi_0(G)$, choose a finitely presentable generator $g_C$ of $\Mod(C)$. Regard each $g_C$ as an object of $\Mod(G)$ whose $C$-component is $g_C$ and whose other components are initial objects, hence zero. These objects form a strong generating set for $\Mod(G)$ consisting of finitely presentable objects, giving (\ref{axiom:SC2}). Moreover, $\Mod(G)$ is abelian componentwise, giving (\ref{axiom:SC4}). The same argument applies to $\Set_\Gpd$ and $\Rep_\Gpd$.
\end{proof}

\begin{theorem}\label{thm:ModX_SC}
    If $X$ is a complex variety, then the fibered category $\Set_X$ is an SC3 category, while the fibered categories $\Mod_X$ and $\Rep_X$ are SC4 categories.
\end{theorem}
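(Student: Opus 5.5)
The plan is to reduce \Cref{thm:ModX_SC} to \Cref{prop:groupoid_SC} by realizing $\Set_X$, $\Mod_X$ and $\Rep_X$ as pullbacks of $\Set_\Gpd$, $\Mod_\Gpd$ and $\Rep_\Gpd$ along the fundamental groupoid functor. I would view $\Pi_1$ as a functor
\[ \Pi_1 \colon \Open(X) \to \Gpd, \qquad U \mapsto \Pi_1(U^\an), \]
sending an inclusion $V \subseteq U$ to the induced morphism of groupoids $\Pi_1(V) \to \Pi_1(U)$. By \Cref{def:set_X}, the fiber $\Set_X(U)$ is $\Set_\Gpd(\Pi_1(U))$, and the restriction functor $\Res_V^U$ is exactly the restriction functor of $\Set_\Gpd$ along $\Pi_1(V) \to \Pi_1(U)$. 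The same holds for $\Mod$ and $\Rep$. So $\Set_X$ is the fibered category obtained from $\Set_\Gpd$ by change of base along $\Pi_1$, and similarly for the other two.

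The key observation is that each condition (\ref{axiom:SC1})--(\ref{axiom:SC4}) only concerns single fibers and single restriction functors. Condition (\ref{axiom:SC1}) is the existence of a right adjoint to one restriction functor, (\ref{axiom:SC3}) is its left exactness, and (\ref{axiom:SC2}) and (\ref{axiom:SC4}) are intrinsic properties of a fiber. Hence if a morphism $\varphi$ in $\Gpd$ is an SCn morphism for $\Set_\Gpd$, then every morphism of $\Open(X)$ mapping to $\varphi$ is an SCn morphism for the pulled-back fibration. Applying \Cref{prop:groupoid_SC} to each $\Pi_1(V) \to \Pi_1(U)$ then shows that $\Set_X$ is SC3 and that $\Mod_X$ and $\Rep_X$ are SC4. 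Here $\Rep$ means complex representations, so $k = \mathbb{C}$ in the earlier proof. The coinduction functor $\Coind_V^U$ already appearing in \Cref{def:set_X} is the right adjoint supplied by \Cref{prop:groupoid_SC}.

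The only delicate point is the identification of the restriction functors on the nose, as opposed to up to isomorphism. This is harmless: in \Cref{def:set_X} the cleavage is defined to be the usual restriction of groupoid sets, which is precisely the cleavage of $\Set_\Gpd$. Even if one only had a natural isomorphism, each of the SC conditions is invariant under replacing a functor by an isomorphic one.

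A second minor point is that \Cref{prop:groupoid_SC} treats arbitrary, possibly non-transitive, groupoids, so disconnected opens $U$ with $\pi_0(U^\an)$ of size greater than one need no separate treatment. Thus I expect no genuine obstacle; the proof is a formal base-change argument.
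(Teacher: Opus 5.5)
Your proposal is correct and is essentially the paper's proof. The paper also identifies $\Mod_X$, $\Set_X$, and $\Rep_X$ as pullbacks of $\Mod_\Gpd$, $\Set_\Gpd$, and $\Rep_\Gpd$ along $\Pi_1 \colon \Open(X) \to \Gpd$ and concludes by \Cref{prop:groupoid_SC}; your further remarks (that the SC conditions are fiberwise and invariant under isomorphism of functors, and that non-transitive $\Pi_1(U)$ is already covered) make explicit what the paper leaves implicit.
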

\begin{proof}
    The fibration $\Mod_X^\op \to \Open(X)$ is the pullback of $\Mod_\Gpd^\op \to \Gpd$ along the fundamental groupoid functor $\Pi_1 \colon \Open(X) \to \Gpd$. Thus, the assertion for $\Mod_X$ follows from \Cref{prop:groupoid_SC}. The same argument applies to $\Set_X$ and $\Rep_X$.
\end{proof}

\begin{remark}\label{rem:SC5_counterexample}
    Condition (\ref{axiom:SC5}) need not hold for $\Set_X$, $\Mod_X$, and $\Rep_X$. Let $F_2$ be the free group on generators $a$ and $b$. Define a homomorphism
    $ F_2 \to \mathbb{Z}$ by $a \mapsto 1$ and $b \mapsto 0$.
    Its kernel $K$ is freely generated by $a^{-i} b a^i$ for $i \in \mathbb{Z}$. For $n \ge 0$, set
    \[ K_n \coloneqq \langle a^{-i} b a^i \mid -n \le i \le n \rangle. \]
    Then $\varinjlim_n F_2/K_n \cong F_2/K$. The $K$-fixed-point sets are
    \[ (F_2/K_n)^K = \varnothing \quad\text{and}\quad (F_2/K)^K = F_2/K. \]
    As $\Coind_{F_2}^\mathbb{Z} \cong (-)^K$, coinduction does not commute with this filtered colimit.
    Likewise, the $K$-invariants of $\mathbb{Z}(F_2/K_n)$ and $\mathbb{C}(F_2/K_n)$ vanish for every $n$ but not after passing to their colimits. Thus, $F_2 \to \mathbb{Z}$ does not satisfy (\ref{axiom:SC5}) for $\Set_\Gpd$, $\Mod_\Gpd$, or $\Rep_\Gpd$.

    Take $X = \mathbb{A}^1_\mathbb{C} \setminus \{1\}$ and $V = \mathbb{A}^1_\mathbb{C} \setminus \{1, -1\}$. The homomorphism $\pi_1(V, 0) \to \pi_1(X, 0)$ induced by $V \subseteq X$ is identified with the homomorphism $F_2 \to \mathbb{Z}$ defined above. Thus, (\ref{axiom:SC5}) fails for $\Set_X$, $\Mod_X$, and $\Rep_X$. By contrast, algebraic examples tend to satisfy (\ref{axiom:SC5}). Later in this paper, we prove that $\QCoh_X$, $\Mod^\et_X$, and $\Rep^\alg_X$ are SC5 categories.
\end{remark}

\begin{proposition}\label{prop:QCoh_SC5}
    If $X$ is a Noetherian scheme, then $\QCoh_X$ is an SC5 category.
\end{proposition}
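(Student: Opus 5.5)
We verify (SC1)--(SC5) for each inclusion $j = j_V^U \colon V \hookrightarrow U$ in $\Open(X)$, with $U$ a Noetherian scheme because $X$ is. The bulk of the work is (SC2), with (SC5) a close second.

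\textbf{(SC1), (SC3), (SC4).} By \Cref{ex:qcoh_X}, $j_*$ preserves quasi-coherence, since $j$ is quasi-compact and quasi-separated. The usual adjunction $j^* \dashv j_*$ on $\mathcal{O}$-modules therefore restricts to quasi-coherent sheaves, which gives (SC1). Restriction to an open subset is exact, since $\mathcal{F}|_V$ is computed sectionwise on opens of $V$, which gives (SC3). The category $\QCoh(U)$ is abelian, being a weak Serre subcategory of $\mathcal{O}_U$-modules closed under kernels and cokernels, which gives (SC4).

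\textbf{(SC2).} First, $\QCoh(U)$ is cocomplete, since colimits of quasi-coherent sheaves computed in $\mathcal{O}_U$-modules remain quasi-coherent. Next, on a Noetherian scheme every quasi-coherent sheaf is the filtered union of its coherent subsheaves (Stacks, Tag~01PG). Hence the isomorphism classes of coherent sheaves form a set $I$ that generates $\QCoh(U)$.

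To see that $I$ is a \emph{strong} generating set, let $f \colon \mathcal{F} \to \mathcal{G}$ be a morphism such that $\Hom(\mathcal{C}, f)$ is bijective for every coherent $\mathcal{C}$.
\begin{itemize}
\item For monicity, take $\mathcal{C}$ a coherent subsheaf of $\ker f$.
\item For surjectivity, take $\mathcal{C}$ a coherent subsheaf of $\mathcal{G}$.
\end{itemize}
It follows that $f$ is an isomorphism.

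It remains to show that coherent sheaves are finitely presentable, that is, that $\Hom(\mathcal{C}, -)$ commutes with filtered colimits. Cover $U$ by finitely many affines $U_i$ and write $U_i \cap U_k$ as a finite union of affines $W$; this is possible because $U$ is quasi-compact and quasi-separated. On an affine $\Spec A$, morphisms out of $\widetilde{C}$ with $C$ finitely presented are computed by $\Hom_A(C, -)$, which commutes with filtered colimits. Moreover, global sections of quasi-coherent sheaves on affines commute with filtered colimits. Now write $\Hom(\mathcal{C}, \mathcal{F})$ as the equalizer of
\[ \prod_i \Hom(\mathcal{C}|_{U_i}, \mathcal{F}|_{U_i}) \rightrightarrows \prod_{W} \Hom(\mathcal{C}|_W, \mathcal{F}|_W). \]
This is a finite limit, and finite limits commute with filtered colimits in $\Set$, so $\Hom(\mathcal{C}, -)$ commutes with filtered colimits.

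\textbf{(SC5).} We need $j_*(\varinjlim \mathcal{G}_\alpha) \cong \varinjlim j_* \mathcal{G}_\alpha$, where the colimit on the right is taken in $\QCoh(U)$. It suffices to check this on affine opens $W \subseteq U$, because on a Noetherian space the presheaf colimit is already a sheaf on quasi-compact opens. At $W$ the claim reads $\Gamma(W \cap V, \varinjlim \mathcal{G}_\alpha) \cong \varinjlim \Gamma(W \cap V, \mathcal{G}_\alpha)$. Since $W \cap V$ is quasi-compact and quasi-separated, cover it by finitely many affines with affine-coverable pairwise intersections. The finite-limit argument from (SC2) then applies, so $\Gamma(W \cap V, -)$ commutes with filtered colimits of quasi-coherent sheaves.

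The main obstacle is the bookkeeping in (SC2) and (SC5): checking that filtered colimits in $\QCoh$ agree with presheaf colimits on quasi-compact opens, and reducing Hom and sections to finite limits over affines. Noetherianity (via quasi-compactness and quasi-separatedness) is exactly what makes both reductions work.
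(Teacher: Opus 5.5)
Your proof is correct and takes the same approach as the paper. Both verify (SC1)--(SC5) one condition at a time. Both use that open inclusions in a Noetherian scheme are quasi-compact and quasi-separated, so $j_*$ preserves quasi-coherence and commutes with filtered colimits. Both use that every quasi-coherent sheaf is the filtered union of its coherent subsheaves, which are finitely presentable.

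The only difference is that the paper cites the corresponding Stacks Project results (Tags 0096, 01LC, 077P, 01PK, 0GPE, 0250, 07TB). You instead reprove these facts by hand, using finite affine covers and the commutation of finite limits with filtered colimits.
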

\begin{proof}
    Condition (\ref{axiom:SC1}) follows from \cite[\href{https://stacks.math.columbia.edu/tag/0096}{Tag~0096}, \href{https://stacks.math.columbia.edu/tag/01LC}{Tag~01LC}]{StacksProject}, using that $V \hookrightarrow U$ is quasi-compact and quasi-separated since $X$ is Noetherian. Condition (\ref{axiom:SC2}) follows from \cite[\href{https://stacks.math.columbia.edu/tag/077P}{Tag~077P}, \href{https://stacks.math.columbia.edu/tag/01PK}{Tag~01PK}, \href{https://stacks.math.columbia.edu/tag/0GPE}{Tag~0GPE}]{StacksProject}. Condition (\ref{axiom:SC3}) follows from \cite[\href{https://stacks.math.columbia.edu/tag/0250}{Tag~0250}]{StacksProject}, and (\ref{axiom:SC4}) from \cite[\href{https://stacks.math.columbia.edu/tag/077P}{Tag~077P}]{StacksProject}. Finally, (\ref{axiom:SC5}) follows from \cite[\href{https://stacks.math.columbia.edu/tag/07TB}{Tag~07TB}]{StacksProject}.
\end{proof}

We conclude this subsection by defining morphisms of SC3 and SC4 categories.

\begin{definition}\label{def:SC_morphism}
    Let $q \colon \mathcal{S}^\op \to \mathcal{B}$ and $p \colon \mathcal{T}^\op \to \mathcal{B}$ be SC3 fibrations, and let
    \[ (\delta^{-1})^\op \colon \mathcal{S}^\op \to \mathcal{T}^\op \]
    be a Cartesian functor over $\mathcal{B}$. We call $\delta^{-1}$ a \emph{morphism of SC3 categories} if the following conditions hold for every $U \in \mathcal{B}$.
    \begin{enumerate}
        \item The functor $\delta^{-1}(U)$ admits a right adjoint.
        \item The functor $\delta^{-1}(U)$ is left exact.
    \end{enumerate}
    If $\mathcal{S}$ and $\mathcal{T}$ are SC4 categories, we call $\delta^{-1}$ a \emph{morphism of SC4 categories}.
\end{definition}

\subsection{Sheaf Cohomology}\label{sec:sheaf_cohomology}

We now return to the assumption that $\mathcal{B} = \Open(X)$. We define sheaf cohomology for SC4 categories over $\mathcal{B}$ and prove a comparison theorem between two such cohomology theories. Let $\mathcal{B}_{X/*} = \Open(X/*)$ be the category obtained from $\mathcal{B}$ by adjoining a new object $*$ together with a unique morphism $U \to *$ for every $U \in \mathcal{B}$. We assume that the SC4 fibration $\mathcal{T}^\op \to \mathcal{B}$ extends to an SC4 fibration $\mathcal{T}_{X/*}^\op \to \mathcal{B}_{X/*}$.

We refer to $*$ as the \emph{terminal space} and to each $U \to *$ as a \emph{structure morphism}. We call $\mathcal{T}_{X/*}(*)$ the \emph{coefficient category} and denote it by $\mathcal{A}$. We associate with $\mathcal{T}$ a \emph{groupoid symbol} $\mathfrak{G}$ and refer to objects of $\mathcal{T}(U)$ as $\mathfrak{G}(U)$-modules. The symbol $\mathfrak{G}$ is formal, and $\mathfrak{G}(U)$ need not be an actual groupoid. We define the \emph{$\mathfrak{G}(U)$-invariants functor} and the \emph{trivial $\mathfrak{G}(U)$-module functor}, respectively, by
\[ (-)^{\mathfrak{G}(U)} \coloneqq \Coind_U^* \colon \mathcal{T}(U) \to \mathcal{A} \quad\text{and}\quad \Delta_{\mathfrak{G}(U)} \coloneqq \Res_U^* \colon \mathcal{A} \to \mathcal{T}(U). \]

\begin{definition}\label{def:sheaf_cohomology}
    For $U \in \mathcal{B}$ and $i \ge 0$, we define the \emph{$i$-th groupoid cohomology functor of $\mathfrak{G}(U)$} by
    \[ H^i(\mathfrak{G}(U), -) \coloneqq R^i(-)^{\mathfrak{G}(U)} \colon \mathcal{T}(U) \to \mathcal{A}, \]
    and the \emph{$i$-th sheaf cohomology functor on $U$} by
    \[ H^i_{\mathcal{T}}(U, -) \coloneqq R^i\!\left((-)^{\mathfrak{G}(U)} \circ \Gamma_\mathcal{T}(U, -)\right) \colon \Sh(X, \mathcal{T}) \to \mathcal{A}. \]
\end{definition}

\begin{example}
    For $\mathcal{T} = \Mod_X$, the coefficient category is $\Ab$ and the groupoid symbol is $\mathfrak{G} = \Pi_1$. The terminal space is $* = \Spec \mathbb{C}$, and $\Pi_1(*)$ is the terminal groupoid $\mathds{1}$. The invariants functor is $(-)^{\mathfrak{G}(U)} = (-)^{\Pi_1(U)}$. The trivial module functor $\Delta_{\mathfrak{G}(U)} \colon \Ab \to \Mod(\Pi_1(U))$ sends an abelian group to the corresponding trivial $\Pi_1(U)$-module. The resulting groupoid and sheaf cohomology functors agree with those defined in \Cref{sec:sheaf_via_groupoid}.
\end{example}

\begin{remark}\label{rem:spectral_sequences}
    This sheaf cohomology theory admits two natural spectral sequences. First, consider the adjunction
    \[ (-)_X : \mathcal{T}(X) \rightleftarrows \Sh(X, \mathcal{T}) : \Gamma_\mathcal{T}(X, -). \]
    Since the left adjoint $(-)_X$ is exact by \Cref{lem:extension_by_zero_adjunction}, we obtain a Grothendieck spectral sequence
    \begin{equation}\label{eq:groupoid_section_spectral_sequence}
        E_2^{p,q} = H^p\!\left(\mathfrak{G}(X), R^q\Gamma_\mathcal{T}(X, \mathcal{M})\right) \Rightarrow H^{p+q}_{\mathcal{T}}(X, \mathcal{M}).
    \end{equation}
    In the case $\mathcal{T} = \Mod_X$, this recovers the Cartan--Leray spectral sequence on $X_\bt$.
    Since the coinduction functors commute with limits and satisfy the cocycle condition, the functor
    \begin{align*}
        (-)^{\mathfrak{G}} \colon \Sh(X, \mathcal{T}) &\to \Sh(X, \mathcal{A}) \\
        \mathcal{M} &\mapsto \left(U \mapsto \mathcal{M}(U)^{\mathfrak{G}(U)}\right)
    \end{align*}
    is well-defined. Similarly, we define
    \begin{align*}
        \Delta_{\mathfrak{G}} \colon \Sh(X, \mathcal{A}) &\to \Sh(X, \mathcal{T}) \\
        \mathcal{F} &\mapsto \left(U \mapsto \Delta_{\mathfrak{G}(U)} \mathcal{F}(U)\right)^\#.
    \end{align*}
    Then $\Delta_{\mathfrak{G}}$ is left adjoint to $(-)^{\mathfrak{G}}$, and it is exact by (\ref{axiom:SC3}) and \Cref{thm:sheafification}. Since
    \[ \Gamma_\mathcal{A}(X, -) \circ (-)^{\mathfrak{G}} \cong (-)^{\mathfrak{G}(X)} \circ \Gamma_\mathcal{T}(X, -), \]
    we obtain a Grothendieck spectral sequence
    \begin{equation}\label{eq:sheaf_invariants_spectral_sequence}
        E_2^{p,q} = H^p\!\left(X, R^q(-)^{\mathfrak{G}}(\mathcal{M})\right) \Rightarrow H^{p+q}_{\mathcal{T}}(X, \mathcal{M}).
    \end{equation}
    When $\mathcal{T} = \Mod_X$, this recovers the Leray spectral sequence for the morphism of sites $X_\bt \to X^\zar$ under \Cref{thm:sheaf_equivalence}.
\end{remark}

For the comparison theorem, let $\mathcal{S}_{X/*}^\op \to \mathcal{B}_{X/*}$ be another SC4 fibration, and fix a morphism $\delta^{-1} \colon \mathcal{S}_{X/*} \to \mathcal{T}_{X/*}$ of SC4 categories. For every $U \in \mathcal{B}_{X/*}$, denote $\delta^{-1}(U)$ and its right adjoint by $\Res_{\mathcal{T}(U)}^{\mathcal{S}(U)}$ and $\Coind_{\mathcal{T}(U)}^{\mathcal{S}(U)}$, respectively. Thus, we have an adjunction
\[
    \Res_{\mathcal{T}(U)}^{\mathcal{S}(U)} : \mathcal{S}_{X/*}(U) \rightleftarrows \mathcal{T}_{X/*}(U) : \Coind_{\mathcal{T}(U)}^{\mathcal{S}(U)}.
\]
Since $(\delta^{-1})^\op$ is Cartesian, every $V \subseteq U$ in $\mathcal{B}_{X/*}$ induces a natural isomorphism
\[
    \Res_{\mathcal{T}(V)}^{\mathcal{T}(U)} \circ \Res_{\mathcal{T}(U)}^{\mathcal{S}(U)} \xrightarrow{\sim} \Res_{\mathcal{T}(V)}^{\mathcal{S}(V)} \circ \Res_{\mathcal{S}(V)}^{\mathcal{S}(U)}.
\]
Taking right adjoints gives a natural isomorphism
\[
    \Coind_{\mathcal{S}(V)}^{\mathcal{S}(U)} \circ \Coind_{\mathcal{T}(V)}^{\mathcal{S}(V)} \xrightarrow{\sim} \Coind_{\mathcal{T}(U)}^{\mathcal{S}(U)} \circ \Coind_{\mathcal{T}(V)}^{\mathcal{T}(U)}.
\]
We also call these the cocycle conditions for the restriction and coinduction functors.

We write $\mathcal{A}_\mathcal{S} \coloneqq \mathcal{S}_{X/*}(*)$ and $\mathcal{A}_\mathcal{T} \coloneqq \mathcal{T}_{X/*}(*)$ for the respective coefficient categories and set $\delta_\mathcal{A} \coloneqq \Coind_{\mathcal{T}(*)}^{\mathcal{S}(*)} \colon \mathcal{A}_\mathcal{T} \to \mathcal{A}_\mathcal{S}$. We denote the groupoid symbols associated with $\mathcal{S}$ and $\mathcal{T}$ by $\mathfrak{H}$ and $\mathfrak{G}$, respectively. There is an adjunction
\[ \delta^* : \Sh(X, \mathcal{S}) \rightleftarrows \Sh(X, \mathcal{T}) : \delta_*, \]
given by
\[ \delta_* \mathcal{N} = \left(U \mapsto \Coind_{\mathcal{T}(U)}^{\mathcal{S}(U)} \mathcal{N}(U)\right) \quad\text{and}\quad \delta^* \mathcal{M} = \left(U \mapsto \Res_{\mathcal{T}(U)}^{\mathcal{S}(U)} \mathcal{M}(U)\right)^\#. \]
The presheaf $\delta_*\mathcal{N}$ is a sheaf because the coinduction functors commute with limits and satisfy the cocycle condition.

\begin{theorem}[Comparison Theorem]\label{thm:comparison_general}
    Let $\mathcal{M} \in \Sh(X, \mathcal{T})$. Suppose that $\delta_\mathcal{A}$ is exact, and that there exists a basis $\mathfrak{U}$ of $X$ such that for every $U \in \mathfrak{U}$ and every $i > 0$,
    \begin{enumerate}
        \item\label{cond:comparison_local_acyclicity} $R^i \Gamma_\mathcal{T}(U, \mathcal{M}) = 0$, and
        \item\label{cond:comparison_coind_acyclicity} $R^i \Coind_{\mathcal{T}(U)}^{\mathcal{S}(U)} \mathcal{M}(U) = 0$.
    \end{enumerate}
    Then the natural map
    \[ H^i_\mathcal{S}(X, \delta_*\mathcal{M}) \to \delta_\mathcal{A}\, H^i_\mathcal{T}(X, \mathcal{M}) \]
    is an isomorphism for all $i \ge 0$.
\end{theorem}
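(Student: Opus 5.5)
The plan is to compare the two composite functors through a Grothendieck spectral sequence for $\delta_*$. By the cocycle condition at the terminal space, $\Coind_*^U$ applied after $\Coind_{\mathcal{T}(U)}^{\mathcal{S}(U)}$ agrees with $\delta_\mathcal{A}$ applied after $\Coind_*^U$ on the $\mathcal{T}$-side. Evaluating at $U = X$ gives a natural isomorphism
\[ (-)^{\mathfrak{H}(X)} \circ \Gamma_\mathcal{S}(X, -) \circ \delta_* \cong \delta_\mathcal{A} \circ (-)^{\mathfrak{G}(X)} \circ \Gamma_\mathcal{T}(X, -). \]
The functor $\delta_*$ is a right adjoint of the left exact $\delta^*$, so it preserves injectives. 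Since $\delta_\mathcal{A}$ is exact, the right derived functors of the right-hand side are $\delta_\mathcal{A} H^i_\mathcal{T}(X, -)$. The Grothendieck spectral sequence for the left-hand side reads
\[ E_2^{p,q} = H^p_\mathcal{S}(X, R^q\delta_* \mathcal{M}) \Rightarrow \delta_\mathcal{A} H^{p+q}_\mathcal{T}(X, \mathcal{M}). \]
Its edge map is the natural map in the statement. It therefore suffices to prove $R^q\delta_*\mathcal{M} = 0$ for $q > 0$.

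To compute $R^q\delta_*\mathcal{M}$, take an injective resolution $\mathcal{M} \to \mathcal{I}^\bullet$ in $\Sh(X, \mathcal{T})$. Then $R^q\delta_*\mathcal{M}$ is the $q$-th cohomology sheaf of $\delta_*\mathcal{I}^\bullet$, that is, the sheafification of the presheaf
\[ U \mapsto H^q\bigl(\Coind_{\mathcal{T}(U)}^{\mathcal{S}(U)} \Gamma_\mathcal{T}(U, \mathcal{I}^\bullet)\bigr). \]
Two facts are needed here.
\begin{enumerate}
\item $\Gamma_\mathcal{T}(U, -)$ preserves injectives, since its left adjoint $(-)_U$ is exact by \Cref{lem:extension_by_zero_adjunction}.
\item Because $\Coind_{\mathcal{T}(U)}^{\mathcal{S}(U)}$ is left exact, there is a Grothendieck spectral sequence
\[ R^a\Coind_{\mathcal{T}(U)}^{\mathcal{S}(U)}\, R^b\Gamma_\mathcal{T}(U, \mathcal{M}) \Rightarrow R^{a+b}\bigl(\Coind_{\mathcal{T}(U)}^{\mathcal{S}(U)} \circ \Gamma_\mathcal{T}(U, -)\bigr)(\mathcal{M}). \]
\end{enumerate}
For $U \in \mathfrak{U}$, hypothesis (1) kills all rows $b > 0$. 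Hypothesis (2) then kills $R^a\Coind(\mathcal{M}(U))$ for $a > 0$. Hence the presheaf above vanishes on every $U \in \mathfrak{U}$ for $q > 0$.

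The main obstacle is concluding that a sheaf whose defining presheaf vanishes on a basis is itself zero. Here the general formalism differs from classical sheaves: sheafification is a transfinite plus construction and sections live in varying fibers. I would argue via \Cref{lem:morphisms_on_basis}. Let $\mathcal{P}$ be the presheaf and $\mathcal{P}^\#$ its sheafification. For any sheaf $\mathcal{N}$ and morphism $\eta \colon \mathcal{P}^\# \to \mathcal{N}$, the composite $\mathcal{P} \to \mathcal{N}$ agrees with $0$ on every $U \in \mathfrak{U}$, since $\mathcal{P}(U) = 0$ there. By \Cref{lem:morphisms_on_basis} this composite is zero. By the adjunction of \Cref{thm:sheafification}, $\eta = 0$. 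Taking $\eta = \id$ gives $\mathcal{P}^\# = 0$, so $R^q\delta_*\mathcal{M} = 0$ for $q > 0$, and the spectral sequence degenerates to the desired isomorphism.

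One more point must be checked carefully: that the cohomology sheaves of $\delta_*\mathcal{I}^\bullet$ are indeed the sheafified sectionwise cohomology presheaves. This holds because $\delta_*\mathcal{I}^\bullet$ is a complex of sheaves, kernels are computed sectionwise, and cokernels are computed by sheafifying the sectionwise ones, using \Cref{lem:limits_in_psh}, \Cref{lem:sh_ab} and the exactness of $(-)^\#$.
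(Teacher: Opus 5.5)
Your proposal is correct and follows essentially the same route as the paper. Both arguments first show $R^q\delta_*\mathcal{M} = 0$ for $q > 0$: the presheaf $U \mapsto R^q\bigl(\Coind_{\mathcal{T}(U)}^{\mathcal{S}(U)} \circ \Gamma_\mathcal{T}(U,-)\bigr)\mathcal{M}$ vanishes on $\mathfrak{U}$ by the Grothendieck spectral sequence, and \Cref{lem:morphisms_on_basis} together with the universal property of sheafification then kills its sheafification. The isomorphism is then read off from the Grothendieck spectral sequences for the two composites in the commutative square, and you additionally spell out points the paper leaves implicit, such as $\delta_*$ and $\Gamma_\mathcal{T}(U,-)$ preserving injectives and how the cohomology sheaves of $\delta_*\mathcal{I}^\bullet$ are computed.
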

\begin{proof}
    We first show that $R^i \delta_* \mathcal{M} = 0$ for all $i > 0$. Let $0 \to \mathcal{M} \to \mathcal{I}^\bullet$ be an injective resolution in $\Sh(X, \mathcal{T})$. Then $R^i \delta_* \mathcal{M}$ is the sheaf associated to the presheaf
    \[
        U \mapsto
        H^i\left((\delta_* \mathcal{I}^\bullet)(U)\right) \cong H^i\left(\Coind_{\mathcal{T}(U)}^{\mathcal{S}(U)} \mathcal{I}^\bullet(U)\right) \cong R^i\left(\Coind_{\mathcal{T}(U)}^{\mathcal{S}(U)} \circ \Gamma_\mathcal{T}(U, -)\right) \mathcal{M}.
    \]
    The Grothendieck spectral sequence for $\Coind_{\mathcal{T}(U)}^{\mathcal{S}(U)} \circ \Gamma_\mathcal{T}(U, -)$ gives
    \[
        E_2^{p,q} = R^p \Coind_{\mathcal{T}(U)}^{\mathcal{S}(U)}\left(R^q \Gamma_\mathcal{T}(U, \mathcal{M})\right) \Rightarrow R^{p+q}\left(\Coind_{\mathcal{T}(U)}^{\mathcal{S}(U)} \circ \Gamma_\mathcal{T}(U, -)\right) \mathcal{M}.
    \]
    For $U \in \mathfrak{U}$, conditions~(\ref{cond:comparison_local_acyclicity}) and~(\ref{cond:comparison_coind_acyclicity}) imply that the $E_2$-terms vanish in positive total degrees. Thus, the universal property of sheafification and \Cref{lem:morphisms_on_basis} give $R^i \delta_* \mathcal{M} = 0$ for all $i > 0$.
    
    We have the following diagram, commuting up to the canonical isomorphism
    \[
    \begin{tikzcd}[column sep=7.5em]
        \Sh(X, \mathcal{T}) \arrow[r, "(-)^{\mathfrak{G}(X)} \circ \Gamma_\mathcal{T}(X{,} -)"] \arrow[d, "\delta_*"'] & \mathcal{A}_\mathcal{T} \arrow[d, "\delta_\mathcal{A}"] \\
        \Sh(X, \mathcal{S}) \arrow[r, "(-)^{\mathfrak{H}(X)} \circ \Gamma_\mathcal{S}(X{,} -)"'] & \mathcal{A}_\mathcal{S}
    \end{tikzcd}
    \]
    Applying the Grothendieck spectral sequences to the two composites and using the vanishing above yields the desired isomorphism.
\end{proof}

\begin{remark}
    \Cref{thm:comparison_general} applies when $\mathcal{T}$ carries a finer ``topology'' than $\mathcal{S}$. Condition~(\ref{cond:comparison_local_acyclicity}) is the analogue of $U$ being a $K(\pi, 1)$ space with respect to $\mathcal{T}$. Condition~(\ref{cond:comparison_coind_acyclicity}) is the analogue of the goodness condition, asserting that $\mathfrak{G}(U)$ is a ``good groupoid,'' meaning that its groupoid cohomology is unchanged upon completion to $\mathfrak{H}(U)$.
\end{remark}
\section{Profinite Theory}\label{sec:profinite}

We apply the framework of \Cref{sec:formalization} with the \'etale fundamental groupoid $\Pi_1^\et$ \cite[Exp.~V, \S7]{SGA1}. Fix a Noetherian scheme $X$ throughout this section. Recall that \[(\Mod^\et_X)^\op \to \Open(X)\] is a fibration whose fiber over $U$ is equivalent to $\Mod(\Pi_1^\et(U))$. We adjoin a terminal space $* = \Spec \mathbb{F}_1$ to $\Open(X)$ to form $\Open(X/*)$ and set $\Pi_1^\et(*)$ to be the terminal groupoid $\mathds{1}$. This extends the fibration to $(\Mod_{X/*}^\et)^\op \to \Open(X/*)$. The construction in \Cref{def:sheaf_cohomology} then gives a sheaf cohomology functor
\[ H^\bullet_\fet(X, -) \colon \Sh(X, \Mod^\et_X) \to \Ab. \]
The goal of this section is to compare $H^\bullet_\fet(X, -)$ with $H^\bullet_\bt(X, -)$ and $H^\bullet(X_\et, -)$. Background on profinite groupoids and \'etale fundamental groupoids is collected in \Cref{sec:profinite_groupoids} for the reader's convenience.

\subsection{Comparison with Betti Cohomology}\label{sec:profinite_betti}

We first show that $\Mod_{X/*}^\et$ is an SC5 category. Let $\PfGpd$ denote the category of profinite groupoids. The fibration $\Mod_\PfGpd^\op \to \PfGpd$ is defined by $\Mod_\PfGpd(G) \coloneqq \Mod(G)$ with the usual restriction functors.

\begin{proposition}\label{prop:ModPfGpd_SC5}
    The fibered category $\Mod_\PfGpd$ is an SC5 category.
\end{proposition}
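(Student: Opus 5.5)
We follow the pattern of \Cref{prop:groupoid_SC}: first reduce to profinite groups, then check (\ref{axiom:SC1})--(\ref{axiom:SC5}) for a continuous homomorphism of profinite groups. For a profinite groupoid $G$, the category $\Mod(G)$ of discrete $G$-modules decomposes as a product over connected components, and within a connected component restriction to a vertex group is an equivalence. So a morphism $\varphi \colon H \to G$ of profinite groupoids induces a restriction functor given componentwise by restriction along continuous homomorphisms $\varphi_x \colon H_x \to G_{\varphi(x)}$. Its right adjoint is the product over the fibers of $\pi_0(H) \to \pi_0(G)$ of the componentwise coinductions. One point needs care here: in the profinite setting $\pi_0$ should be taken in the sense of \Cref{sec:profinite_groupoids}. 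If the component set is a profinite space rather than a discrete set, the decomposition must instead be a ``continuous product''. I will therefore rely on the appendix's description of discrete $G$-modules and treat only the transitive case carefully.

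For a continuous homomorphism $\varphi \colon H \to G$ of profinite groups, restriction is exact, which gives (\ref{axiom:SC3}). Its right adjoint is the discrete coinduction
\[ \Coind_H^G N = \{ f \colon G \to N \text{ continuous} \mid f(\varphi(h)g) = h f(g) \}, \]
equivalently $\varinjlim_{U} \Hom_{H}(\mathbb{Z}[G/U], N)$ over open normal subgroups $U \le G$. This gives (\ref{axiom:SC1}). The category $\Mod(G)$ is abelian, which is (\ref{axiom:SC4}). For (\ref{axiom:SC2}), the objects $\mathbb{Z}[G/U]$ with $U$ open normal form a strong generating set: $\Hom(\mathbb{Z}[G/U], M) = M^U$, every element of a discrete module is fixed by some open subgroup, and a morphism that is bijective on all $M^U$ is bijective. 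Each $\mathbb{Z}[G/U]$ is finitely presentable, because filtered colimits in $\Mod(G)$ are computed on underlying abelian groups and $(-)^U$ commutes with filtered colimits: $U$ is finite-index, so its fixed points are determined by finitely many generator conditions relative to $G/U$, or more directly, an element fixed by $U$ in the colimit is already fixed at some stage because $U$ is topologically finitely generated modulo the stabilizer.

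The main obstacle is (\ref{axiom:SC5}), together with making the finite-presentability argument rigorous, since $U$ need not be finitely generated. I would handle both with the same device. For $x$ in a filtered colimit, $x$ comes from some $x_\alpha$ whose stabilizer $S$ is open. Since $U/(U\cap S)$ is finite, the finitely many translates $u x_\alpha - x_\alpha$ become zero at a later stage, so $x$ is $U$-fixed there. For (\ref{axiom:SC5}), $\Coind_H^G N$ is the filtered colimit over open normal $U$ of the modules $\Hom_H(\mathbb{Z}[G/U], N)$. Here $\mathbb{Z}[G/U]$ is finitely generated as a discrete $H$-module, being finite free over $\mathbb{Z}$, so it is finitely presentable in $\Mod(H)$ by the argument above. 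Hence $\Hom_H(\mathbb{Z}[G/U], -)$ commutes with filtered colimits, and interchanging the two filtered colimits gives (\ref{axiom:SC5}). Finally, for a general groupoid morphism, (\ref{axiom:SC5}) survives taking products over the fibers of $\pi_0$, because those products are finite or handled by the continuity built into the appendix's definition of $\Mod(G)$. This is the step I would verify last against the appendix's conventions.
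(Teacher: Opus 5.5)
Your proof is correct and follows essentially the same route as the paper. You reduce to a continuous homomorphism of profinite groups and write $\Coind_H^G \cong \varinjlim_U \Hom_H(\mathbb{Z}[G/U], -)$ over open normal $U$. You then get (SC2) and (SC5) from finite presentability of finite permutation modules. The paper obtains that finite presentability formally from $\Set(G) = \Ind(\FSet(G))$, while you check it directly with the argument that $U/(U \cap S)$ is finite; your earlier remark about topological finite generation is not needed.

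Your concern about $\pi_0$ does not arise. The appendix defines a profinite groupoid as a formal \emph{finite} coproduct of transitive ones, so the products over the fibers of $\pi_0(H) \to \pi_0(G)$ are finite and commute with filtered colimits.
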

\begin{proof}
    As in \Cref{lem:transitive_groupoid_SC,prop:groupoid_SC}, it suffices to verify the conditions for a morphism $\varphi \colon H \to G$ of profinite groups. The restriction functor $\varphi^* = \Res_H^G$ is exact and has the right adjoint
    \[ \varphi_* = \Coind_H^G \cong \varinjlim_{\substack{N \trianglelefteq G \\ N \text{ open}}} \Hom_{\Set(H)}(G/N, -), \]
    giving (\ref{axiom:SC1}) and (\ref{axiom:SC3}). The category $\Mod(G)$ admits all colimits, which are computed in $\Ab$. Since $\Set(G) = \Ind(\FSet(G))$, for every $S \in \FSet(G)$, the functor $\Hom_{\Set(G)}(S, -)$ commutes with filtered colimits. Hence,
    \[ \left\{ \mathbb{Z}(S) \mid S \in \FSet(G) \right\} \]
    is a strong generating set for $\Mod(G)$ consisting of finitely presentable objects, giving (\ref{axiom:SC2}). The abelianness of $\Mod(G)$ gives (\ref{axiom:SC4}). Finally, the displayed formula for $\Coind_H^G$ shows that it commutes with filtered colimits, giving (\ref{axiom:SC5}).
\end{proof}

Pulling back along the functor $\Pi_1^\et \colon \Open(X/*) \to \PfGpd$ gives the following corollary.

\begin{corollary}\label{cor:ModEt_SC5}
    If $X$ is a Noetherian scheme, then $\Mod_{X/*}^\et$ is an SC5 category.
\end{corollary}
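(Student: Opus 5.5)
The plan is to obtain \Cref{cor:ModEt_SC5} by pulling back the SC5 fibration $\Mod_\PfGpd^\op \to \PfGpd$ of \Cref{prop:ModPfGpd_SC5}, exactly as \Cref{thm:ModX_SC} was deduced from \Cref{prop:groupoid_SC}. First I will make the functor
\[ \Pi_1^\et \colon \Open(X/*) \to \PfGpd \]
precise. On $U \in \Open(X)$ it is the \'etale fundamental groupoid of $U$, whose objects are geometric points of $U$. An inclusion $V \subseteq U$ is sent to the induced morphism of profinite groupoids. The terminal space $*$ goes to the terminal groupoid $\mathds{1}$, and each structure morphism $U \to *$ goes to the unique morphism $\Pi_1^\et(U) \to \mathds{1}$. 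Functoriality for $W \subseteq V \subseteq U$ and compatibility with the structure morphisms are immediate, since both are induced by composition of inclusions and $\mathds{1}$ is terminal.

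Next I will check that $\Mod_{X/*}^\et$ is, up to equivalence of fibered categories, the pullback of $\Mod_\PfGpd$ along this functor. The fiber over $U$ is $\Mod(\Pi_1^\et(U))$ by \Cref{ex:dset_X}, and the fiber over $*$ is $\Mod(\mathds{1}) \cong \Ab$. The restriction functors are, by definition, restriction along the induced groupoid morphisms, which are exactly the pulled-back cleavage. Since each axiom (\ref{axiom:SC1})--(\ref{axiom:SC5}) concerns only a single morphism $f \colon V \to U$ of the base, namely the functor $f^*$ and the fibers $\mathcal{T}(U)$ and $\mathcal{T}(V)$, every such $f$ in $\Open(X/*)$ is an SC5 morphism because its image $\Pi_1^\et(f)$ is one for $\Mod_\PfGpd$.

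The only step needing care is that \Cref{prop:ModPfGpd_SC5} reduces, via connected components and vertex groups, to morphisms of profinite groups. For this reduction to apply to $\Pi_1^\et(U)$, the groupoid $\Pi_1^\et(U)$ must have only finitely many connected components, or at least decompose as a product over components compatibly with the profinite topology. This is where the Noetherian hypothesis enters: a Noetherian $U$ has finitely many connected components, so $\Mod(\Pi_1^\et(U))$ is a finite product of categories $\Mod(\pi_1^\et(U_c, \bar{x}))$, one for each component $U_c$. The right adjoint of restriction is then a finite product of the profinite coinductions. That finite product commutes with filtered colimits, so (\ref{axiom:SC5}) survives the reduction. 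I expect this bookkeeping, together with making sure that the equivalence of each connected profinite groupoid with its vertex group respects discreteness of modules (cf.\ \Cref{sec:profinite_groupoids}), to be the main point to verify. Once it is settled, the corollary follows formally.
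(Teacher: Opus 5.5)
Your proposal is correct and follows the paper's argument: the paper proves the corollary in one line, by pulling back the SC5 fibration $\Mod_\PfGpd^\op \to \PfGpd$ of \Cref{prop:ModPfGpd_SC5} along $\Pi_1^\et \colon \Open(X/*) \to \PfGpd$, which is exactly your route. One minor correction to your bookkeeping: the Noetherian hypothesis is needed so that every open $U$ has finitely many connected components and hence $\Pi_1^\et(U)$ is an object of $\PfGpd$ (a formal finite coproduct) at all; once that holds, \Cref{prop:ModPfGpd_SC5} applies as a black box and you need not redo its reduction to profinite groups.
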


\begin{remark}
    Define $\Set_\PfGpd$ and $\Set^\et_X$ analogously to $\Mod_\PfGpd$ and $\Mod^\et_X$, respectively, with $\Mod(G)$ replaced by $\Set(G)$. The proof of \Cref{prop:ModPfGpd_SC5} and the pullback argument above show that they satisfy (\ref{axiom:SC1}), (\ref{axiom:SC3}), and (\ref{axiom:SC5}). They also satisfy (\ref{axiom:SC2}), since the objects of $\FSet(G)$ form a strong generating set of $\Set(G)$ consisting of finitely presentable objects.
\end{remark}

For the remainder of this subsection, we assume that $X$ is a smooth complex variety. For every $U \in \Open(X)$, analytification of finite \'etale covers induces a functor
\[ \Mod(\Pi_1^\et(U)) \to \Mod(\Pi_1(U)). \]
These functors assemble into a morphism of SC categories $\delta^{-1} \colon \Mod_{X/*}^\et \to \Mod_{X/*}$, whose right adjoint on each fiber is given by the coinduction functors below.

Given an abstract group $G$, we have the adjunction
\[ \Res_G^{\widehat{G}} : \Mod(\widehat{G}) \rightleftarrows \Mod(G) : \Coind_G^{\widehat{G}}, \]
where $\Res_G^{\widehat{G}}$ is the forgetful functor and
\[ \Coind_G^{\widehat{G}} M = \bigcup_{\substack{N \trianglelefteq G \\ [G:N] < \infty}} M^N. \]

\begin{definition}[Serre, {\cite[Ch.~I, \S2.6]{Serre}}]\label{def:good_group}
    An abstract group $G$ is \emph{good in the sense of Serre} if for every finite $\widehat{G}$-module $M$, the natural map
    \[ H^i_\cnt(\widehat{G}, M) \to H^i_\grp(G, M) \]
    is an isomorphism for all $i \ge 0$.
\end{definition}

\begin{lemma}\label{lem:profinite_coind_vanishing}
    Let $G$ be a good group. If $M \in \Mod(G)$ is finite, then for all $i > 0$,
    \[ R^i \Coind_G^{\widehat{G}} M = 0. \]
\end{lemma}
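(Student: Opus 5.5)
The plan is to compute $R^i \Coind_G^{\widehat{G}}$ explicitly as a filtered colimit of group cohomology of finite-index subgroups. Goodness will then be used in the form of Serre's effacement criterion: every class dies on a finite-index subgroup.

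\textbf{Step 1: the derived functors as a colimit.} By the displayed formula, $\Coind_G^{\widehat{G}} M = \varinjlim_N M^N$, where $N$ runs over finite-index normal subgroups of $G$ and the system is filtered. For a fixed $N$, the functor $M \mapsto M^N$ on $\Mod(G)$ is the composite of $\Res_N^G$ with $N$-invariants. Restriction $\Res_N^G$ is exact and has an exact left adjoint, induction, so it preserves injectives. Hence $R^i(M \mapsto M^N) = H^i_\grp(N, M)$. Filtered colimits in $\Ab$ are exact, so taking an injective resolution gives
\[ R^i \Coind_G^{\widehat{G}} M \cong \varinjlim_N H^i_\grp(N, M), \]
with transition maps the restriction maps. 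It therefore suffices to show: for every finite-index normal $N$ and every $x \in H^i_\grp(N, M)$ with $i > 0$, there is a finite-index normal $N' \subseteq N$ of $G$ with $x|_{N'} = 0$.

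\textbf{Step 2: effacement on $G$ itself.} Let $M$ be a finite $G$-module and $y \in H^i_\grp(G, M)$ with $i>0$. Since $M$ is finite, $G$ acts on $M$ through a finite quotient, so $M$ is a finite $\widehat{G}$-module. Goodness says the map $H^i_\cnt(\widehat{G}, M) \to H^i_\grp(G, M)$ is an isomorphism. Write $H^i_\cnt(\widehat{G}, M) = \varinjlim_{N'} H^i(G/N', M^{N'})$. Then $y$ is inflated from some $H^i(G/N', M^{N'})$, and we may shrink $N'$ so that it acts trivially on $M$. The composite of inflation from $G/N'$ with restriction to $N'$ factors through $H^i(N'/N', -) = 0$ for $i > 0$. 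Hence $y|_{N'} = 0$.

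\textbf{Step 3: passing to $N$ via Shapiro.} Given $x \in H^i_\grp(N, M)$, Shapiro's lemma identifies $H^i_\grp(N, M)$ with $H^i_\grp(G, \Coind_N^G M)$. Here $\Coind_N^G M = \prod_{G/N} M$ is again finite, since $[G:N] < \infty$. Let $y$ be the class corresponding to $x$. By Step 2 there is a finite-index normal $N'$ with $y|_{N'} = 0$; replacing $N'$ by $N' \cap N$, we may assume $N' \subseteq N$. As $N'$-modules, Mackey's formula gives
\[ \Res_{N'}^G \Coind_N^G M \cong \prod_{g \in G/N} {}^g\!M. \]
Under Shapiro's isomorphism, $x|_{N'}$ is the component of $y|_{N'}$ at the identity coset: the composite of $\Res^G_{N'}$ with projection to that factor agrees with the Shapiro map to $H^i(N,M)$ followed by restriction to $N'$. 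So $x|_{N'} = 0$, and the colimit in Step 1 vanishes.

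I expect the main obstacle to be the bookkeeping in Step 3, namely checking that Shapiro's isomorphism is compatible with restriction to $N'$ and the Mackey decomposition. If this becomes awkward, a fallback is to use Serre's exercise that finite-index subgroups of good groups are good, and apply Step 2 directly to $N$.
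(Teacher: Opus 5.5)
Your proof is correct. Step~1 is exactly the paper's argument: both identify $R^i\Coind_G^{\widehat{G}}M$ with $\varinjlim_N H^i_\grp(N,M)$, using exactness of filtered colimits and the fact that $\Res_N^G$ preserves injectives.

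The difference is in the second half. The paper stops there and cites Serre's Exercise~1(a) in Ch.~I, \S2.6, reading it as the statement that goodness is equivalent to the vanishing of this colimit. Your Steps~2--3 instead prove the needed implication directly. This buys two things.

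First, it shows that only the surjectivity of $H^i_\cnt(\widehat{G},M)\to H^i_\grp(G,M)$ is used. A class on $G$ is inflated from some $G/N'$ with $N'$ acting trivially on $M$, so it dies on $N'$.

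Second, it makes explicit how one passes from classes on $G$ to classes on the finite-index subgroups $N$ that actually appear in the colimit. As usually stated, Serre's effacement criterion concerns classes in $H^q(G,M)$ itself. This passage therefore needs either your Shapiro--Mackey computation or the fact that finite-index subgroups of good groups are good, which is your fallback. Your computation is sound: $\Coind_N^G M$ is finite, hence a finite $\widehat{G}$-module. For $N'\subseteq N$ normal in $G$, Shapiro's map followed by restriction to $N'$ equals restriction to $N'$ followed by projection onto the identity-coset factor of $\Res^G_{N'}\Coind_N^G M\cong\prod_{G/N}{}^gM$.

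The paper's citation is shorter but leaves this step implicit.
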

\begin{proof}
    Let $0 \to M \to I^\bullet$ be an injective resolution in $\Mod(G)$. Since $\Mod(\widehat{G})$ is locally finitely presentable, filtered colimits are exact, so
    \[ R^i \Coind_G^{\widehat{G}} M = H^i\!\left(\Coind_G^{\widehat{G}} I^\bullet\right) = \varinjlim_{\substack{N \trianglelefteq G \\ [G:N] < \infty}} H^i\!\left((I^\bullet)^N\right) = \varinjlim_{\substack{N \trianglelefteq G \\ [G:N] < \infty}} H^i_\grp(N, M). \]
    Here, the last equality follows because $N \subseteq G$ implies that $\Res_N^G$ preserves injective objects. By \cite[Ch.~I, \S2.6, Exercise~1(a)]{Serre}, the goodness of $G$ is equivalent to the vanishing of the last colimit for all $i > 0$.
\end{proof}

\begin{theorem}[Finite \'Etale to Betti Comparison]\label{thm:profinite_betti_comparison}
    Let $X$ be a smooth complex variety, and let $\mathcal{M} \in \Sh(X, \Mod_X)$ be a locally constant sheaf of finite abelian groups. Then the natural map
    \[ H^i_\fet(X, \delta_*\mathcal{M}) \to H^i_\bt(X, \mathcal{M}) \]
    is an isomorphism for all $i \ge 0$.
\end{theorem}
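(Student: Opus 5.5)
We will apply the Comparison Theorem (\Cref{thm:comparison_general}) with $\mathcal{S} = \Mod^\et_{X/*}$, $\mathcal{T} = \Mod_{X/*}$, and $\delta^{-1}$ the analytification morphism. Both are SC4 categories, by \Cref{cor:ModEt_SC5} and \Cref{thm:ModX_SC}; for the latter we extend over $* = \Spec\mathbb{C}$ with $\Pi_1(*) = \mathds{1}$. Over the terminal space both coefficient categories are $\Ab$. The map $\Pi_1(*) \to \Pi_1^\et(*)$ is the identity of $\mathds{1}$, so $\delta_\mathcal{A} = \id_\Ab$ is exact, and the conclusion of \Cref{thm:comparison_general} reads exactly $H^i_\fet(X, \delta_*\mathcal{M}) \cong H^i_\bt(X, \mathcal{M})$. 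It therefore remains to choose a basis $\mathfrak{U}$ and verify the two local conditions.

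For $\mathfrak{U}$ we take the Artin neighborhoods of $X$, which form a basis by \Cref{prop:artin_good_neighborhood}. Condition (\ref{cond:comparison_local_acyclicity}) holds because $U^\an$ is a $K(\pi,1)$ space by \Cref{thm:top_kpi1}, so \Cref{lem:betti_kpi1} gives $R^i\Gamma(U, \mathcal{M}) = 0$ for $i > 0$, using that $\mathcal{M}$ is locally constant.

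For condition (\ref{cond:comparison_coind_acyclicity}), we must show $R^i \Coind_{\Pi_1(U)}^{\Pi_1^\et(U)} \mathcal{M}(U) = 0$. First we reduce to connected $U$ by the componentwise decomposition used in \Cref{prop:groupoid_SC}; coinduction, and hence its derived functors, is a product over components. Next we pass to vertex groups as in \Cref{lem:transitive_groupoid_SC}. By the Riemann existence theorem, $\pi_1^\et(U,x) \cong \widehat{\pi_1(U^\an,x)}$, so the coinduction becomes $\Coind_G^{\widehat{G}}$ with $G = \pi_1(U^\an, x)$. The module $\mathcal{M}(U)$ is a $G$-module whose underlying group is a stalk of a locally constant sheaf of finite abelian groups, hence finite. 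We then apply \Cref{lem:profinite_coind_vanishing}.

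The main obstacle is goodness of $G$ in the sense of Serre. For this we use the structure of Artin neighborhoods: they are iterated elementary fibrations, so $G$ is an iterated extension of finitely generated free groups, each acting on the next with the properties established by Artin. Serre's criterion then gives goodness: an extension of good groups $1 \to N \to G \to Q \to 1$ with $N$ finitely generated, and $H^q(N, M)$ finite for every finite $M$, is good. Free groups of finite rank are good. This is the same input \cite[Exp.~XI, \S4]{SGA4_3} uses in Artin's comparison, so we cite it rather than reprove it. One should also check that the equivalences between groupoid and vertex-group module categories are compatible with the coinduction functors, so that the derived functors match. This holds because both vertical equivalences are exact and commute with the restriction functors.
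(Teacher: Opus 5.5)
Your proposal is correct and follows the paper's proof. The paper applies \Cref{thm:comparison_general} with the basis of Artin neighborhoods, gets condition~(\ref{cond:comparison_local_acyclicity}) from \Cref{lem:betti_kpi1}, and gets condition~(\ref{cond:comparison_coind_acyclicity}) from \Cref{lem:profinite_coind_vanishing} together with Serre-goodness of $\pi_1$ of an Artin neighborhood (SGA~4, Exp.~XI, Variante~4.6, and Serre's Exercise~2(d)). Your additional checks (reduction to components and vertex groups, Riemann existence, finiteness of $\mathcal{M}(U)$, $\delta_\mathcal{A} = \id$) are details the paper leaves implicit.
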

\begin{proof}
    We apply \Cref{thm:comparison_general} with $\mathcal{S} = \Mod^\et_X$ and $\mathcal{T} = \Mod_X$, taking $\mathfrak{U}$ to be the basis of Artin neighborhoods. Condition~(\ref{cond:comparison_local_acyclicity}) follows from \Cref{lem:betti_kpi1}. Condition~(\ref{cond:comparison_coind_acyclicity}) follows from \Cref{lem:profinite_coind_vanishing} and the fact that the topological fundamental group of an Artin neighborhood is good \cite[Variante~4.6]{SGA4_3}\cite[Ch.~I, \S2.6, Exercice~2(d)]{Serre}.
\end{proof}

\subsection{\texorpdfstring{Comparison with \'Etale Cohomology}{Comparison with Étale Cohomology}}\label{sec:profinite_etale}

We now return to the standing assumption that $X$ is a Noetherian scheme. The goal of this subsection is to compare $H^\bullet_\fet(X, -)$ with $H^\bullet(X_\et, -)$. To this end, we introduce the finite \'etale site $X_\fet$, analogous to the Betti site but with analytic coverings replaced by finite \'etale covers. For $U \in \Open(X)$, let $\EtCov(U)$ denote the site of finite \'etale covers of $U$, equipped with the jointly surjective topology.

\begin{definition}\label{def:category_fet}
    The \emph{finite \'etale site}\footnote{The terminology follows \cite[p.~39]{Milne2}.} $X_\fet$ of $X$ is defined as follows.
    \begin{enumerate}
        \item \textbf{Objects:} A morphism of schemes $p_E\colon E \to X$ such that the image $p_E(E) \subseteq X$ is a Zariski open subset and the induced map $E \to p_E(E)$ is a finite \'etale cover.
        \item \textbf{Morphisms:} A morphism of schemes $f\colon E \to F$ making the following diagram commute.
        \[
        \begin{tikzcd}[column sep=1em, row sep=2.5em]
            E \arrow[rr, "f"] \arrow[dr, "p_E"'] & & F \arrow[dl, "p_F"] \\
            & X &
        \end{tikzcd}
        \]
    \end{enumerate}
    We endow $X_\fet$ with the Grothendieck topology generated by Zariski open covers in $X^\zar$ and jointly surjective families in $\EtCov(U)$ for every $U \in \Open(X)$. Objects of $X^\zar$ are regarded as trivial coverings.
\end{definition}

For every $U \in \Open(X)$, there are natural equivalences
\begin{equation}\label{eq:finite_etale_covering_groupoid_equivalence}
    \Phi_U : \Sh(\EtCov(U), \Set) \rightleftarrows \Set(\Pi_1^\et(U)) : \Psi_U.
\end{equation}
Explicitly,
\begin{align*}
    \Phi_U \colon \Sh(\EtCov(U), \Set) &\xrightarrow{\sim} \Set(\Pi_1^\et(U)), \\
    \mathcal{F} &\mapsto \left(x \mapsto \varinjlim_{(E, y) \to (U, x)} \mathcal{F}(E)\right),
\end{align*}
where the colimit ranges over pointed connected finite \'etale covers $(E, y) \to (U, x)$. For $E \in \EtCov(U)$, let $E_{(-)}$ denote the $\Pi_1^\et(U)$-set sending a geometric point $x \to U$ to the geometric fiber $E_x$. Then
\begin{align*}
    \Psi_U \colon \Set(\Pi_1^\et(U)) &\xrightarrow{\sim} \Sh(\EtCov(U), \Set), \\
    M &\mapsto \left(E \mapsto \Hom_{\Set(\Pi_1^\et(U))}\left(E_{(-)}, M\right)\right).
\end{align*}

Applying the argument of \Cref{sec:sheaf_via_groupoid}, mutatis mutandis, yields the following theorem.

\begin{theorem}\label{thm:fet_sheaf_equivalence}
    There is an equivalence of categories
    \[ \Phi : \Sh(X_\fet, \Set) \rightleftarrows \Sh(X, \Set^\et_X) : \Psi. \]
\end{theorem}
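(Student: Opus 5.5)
The plan is to repeat the argument of \Cref{sec:sheaf_via_groupoid} with $\Pi_1$ replaced by $\Pi_1^\et$ and $\Cov(U)$ replaced by $\EtCov(U)$. The key inputs are the equivalences \eqref{eq:finite_etale_covering_groupoid_equivalence} and a sheaf criterion for $X_\fet$. The steps follow \Cref{prop:sheaf_criterion}, \Cref{lem:equiv_hsh}, \Cref{lem:kappa_coind}, \Cref{prop:half_sheaf_equivalence}, \Cref{lem:local_equivalence} and \Cref{thm:sheaf_equivalence} in that order.

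First, $E \mapsto p_E(E)$ defines a fibration $p \colon X_\fet \to \Open(X)$ with fiber $\EtCov(U)$ and cleavage given by $E \times_X V \to E$. The topology of \Cref{def:category_fet} is the covanishing topology of Abbes--Gros for this fibered site, so \cite[Proposition~VI.5.10]{AbbesGros} again shows the following: a presheaf is a sheaf if and only if it is sheafified along coverings (each $\mathcal{F}_U \in \Sh(\EtCov(U), \Set)$) and along the Zariski topology (each $\mathcal{F}^E \in \Sh(U, \Set)$). Second, the proof of \Cref{lem:equiv_hsh} is purely formal. It only uses that every morphism $f \colon E \to F$ in $X_\fet$ factors uniquely as $E \to F \times_X p(E) \to F$ with the first map in $\EtCov(p(E))$. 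This factorization holds because finite \'etale covers are stable under base change and a morphism over $X$ lands over $p(E)$. Half-sheaves on $X_\fet$ are therefore equivalent to families $\mathcal{F}_U$ together with cocycle maps $\rho_V^U \colon \mathcal{F}_U \to \kappa_V^U\mathcal{F}_V$, where $\kappa_V^U$ is the direct image along $E \mapsto E \times_X V$.

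Third, I prove the analogue of \Cref{lem:kappa_coind}:
\[ \kappa_V^U \circ \Psi_V \cong \Psi_U \circ \Coind_{\Pi_1^\et(V)}^{\Pi_1^\et(U)}. \]
The computation is the same as before. It needs the identification $(E \times_X V)_{(-)} \cong \Res_{\Pi_1^\et(V)}^{\Pi_1^\et(U)} E_{(-)}$ of discrete $\Pi_1^\et(V)$-sets, which holds because geometric fibers are compatible with base change and $\Pi_1^\et(V) \to \Pi_1^\et(U)$ is induced by the functoriality of fiber functors. It also needs the adjunction $\Res \dashv \Coind$ for profinite groupoids from \Cref{sec:profinite_groupoids}. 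This step is where I expect the only real care. The fiber functors and groupoid structure must be set up so that the restriction functors of $\Set^\et_X$ literally match the base-change pullback of covers up to coherent natural isomorphism. Otherwise the cocycle conditions on the two sides correspond only up to isomorphism, and one must check that these isomorphisms themselves satisfy the cocycle identities. Given that, the presheaf axioms for $\mathcal{M} \in \PSh(X, \Set^\et_X)$, written via $r_V^U \colon \mathcal{M}(U) \to \Coind_V^U\mathcal{M}(V)$, translate exactly into the data above. This yields $\HSh(X_\fet, \Set) \simeq \PSh(X, \Set^\et_X)$.

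Finally, the sheaf conditions are matched as in \Cref{lem:local_equivalence}. By \Cref{lem:limit_coind_cover}, the sheaf condition for $\mathcal{M}$ is an equalizer condition in $\Set(\Pi_1^\et(U))$ involving $\Coind_V^U\mathcal{M}(V)$. Under $\Psi_U$ and the lemma above, it becomes an equalizer condition in $\Sh(\EtCov(U), \Set)$ on the sheaves $\kappa_V^U\mathcal{F}_V$. Limits in $\Sh(\EtCov(U), \Set)$ are computed sectionwise, so evaluating at each $E$ gives exactly the Zariski sheaf condition for $\mathcal{F}^E$. Combining this with the criterion from the first step, the half-sheaf equivalence restricts to $\Sh(X_\fet, \Set) \simeq \Sh(X, \Set^\et_X)$.
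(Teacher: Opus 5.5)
Your proposal is correct and follows the paper's own argument. The paper proves this theorem by saying that the argument of \Cref{sec:sheaf_via_groupoid} applies mutatis mutandis, and you carry out that transfer step by step: the Abbes--Gros sheaf criterion, the half-sheaf description, the compatibility of $\kappa_V^U$ with $\Coind$, and the matching of equalizer conditions. The identification $(E \times_X V)_{(-)} \cong \Res E_{(-)}$, which you flag as needing care, holds essentially by definition in the paper's setup. There, $\FSet(\Pi_1^\et(U))$ is $\EtCov(U)$, and restriction along $\Pi_1^\et(V) \to \Pi_1^\et(U)$ is base change of covers.
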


The same equivalence holds with $\Set$ replaced by $\Ab$. Since $X_\et$ is finer than $X_\fet$, there is a natural morphism of sites
\[ \delta \colon X_\et \to X_\fet. \]
We therefore compare sheaf cohomology on these two sites along $\delta$. For this, we recall the \'etale analogue of $K(\pi, 1)$ spaces.

\begin{definition}[Achinger, {\cite[Definition~4.1]{Achinger}}]\label{def:etale_kpi1}
    A connected scheme $X$ is an \emph{\'etale $K(\pi, 1)$ scheme} if for every locally constant \'etale sheaf $\mathcal{F}$ of finite abelian groups and every (equivalently, some) geometric base point $x \to X$, the natural map
    \[ H^i_\cnt(\pi_1^\et(X, x), \mathcal{F}_x) \to H^i(X_\et, \mathcal{F}) \]
    is an isomorphism for all $i \ge 0$.
\end{definition}

\begin{theorem}[Achinger]\label{thm:etale_kpi1}
    Let $X$ be either an Artin neighborhood over a field of characteristic $0$, or a connected affine scheme over $\mathbb{F}_p$. Then $X$ is an \'etale $K(\pi, 1)$ scheme.
\end{theorem}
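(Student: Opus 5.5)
The plan is to handle the two cases separately. In both, we use that the $K(\pi,1)$ property is equivalent to the vanishing of positive-degree cohomology classes after pullback to some finite \'etale cover. That is, for every locally constant $\mathcal{F}$ of finite abelian groups and every class $\xi \in H^i(X_\et, \mathcal{F})$ with $i > 0$, there is a connected finite \'etale cover $E \to X$ with $\xi|_E = 0$. This reformulation comes from the Hochschild--Serre spectral sequence for the pro-finite-\'etale universal cover $\widetilde{X} = \varprojlim E$. It identifies the $K(\pi,1)$ property with $H^i(\widetilde{X}_\et, \mathcal{F}) = \varinjlim_E H^i(E_\et, \mathcal{F}) = 0$ for $i > 0$. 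Exactly as in the proof of \Cref{lem:profinite_coind_vanishing}, the colimit over covers becomes the effaceability statement.

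For an Artin neighborhood $X$ over a field $k$ of characteristic $0$, we first reduce to $k$ algebraically closed. Let $\bar k$ be an algebraic closure. The fundamental exact sequence $1 \to \pi_1^\et(X_{\bar k}) \to \pi_1^\et(X) \to \mathrm{Gal}(\bar k/k) \to 1$ and the Hochschild--Serre spectral sequence for $X_{\bar k} \to X$ compare with the Lyndon--Hochschild--Serre spectral sequence in continuous group cohomology. Hence it suffices to treat $X_{\bar k}$, which is again an Artin neighborhood since the definition is stable under base field extension. Next, we spread $X$ out over a finitely generated subfield and embed that subfield in $\mathbb{C}$. Invariance of \'etale cohomology with finite coefficients under extension of algebraically closed fields, together with the corresponding invariance of $\pi_1^\et$ in characteristic $0$, then reduces us to $k = \mathbb{C}$. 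Over $\mathbb{C}$, Artin's comparison theorem identifies $H^i(X_\et, \mathcal{F})$ with $H^i(X^\an, \mathcal{F})$. By \Cref{thm:top_kpi1}, the latter is $H^i_\grp(\pi_1(X^\an), \mathcal{F}_x)$. Riemann existence gives $\pi_1^\et(X) = \widehat{\pi_1(X^\an)}$, and goodness of $\pi_1(X^\an)$, as used in the proof of \Cref{thm:profinite_betti_comparison}, identifies this with $H^i_\cnt(\pi_1^\et(X), \mathcal{F}_x)$.

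For a connected affine scheme $X$ over $\mathbb{F}_p$, we split the coefficients into their $p$-primary part and their prime-to-$p$ part. By d\'evissage and passing to a finite \'etale cover trivializing $\mathcal{F}$, it suffices to treat $\mathcal{F} = \mathbb{Z}/\ell$ for a prime $\ell$. When $\ell = p$, the Artin--Schreier sequence $0 \to \mathbb{Z}/p \to \mathcal{O} \xrightarrow{F - 1} \mathcal{O} \to 0$ and the vanishing of $H^i(X, \mathcal{O})$ for $i > 0$ on affines show that $H^i(X_\et, \mathbb{Z}/p) = 0$ for $i \ge 2$. Degree $1$ is automatic because $H^1$ classifies torsors, which are finite \'etale. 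The case $\ell \ne p$ is the main obstacle, and it is the content of Achinger's theorem. We first reduce by Noetherian approximation to $X$ of finite type over $\mathbb{F}_p$. Then we use Achinger's construction of a finite \'etale morphism $X \to \mathbb{A}^n$, which exists in characteristic $p$ thanks to wild Artin--Schreier-type coordinate changes. Finally, we argue by induction on dimension, using the Leray spectral sequence for a projection $\mathbb{A}^n \to \mathbb{A}^{n-1}$ and the killing of ramified classes by Kummer-type covers. I expect to invoke \cite[Definition~4.1]{Achinger} and the corresponding main theorem of that paper for this step rather than reprove it.
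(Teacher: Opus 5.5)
Your proposal is correct, but for the first case it takes a genuinely different route from the paper. The paper gives no argument of its own: it cites \cite[\S1.1]{Achinger} for Artin neighborhoods and \cite[Theorem~1.1]{Achinger} for connected affine $\mathbb{F}_p$-schemes. In characteristic $0$ you instead derive the \'etale statement from Betti inputs the paper already uses elsewhere, in three steps. First, a Hochschild--Serre reduction to $\bar k$; this is legitimate because an Artin neighborhood is geometrically irreducible, so the fundamental exact sequence holds. Second, a Lefschetz-principle descent to $\mathbb{C}$ of the kind used in \Cref{lem:alg_ext_surjectivity_artin}, using invariance of $\pi_1^\et$ and of torsion \'etale cohomology under extensions of algebraically closed fields of characteristic $0$; this invariance also lets you descend the locally constant sheaf. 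Third, Artin comparison, \Cref{thm:top_kpi1}, Riemann existence, and goodness of $\pi_1(X^\an)$. What your route buys is a characteristic-$0$ case that is essentially self-contained relative to the ingredients of \Cref{thm:profinite_betti_comparison}. It also shows that this case is a formal consequence of the topological $K(\pi,1)$ property plus goodness. The cost is reliance on transcendental comparison theorems and a descent step that the bare citation avoids.

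In the second case your argument rests on the same citation as the paper. Your Artin--Schreier treatment of $p$-torsion is correct: affine vanishing of $H^i(X, \mathcal{O}_X)$ kills $H^{\ge 2}$, and an $H^1$ class dies on its own torsor. However, \cite[Theorem~1.1]{Achinger} already covers all finite coefficients, so the split into $p$-part and prime-to-$p$ part is redundant. Your sketch of the $\ell \ne p$ case is only impressionistic; for instance, the finite \'etale map to $\mathbb{A}^n$ is Kedlaya's theorem. This is harmless, since you explicitly defer to Achinger for that step.
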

\begin{proof}
    The first case follows from \cite[\S1.1]{Achinger}. The second case follows from \cite[Theorem~1.1]{Achinger}.
\end{proof}

In particular, every smooth variety over a field of characteristic $0$, and every Noetherian scheme over $\mathbb{F}_p$, admits a basis of \'etale $K(\pi, 1)$ schemes.

\begin{theorem}[Finite \'Etale to \'Etale Comparison]\label{thm:profinite_etale_comparison}
    Let $X$ be a smooth variety over a field of characteristic $0$, or a Noetherian scheme over $\mathbb{F}_p$. Let $\mathcal{F}$ be a filtered colimit of locally constant \'etale sheaves of finite abelian groups. Then the natural map
    \[ H^i_\fet(X, \Phi(\delta_* \mathcal{F})) \to H^i(X_\et, \mathcal{F}) \]
    is an isomorphism for all $i \ge 0$.
\end{theorem}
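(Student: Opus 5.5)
The strategy follows the proof of \Cref{thm:betti_comparison}: a Leray spectral sequence along $\delta \colon X_\et \to X_\fet$, with the higher direct images killed on a basis of \'etale $K(\pi,1)$ schemes. The equivalence \Cref{thm:fet_sheaf_equivalence}, with $\Ab$ in place of $\Set$, gives an identification $\Gamma(X, \Phi(-))^{\Pi_1^\et(X)} \cong \Gamma(X_\fet, -)$, analogous to \eqref{eq:betti_global_sections}. Taking right derived functors yields $H^i_\fet(X, \Phi(-)) \cong H^i(X_\fet, -)$, as in \Cref{cor:betti_site_identification}. It therefore suffices to show that
\[ H^i(X_\fet, \delta_*\mathcal{F}) \to H^i(X_\et, \mathcal{F}) \]
is an isomorphism. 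The Leray spectral sequence $H^p(X_\fet, R^q\delta_*\mathcal{F}) \Rightarrow H^{p+q}(X_\et, \mathcal{F})$ reduces this to showing $R^q\delta_*\mathcal{F} = 0$ for $q > 0$.

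As in \Cref{lem:higher_direct_image}, $R^q\delta_*\mathcal{F}$ is the sheaf associated to the presheaf $E \mapsto H^q(E_\et, \mathcal{F})$ on $X_\fet$. It thus suffices to show the following: for every $E \in X_\fet$ and every class $c \in H^q(E_\et, \mathcal{F})$ with $q > 0$, there is a covering of $E$ in $X_\fet$ on whose members $c$ dies. First cover $p(E)$ Zariski-locally by \'etale $K(\pi,1)$ schemes $U_\lambda$. These exist by \Cref{thm:etale_kpi1}: in characteristic $0$ take Artin neighborhoods via \Cref{prop:artin_good_neighborhood}, and over $\mathbb{F}_p$ take connected affine opens. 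Replace $E$ by the connected components of $E \times_X U_\lambda$. A connected finite \'etale cover of an \'etale $K(\pi,1)$ scheme is again one, because its fundamental group is an open subgroup and continuous cohomology is compatible with Shapiro's lemma on both sides. So we may assume $E$ is a connected \'etale $K(\pi,1)$ scheme with fundamental group $\pi = \pi_1^\et(E, x)$.

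Next handle $\mathcal{F}$. \'Etale cohomology on the Noetherian scheme $E$, and continuous cohomology of $\pi$ with discrete coefficients, both commute with filtered colimits. Hence the $K(\pi,1)$ isomorphism $H^q_\cnt(\pi, \mathcal{F}_x) \cong H^q(E_\et, \mathcal{F})$ extends from locally constant finite sheaves to the given filtered colimit $\mathcal{F}$. Continuous cohomology of a profinite group with discrete coefficients is the colimit over open normal subgroups $N$ of $H^q(\pi/N, \mathcal{F}_x^N)$, and it is effaceable on passing to open subgroups. Thus any class in $H^q_\cnt(\pi, \mathcal{F}_x)$ with $q > 0$ restricts to zero on some open subgroup $\pi' \subseteq \pi$. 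Let $E' \to E$ be the connected finite \'etale cover corresponding to $\pi'$. Then $c$ vanishes in $H^q(E'_\et, \mathcal{F})$, by the $K(\pi,1)$ isomorphism for $E'$ and its compatibility with restriction. Since $E' \to E$ is a covering in $X_\fet$, this proves $R^q\delta_*\mathcal{F} = 0$.

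The main obstacle is the bookkeeping that makes the $K(\pi,1)$ property usable here. Two points need care: that it is stable under connected finite \'etale covers, and that it passes to filtered colimits of locally constant finite sheaves, with the isomorphisms natural in restriction along $E' \to E$. Over $\mathbb{F}_p$ there is a further point: finite \'etale covers of affine schemes are affine, so \Cref{thm:etale_kpi1} applies directly. In characteristic $0$ the stability under covers must come from the Shapiro argument above, or from Achinger's formulation. Finally, the edge map of the Leray spectral sequence must be identified with the stated natural map; this is routine.
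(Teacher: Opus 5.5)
Your proposal is correct and follows essentially the same route as the paper: identify $H^i_\fet(X,\Phi(-))$ with $H^i(X_\fet,-)$, use the Leray spectral sequence for $\delta$ to reduce to the vanishing of $R^q\delta_*\mathcal{F}$ for $q>0$, and kill classes locally in the finite \'etale topology using Achinger's $K(\pi,1)$ results. The differences are minor. The paper first reduces to a single locally constant finite $\mathcal{F}_\lambda$, because \'etale cohomology commutes with filtered colimits, and then cites Achinger's Proposition~4.2 for the local vanishing. You instead extend the $K(\pi,1)$ isomorphism to the colimit and derive the vanishing directly from effaceability of continuous cohomology, and you also make explicit the stability of the $K(\pi,1)$ property under connected finite \'etale covers, which the paper leaves implicit.
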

\begin{proof}
    By \Cref{thm:fet_sheaf_equivalence}, with $\Set$ replaced by $\Ab$, we have $H^i_\fet(X, \Phi(\delta_*\mathcal{F})) \cong H^i(X_\fet, \delta_*\mathcal{F})$. Therefore, it suffices to show that the natural map
    \[ H^i(X_\fet, \delta_*\mathcal{F}) \to H^i(X_\et, \mathcal{F}) \]
    is an isomorphism for all $i \ge 0$. Write $\mathcal{F} = \varinjlim_\lambda \mathcal{F}_\lambda$, where each $\mathcal{F}_\lambda$ is a locally constant \'etale sheaf of finite abelian groups.

    Let $0 \to \mathcal{F} \to \mathcal{I}^\bullet$ be an injective resolution in $\Sh(X_\et, \Ab)$. Restriction to $E_\et$ preserves injective sheaves \cite[\href{https://stacks.math.columbia.edu/tag/03F3}{Tag~03F3}]{StacksProject}, so $R^i \delta_* \mathcal{F}$ is the sheaf associated to the presheaf
    \[ E \mapsto H^i\!\left((\delta_* \mathcal{I}^\bullet)(E)\right) = H^i\!\left(\Gamma(E_\et, p_E^* \mathcal{I}^\bullet)\right) \cong H^i(E_\et, p_E^* \mathcal{F}). \]
    Since $p_E^*$ is a left adjoint, it commutes with filtered colimits. Hence, by \cite[\href{https://stacks.math.columbia.edu/tag/03Q5}{Tag~03Q5}]{StacksProject},
    \[ H^i(E_\et, p_E^* \mathcal{F}) \cong \varinjlim_\lambda H^i(E_\et, p_E^* \mathcal{F}_\lambda). \]
    Consequently, every element of $H^i(E_\et, p_E^* \mathcal{F})$ is represented by an element of $H^i(E_\et, p_E^* \mathcal{F}_\lambda)$ for some $\lambda$. By \Cref{thm:etale_kpi1} and \cite[Proposition~4.2]{Achinger}, the latter element is locally zero in the finite \'etale topology when $i > 0$. Therefore, $R^i \delta_* \mathcal{F} = 0$ for all $i > 0$, and the result follows from the Leray spectral sequence for $\delta_*$.
\end{proof}

\section{Algebraic Theory}\label{sec:algebraic}

We apply the framework of \Cref{sec:formalization} with the pro-algebraic fundamental groupoid $\Pi_1^\alg$, for which Grothendieck topologies are not appropriate tools. Fix a smooth variety $X$ over an algebraically closed field $k$ of characteristic $0$ throughout this section. The pro-algebraic fundamental groupoid $\Pi_1^\alg(X)$ is the Tannaka groupoid of the tensor category
\[ \MIC^\rs(X) \coloneqq \{\text{vector bundles with regular singular integrable connections on } X\} \]
with respect to the fiber functor over $X$ that forgets the connection \cite[Proposition~10.32(a)]{Deligne1}. Recall that $(\Rep^\alg_X)^\op \to \Open(X)$ is a fibration whose fiber over $U$ is equivalent to $\Rep(\Pi_1^\alg(U))$. We adjoin a terminal space $* \coloneqq \Spec k$ to $\Open(X)$ to form $\Open(X/*)$. This extends the fibration to $(\Rep_{X/*}^\alg)^\op \to \Open(X/*)$. The construction in \Cref{def:sheaf_cohomology} then gives a sheaf cohomology functor
\[ H^\bullet_\alg(X, -) \colon \Sh(X, \Rep^\alg_X) \to \Vect(k). \]
The goal of this section is to compare $H^\bullet_\alg(X, -)$ with $H^\bullet_\dR(X, -)$ and $H^\bullet_{\bt,\mathbb{C}}(X, -)$. Background on affine groupoid schemes, their representations, and the restriction functors is collected in \Cref{sec:affine_groupoids} for the reader's convenience.

\subsection{Comparison with Betti Cohomology}\label{sec:algebraic_betti}

We first work over an arbitrary field $k$. Let $\AfGpd$ denote the category of affine groupoid schemes over $k$. The fibration $\Rep_\AfGpd^\op \to \AfGpd$ is defined by $\Rep_\AfGpd(G) \coloneqq \Rep(G)$ with the usual restriction functors.

\begin{lemma}\label{lem:rep_adjunction}
    Given a morphism $H \to G$ in $\AfGpd$, we have an adjunction
    \[ \Res_H^G : \Rep(G) \rightleftarrows \Rep(H) : \Coind_H^G. \]
    Moreover, the coinduction functor $\Coind_H^G$ commutes with filtered colimits.
\end{lemma}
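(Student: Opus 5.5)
The plan is to construct $\Coind_H^G$ explicitly as a groupoid version of the classical induction formula for affine group schemes \cite[Ch.~I, \S3]{Jantzen}. We then read off both the adjunction and compatibility with filtered colimits from that formula.

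\emph{Setup.} Let $S$ and $T$ be the object schemes of $G$ and $H$, and let $f \colon T \to S$ be the induced map on objects. Representations of $G$ are identified with comodules over the Hopf algebroid $(\mathcal{O}(S), \mathcal{O}(G))$, i.e.\ quasi-coherent sheaves on $S$ with a coaction $V \to V \otimes_{\mathcal{O}(S)} \mathcal{O}(G)$ (and similarly for $H$, as recalled in \Cref{sec:affine_groupoids}). Since $\mathcal{O}(G)$ is flat over $\mathcal{O}(S)$ via both source and target, the forgetful functor $\Rep(G) \to \QCoh(S)$ is exact and creates colimits. Hence kernels, equalizers and filtered colimits in $\Rep(G)$ are computed on underlying modules. $\Res_H^G$ sends $V$ to $f^*V$ with coaction pulled back along $H \to G$; it clearly preserves colimits.

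\emph{Construction of $\Coind_H^G$.} For $N \in \Rep(H)$, consider the $\mathcal{O}(S)$-module $N \otimes \mathcal{O}(G)$. This is formed by base-changing $\mathcal{O}(G)$ along $f$ on one leg, i.e.\ using the scheme $T \times_{f,S,s} G$ of arrows of $G$ starting at objects in the image of $f$. It carries two structures:
\begin{itemize}
    \item a $G$-coaction by right translation;
    \item two $H$-coaction maps, one from the coaction on $N$ and one from left translation through $H \to G$.
\end{itemize}
Define
\[ \Coind_H^G N \coloneqq \Eq\bigl(N \otimes \mathcal{O}(G) \rightrightarrows N \otimes \mathcal{O}(H) \otimes \mathcal{O}(G)\bigr), \]
the ``$H$-equivariant functions from $G$ to $N$.'' By flatness, this equalizer is a $G$-subcomodule.

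\emph{Adjunction.} We verify Frobenius reciprocity in the usual way. A $G$-map $V \to \Coind_H^G N$ corresponds, via composition with the counit (evaluation at identity arrows) $\Coind_H^G N \to N$, to an $H$-map $\Res_H^G V \to N$. Conversely, an $H$-map $\phi \colon f^*V \to N$ yields
\[ V \to V \otimes \mathcal{O}(G) \xrightarrow{\phi \otimes \id} N \otimes \mathcal{O}(G), \]
which lands in the equalizer. One then checks that the two constructions are mutually inverse and natural; the checks are formal Hopf-algebroid identities.

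\emph{Filtered colimits.} In the formula for $\Coind_H^G$, tensor products (and pushforward along the affine map $f$) commute with all colimits. The equalizer is a finite limit, which commutes with filtered colimits in module categories. Since the forgetful functors reflect both, $\Coind_H^G$ commutes with filtered colimits.

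The main obstacle I expect is the groupoid bookkeeping in the construction step. One must set up the correct fibered product of $T$ with $G$ so that the equalizer really is a quasi-coherent $\mathcal{O}(S)$-module with a $G$-coaction, and confirm that the flatness hypotheses on affine groupoid schemes make kernels of comodule maps comodules. If this becomes unwieldy, a fallback is to use the adjoint functor theorem (valid since $\Rep(G)$ is locally finitely presentable, every comodule being the union of finite ones, and $\Res_H^G$ preserves colimits) only for existence. The explicit formula would still be needed to prove commutation with filtered colimits; one may also reduce to connected components as in \Cref{prop:groupoid_SC}.
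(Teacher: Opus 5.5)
Your approach is the paper's: realize $\Coind_H^G$ as a cotensor product (your equalizer $N \otimes \mathcal{O}(G) \rightrightarrows N \otimes \mathcal{O}(H) \otimes \mathcal{O}(G)$). Then deduce commutation with filtered colimits from two facts: tensor products commute with colimits, and filtered colimits are exact in module categories. There is, however, a gap in the setup. You identify $\Rep(G)$ with comodules over the Hopf algebroid $(\mathcal{O}(S), \mathcal{O}(G))$, which presupposes that $S$, and hence $G$, is affine. An object of $\AfGpd$ acts on an arbitrary nonempty scheme $S$ and is only affine over $S \times S$; for instance, $\Pi_1^\alg(U)$ acts on $U$, which is usually not affine. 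Likewise, the object map $f \colon T \to S$ need not be affine. An example is the open immersion $\mathbb{A}^2 \setminus \{0\} \hookrightarrow \mathbb{A}^2$ underlying $\Pi_1^\alg(\mathbb{A}^2 \setminus \{0\}) \to \Pi_1^\alg(\mathbb{A}^2)$. So your appeal to pushforward along an affine map is also unjustified as stated.

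The missing step is the reduction the paper makes before doing any algebra. First pass to transitive components as in \Cref{prop:groupoid_SC}. Coinduction is then a finite product of componentwise coinductions, which is harmless for filtered colimits. Then choose a nonempty affine scheme $\Spec B$ mapping to $T$. Base change $H$ along $\Spec B \to T$ and $G$ along the composite $\Spec B \to T \to S$. By \cite[Remarque~1.8, (3.5.1)]{Deligne}, the resulting functors $\Rep(H) \to \Rep(H_{\Spec B})$ and $\Rep(G) \to \Rep(G_{\Spec B})$ are equivalences. By the cocycle condition they commute with restriction along $H \to G$ and $H_{\Spec B} \to G_{\Spec B}$, hence are compatible with coinduction, and equivalences preserve filtered colimits. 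Now both groupoids act on the same affine scheme. So $G_{\Spec B} = \Spec L$ and $H_{\Spec B} = \Spec K$ are $B$-corings, and the morphism corresponds to a coring homomorphism $L \to K$. Your equalizer is then just $N \square_K L$, and the bookkeeping with $T \times_{f,S,s} G$ that you anticipated disappears. (With distinct affine bases your formula would be the usual $N \square_K (C \otimes_B L)$, but the common base is simpler.) The adjunction and the filtered-colimit statement then follow from your own arguments, or, as in the paper, from \cite[22.12, 21.3(3)]{BrzezinskiWisbauer}. Your arguments rely on flatness of $L$ over $B$, which you correctly derive from faithful flatness of $(s,t)$.
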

\begin{proof}
    As in \Cref{prop:groupoid_SC}, we may assume that $H$ and $G$ are transitive. We may further assume that they act on the same affine scheme $\Spec B$ by \cite[Remarque~1.8, (3.5.1)]{Deligne}. Let $H = \Spec K$ and $G = \Spec L$. According to \cite[1.14]{Deligne}, $K$ and $L$ are $B$-corings, and the morphism $H \to G$ corresponds to a coring homomorphism $L \to K$. By \cite[1.15]{Deligne}, the categories $\Rep(H)$ and $\Rep(G)$ are equivalent to the categories of comodules over $K$ and $L$, respectively. Thus, by \cite[22.12]{BrzezinskiWisbauer}, the coinduction functor is identified with the cotensor product, which commutes with filtered colimits by \cite[21.3(3)]{BrzezinskiWisbauer}.
\end{proof}

\begin{proposition}\label{prop:RepAfGpd_SC5}
    The fibration $\Rep_\AfGpd^\op \to \AfGpd$ is an SC5 fibration.
\end{proposition}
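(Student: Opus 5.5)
We verify (SC1)--(SC5) for an arbitrary morphism $\varphi \colon H \to G$ in $\AfGpd$, following the pattern of \Cref{prop:ModPfGpd_SC5}. First we reduce to transitive groupoids. As in \Cref{prop:groupoid_SC}, $\Rep(G)$ decomposes as a product over the connected components of $G$, and $\Res_H^G$ becomes componentwise restriction. Its right adjoint is a product over fibers of $\pi_0(H) \to \pi_0(G)$ of the componentwise coinductions. One caveat: for schemes, the decomposition into components should be taken in the sense of the base $\Spec B$; if $\pi_0$ is not discrete, we would instead directly use the coring description. Products of functors commuting with filtered colimits still commute with filtered colimits, since filtered colimits in $\Rep(G)$ are computed on underlying quasi-coherent modules and products in a Grothendieck category are not needed beyond finitely many factors per component. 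If this is an issue, we would avoid the decomposition and work with corings over a general base.

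By \Cref{lem:rep_adjunction}, (SC1) holds, and (SC5) holds because $\Coind_H^G$ is a cotensor product, which commutes with filtered colimits. For (SC3), $\Res_H^G$ is the corestriction of comodules along the coring map $L \to K$ after identifying bases via \cite{Deligne}. Corestriction does not change the underlying $B$-module, and kernels of comodules are computed on underlying modules when the coring is flat over $B$; here $L$ and $K$ are faithfully flat, since they are groupoids acting transitively. Thus $\Res_H^G$ is exact, in particular left exact.

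For (SC4) and (SC2), we use that $\Rep(G)$ is the category of comodules over the flat coring $L$ over $B$. This category is abelian, with colimits and kernels computed in $B$-modules, by \cite{BrzezinskiWisbauer}. For local finite presentability we invoke the fundamental theorem that every representation of an affine groupoid scheme is a filtered union of subrepresentations whose underlying modules are of finite type; this is \cite[Deligne]{Deligne}, Tannakian theory in characteristic-free form. Such subobjects are finitely presentable in $\Rep(G)$ because Hom from them commutes with filtered colimits; this follows from computing Hom through the underlying finite projective modules together with an equalizer describing comodule maps. These objects then form a strong generating set.

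The main obstacle is this last step. It requires the finiteness theorem for comodules over a coring, namely that every comodule is a filtered union of coherent subcomodules, and it also requires checking that such subcomodules are genuinely finitely presentable. For the latter, we would write $\Hom_{\Rep(G)}(V, M)$ as the equalizer of $\Hom_B(V, M) \rightrightarrows \Hom_B(V, M \otimes_B L)$. Both terms commute with filtered colimits when $V$ is finitely presented over $B$, and finite limits commute with filtered colimits of sets.
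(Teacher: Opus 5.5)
Your argument is correct, but it reaches (SC3) and (SC4) by a different route than the paper. Both proofs reduce to transitive groupoids and get (SC1) and (SC5) from \Cref{lem:rep_adjunction} (coinduction is a cotensor product).

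The paper then routes everything else through \Cref{lem:rep_ind}, namely $\Rep(G) \simeq \Ind(\FRep(G))$:
\begin{itemize}
\item (SC4) and (SC2) are read off from this Ind-description.
\item (SC3) comes from exactness of pullback on $\FRep$ (exact sequences of vector bundles are locally split), extended to Ind-categories via \cite[Corollary~8.6.8]{KashiwaraSchapira}.
\end{itemize}

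You instead stay in the comodule picture throughout. Since both corings $L$ and $K$ are flat over $B$, kernels and cokernels in both comodule categories are computed on underlying $B$-modules. Corestriction along $L \to K$ does not change the underlying module, so exactness of $\Res_H^G$ and abelianness of $\Rep(G)$ are immediate. This is more elementary: it avoids both the local-splitting argument and the Ind-formalism. The paper's route instead packages (SC2) and (SC4) into one structural statement, \Cref{lem:rep_ind}, which it also uses elsewhere.

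For (SC2), your argument is essentially the proof of \Cref{lem:rep_ind} redone inline. It uses the same two inputs: Deligne's filtered-union result, and the equalizer description of $\Hom$ to get finite presentability.

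Two small remarks. First, your hedging about non-discrete $\pi_0$ is unnecessary. Here an affine groupoid scheme is by definition a finite disjoint union of transitive ones, so the fibers of $\pi_0(H) \to \pi_0(G)$ are finite. Coinduction is then a finite product, which commutes with filtered colimits. Second, use the form of \cite[Corollaire~3.9]{Deligne} giving subrepresentations that are locally free of finite rank, not merely of finite type, since $B$ need not be Noetherian. With that, your generating set is automatically strong because $\Rep(G)$ is abelian, and it is a set because $\FRep(G)$ is essentially small.
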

\begin{proof}
    As in \Cref{prop:groupoid_SC}, we may assume that $H$ and $G$ are transitive. Since exact sequences of vector bundles are locally split, pullback of vector bundles is exact. Thus, $\Res_H^G \colon \FRep(G) \to \FRep(H)$ is exact, and hence so is its extension to ind-categories by \cite[Corollary~8.6.8]{KashiwaraSchapira}, giving (\ref{axiom:SC3}). Conditions (\ref{axiom:SC1}) and (\ref{axiom:SC5}) follow from \Cref{lem:rep_adjunction}. By \Cref{lem:rep_ind}, we have $\Rep(G) \simeq \Ind(\FRep(G))$, so $\Rep(G)$ is abelian, giving (\ref{axiom:SC4}), and a set of representatives of the isomorphism classes in $\FRep(G)$ forms a strong generating set for $\Rep(G)$ consisting of finitely presentable objects, giving (\ref{axiom:SC2}).
\end{proof}

\begin{corollary}\label{cor:RepAlg_SC5}
    If $X$ is a smooth variety over an algebraically closed field $k$ of characteristic $0$, then $\Rep_{X/*}^\alg$ is an SC5 category.
\end{corollary}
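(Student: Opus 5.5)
The plan is to obtain the corollary by pulling back the fibration of \Cref{prop:RepAfGpd_SC5} along the pro-algebraic fundamental groupoid functor, in the same way \Cref{cor:ModEt_SC5} was deduced from \Cref{prop:ModPfGpd_SC5} and \Cref{thm:ModX_SC} from \Cref{prop:groupoid_SC}.

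\textbf{Step 1: a functor to $\AfGpd$.} Each Zariski open $U \subseteq X$ is again a smooth variety over $k$, so $\Pi_1^\alg(U)$ is the Tannaka groupoid of $\MIC^\rs(U)$. This is an affine groupoid scheme over $k$ acting on $U$. An inclusion $V \subseteq U$ gives a restriction functor $\MIC^\rs(U) \to \MIC^\rs(V)$. It is a tensor functor, and it is compatible with the forgetful fiber functors, since regular singularity is preserved under restriction to opens. By Tannakian functoriality it therefore induces a morphism of affine groupoid schemes $\Pi_1^\alg(V) \to \Pi_1^\alg(U)$ covering $V \hookrightarrow U$.

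For the terminal space $* = \Spec k$, set $\Pi_1^\alg(*)$ to be the trivial groupoid $\Spec k$. Its representations are $\Vect(k)$, and the structure morphisms $U \to *$ give the maps $\Pi_1^\alg(U) \to \Spec k$. Functoriality, that is, compatibility with composites, follows from uniqueness in the Tannakian reconstruction. Together these give a functor $\Pi_1^\alg \colon \Open(X/*) \to \AfGpd$.

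\textbf{Step 2: pull back.} By definition, $(\Rep^\alg_{X/*})^\op \to \Open(X/*)$ is the pullback of $\Rep_\AfGpd^\op \to \AfGpd$ along this functor, with restriction functors given by restriction of representations. Conditions (\ref{axiom:SC1})--(\ref{axiom:SC5}) are conditions on individual fibers and on individual restriction functors. They are therefore inherited under pullback: each fiber $\Rep(\Pi_1^\alg(U))$ and each restriction functor is literally one occurring in $\Rep_\AfGpd$. Applying \Cref{prop:RepAfGpd_SC5} then finishes the proof.

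\textbf{Main obstacle.} The only nontrivial point is Step 1, namely checking that $\Pi_1^\alg(U)$, and the structure morphism to $*$, really are objects and morphisms of $\AfGpd$ in the sense used in \Cref{sec:affine_groupoids}.

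\begin{itemize}
\item \emph{Base of the groupoid.} The groupoid $\Pi_1^\alg(U)$ lives over $U \times U$ rather than over a $k$-point. I would either use the appendix's conventions, under which groupoids over varying bases are allowed, or reduce to the transitive case exactly as in the proof of \Cref{lem:rep_adjunction}.
\item \emph{Transitivity.} Over an algebraically closed field of characteristic $0$, each connected component of $U$ gives a transitive groupoid by Deligne's theorem \cite[Proposition~10.32(a)]{Deligne1}.
\item \emph{Connected components.} Disconnected $U$ is handled componentwise, as in \Cref{prop:groupoid_SC}.
\end{itemize}
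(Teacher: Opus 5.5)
Your proposal is correct and follows essentially the same approach as the paper, which deduces the corollary from \Cref{prop:RepAfGpd_SC5} by pulling back along $\Pi_1^\alg \colon \Open(X/*) \to \AfGpd$. Your extra checks are consistent with the paper's conventions and just make explicit what the paper's one-line proof leaves implicit: that $\Pi_1^\alg$ is a well-defined functor into $\AfGpd$, and that (\ref{axiom:SC1})--(\ref{axiom:SC5}) are fiberwise conditions inherited under pullback.
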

\begin{proof}
    The result follows from \Cref{prop:RepAfGpd_SC5} by pulling back along the functor
    \[ \Pi_1^\alg \colon \Open(X/*) \to \AfGpd. \qedhere \]
\end{proof}

To compare $H^\bullet_\alg(X, -)$ with $H^\bullet_{\bt,\mathbb{C}}(X, -)$, we assume for the remainder of this subsection that $k = \mathbb{C}$, so $X$ is a smooth complex variety. By \cite[10.25]{Deligne1}, we have $\FRep(\Pi_1^\alg(U)) \simeq \FRep(\Pi_1(U))$. Passing to ind-categories gives a morphism of SC categories
\[ \delta^{-1} \colon \Rep_{X/*}^\alg \to \Rep_{X/*}, \]
given fiberwise by the full embedding $\Rep(\Pi_1^\alg(U)) \hookrightarrow \Rep(\Pi_1(U))$, whose right adjoint on each fiber is given by the coinduction functor below.

Recall that the pro-algebraic completion $G^\alg$ of an abstract group $G$ is the Tannaka group of the Tannakian category of finite-dimensional $G$-representations. Then we have the adjunction
\[ \Res_G^{G^\alg} : \Rep(G^\alg) \rightleftarrows \Rep(G) : \Coind_G^{G^\alg}, \]
where $\Res_G^{G^\alg}$ is the forgetful functor and
\begin{align*}
    \Coind_G^{G^\alg} \colon \Rep(G) &\to \Rep(G^\alg), \\
    N &\mapsto \bigcup_{\substack{L \subseteq N \\ L \in \FRep(G)}} L.
\end{align*}

\begin{definition}[cf.\ {\cite[Lemma~4.11]{KPT}}]\label{def:algebraically_good}
    An abstract group $G$ is \emph{algebraically good} if for every finite-dimensional $G^\alg$-representation $M$, the natural map
    \[ H^i_\rat(G^\alg, M) \to H^i_\grp(G, M) \]
    is an isomorphism for all $i \ge 0$.
\end{definition}

\begin{theorem}[Katzarkov--Pantev--To\"en, {\cite[Remark~4.17(ii)]{KPT}}]\label{prop:artin_nbd_alg_good}
    Let $U$ be an Artin neighborhood and let $x \in U(\mathbb{C})$ be a base point. Then $\pi_1(U, x)$ is algebraically good.
\end{theorem}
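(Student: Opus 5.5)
The plan is to follow Artin's original argument for goodness in the sense of Serre, transposed to pro-algebraic completions, by induction on $\dim U$. Recall from \cite[Exp.~XI, \S3]{SGA4_3} that an Artin neighborhood $U$ of dimension $n$ carries an elementary fibration $f \colon U \to U'$ onto an Artin neighborhood $U'$ of dimension $n-1$. The fibers of $f$ are smooth affine curves $C$ with a constant number of punctures, and $f$ admits a section. Since $f^\an$ is a locally trivial fibration, the long exact homotopy sequence, together with $\pi_2(U'^\an) = 0$ from \Cref{thm:top_kpi1}, gives a short exact sequence
\[ 1 \to F \to \pi_1(U, x) \to \pi_1(U', f(x)) \to 1. \]
Here $F = \pi_1(C, x)$ is a finitely generated free group, and the sequence is split by the section.

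\textbf{Base case.} The base case is $\dim U = 0$, or more usefully a finitely generated free group $F$; this is also the input for the fibers. For a free group, $H^i_\grp(F, -) = 0$ for $i \ge 2$. I would show that $F^\alg$ also has rational cohomological dimension $\le 1$, by checking that the restriction of $\Ext^2$ to finite-dimensional representations vanishes, and then using the fact that every rational representation is a filtered union of finite-dimensional ones. For degrees $0$ and $1$, the comparison holds for any group. In degree $0$ this is immediate. In degree $1$, both groups classify extensions of $k$ by $M$. Because $\FRep(G^\alg) \simeq \FRep(G)$ is closed under extensions inside $\Rep(G)$, the map $H^1_\rat(G^\alg, M) \to H^1_\grp(G, M)$ is an isomorphism. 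Hence free groups of finite rank are algebraically good.

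\textbf{Inductive step.} Write $G = \pi_1(U, x)$ and $Q = \pi_1(U', f(x))$. The first claim is that the completed sequence $1 \to F^\alg \to G^\alg \to Q^\alg \to 1$ is exact. Right exactness and exactness in the middle are formal from Tannakian criteria. Left exactness, meaning that $F^\alg \to G^\alg$ is a closed immersion, amounts to showing that every finite-dimensional $F$-representation is a subquotient of the restriction of a $G$-representation. I would deduce this from the splitting $G = F \rtimes Q$, together with finite generation of $F$, by inducing up: the $Q$-orbit of the relevant data is controlled because $F$ is finitely generated. I expect this step, which is the content of \cite[Lemma~4.11]{KPT}, to be the main obstacle.

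Granting exactness, I would compare the two Hochschild--Serre spectral sequences
\[ H^p_\rat\bigl(Q^\alg, H^q_\rat(F^\alg, M)\bigr) \Rightarrow H^{p+q}_\rat(G^\alg, M) \quad\text{and}\quad H^p_\grp\bigl(Q, H^q_\grp(F, M)\bigr) \Rightarrow H^{p+q}_\grp(G, M). \]
By the base case, $H^q_\rat(F^\alg, M) \cong H^q_\grp(F, M)$. These groups are finite-dimensional because $F$ is finitely generated free, and they are $Q$-representations on which the $Q$-action factors through $Q^\alg$, since they arise from rational $G^\alg$-modules. The induction hypothesis applied to $Q$ then identifies the $E_2$-pages, and the comparison of spectral sequences gives that $G$ is algebraically good.
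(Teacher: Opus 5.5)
The paper does not prove this statement; it quotes it from Katzarkov--Pantev--To\"en, so your outline has to stand on its own. Your base case is fine: free groups of finite rank are algebraically good, and your argument for that works. The inductive step, however, rests on a claim that is false in general, namely that $F^{\alg}\to G^{\alg}$ is a closed immersion, i.e.\ that every finite-dimensional $F$-representation is a subquotient of the restriction of a $G$-representation.

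To see why it fails, let $V$ be a finite-dimensional $G$-representation and $g\in G$. The map $v\mapsto gv$ identifies $V|_F$ with its twist by conjugation by $g$. So the finitely many Jordan--H\"older constituents of $V|_F$ form a set stable under conjugation by $G$. Hence a character of $F$ with infinite $Q$-orbit is never such a subquotient.

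This does occur for Artin neighborhoods. Let $U\to U'\coloneqq\mathbb{P}^1\setminus\{0,1,\infty\}$ be the Legendre family $y^2=x(x-1)(x-\lambda)$ with its four $2$-torsion sections removed. This is a tower of two elementary fibrations, and since $Q=\pi_1(U')$ is free, the sequence even splits. Here $F$ is free of rank $5$ and surjects $G$-equivariantly onto $H_1(E_\lambda,\mathbb{Z})\cong\mathbb{Z}^2$. In a suitable basis, the local monodromy at $\lambda=0$ acts on this lattice by $(m,n)\mapsto(m+2n,n)$. For $c\in\mathbb{C}^\times$ not a root of unity, the character $\chi(m,n)=c^m$, pulled back to $F$, has the pairwise distinct twists $(m,n)\mapsto c^{m+2jn}$, $j\in\mathbb{Z}$. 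So neither finite generation of $F$ nor the splitting controls orbits. The mechanism behind Serre's profinite argument (only finitely many subgroups of a given index) has no pro-algebraic analogue, because $Q$ acts on the positive-dimensional torus of characters of $F$.

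Consequently, the Hochschild--Serre spectral sequence you can actually write down for $G^{\alg}$ has $N\coloneqq\ker(G^{\alg}\to Q^{\alg})$ in place of $F^{\alg}$. This $N$ is the image of $F^{\alg}$; that is your middle exactness. Its category $\FRep(N)$ is the full subcategory of $\FRep(F)$ consisting of subquotients of restrictions, which may be strictly smaller. Your base case gives no information about $H^q_{\rat}(N,-)$.

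What the comparison really needs is $H^q_{\rat}(N,M)\cong H^q_{\grp}(F,M)$ for $M\in\FRep(G^{\alg})$. Since $F$ is free, this reduces to showing that $\FRep(N)$ is closed under extensions in $\FRep(F)$. Then the $\Ext^1$ groups agree, and $H^{\ge 2}_{\rat}(N,-)=0$ follows by effacing a class with a finite-dimensional $M\hookrightarrow M'$ and using surjectivity of $H^1_{\grp}(F,M')\to H^1_{\grp}(F,M'/M)$.

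When $G=F\rtimes Q$, the key input is the $G$-module $M\oplus Z^1(F,M)$. Here $f\in F$ acts by $f(v,z)=(fv+z(f),z)$, and $q\in Q$ acts by $q(v,z)=(qv,\,f\mapsto q\,z(q^{-1}fq))$. This module contains every $F$-extension of the trivial representation by $M|_F$, and freeness of $F$ propagates this to arbitrary extensions in $\FRep(N)$. Note, however, that an elementary fibration in SGA~4 comes with no section. The splitting you invoke is automatic when $U'$ is a curve (then $Q$ is free) but is not provided in general, so that case needs a further argument.

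Alternatively, you can bypass completions and verify the effaceability criterion directly from the discrete Hochschild--Serre spectral sequence: every class in $H^i_{\grp}(G,V)$ with $i>0$ should die in $H^i_{\grp}(G,W)$ for some finite-dimensional $V\hookrightarrow W$. In the Legendre example $G$ is still algebraically good, so the statement survives. The exactness your inductive step relies on does not.
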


\begin{lemma}\label{lem:alg_coind_vanishing}
    Let $G$ be an algebraically good group. If $M \in \Rep(G)$ is finite-dimensional, then for all $i > 0$,
    \[ R^i \Coind_G^{G^\alg} M = 0. \]
\end{lemma}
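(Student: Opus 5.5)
We follow the proof of \Cref{lem:profinite_coind_vanishing}, replacing finite-index normal subgroups by finite-dimensional subrepresentations. The plan is to first identify $R^i\Coind_G^{G^\alg}$ with a filtered colimit of group cohomology groups, and then show that goodness forces this colimit to vanish in positive degrees.

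First, choose an injective resolution $0 \to M \to I^\bullet$ in $\Rep(G)$. We have $R^i \Coind_G^{G^\alg} M = H^i(\Coind_G^{G^\alg} I^\bullet)$. Each $\Coind_G^{G^\alg} I^n$ is the filtered union of the finite-dimensional $G$-subrepresentations of $I^n$. We want to rewrite this union as a colimit of Hom-groups. Every finite-dimensional $G$-representation $L$ lies in $\FRep(G) \simeq \FRep(G^\alg)$, so it is a quotient of a finite-dimensional $G^\alg$-representation. Thus
\[ \Coind_G^{G^\alg} I = \varinjlim_{W} \Hom_G(W, I), \]
where $W$ runs over a filtered system of finite-dimensional $G^\alg$-representations. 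One concrete choice is the finite-dimensional subcomodules $W$ of the regular representation $\mathcal{O}(G^\alg)$, via $\Hom_G(W^\vee \otimes -, -)$ or via matrix coefficients. The natural evaluation map realizes $\Coind$ as $\varinjlim_W (W \otimes \Hom_G(W, I))$, or more simply as $\varinjlim_W \Hom_G(W^\vee, I) \otimes W^\vee$. In comodule language, $\Coind_G^{G^\alg} I \cong (\mathcal{O}(G^\alg) \otimes I)^G$, which is the cleanest form. Since $\mathcal{O}(G^\alg) = \varinjlim W$ is a filtered union of finite-dimensional subcomodules, we get
\[ \Coind_G^{G^\alg} I = \varinjlim_W (W \otimes I)^G. \]

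Next, since $\Rep(G)$ is a module category, filtered colimits are exact. Moreover, $W \otimes -$ is exact and preserves injectives, because it has the exact left adjoint $W^\vee \otimes -$. Therefore
\[ R^i \Coind_G^{G^\alg} M = \varinjlim_W H^i_\grp(G, W \otimes M) \cong H^i_\grp(G, \mathcal{O}(G^\alg) \otimes M), \]
using that group cohomology of $G$ commutes with filtered colimits of coefficients, since $G$ is finitely generated in our application. If we want to avoid finiteness hypotheses, we keep the colimit form instead.

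Finally, $W \otimes M$ is a finite-dimensional $G^\alg$-representation, because $M \in \FRep(G) \simeq \FRep(G^\alg)$. Algebraic goodness then gives
\[ H^i_\grp(G, W \otimes M) \cong H^i_\rat(G^\alg, W \otimes M), \]
and the colimit becomes $\varinjlim_W H^i_\rat(G^\alg, W \otimes M)$. Rational cohomology of an affine group scheme commutes with filtered colimits, so this equals $H^i_\rat(G^\alg, \mathcal{O}(G^\alg) \otimes M)$. This group vanishes for $i > 0$, because $\mathcal{O}(G^\alg) \otimes M$ is isomorphic to $\mathcal{O}(G^\alg) \otimes M_{\mathrm{triv}}$ by the tensor identity, and is therefore an injective $G^\alg$-module.

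The main obstacle is justifying $\Coind_G^{G^\alg} I = (\mathcal{O}(G^\alg) \otimes I)^G$ naturally, together with the compatibility of the goodness isomorphisms with the transition maps in $W$. I would handle both by naturality of the comparison map $H^\bullet_\rat \to H^\bullet_\grp$ in the coefficients.
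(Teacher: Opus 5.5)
Your argument is correct, but it takes a different route from the paper.

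\textbf{The paper's route.} The paper never writes a formula for $\Coind_G^{G^\alg}$. For each $V\in\FRep(G^\alg)$ it runs the Grothendieck spectral sequence $\Ext^p_{G^\alg}(V,R^q\Coind_G^{G^\alg}M)\Rightarrow\Ext^{p+q}_G(V,M)$. It identifies the edge maps with the comparison maps $H^p_\rat(G^\alg,V^\vee\otimes M)\to H^p_\grp(G,V^\vee\otimes M)$. It then kills $\Hom_{G^\alg}(V,R^q\Coind_G^{G^\alg}M)$ by induction on $q$, using only $\Coind_G^{G^\alg}M\cong M$ and the fact that $\FRep(G^\alg)$ generates.

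\textbf{Your route.} You compute $R^i\Coind_G^{G^\alg}M\cong\varinjlim_W H^i_\grp(G,W\otimes M)$ from the induced-module formula $\Coind_G^{G^\alg}N\cong(\mathcal{O}(G^\alg)\otimes N)^G$. You then conclude by goodness, by commutation of $H^\bullet_\rat$ with filtered colimits, and by injectivity of $\mathcal{O}(G^\alg)\otimes M$ via the tensor identity. This is the exact analogue of the paper's proof of \Cref{lem:profinite_coind_vanishing}, with $\mathcal{O}(G^\alg)$ replacing functions on the finite quotients $G/N$. In effect it proves the algebraic counterpart of the exercise from Serre invoked there, and it describes $R^i\Coind_G^{G^\alg}M$ explicitly for arbitrary $M$.

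\textbf{What the paper's route buys.} The paper's argument is more formal. Importantly, it is reused verbatim for \Cref{lem:dR_coind_vanishing}, where no comparably simple formula for $\Coind_{\Dcat(X)}^{\Rcat(X)}$ is at hand.

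\textbf{Three things to tidy.}
\begin{enumerate}
\item The formula you flag as the main obstacle follows from $\Hom_{G^\alg}\bigl(V,(\mathcal{O}(G^\alg)\otimes N)^G\bigr)\cong\Hom_k(V,N)^G=\Hom_G(V,N)$. Here $G^\alg$ acts through one regular action, $G$ acts diagonally through the other, and $G$ acts on $\Hom_k(V,N)$ by conjugation. Accordingly, $W$ must range over finite-dimensional subcomodules for the regular action through which $G$ acts. Only then does $(W\otimes I)^G$ make sense and does goodness apply to $W\otimes M$.
\item The other candidate formulas you float, such as $\varinjlim_W\Hom_G(W^\vee,I)\otimes W^\vee$, are not correct and should be dropped.
\item Finite generation of $G$ does not make $H^\bullet_\grp(G,-)$ commute with filtered colimits in all degrees; one needs something like type $\mathrm{FP}_\infty$. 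Since you keep the colimit form and pass it through $H^\bullet_\rat$ instead, nothing depends on that remark.
\end{enumerate}
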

\begin{proof}
    Let $V \in \FRep(G^\alg)$ be arbitrary. We have a Grothendieck spectral sequence
    \[ E_2^{p,q} = \Ext^p_{G^\alg}(V, R^q \Coind_G^{G^\alg} M) \Rightarrow \Ext^{p+q}_G(V, M). \]
    Since $M$ is finite-dimensional, $\Coind_G^{G^\alg} M \cong M$. Moreover, there are natural isomorphisms
    \[ \Ext^p_{G^\alg}(V, M) \cong H^p_\rat(G^\alg, V^\vee \otimes M) \quad\text{and}\quad \Ext^p_G(V, M) \cong H^p_\grp(G, V^\vee \otimes M). \]
    Hence, the edge map $E_2^{p,0} \to H^p$ corresponds to the natural map
    \[ H^p_\rat(G^\alg, V^\vee \otimes M) \to H^p_\grp(G, V^\vee \otimes M). \]
    Since $G$ is algebraically good, this is an isomorphism for all $p \ge 0$.

    We prove that $R^q \Coind_G^{G^\alg} M = 0$ for all $q > 0$ by induction. Suppose $R^j \Coind_G^{G^\alg} M = 0$ for all $0 < j < q$. Then $E_2^{p,j} = 0$ for all $p$ and $0 < j < q$. Consequently, the only potentially non-zero differential leaving the $(0,q)$-term is
    \[ d_{q+1} \colon E_{q+1}^{0,q} \to E_{q+1}^{q+1, 0}. \]
    However, since the edge map $E_2^{q+1, 0} \to H^{q+1}$ is an isomorphism, we must have $d_{q+1} = 0$, so $E_2^{0,q} \cong E_\infty^{0,q}$. On the other hand, because the edge map $E_2^{q, 0} \to H^q$ is an isomorphism,
    \[ E_2^{0,q} \cong \Hom_{\Rep(G^\alg)}(V, R^q \Coind_G^{G^\alg} M) = 0. \]
    Since $\FRep(G^\alg)$ generates $\Rep(G^\alg)$, we conclude that $R^q \Coind_G^{G^\alg} M = 0$.
\end{proof}

\begin{theorem}[Pro-Algebraic to Betti Comparison]\label{thm:alg_betti_comparison}
    Let $X$ be a smooth complex variety. Let $\mathcal{M} \in \Sh(X, \Rep_X)$ be a locally constant sheaf of finite-dimensional complex vector spaces. Then the natural map
    \[ H^i_\alg(X, \delta_*\mathcal{M}) \to H^i_{\bt,\mathbb{C}}(X, \mathcal{M}) \]
    is an isomorphism for all $i \ge 0$.
\end{theorem}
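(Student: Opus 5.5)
The plan is to apply \Cref{thm:comparison_general} with $\mathcal{S} = \Rep^\alg_{X/*}$, $\mathcal{T} = \Rep_{X/*}$ and the morphism $\delta^{-1}$ of SC categories constructed above. The basis $\mathfrak{U}$ will be the Artin neighborhoods of $X$, which form a basis by \Cref{prop:artin_good_neighborhood}. On the terminal space both groupoids are trivial, so $\delta_\mathcal{A}$ is the identity of $\Vect(\mathbb{C})$ and is exact. The conclusion of \Cref{thm:comparison_general} is then exactly the asserted isomorphism $H^i_\alg(X, \delta_*\mathcal{M}) \to H^i_{\bt,\mathbb{C}}(X, \mathcal{M})$. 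It remains to verify the two acyclicity conditions on each Artin neighborhood $U$.

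\textbf{Condition~(\ref{cond:comparison_local_acyclicity}).} By \Cref{thm:top_kpi1}, $U^\an$ is a $K(\pi,1)$ space. \Cref{lem:betti_kpi1}, transported to $\Rep_X$ via \Cref{rem:rep_comparison}, then gives $R^i\Gamma(U, \mathcal{M}) = 0$ for $i > 0$. The transport step is to check that the argument of \Cref{lem:betti_kpi1} (evaluation at the universal cover together with \Cref{thm:betti_comparison}) goes through verbatim with $\mathbb{C}$-coefficients. This holds because the adjunctions in \Cref{rem:rep_comparison} are exact and compatible with $\Psi$.

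\textbf{Condition~(\ref{cond:comparison_coind_acyclicity}).} We need $R^i\Coind_{\Rep(\Pi_1(U))}^{\Rep(\Pi_1^\alg(U))}\mathcal{M}(U) = 0$ for $i > 0$. First reduce to connected $U$; an Artin neighborhood is connected, or otherwise we decompose over $\pi_0$ as in \Cref{prop:groupoid_SC}. Choosing a base point $x$ then identifies the coinduction with $\Coind_G^{G^\alg}$ for $G = \pi_1(U,x)$. Since $\Pi_1^\alg(U)$ is the Tannaka groupoid of $\MIC^\rs(U) \simeq \FRep(\Pi_1(U))$, its vertex group is $G^\alg$. Because $\mathcal{M}$ is locally constant with finite-dimensional stalks, $\mathcal{M}(U)$ is a finite-dimensional representation of $G$. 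The vanishing then follows from \Cref{lem:alg_coind_vanishing}, using that $G$ is algebraically good by \Cref{prop:artin_nbd_alg_good}.

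\textbf{Main obstacle.} The step most likely to require care is showing that $\mathcal{M}(U) = \Gamma(U,\mathcal{M})$ is finite-dimensional and is really the relevant local-system representation. For connected $U$ with $U^\an$ a $K(\pi,1)$, the sections of $\Psi(\mathcal{M})$ over the universal cover $P_x$ of $U^\an$ should be the stalk of the local system on $U^\an$, by the same identification used in the proof of \Cref{lem:betti_kpi1}. That stalk is finite-dimensional, and the $\Pi_1(U)$-action is the monodromy. If this identification needs more than that lemma provides, I would argue via $\Psi(\mathcal{M}) = \delta_*\mathcal{F}$ for a locally constant $\mathcal{F}$ on $X^\an_\bt$ and compute $\Gamma((P_x)_\bt, \delta_*\mathcal{F}) = \mathcal{F}(P_x)$, which equals the stalk because $P_x$ is connected and $\mathcal{F}$ is constant there.
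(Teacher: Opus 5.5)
Your proposal is correct and matches the paper's proof: the paper also applies the general comparison theorem with $\mathcal{S} = \Rep^\alg_X$, $\mathcal{T} = \Rep_X$ and the basis of Artin neighborhoods, taking condition (1) from the $K(\pi,1)$ lemma for $\Rep_X$ and condition (2) from algebraic goodness of the fundamental group of an Artin neighborhood together with the vanishing of $R^i\Coind_G^{G^\alg}$ on finite-dimensional modules. Your extra checks are details the paper leaves implicit, and your arguments for them are sound: that $\delta_\mathcal{A}$ is the identity on $\Vect(\mathbb{C})$ and hence exact, and that $\mathcal{M}(U)$ is the finite-dimensional monodromy representation on a stalk, computed via $\Psi(\mathcal{M}) = \delta_*\mathcal{F}$ on the universal cover.
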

\begin{proof}
    We apply \Cref{thm:comparison_general} with $\mathcal{S} = \Rep^\alg_X$ and $\mathcal{T} = \Rep_X$, taking $\mathfrak{U}$ to be the basis of Artin neighborhoods. Condition~(\ref{cond:comparison_local_acyclicity}) follows from \Cref{lem:betti_kpi1}. Condition~(\ref{cond:comparison_coind_acyclicity}) follows from \Cref{lem:alg_coind_vanishing,prop:artin_nbd_alg_good}.
\end{proof}

\subsection{Comparison with Algebraic de Rham Cohomology}\label{sec:algebraic_dR}

We return to the standing assumptions that $k$ is an algebraically closed field of characteristic $0$ and that $X$ is a smooth variety over $k$. Let $\Mod(\mathcal{D}_U)$ denote the category of all $\mathcal{D}_U$-modules on $U \in \Open(X)$, which is a Grothendieck category by \cite[Theorem~18.1.6]{KashiwaraSchapira}. Let $j_V^U \colon V \to U$ be an open immersion in $\Open(X)$. Since $\mathcal{D}_V \cong (j_V^U)^*\mathcal{D}_U \cong \mathcal{D}_U|_V$, the pullback $(j_V^U)^* \colon \Mod(\mathcal{D}_U) \to \Mod(\mathcal{D}_V)$, pushforward $(j_V^U)_* \colon \Mod(\mathcal{D}_V) \to \Mod(\mathcal{D}_U)$, and extension by zero $(j_V^U)_! \colon \Mod(\mathcal{D}_V) \to \Mod(\mathcal{D}_U)$ are compatible with the corresponding functors on the underlying $\mathcal{O}$-modules. These functors fit into the adjunctions
\[ (j_V^U)_! \dashv (j_V^U)^* \dashv (j_V^U)_*. \]
For convenience, we write $\Dcat(U) \coloneqq \Mod(\mathcal{D}_U)$, especially when used as a subscript, and
\[ \Res_{\Dcat(V)}^{\Dcat(U)} \coloneqq (j_V^U)^* = (-)|_V \quad\text{and}\quad \Coind_{\Dcat(V)}^{\Dcat(U)} \coloneqq (j_V^U)_*. \]

We write $\Rcat(U)$ for the full subcategory of $\Mod(\mathcal{D}_U)$ consisting of \emph{ind-regular singular} $\mathcal{D}_U$-modules, which are defined as filtered unions of objects of $\MIC^\rs(U)$. Henceforth, we identify $\Rep(\Pi_1^\alg(U))$ and $\Rep^\alg_X(U)$ with $\Rcat(U)$. Colimits in $\Rep^\alg_X(U)$ and $\Mod(\mathcal{D}_U)$ are computed on the underlying sheaves. The colimit of the underlying sheaves admits the corresponding natural $\Pi_1^\alg(U)$-representation and $\mathcal{D}_U$-module structures. Thus, the full inclusion functor
\[ \Res_{\Dcat(U)}^{\Rcat(U)} \colon \Rep^\alg_X(U) \hookrightarrow \Mod(\mathcal{D}_U) \]
commutes with colimits. Therefore, by \cite[Proposition~8.3.27(iii)]{KashiwaraSchapira}, we have an adjunction
\[ \Res_{\Dcat(U)}^{\Rcat(U)} : \Rep^\alg_X(U) \rightleftarrows \Mod(\mathcal{D}_U) : \Coind_{\Dcat(U)}^{\Rcat(U)}. \]
The restriction functors satisfy the cocycle condition
\[ \Res_{\Dcat(V)}^{\Rcat(U)} \coloneqq \Res_{\Dcat(V)}^{\Rcat(V)} \circ \Res_{\Rcat(V)}^{\Rcat(U)} = \Res_{\Dcat(V)}^{\Dcat(U)} \circ \Res_{\Dcat(U)}^{\Rcat(U)}. \]
Taking right adjoints, we obtain a natural isomorphism
\[ \Coind_{\Dcat(V)}^{\Rcat(U)} \coloneqq \Coind_{\Rcat(V)}^{\Rcat(U)} \circ \Coind_{\Dcat(V)}^{\Rcat(V)} \cong \Coind_{\Dcat(U)}^{\Rcat(U)} \circ \Coind_{\Dcat(V)}^{\Dcat(U)}. \]

We next construct an adjunction
\[ \delta^* : \Sh(X, \Rep^\alg_X) \rightleftarrows \Mod(\mathcal{D}_X) : \delta_*. \]
Given $\mathcal{F} \in \Mod(\mathcal{D}_X)$, define $\delta_* \mathcal{F}$ by
\[ (\delta_* \mathcal{F})(U) \coloneqq \Coind_{\Dcat(U)}^{\Rcat(U)} \mathcal{F}|_U. \]
For every inclusion $V \subseteq U$, we have $(\delta_* \mathcal{F})(U)|_V \subseteq (\delta_* \mathcal{F})(V)$, so these inclusions make $\delta_* \mathcal{F}$ a presheaf.

\begin{lemma}\label{lem:delta_sheaf}
    For every $\mathcal{F} \in \Mod(\mathcal{D}_X)$, the presheaf $\delta_* \mathcal{F}$ is a sheaf in $\Sh(X, \Rep^\alg_X)$.
\end{lemma}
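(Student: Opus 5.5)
We reduce the sheaf condition for $\delta_*\mathcal{F}$ to the sheaf property of $\mathcal{F}$ in $\Mod(\mathcal{D}_U)$ by a chain of right adjoints. Fix $U \in \Open(X)$ and an open cover $\mathcal{U}$ of $U$. By \Cref{lem:limit_coind_cover}, it suffices to show that
\[ (\delta_*\mathcal{F})(U) \to \prod_{V \in \mathcal{U}} \Coind_{\Rcat(V)}^{\Rcat(U)} (\delta_*\mathcal{F})(V) \rightrightarrows \prod_{V, W \in \mathcal{U}} \Coind_{\Rcat(V\cap W)}^{\Rcat(U)} (\delta_*\mathcal{F})(V \cap W) \]
is an equalizer diagram in $\Rep^\alg_X(U) = \Rcat(U)$. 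Using the cocycle isomorphism $\Coind_{\Rcat(V)}^{\Rcat(U)} \circ \Coind_{\Dcat(V)}^{\Rcat(V)} \cong \Coind_{\Dcat(U)}^{\Rcat(U)} \circ \Coind_{\Dcat(V)}^{\Dcat(U)}$ established just before the statement, the term indexed by $V$ becomes $\Coind_{\Dcat(U)}^{\Rcat(U)} (j_V^U)_*(\mathcal{F}|_V)$. The terms indexed by $V \cap W$ transform in the same way. The first step is to check that, under these identifications, the two parallel arrows and the first arrow are $\Coind_{\Dcat(U)}^{\Rcat(U)}$ applied to the usual maps
\[ \mathcal{F}|_U \to \prod_{V} (j_V^U)_*\mathcal{F}|_V \rightrightarrows \prod_{V,W} (j_{V\cap W}^U)_*\mathcal{F}|_{V\cap W}. \]
This check is the main bookkeeping point. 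I would do it by passing to adjoints: a morphism $R \to \Coind(\cdot)$ from an object $R \in \Rcat(U)$ corresponds to a $\mathcal{D}$-module morphism, and the restriction maps of $\delta_*\mathcal{F}$ are defined by inclusions compatible with restriction of $\mathcal{F}$. Naturality of the adjunction units then identifies the arrows.

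The displayed diagram is an equalizer in $\Mod(\mathcal{D}_U)$. This holds because $\mathcal{F}$ is a sheaf of $\mathcal{D}_X$-modules. Products and kernels in $\Mod(\mathcal{D}_U)$ are computed on underlying sheaves of abelian groups, and there the statement is exactly the classical sheaf axiom for $\mathcal{F}|_U$ on the cover $\mathcal{U}$. The functor $\Coind_{\Dcat(U)}^{\Rcat(U)}$ is a right adjoint, so it preserves all limits, and in particular products and equalizers. Applying it yields an equalizer diagram in $\Rcat(U)$, which is the required sheaf condition.

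The step I expect to be delicate is the identification of morphisms described in the first step. The restriction maps of $\delta_*\mathcal{F}$ were given concretely as inclusions $(\delta_*\mathcal{F})(U)|_V \subseteq (\delta_*\mathcal{F})(V)$, whereas the abstract argument works with coinduction units. To bridge the two, I would note that $\Coind_{\Dcat(U)}^{\Rcat(U)}\mathcal{G}$ is the largest ind-regular singular $\mathcal{D}_U$-submodule of $\mathcal{G}$. Indeed, the inclusion preserves colimits and $\Rcat(U)$ is closed under sums and quotients, so the counit is a monomorphism. With this description both sides are subobjects of $\mathcal{F}|_U$ and of its pushforwards, and compatibility reduces to the evident commutativity of restriction maps of $\mathcal{F}$.
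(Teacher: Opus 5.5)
Your argument is the paper's proof. You apply the limit-preserving right adjoint $\Coind_{\Dcat(U)}^{\Rcat(U)}$ to the equalizer expressing the sheaf property of $\mathcal{F}|_U$ on $\mathcal{U}$, rewrite the terms with the cocycle isomorphism, and conclude via \Cref{lem:limit_coind_cover}. Your check of the arrows by naturality of the adjunctions is a detail the paper leaves implicit, and it is sound.

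The justification in your last paragraph, however, is false in the generality of the lemma. Because $\Mod(\mathcal{D}_U)$ contains non-quasi-coherent modules, $\Rcat(U)$ is not closed under quotient objects there. So the counit $\Coind_{\Dcat(U)}^{\Rcat(U)}\mathcal{G} \to \mathcal{G}$ need not be a monomorphism.

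For example, let $V \subsetneq U$ be a nonempty open subset of an irreducible $U$, and set $\mathcal{G} \coloneqq \mathcal{O}_U/(j_V^U)_!\mathcal{O}_V$. This $\mathcal{D}_U$-module vanishes on $V$ but has stalk $\mathcal{O}_{U,x} \neq 0$ at every $x \in U \setminus V$, so it is not quasi-coherent. The quotient map $\mathcal{O}_U \to \mathcal{G}$ factors through the counit, which is therefore an epimorphism. If the counit were also a monomorphism, $\mathcal{G}$ would be ind-regular singular, hence quasi-coherent.

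The description of $\Coind_{\Dcat(U)}^{\Rcat(U)}\mathcal{G}$ as the largest ind-regular singular submodule is correct for quasi-coherent $\mathcal{G}$. That case is also what the paper's description of the restriction maps of $\delta_*\mathcal{F}$ as inclusions tacitly uses.

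For arbitrary $\mathcal{F}$, define the restriction map $(\delta_*\mathcal{F})(U)|_V \to (\delta_*\mathcal{F})(V)$ as the adjoint transpose of the restricted counit $\bigl(\Coind_{\Dcat(U)}^{\Rcat(U)}\mathcal{F}|_U\bigr)|_V \to \mathcal{F}|_V$. The adjunction argument from your first paragraph then identifies the arrows with $\Coind_{\Dcat(U)}^{\Rcat(U)}$ applied to the usual maps, and no subobject description is needed.
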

\begin{proof}
    Fix $U \in \Open(X)$, and let $\mathcal{U}$ be an open cover of $U$. By the sheaf condition for $\mathcal{F}$,
    \[ \mathcal{F}|_U \to \prod_{V \in \mathcal{U}} \Coind_{\Dcat(V)}^{\Dcat(U)} \mathcal{F}|_V \rightrightarrows \prod_{V, W \in \mathcal{U}} \Coind_{\Dcat(V \cap W)}^{\Dcat(U)} \mathcal{F}|_{V \cap W} \]
    is an equalizer diagram. Since $\Coind_{\Dcat(U)}^{\Rcat(U)}$ is a right adjoint, it preserves limits, so applying it gives an equalizer diagram
    \[ \Coind_{\Dcat(U)}^{\Rcat(U)} \mathcal{F}|_U \to \prod_{V \in \mathcal{U}} \Coind_{\Dcat(V)}^{\Rcat(U)} \mathcal{F}|_V \rightrightarrows \prod_{V, W \in \mathcal{U}} \Coind_{\Dcat(V \cap W)}^{\Rcat(U)} \mathcal{F}|_{V \cap W}, \]
    where we used the cocycle condition for coinduction. This is precisely the equalizer diagram
    \[ (\delta_* \mathcal{F})(U) \to \prod_{V \in \mathcal{U}} (\delta_* \mathcal{F})(V) \rightrightarrows \prod_{V, W \in \mathcal{U}} (\delta_* \mathcal{F})(V \cap W), \]
    which is the sheaf condition for $\delta_* \mathcal{F}$.
\end{proof}

Let $\mathcal{M} \in \Sh(X, \Rep^\alg_X)$. For $V \subseteq U$ in $\Open(X)$, the morphism
\[ \Res_{\Rcat(V)}^{\Rcat(U)} \mathcal{M}(U) = \mathcal{M}(U)|_V \to \mathcal{M}(V) \]
induces the composite
\[ \mathcal{M}(U)(U) \to \mathcal{M}(U)(V) \to \mathcal{M}(V)(V). \]
Thus, $U \mapsto \mathcal{M}(U)(U)$ is a presheaf of $\mathcal{D}_X$-modules. Define
\[ \delta^* \mathcal{M} \coloneqq \bigl(U \mapsto \mathcal{M}(U)(U)\bigr)^\#, \]
where $(-)^\#$ denotes sheafification in the category of presheaves of $\mathcal{D}_X$-modules.

\begin{lemma}\label{lem:delta_adjunction}
    There is an adjunction $\delta^* : \Sh(X, \Rep^\alg_X) \rightleftarrows \Mod(\mathcal{D}_X) : \delta_*$.
\end{lemma}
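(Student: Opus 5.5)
I will exhibit a natural bijection
\[ \Hom_{\Mod(\mathcal{D}_X)}(\delta^*\mathcal{M}, \mathcal{F}) \cong \Hom_{\Sh(X, \Rep^\alg_X)}(\mathcal{M}, \delta_*\mathcal{F}) \]
and factor it through presheaves. By the universal property of sheafification of presheaves of $\mathcal{D}_X$-modules, the left side is the set of morphisms of presheaves of $\mathcal{D}_X$-modules $\{\mathcal{M}(U)(U)\}_U \to \{\mathcal{F}(U)\}_U$. So it suffices to match such presheaf morphisms with morphisms $\mathcal{M} \to \delta_*\mathcal{F}$ in $\Sh(X, \Rep^\alg_X)$, naturally in both variables.

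For the right side, a morphism $\eta\colon\mathcal{M} \to \delta_*\mathcal{F}$ is a family $\eta_U \colon \mathcal{M}(U) \to \Coind_{\Dcat(U)}^{\Rcat(U)}\mathcal{F}|_U$ in $\Rcat(U)$ compatible with restrictions. By the adjunction $\Res_{\Dcat(U)}^{\Rcat(U)} \dashv \Coind_{\Dcat(U)}^{\Rcat(U)}$, each $\eta_U$ corresponds to a morphism $\tilde\eta_U \colon \mathcal{M}(U) \to \mathcal{F}|_U$ of $\mathcal{D}_U$-modules, i.e.\ a morphism of sheaves on $U$. The compatibility of $\eta$ with restrictions then translates, via the cocycle isomorphism $\Coind_{\Rcat(V)}^{\Rcat(U)} \circ \Coind_{\Dcat(V)}^{\Rcat(V)} \cong \Coind_{\Dcat(U)}^{\Rcat(U)} \circ \Coind_{\Dcat(V)}^{\Dcat(U)}$, into the condition that for $V \subseteq U$ the square formed by $\tilde\eta_U|_V$, $\tilde\eta_V$, the structure map $\mathcal{M}(U)|_V \to \mathcal{M}(V)$, and the identity of $\mathcal{F}|_V$ commutes.

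Next I will show that such compatible families $(\tilde\eta_U)_U$ are the same as presheaf morphisms $\phi\colon U \mapsto (\mathcal{M}(U)(U) \to \mathcal{F}(U))$. Given $(\tilde\eta_U)$, set $\phi_U \coloneqq \tilde\eta_U(U)$; naturality in $U$ follows from the compatibility square evaluated on sections over $V$. Conversely, given $\phi$, for $W \subseteq U$ define $\tilde\eta_U(W)$ as the composite
\[ \mathcal{M}(U)(W) \to \mathcal{M}(W)(W) \xrightarrow{\phi_W} \mathcal{F}(W). \]
One checks that this is a morphism of sheaves on $U$ (compatibility with further restriction $W' \subseteq W$ uses that $\phi$ is a presheaf morphism and that the transition maps of $\mathcal{M}$ compose), that it is $\mathcal{D}_U$-linear (the maps involved are $\mathcal{D}$-linear), and that the resulting family is compatible. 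These two constructions are mutually inverse: one direction is immediate by evaluating at $W = U$; the other uses that $\tilde\eta_U(W)$ must factor as above because $\tilde\eta_U|_W$ agrees with $\tilde\eta_W$ composed with $\mathcal{M}(U)|_W \to \mathcal{M}(W)$, evaluated at $W$.

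Naturality in $\mathcal{M}$ and $\mathcal{F}$ is formal from these formulas. I expect the main obstacle to be the bookkeeping in the middle step: making precise that the compatibility of $\eta$ in the fibered category, which is phrased through the cartesian factorization $\mathcal{M}(U) \to \Res_V^U\mathcal{M}(U) \to \mathcal{M}(V)$ and coinductions, is exactly the commutativity of the restricted square for $\tilde\eta$. I will handle this by writing both conditions as equalities of maps out of $\mathcal{M}(U)|_V$ into $\mathcal{F}|_V$ and transporting across the adjunctions using the cocycle isomorphism, exactly as in the adjointness argument of \Cref{lem:morphisms_on_basis}.
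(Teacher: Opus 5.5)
Your proposal is correct and follows the same chain as the paper's proof. First, the universal property of sheafification reduces the problem to presheaf morphisms $\mathcal{M}(U)(U) \to \mathcal{F}(U)$. Second, these are identified with compatible families $\mathcal{M}(U) \to \mathcal{F}|_U$ in $\Mod(\mathcal{D}_U)$. Third, the fiberwise adjunctions turn such families into morphisms $\mathcal{M} \to \delta_*\mathcal{F}$. The only difference is that you explicitly verify the middle identification via the formula $\tilde\eta_U(W) = \phi_W \circ \bigl(\mathcal{M}(U)(W) \to \mathcal{M}(W)(W)\bigr)$, which the paper simply asserts as an equivalence.
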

\begin{proof}
    Let $\mathcal{M} \in \Sh(X, \Rep^\alg_X)$ and $\mathcal{F} \in \Mod(\mathcal{D}_X)$. By the universal property of sheafification, a morphism $\delta^* \mathcal{M} \to \mathcal{F}$ is equivalent to a family of morphisms
    \[ \mathcal{M}(U)(U) \to \mathcal{F}(U) \]
    of $\mathcal{D}_X(U)$-modules, natural in $U$, or equivalently, to a family of morphisms
    \[ \mathcal{M}(U) \to \mathcal{F}|_U \]
    in $\Mod(\mathcal{D}_U)$, natural in $U$. By the fiberwise adjunctions and their compatibility with restriction, such a family corresponds to a family of morphisms
    \[ \mathcal{M}(U) \to \Coind_{\Dcat(U)}^{\Rcat(U)}(\mathcal{F}|_U) \]
    in $\Rep^\alg_X(U)$, natural in $U$, which is precisely a morphism $\mathcal{M} \to \delta_* \mathcal{F}$. Therefore, naturally in $\mathcal{M}$ and $\mathcal{F}$,
    \[ \Hom_{\Mod(\mathcal{D}_X)}(\delta^* \mathcal{M}, \mathcal{F}) \cong \Hom_{\Sh(X, \Rep^\alg_X)}(\mathcal{M}, \delta_* \mathcal{F}). \qedhere \]
\end{proof}

\begin{lemma}\label{lem:delta_exact}
    The functor $\delta^* \colon \Sh(X, \Rep^\alg_X) \to \Mod(\mathcal{D}_X)$ is exact.
\end{lemma}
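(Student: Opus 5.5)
We prove that $\delta^*$ is exact by writing it as a composite of three functors, each of which is exact. The functor $\delta^*$ is a left adjoint by \Cref{lem:delta_adjunction}, so it is right exact. It therefore suffices to show that $\delta^*$ preserves monomorphisms, or, more simply, finite limits. We factor
\[ \delta^* = (-)^\#_{\mathcal{D}} \circ \mathrm{ev} \circ \iota, \]
where the three factors are as follows. The functor $\iota \colon \Sh(X, \Rep^\alg_X) \to \PSh(X, \Rep^\alg_X)$ is the forgetful functor. The functor $\mathrm{ev}$ sends $\mathcal{M} \mapsto (U \mapsto \mathcal{M}(U)(U))$ and lands in presheaves of $\mathcal{D}_X$-modules. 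The last factor is classical sheafification of presheaves of $\mathcal{D}_X$-modules, which is exact.

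The forgetful functor $\iota$ is left exact, since it is a right adjoint by \Cref{thm:sheafification}. Kernels in $\Sh(X, \Rep^\alg_X)$ are therefore computed sectionwise in $\Rep^\alg_X(U) = \Rcat(U)$. Kernels in $\Rcat(U)$ are in turn computed on underlying $\mathcal{D}_U$-modules: the inclusion $\Rcat(U) \hookrightarrow \Mod(\mathcal{D}_U)$ is exact, because $\Rcat(U) \simeq \Ind(\MIC^\rs(U))$ is closed under subobjects and quotients in $\Mod(\mathcal{D}_U)$, and kernels of morphisms of regular singular connections are again regular singular. Taking sections over $U$ is left exact on sheaves of $\mathcal{D}_U$-modules. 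Consequently $\mathrm{ev} \circ \iota$ sends monomorphisms (indeed all finite limits) to sectionwise monomorphisms (finite limits) of presheaves of $\mathcal{D}_X$-modules. Composing with exact sheafification shows that $\delta^*$ is left exact. Combined with right exactness, this proves that $\delta^*$ is exact.

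The main obstacle is the claim that kernels in $\Rcat(U)$ agree with kernels in $\Mod(\mathcal{D}_U)$. This requires that a kernel of a morphism between filtered unions of regular singular integrable connections is again such a filtered union. Here we use that $\MIC^\rs(U)$ is an abelian subcategory of $\Mod(\mathcal{D}_U)$ closed under kernels and subquotients, since regular singularity is stable under subquotients. We also use that filtered colimits in $\Mod(\mathcal{D}_U)$ are exact, which holds because it is a Grothendieck category. Writing a morphism in $\Rcat(U)$ as a filtered colimit of morphisms in $\MIC^\rs(U)$ then shows that its kernel is a filtered colimit of objects of $\MIC^\rs(U)$.

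A subtlety is whether the evaluation $\mathcal{M}(U)(U)$ interacts badly with sheafification in $\Sh(X,\Rep^\alg_X)$. Since we only use that $\iota$ preserves kernels, and monomorphisms in $\Sh(X,\Rep^\alg_X)$ are sectionwise monomorphisms, no further control of the transfinite plus construction is needed.
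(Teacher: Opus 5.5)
Your proof is correct and follows the paper's argument. As in the paper, $\mathcal{M}\mapsto\mathcal{M}(U)(U)$ is left exact as the composite of $\mathcal{M}\mapsto\mathcal{M}(U)$, the inclusion $\Rcat(U)\hookrightarrow\Mod(\mathcal{D}_U)$, and sections; sheafification of presheaves of $\mathcal{D}_X$-modules is exact; and right exactness comes from \Cref{lem:delta_adjunction}. Your level-representation argument for why the inclusion preserves kernels supplies a step the paper only asserts.

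There are two harmless overstatements. First, only the sheafification factor is exact; the other two factors are merely left exact, which is all you use. Second, $\Rcat(U)$ is not closed under arbitrary subobjects or quotients in the category of all $\mathcal{D}_U$-modules; for example, $(j_V^U)_!\mathcal{O}_V\subseteq\mathcal{O}_U$ is not quasi-coherent. What is actually needed is closure under kernels of morphisms in $\Rcat(U)$, and your third paragraph proves exactly that.
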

\begin{proof}
    For each $U$, the functor $\mathcal{M} \mapsto \mathcal{M}(U)(U)$ is the composite of the functor $\mathcal{M} \mapsto \mathcal{M}(U)$, the inclusion $\Rcat(U) \hookrightarrow \Mod(\mathcal{D}_U)$, and the ordinary section functor, all of which are left exact. Since ordinary sheafification of presheaves of $\mathcal{D}_X$-modules is exact, $\delta^*$ is left exact. Since $\delta^*$ is a left adjoint by \Cref{lem:delta_adjunction}, it is also right exact.
\end{proof}

\begin{remark}\label{rem:alg_dR_edge_map}
    Recall that $H^i_\dR(X, -) \cong \Ext^i_{\Dcat(X)}(\mathcal{O}_X, -)$ via the Spencer resolution, where $\mathcal{O}_X$ is regarded as a $\mathcal{D}_X$-module via the trivial connection $d \colon \mathcal{O}_X \to \Omega^1_{X/k}$ \cite[23.2.1]{ABC}. Then
    \begin{align*}
        \Hom_{\Dcat(X)}(\mathcal{O}_X, -)
        &\cong \Hom_{\Rcat(X)}\!\left(\mathcal{O}_X, \Coind_{\Dcat(X)}^{\Rcat(X)} -\right) \\
        &\cong \left(\Coind_{\Dcat(X)}^{\Rcat(X)} -\right)^{\Pi_1^\alg(X)} \\
        &\cong \Gamma(X, -)^{\Pi_1^\alg(X)} \circ \delta_*.
    \end{align*}
    By \Cref{lem:delta_exact}, the right adjoint $\delta_*$ preserves injective objects. Hence, for $M \in \Mod(\mathcal{D}_X)$, the Grothendieck spectral sequence for $\Gamma(X, -)^{\Pi_1^\alg(X)} \circ \delta_*$ takes the form
    \begin{equation}\label{eq:alg_dR_spectral_sequence}
        E_2^{p,q} = H^p_\alg\!\left(X, R^q \delta_* M\right) \Rightarrow H^{p+q}_\dR(X, M).
    \end{equation}
\end{remark}

\begin{definition}\label{def:alg_kpi1}
    A smooth connected variety $X$ over $k$ is an \emph{algebraic $K(\pi, 1)$ variety} if for every ind-regular singular $\mathcal{D}_X$-module $M$ and every (equivalently, some) base point $x \in X(k)$, the natural map
    \[ H^i_\rat(\pi_1^\alg(X, x), M_x) \to H^i_\dR(X, M) \]
    is an isomorphism for all $i \ge 0$.
\end{definition}

\begin{remark}\label{rem:alg_dR_edge_maps}
    Let $M \in \Rcat(X)$. Since $(\delta_* M)(X) \cong M$, the spectral sequence \eqref{eq:groupoid_section_spectral_sequence}, applied to $\delta_* M$, gives an edge map
    \[ H^i_\rat(\Pi_1^\alg(X), M) \to H^i_\alg(X, \delta_* M). \]
    Together with the edge map of \eqref{eq:alg_dR_spectral_sequence}, this fits into a natural commutative diagram
    \[
    \begin{tikzcd}
        H^i_\rat(\Pi_1^\alg(X), M) \arrow[r] \arrow[d, "{\rotatebox{-90}{\scalebox{1}{$\sim$}}}"'] & H^i_\alg(X, \delta_* M) \arrow[r] & H^i_\dR(X, M) \arrow[d, "{\rotatebox{-90}{\scalebox{1}{$\sim$}}}"'] \\
        \Ext^i_{\Rcat(X)}(\mathcal{O}_X, M) \arrow[rr] & & \Ext^i_{\Dcat(X)}(\mathcal{O}_X, M).
    \end{tikzcd}
    \]
    If $X$ is connected, evaluation at a base point $x \in X(k)$ identifies $\Pi_1^\alg(X)$-representations with $\pi_1^\alg(X, x)$-representations. Thus, the natural map in \Cref{def:alg_kpi1} identifies with
    \[ \Ext^i_{\Rcat(X)}(\mathcal{O}_X, M) \to \Ext^i_{\Dcat(X)}(\mathcal{O}_X, M). \]
\end{remark}

We defer the proof of the following theorem to \Cref{sec:algebraic_kpi1}.

\begin{theorem}\label{thm:alg_kpi1}
    Let $X$ be an Artin neighborhood over an algebraically closed field $k$ of characteristic $0$. Then $X$ is an algebraic $K(\pi, 1)$ variety.
\end{theorem}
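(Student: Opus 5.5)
The strategy is to recast the map of \Cref{def:alg_kpi1} as the map
\[ \Ext^i_{\Rcat(X)}(\mathcal{O}_X, M) \to \Ext^i_{\Dcat(X)}(\mathcal{O}_X, M) \]
of \Cref{rem:alg_dR_edge_maps}. We then prove it is an isomorphism by an effaceability argument. The effaceability input is established over $\mathbb{C}$ via Riemann--Hilbert and the topological $K(\pi,1)$ property, and transported to arbitrary algebraically closed $k$ of characteristic $0$ by a Lefschetz-principle argument.

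\textbf{Step 1: reduction to finite-dimensional coefficients.} Both sides, viewed as functors of $M \in \Rcat(X)$, are $\delta$-functors agreeing in degree $0$, since $\Rcat(X)$ is a full subcategory of $\Mod(\mathcal{D}_X)$. Both commute with filtered colimits. For the left side, $\Rcat(X) \simeq \Ind(\MIC^\rs(X))$ and rational cohomology of an affine group scheme commutes with filtered colimits. For the right side, $X$ is quasi-compact and separated, and de Rham cohomology is computed by a finite \v{C}ech--de Rham complex. Hence it suffices to treat $M \in \MIC^\rs(X)$.

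By the universal $\delta$-functor property on $\Rcat(X)$, which has enough injectives and in which every finite-dimensional object embeds into an injective ind-object, the map is an isomorphism in all degrees as soon as the following \emph{effaceability} holds. For every $M \in \MIC^\rs(X)$, every $i>0$ and every class $c \in H^i_\dR(X,M)$, there is a monomorphism $M \hookrightarrow N$ in $\MIC^\rs(X)$ with $c \mapsto 0$ in $H^i_\dR(X,N)$. Indeed, then $H^\bullet_\dR(X,-)|_{\Rcat(X)}$ is effaceable in positive degrees, hence universal, hence isomorphic to $\Ext^\bullet_{\Rcat(X)}(\mathcal{O}_X,-)$ compatibly with the given map.

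\textbf{Step 2: the case $k=\mathbb{C}$.} By Deligne's Riemann--Hilbert correspondence and comparison theorem, $\MIC^\rs(X) \simeq \FRep(\pi_1(X,x))$, and $H^i_\dR(X,M) \cong H^i(X^\an, M^{\nabla})$. By \Cref{thm:top_kpi1}, the latter equals $H^i_\grp(\pi_1(X,x), M_x)$. By \Cref{prop:artin_nbd_alg_good}, this equals $H^i_\rat(\pi_1^\alg(X,x), M_x)$, giving the isomorphism directly. In particular, classes in positive degree are effaced by finite-dimensional representations, since rational cohomology of $\pi_1^\alg$ is effaceable within $\FRep$: every finite-dimensional representation embeds in a finite-dimensional subrepresentation of the regular representation.

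\textbf{Step 3: descent to general $k$ (the main obstacle).} The data $X$, $M$ and $c$ are defined over a finitely generated subfield $k_0 \subseteq k$. Choose an embedding $\bar{k}_0 \hookrightarrow \mathbb{C}$, and note that the Artin neighborhood structure is preserved by base change. Over $\mathbb{C}$, Step 2 provides $N$ and an effacing monomorphism. This is finitely many polynomial data (a vector bundle with connection, a morphism, and a de Rham cochain witnessing $c \mapsto 0$), so they are defined over a finitely generated $k_0$-algebra $A \subseteq \mathbb{C}$. Specializing along a $\bar{k}_0$-point of $\Spec A$ and then extending scalars to $k$ yields an effacement over $k$.

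Here one must check that regular singularity, integrability, injectivity and the vanishing of $c$ are preserved by specialization on a dense open of $\Spec A$. Regular singularity can be tested on curves, or via exponents along a fixed normal crossings compactification, which is constructible. De Rham cohomology commutes with flat base change and is constructible in families. The delicate point is regular singularity in families; I would handle it by fixing a good compactification over $k_0$ and using the residue (exponent) criterion, which spreads out. Finally, $H^i_\rat$ is insensitive to the choice of base point (\Cref{def:alg_kpi1}), which completes the argument.
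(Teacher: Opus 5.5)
Your proposal is correct. It shares the paper's overall structure: a one-sided criterion, then the complex case via Riemann--Hilbert, the topological $K(\pi,1)$ property and algebraic goodness, then spreading out. What differs is what you transport from $\mathbb{C}$.

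\textbf{The paper's route.} It reduces to \emph{surjectivity} of $\Ext^i_{\Rcat(X)}(\mathcal{O}_X, M) \to \Ext^i_{\Dcat(X)}(\mathcal{O}_X, M)$ for $M \in \MIC^\rs(X)$ (\Cref{lem:alg_kpi1_surjective}, via Anderson's lemma). It then proves that this surjectivity is invariant under extensions of algebraically closed fields (\Cref{lem:alg_ext_surjectivity_base_change}). To do so, it writes preimages of a basis of $H^i_\dR$ as Yoneda $i$-extensions with middle terms in $\MIC^\rs$. It spreads out these whole exact sequences together with an SNC compactification and logarithmic lattices. It arranges exactness, relative SNC, and freeness of $H^i_\dR(X_R/R, M_R)$ on the given basis, and finally specializes at a closed point.

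\textbf{Your route.} You use Grothendieck's effaceability criterion instead. In this setting it is equivalent to the paper's criterion, since both amount to $H^i_\dR(X,-)$ vanishing in positive degrees on injective objects of $\Rcat(X)$. You then transport only one monomorphism $M \hookrightarrow N$ and one primitive $b$ with $\nabla b = \iota(z)$. Since $X$, $M$, the cocycle $z$ and the compactification are constant in your family, $\nabla b = \iota(z)$ specializes verbatim. All you need is local freeness of a spread-out logarithmic lattice of $N$ and of $\mathrm{coker}(M \to N)$. You do not need the Yoneda description of $\Ext$ in $\Ind(\MIC^\rs(X))$, base change or freeness of relative de Rham cohomology, or constructibility or exponent arguments. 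Your remarks on those are superfluous: existence of a locally free logarithmic lattice is all that is used.

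\textbf{What each buys.} Your route is lighter. The paper's route, in exchange, yields a standalone two-directional base-change statement for surjectivity over arbitrary smooth affine $X$.

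\textbf{Two details to make explicit.}
\begin{enumerate}
    \item Enlarge $k_0$ so that an Artin-neighborhood presentation of $X$ and a logarithmic extension of $M$ descend, as the paper does. Otherwise you do not know that $X_{\mathbb{C}}$ is an Artin neighborhood, or that the descended $M$ is regular singular.
    \item In Step~2, the effacement inside $\FRep$ does not come from the embedding into the regular representation alone. It comes from injectivity of $M_x \otimes \mathcal{O}(\pi_1^\alg(X,x))$ together with commutation of $H^i_\rat$ with filtered colimits.
\end{enumerate}
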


\begin{lemma}\label{lem:dR_coind_vanishing}
    Let $X$ be an algebraic $K(\pi, 1)$ variety. If $M \in \Mod(\mathcal{D}_X)$ is ind-regular singular, then for all $i > 0$,
    \[ R^i \Coind_{\Dcat(X)}^{\Rcat(X)} M = 0. \]
\end{lemma}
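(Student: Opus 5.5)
The plan mirrors the proof of \Cref{lem:alg_coind_vanishing}, with the pair $(\Rep(G), \Rep(G^\alg))$ replaced by $(\Mod(\mathcal{D}_X), \Rcat(X))$. Write $C \coloneqq \Coind_{\Dcat(X)}^{\Rcat(X)}$ and $R \coloneqq \Res_{\Dcat(X)}^{\Rcat(X)}$. Fix $V \in \MIC^\rs(X)$. Since $R$ is exact (colimits and kernels are computed on underlying sheaves), its right adjoint $C$ preserves injectives, and $\Hom_{\Rcat(X)}(V, C(-)) \cong \Hom_{\Dcat(X)}(V, -)$. This gives a Grothendieck spectral sequence
\[ E_2^{p,q} = \Ext^p_{\Rcat(X)}(V, R^q C M) \Rightarrow \Ext^{p+q}_{\Dcat(X)}(V, M). \]
Because $M$ is ind-regular singular and $R$ is fully faithful, $CM \cong M$. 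Thus the edge map $E_2^{p,0} \to E^p$ is the natural map $\Ext^p_{\Rcat(X)}(V, M) \to \Ext^p_{\Dcat(X)}(V, M)$.

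Next we show that this edge map is an isomorphism for all $p$. Since $V$ is a vector bundle with connection, it is dualizable, so
\[ \Ext^p_{\Rcat(X)}(V, M) \cong \Ext^p_{\Rcat(X)}(\mathcal{O}_X, V^\vee \otimes M) \quad\text{and}\quad \Ext^p_{\Dcat(X)}(V, M) \cong \Ext^p_{\Dcat(X)}(\mathcal{O}_X, V^\vee \otimes M). \]
Both hold because $V \otimes_{\mathcal{O}} -$ and $V^\vee \otimes_{\mathcal{O}} -$ are exact adjoint functors on both categories, preserving $\Rcat(X)$ since regular singular objects are closed under tensor products, and hence preserving injectives. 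Moreover, $V^\vee \otimes M$ is again ind-regular singular. By \Cref{rem:alg_dR_edge_maps}, the map $\Ext^p_{\Rcat(X)}(\mathcal{O}_X, N) \to \Ext^p_{\Dcat(X)}(\mathcal{O}_X, N)$ with $N = V^\vee \otimes M$ is identified with $H^p_\rat(\pi_1^\alg(X,x), N_x) \to H^p_\dR(X, N)$. This is an isomorphism by the algebraic $K(\pi,1)$ hypothesis (applied on each connected component if $X$ is disconnected). We still need to check that the identification is compatible with the edge map above; this naturality check is routine.

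Now run the same induction as in \Cref{lem:alg_coind_vanishing}. Suppose $R^jCM = 0$ for $0 < j < q$. Then the only differential out of $E^{0,q}$ is $d_{q+1} \colon E^{0,q}_{q+1} \to E^{q+1,0}_{q+1}$. This differential vanishes because the edge map $E_2^{q+1,0} \to E^{q+1}$ is injective. Hence $E_2^{0,q} = E_\infty^{0,q}$. Since $E_2^{q,0} \to E^q$ is surjective, the abutment $E^q$ equals its bottom filtration piece, which forces $E_\infty^{0,q} = 0$. Therefore
\[ \Hom_{\Rcat(X)}(V, R^qCM) = 0 \quad \text{for all } V \in \MIC^\rs(X). \]
Since $\MIC^\rs(X)$ generates $\Rcat(X) \simeq \Ind(\MIC^\rs(X))$, we conclude that $R^qCM = 0$.

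The main obstacle is the identification in the second step: the twist by $V^\vee$ and the compatibility of the resulting map with the edge map of the spectral sequence. In particular, we must verify that tensoring with $V$ preserves injectives in $\Mod(\mathcal{D}_X)$, which follows from its exact left adjoint $V^\vee \otimes -$.
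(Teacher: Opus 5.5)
Your proposal is correct and follows the paper's proof essentially step for step. It uses the same Grothendieck spectral sequence for $\Hom_{\Rcat(X)}(V, -) \circ \Coind_{\Dcat(X)}^{\Rcat(X)}$, the same twist by $V^\vee$ that reduces the edge map to the natural map of \Cref{def:alg_kpi1} via \Cref{rem:alg_dR_edge_maps}, and the same induction from \Cref{lem:alg_coind_vanishing} followed by generation of $\Rcat(X)$ by $\MIC^\rs(X)$ (your explicit filtration argument for $E_\infty^{0,q} = 0$ is, if anything, slightly more careful than the paper's compressed version).
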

\begin{proof}
    Let $V \in \MIC^\rs(X)$. We have the Grothendieck spectral sequence
    \[ E_2^{p,q} = \Ext^p_{\Rcat(X)}(V, R^q \Coind_{\Dcat(X)}^{\Rcat(X)} M) \Rightarrow \Ext^{p+q}_{\Dcat(X)}(V, M). \]
    Since $M$ is ind-regular singular, $\Coind_{\Dcat(X)}^{\Rcat(X)} M \cong M$. Via the natural tensor--Hom identifications, the edge map identifies with
    \[ \Ext^p_{\Rcat(X)}(\mathcal{O}_X, V^\vee \otimes M) \to \Ext^p_{\Dcat(X)}(\mathcal{O}_X, V^\vee \otimes M). \]
    By \Cref{rem:alg_dR_edge_maps}, applied to $V^\vee \otimes M$, this is the natural map in \Cref{def:alg_kpi1}, and hence is an isomorphism for all $p \ge 0$. The inductive argument in the proof of \Cref{lem:alg_coind_vanishing}, mutatis mutandis, then gives
    \[ \Hom_{\Rcat(X)}\!\left(V, R^q \Coind_{\Dcat(X)}^{\Rcat(X)} M\right) = 0 \]
    for every $q > 0$. Since $\MIC^\rs(X)$ generates $\Rcat(X)$, the result follows.
\end{proof}

\begin{lemma}\label{lem:dR_delta_vanishing}
    If $\mathcal{F} \in \Mod(\mathcal{D}_X)$ is ind-regular singular, then for all $i > 0$,
    \[ R^i \delta_* \mathcal{F} = 0. \]
\end{lemma}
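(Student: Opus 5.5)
We will imitate the first half of the proof of \Cref{thm:comparison_general}. Let $0 \to \mathcal{F} \to \mathcal{I}^\bullet$ be an injective resolution in $\Mod(\mathcal{D}_X)$. Restriction $\mathcal{I}^\bullet|_U$ remains an injective resolution of $\mathcal{F}|_U$ in $\Mod(\mathcal{D}_U)$, since $(j_U^X)^*$ has the exact left adjoint $(j_U^X)_!$. As in the proof of \Cref{thm:comparison_general}, the sheaf $R^i\delta_*\mathcal{F}$ is the sheafification, in $\Sh(X, \Rep^\alg_X)$, of the presheaf
\[ U \mapsto H^i\bigl(\Coind_{\Dcat(U)}^{\Rcat(U)} \mathcal{I}^\bullet|_U\bigr) \cong R^i \Coind_{\Dcat(U)}^{\Rcat(U)} (\mathcal{F}|_U). \]
To justify this, note that kernels in $\Sh(X, \Rep^\alg_X)$ are sectionwise and cokernels are sheafified sectionwise cokernels. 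The sheafification is exact by \Cref{thm:sheafification}, so cohomology of $\delta_*\mathcal{I}^\bullet$ equals the sheafification of sectionwise cohomology.

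Next, we use \Cref{prop:artin_good_neighborhood} to take $\mathfrak{U}$ to be the basis of Artin neighborhoods of $X$. For $U \in \mathfrak{U}$, we may assume $U$ is connected, since Artin neighborhoods are connected by construction. Then $U$ is an algebraic $K(\pi, 1)$ variety by \Cref{thm:alg_kpi1}. The restriction $\mathcal{F}|_U$ is still ind-regular singular, since a filtered union of objects of $\MIC^\rs(X)$ restricts to a filtered union of objects of $\MIC^\rs(U)$. Hence \Cref{lem:dR_coind_vanishing}, applied to $U$ in place of $X$, gives $R^i \Coind_{\Dcat(U)}^{\Rcat(U)}(\mathcal{F}|_U) = 0$ for $i > 0$. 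So the presheaf above vanishes on the basis $\mathfrak{U}$.

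Finally, we conclude that its sheafification vanishes. The zero morphism and the identity of $R^i\delta_*\mathcal{F}$ agree on every $U \in \mathfrak{U}$, because each section there factors through the presheaf value, which is zero. More precisely, the unit from the presheaf to its sheafification, composed with the identity and with zero, gives two morphisms that agree on $\mathfrak{U}$. By \Cref{lem:morphisms_on_basis} they agree everywhere, so by the universal property of sheafification the identity of $R^i\delta_*\mathcal{F}$ is zero.

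The main subtle point is the identification of $R^i\delta_*$ with the sheafification of the sectionwise derived coinductions. This requires checking that $\mathcal{I}^\bullet|_U$ is acyclic for $\Coind_{\Dcat(U)}^{\Rcat(U)}$, which holds because restriction preserves injectives. The remaining steps are direct applications of earlier results.
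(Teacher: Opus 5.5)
Your proof is correct and is essentially the paper's argument: restriction to $U$ preserves injectives because $(j_U^X)_!$ is exact, so $R^i\delta_*\mathcal{F}$ is the sheafification of $U \mapsto R^i\Coind_{\Dcat(U)}^{\Rcat(U)}(\mathcal{F}|_U)$; this presheaf vanishes on the basis of Artin neighborhoods by the algebraic $K(\pi,1)$ property, and one concludes with the lemma on morphisms agreeing on a basis together with the universal property of sheafification. The only blemish is your first phrasing of the last step, since sections of a sheafification over $U \in \mathfrak{U}$ need not come from the presheaf over $U$; your ``more precisely'' version, which compares the two composites with the unit $\mathcal{P} \to \mathcal{P}^\#$, is exactly right.
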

\begin{proof}
    Let $0 \to \mathcal{F} \to \mathcal{I}^\bullet$ be an injective resolution in $\Mod(\mathcal{D}_X)$. Since extension by zero of $\mathcal{D}_U$-modules is computed on the underlying sheaves of abelian groups, $(j_U^X)_!$ is exact by \cite[\href{https://stacks.math.columbia.edu/tag/01AK}{Tag~01AK}]{StacksProject}, and its right adjoint $(j_U^X)^* = (-)|_U$ preserves injective objects. Hence, $R^i \delta_* \mathcal{F}$ is the sheaf associated to the presheaf
    \[ U \mapsto H^i\!\left((\delta_* \mathcal{I}^\bullet)(U)\right) \cong H^i\!\left(\Coind_{\Dcat(U)}^{\Rcat(U)}\!\left(\mathcal{I}^\bullet|_U\right)\right) \cong R^i \Coind_{\Dcat(U)}^{\Rcat(U)}\!\left(\mathcal{F}|_U\right). \]
    By \Cref{thm:alg_kpi1,prop:artin_good_neighborhood}, $X$ admits a basis $\mathfrak{U}$ of Artin neighborhoods, each of which is an algebraic $K(\pi, 1)$ variety. For every $U \in \mathfrak{U}$, \Cref{lem:dR_coind_vanishing} gives
    \[ R^i \Coind_{\Dcat(U)}^{\Rcat(U)}\!\left(\mathcal{F}|_U\right) = 0 \]
    for all $i > 0$. Thus, the result follows from the universal property of sheafification and \Cref{lem:morphisms_on_basis}.
\end{proof}

\begin{theorem}[Pro-Algebraic to de Rham Comparison]\label{thm:alg_dR_comparison}
    Let $X$ be a smooth variety over an algebraically closed field $k$ of characteristic $0$. Let $\mathcal{F} \in \Mod(\mathcal{D}_X)$ be an ind-regular singular $\mathcal{D}_X$-module. Then the natural map
    \[ H^i_\alg(X, \delta_*\mathcal{F}) \to H^i_\dR(X, \mathcal{F}) \]
    is an isomorphism for all $i \ge 0$.
\end{theorem}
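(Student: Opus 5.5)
The plan is to use the Grothendieck spectral sequence \eqref{eq:alg_dR_spectral_sequence} from \Cref{rem:alg_dR_edge_map}:
\[ E_2^{p,q} = H^p_\alg\!\left(X, R^q \delta_* \mathcal{F}\right) \Rightarrow H^{p+q}_\dR(X, \mathcal{F}). \]
This spectral sequence comes from the factorization
\[ \Hom_{\Dcat(X)}(\mathcal{O}_X, -) \cong \Gamma(X, -)^{\Pi_1^\alg(X)} \circ \delta_*. \]
This factorization is legitimate for two reasons. First, $\delta_*$ is a right adjoint of the exact functor $\delta^*$ (\Cref{lem:delta_adjunction,lem:delta_exact}), so it preserves injectives. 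Second, $H^i_\dR(X, -) \cong \Ext^i_{\Dcat(X)}(\mathcal{O}_X, -)$ by the Spencer resolution.

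The key input is \Cref{lem:dR_delta_vanishing}. Since $\mathcal{F}$ is ind-regular singular, it gives $R^q \delta_* \mathcal{F} = 0$ for all $q > 0$. The spectral sequence therefore collapses onto the row $q = 0$, and the edge map
\[ H^i_\alg(X, \delta_* \mathcal{F}) \to H^i_\dR(X, \mathcal{F}) \]
is an isomorphism for every $i \ge 0$.

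It remains to check that this edge map is the "natural map" of the statement. I would take that as the definition of the natural map, or equivalently describe it as follows. Choose an injective resolution $\mathcal{F} \to \mathcal{I}^\bullet$ in $\Mod(\mathcal{D}_X)$ and apply $\delta_*$. The complex $\delta_*\mathcal{I}^\bullet$ consists of injectives and receives a map from $\delta_*\mathcal{F}$. Comparing it with an injective resolution of $\delta_*\mathcal{F}$ in $\Sh(X, \Rep^\alg_X)$ yields the map on derived functors, and this map agrees with the edge map. Compatibility with the diagram in \Cref{rem:alg_dR_edge_maps} is then formal.

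The real content lies in \Cref{lem:dR_delta_vanishing}, which in turn depends on \Cref{thm:alg_kpi1} (Artin neighborhoods are algebraic $K(\pi,1)$ varieties, proved later) and on \Cref{lem:dR_coind_vanishing}. Given those, the only point needing care is that the Grothendieck spectral sequence applies: $\delta_*$ must send injectives to $\Gamma(X,-)^{\Pi_1^\alg(X)}$-acyclic objects, and this holds because they are injective. So I expect no serious obstacle beyond what is already established; the theorem is a direct assembly of these results.
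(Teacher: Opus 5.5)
Your proposal is correct and is essentially the paper's own proof. The paper likewise cites the vanishing $R^q\delta_*\mathcal{F}=0$ for $q>0$ (\Cref{lem:dR_delta_vanishing}), concludes that the Grothendieck spectral sequence \eqref{eq:alg_dR_spectral_sequence} degenerates, and takes its edge map as the desired isomorphism.
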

\begin{proof}
    By \Cref{lem:dR_delta_vanishing}, $R^i \delta_* \mathcal{F} = 0$ for all $i > 0$. Hence, the spectral sequence \eqref{eq:alg_dR_spectral_sequence} degenerates, and its edge map is the desired isomorphism.
\end{proof}

\subsection{\texorpdfstring{Algebraic $K(\pi, 1)$ Varieties}{Algebraic K(π, 1) Varieties}}\label{sec:algebraic_kpi1}

We now prove \Cref{thm:alg_kpi1}. We first reduce the assertion to a weaker condition. We then establish this condition over $\mathbb{C}$, using that the fundamental group of an Artin neighborhood is algebraically good. The general case follows by spreading out and base change.

\begin{lemma}\label{lem:alg_kpi1_surjective}
    Suppose that $X$ is connected, and let $x \in X(k)$ be a base point. If the natural map
    \[ H^i_\rat(\pi_1^\alg(X, x), M_x) \to H^i_\dR(X, M) \]
    is surjective for every finite-dimensional $\Pi_1^\alg(X)$-representation $M$ and every $i \ge 0$, then $X$ is an algebraic $K(\pi, 1)$ variety.
\end{lemma}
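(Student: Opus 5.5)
The plan is to work throughout with the reformulation in \Cref{rem:alg_dR_edge_maps}: for connected $X$, the natural map of \Cref{def:alg_kpi1} identifies with
\[ \theta^i_M \colon \Ext^i_{\Rcat(X)}(\mathcal{O}_X, M) \to \Ext^i_{\Dcat(X)}(\mathcal{O}_X, M), \]
induced by the exact full inclusion $\Rcat(X) \hookrightarrow \Mod(\mathcal{D}_X)$. Because this inclusion is exact, $\{\theta^i\}_i$ is a morphism of $\delta$-functors on $\Rcat(X)$, so it is compatible with the connecting maps of the long exact sequences attached to short exact sequences in $\Rcat(X)$. In degree $0$, $\theta^0_M$ is an isomorphism because $\Rcat(X)$ is a full subcategory. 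The argument has two parts: reduce to finite-dimensional $M$, then prove injectivity by induction on $i$. Surjectivity is the hypothesis.

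\textbf{Reduction to finite-dimensional coefficients.} Every $M \in \Rcat(X)$ is a filtered union of objects of $\MIC^\rs(X)$, i.e.\ of finite-dimensional $\Pi_1^\alg(X)$-representations. I claim both sides commute with filtered colimits. On the left, $\Ext^i_{\Rcat(X)}(\mathcal{O}_X,-) = H^i_\rat(\pi_1^\alg(X,x), (-)_x)$, and rational cohomology is computed by the Hochschild complex, which commutes with filtered colimits. On the right, $\Ext^i_{\Dcat(X)}(\mathcal{O}_X,-) = H^i_\dR(X,-)$ is the hypercohomology of the de Rham complex on the Noetherian space $X$, whose terms commute with filtered colimits. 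Both identifications are natural, and $\theta$ commutes with the transition maps, so it suffices to prove $\theta^i_M$ is an isomorphism for finite-dimensional $M$.

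\textbf{Injectivity by induction on $i$.} Suppose $\theta^{i-1}_N$ is an isomorphism for every finite-dimensional $N$, and let $c \in \Ext^i_{\Rcat(X)}(\mathcal{O}_X, M)$ with $\theta^i_M(c) = 0$, where $i \ge 1$. First efface $c$ by a finite-dimensional extension.
\begin{itemize}
  \item Embed $M$ into an injective object $I$ of the Grothendieck category $\Rcat(X) \simeq \Ind(\MIC^\rs(X))$, and write $I$ as a filtered union of finite-dimensional $N_\alpha \supseteq M$.
  \item Since $\Ext^i_{\Rcat(X)}(\mathcal{O}_X, I) = 0$, the colimit commutation above shows that $c$ dies in $\Ext^i_{\Rcat(X)}(\mathcal{O}_X, N)$ for some finite-dimensional $N = N_\alpha$.
\end{itemize}
This gives a short exact sequence $0 \to M \to N \to Q \to 0$ of finite-dimensional objects with $c = \partial(q)$ for some $q \in \Ext^{i-1}_{\Rcat(X)}(\mathcal{O}_X, Q)$. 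Then $\partial_\Dcat(\theta^{i-1}_Q(q)) = \theta^i_M(c) = 0$. By exactness of the $\mathcal{D}$-sequence, $\theta^{i-1}_Q(q)$ is the image of some class in $\Ext^{i-1}_{\Dcat(X)}(\mathcal{O}_X, N)$. That class is $\theta^{i-1}_N(n)$ for some $n$, by the inductive hypothesis. Let $\bar n$ be the image of $n$ in $\Ext^{i-1}_{\Rcat(X)}(\mathcal{O}_X, Q)$. Then $q - \bar n$ lies in the kernel of $\theta^{i-1}_Q$, which is zero, so $c = \partial(q) = \partial(\bar n) = 0$. Combined with the assumed surjectivity, $\theta^i$ is an isomorphism on finite-dimensional objects, which completes the induction.

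\textbf{Main obstacle.} The delicate point is the colimit commutation for both Ext functors. It is used twice: in the effacement step, and in passing from finite-dimensional to ind-regular singular coefficients. For the de Rham side, I would use the Spencer/de Rham identification of \Cref{rem:alg_dR_edge_map} together with the fact that Zariski cohomology of quasi-coherent-type sheaves on a Noetherian space commutes with filtered colimits (\cite[\href{https://stacks.math.columbia.edu/tag/03Q5}{Tag~03Q5}]{StacksProject}-type results). For the rational side, I would use the Hochschild complex of $\pi_1^\alg(X,x)$. Finally, I would check the \emph{independence of base point} claim in \Cref{def:alg_kpi1}. This holds because $\theta$ is defined intrinsically via $\mathcal{O}_X$, with no choice of base point.
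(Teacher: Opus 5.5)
Your proposal is correct and is essentially the paper's argument. Both rest on two ingredients: first, that $\Ext^i_{\Rcat(X)}(\mathcal{O}_X,-) \cong H^i_\rat(\pi_1^\alg(X,x),(-)_x)$ and $\Ext^i_{\Dcat(X)}(\mathcal{O}_X,-) \cong H^i_\dR(X,-)$ commute with filtered colimits; second, the standard effaceability argument for a morphism of $\delta$-functors that is an isomorphism in degree $0$ and surjective in every degree. The difference is only in the order of steps. The paper first uses the colimit property to extend surjectivity to all ind-regular singular $M$, in particular to injectives of $\Rcat(X)$, and then cites Anderson's Lemma~11 for the effaceability step. You instead run that induction by hand inside the finite-dimensional objects, effacing by a finite-dimensional $N \supseteq M$, and pass to arbitrary $M$ only at the end.
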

\begin{proof}
    By \cite[Lemma~I.4.17]{Jantzen}, $H^i_\rat(\pi_1^\alg(X, x), -)$ commutes with filtered colimits. By \cite[\href{https://stacks.math.columbia.edu/tag/01FF}{Tag~01FF}]{StacksProject}, cohomology of $\mathcal{O}_X$-modules commutes with filtered colimits. The hypercohomology spectral sequence
    \[
    E_1^{p,q} = H^q\left(X, - \otimes_{\mathcal{O}_X} \Omega^p_{X/k}\right) \Rightarrow H^{p+q}_\dR(X, -)
    \]
    together with the Comparison Theorem for spectral sequences \cite[Theorem~5.2.12]{Weibel} then shows that $H^i_\dR(X, -)$ commutes with filtered colimits. Let $M$ be an arbitrary $\Pi_1^\alg(X)$-representation. By \cite[Corollaire~3.9]{Deligne}, $M$ is a filtered union of its finite-dimensional sub-representations $M_i$. Since the natural map is surjective for every $M_i$, passing to the filtered colimit shows that it is surjective for $M$. Applying \cite[Lemma~11]{Anderson} with
    \[ F : \Rcat(X) \hookrightarrow \Dcat(X) \quad\text{and}\quad H \coloneqq \Hom_{\Dcat(X)}(\mathcal{O}_X, -), \]
    the natural map $\Ext^i_{\Rcat(X)}(\mathcal{O}_X, M) \to \Ext^i_{\Dcat(X)}(\mathcal{O}_X, M)$ is an isomorphism for all $i \ge 0$.
\end{proof}

\begin{lemma}\label{lem:alg_kpi1_C}
    An Artin neighborhood $X$ over $\mathbb{C}$ is an algebraic $K(\pi, 1)$ variety.
\end{lemma}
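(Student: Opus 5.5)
By \Cref{lem:alg_kpi1_surjective}, it suffices to show that for every finite-dimensional $\Pi_1^\alg(X)$-representation $M$, that is, every $M \in \MIC^\rs(X)$, the natural map
\[ H^i_\rat(\pi_1^\alg(X, x), M_x) \to H^i_\dR(X, M) \]
is surjective. I plan to prove the stronger statement that it is an isomorphism. To do so, I will factor it through Betti cohomology as a chain of known isomorphisms. Write $\pi \coloneqq \pi_1(X^\an, x)$ and let $\mathbb{L}$ be the local system of horizontal sections of $M^\an$. By \cite[10.25]{Deligne1}, we have $\pi_1^\alg(X, x) = \pi^\alg$, and $M_x$ is the $\pi^\alg$-representation corresponding to $\mathbb{L}_x$.

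The chain consists of three isomorphisms.
\[ H^i_\rat(\pi^\alg, M_x) \xrightarrow{\ (1)\ } H^i_\grp(\pi, \mathbb{L}_x) \xrightarrow{\ (2)\ } H^i(X^\an, \mathbb{L}) \xleftarrow{\ (3)\ } H^i_\dR(X, M). \]
Map (1) is an isomorphism because $\pi$ is algebraically good by \Cref{prop:artin_nbd_alg_good}. Map (2) is an isomorphism because $X^\an$ is a $K(\pi,1)$ space by \Cref{thm:top_kpi1}, so cohomology with locally constant coefficients is group cohomology. This is also the content of \Cref{lem:betti_kpi1} and the Cartan--Leray spectral sequence \eqref{eq:groupoid_section_spectral_sequence} for $\Rep_X$. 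Map (3) is Deligne's comparison theorem for regular singular connections. It identifies algebraic de Rham cohomology of $M$ with analytic de Rham cohomology of $M^\an$, and hence, by the holomorphic Poincar\'e lemma, with $H^i(X^\an, \mathbb{L})$. Composing gives an isomorphism $H^i_\rat(\pi_1^\alg(X,x), M_x) \cong H^i_\dR(X, M)$ for each $i$.

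The main obstacle is showing that this composite agrees with the natural map of \Cref{def:alg_kpi1}. By \Cref{rem:alg_dR_edge_maps}, that map is $\Ext^i_{\Rcat(X)}(\mathcal{O}_X, M) \to \Ext^i_{\Dcat(X)}(\mathcal{O}_X, M)$, induced by the exact inclusion. To compare the two, I would express every term in the chain as Yoneda $\Ext^i(\mathbf{1}, -)$ in an abelian category, and every map as the map induced by an exact functor sending the unit to the unit. The categories are $\Rcat(X) \simeq \Rep(\pi^\alg)$, then $\Rep(\pi)$, then $\Sh(X^\an, \Vect(\mathbb{C}))$, and on the other side $\Dcat(X)$ followed by analytic $\mathcal{D}_{X^\an}$-modules.

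The square to check commutes at the level of exact functors, up to natural isomorphism. One path takes the local system of a regular singular connection. The other path analytifies the connection and then takes horizontal sections via the solution or de Rham functor, which is an equivalence onto local systems by Riemann--Hilbert for the objects involved. These two paths agree. Since exact functors act on Yoneda extensions functorially, the induced maps on $\Ext^i$ commute. The only remaining issue is that Deligne's isomorphism (3) agrees with the $\Ext$ map of the analytification and horizontal sections functor, which I would verify through the Spencer and de Rham resolutions.

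Granting this compatibility, the natural map is an isomorphism, in particular surjective, for every finite-dimensional $M$. Therefore \Cref{lem:alg_kpi1_surjective} shows that $X$ is an algebraic $K(\pi,1)$ variety.
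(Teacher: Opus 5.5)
Your proposal is correct and follows essentially the same route as the paper: algebraic goodness (\Cref{prop:artin_nbd_alg_good}), the $K(\pi,1)$ property (\Cref{thm:top_kpi1}), the Poincar\'e lemma and Deligne's comparison theorem give the chain of isomorphisms, and \Cref{lem:alg_kpi1_surjective} finishes. Your extra check that the composite equals the natural map of \Cref{def:alg_kpi1} goes beyond the paper, which only asserts that the isomorphisms are natural; for that check, compare Yoneda Ext groups via the exact functor $\mathbb{L} \mapsto \mathcal{O}_{X^\an} \otimes_{\mathbb{C}} \mathbb{L}$ from local systems to $\mathcal{D}_{X^\an}$-modules, not via horizontal sections, which is not exact on all $\mathcal{D}_{X^\an}$-modules.
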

\begin{proof}
    Let $M$ be a finite-dimensional $\Pi_1^\alg(X)$-representation, and let $x \in X(\mathbb{C})$ be a base point. By \Cref{prop:artin_nbd_alg_good}, we have natural isomorphisms
    \[ H^i_\rat(\pi_1^\alg(X, x), M_x) \xrightarrow{\sim} H^i_\grp(\pi_1(X, x), M_x) \]
    for all $i \ge 0$. Let $\mathcal{L}_M$ denote the local system on $X^\an$ associated with $M_x$. Since $X^\an$ is a $K(\pi, 1)$ space by \Cref{thm:top_kpi1}, the Poincaré lemma and \cite[Th\'eor\`eme~II.6.2]{Deligne4} give natural isomorphisms
    \[ H^i_\grp(\pi_1(X, x), M_x) \cong H^i(X^\an, \mathcal{L}_M) \cong H^i_{\dR,\an}(X, M^\an) \cong H^i_\dR(X, M). \]
    The hypothesis of \Cref{lem:alg_kpi1_surjective} is satisfied, so the result follows.
\end{proof}

\begin{lemma}\label{lem:alg_ext_surjectivity_base_change}
    Let $k \subseteq K$ be an extension of algebraically closed fields of characteristic $0$. Let $X$ be a smooth affine variety over $k$, and let $M \in \MIC^\rs(X)$. Set $X_K \coloneqq X \otimes_k K$ and $M_K \coloneqq M \otimes_k K$. For every $i \ge 0$, the natural map
    \[ \Ext^i_{\Rcat(X)}\!\left(\mathcal{O}_X, M\right) \to \Ext^i_{\Dcat(X)}\!\left(\mathcal{O}_X, M\right) \]
    is surjective if and only if the natural map
    \[ \Ext^i_{\Rcat(X_K)}\!\left(\mathcal{O}_{X_K}, M_K\right) \to \Ext^i_{\Dcat(X_K)}\!\left(\mathcal{O}_{X_K}, M_K\right) \]
    is surjective.
\end{lemma}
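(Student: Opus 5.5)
The plan is to characterize the image of $\Ext^i_{\Rcat(X)}(\mathcal{O}_X, M) \to \Ext^i_{\Dcat(X)}(\mathcal{O}_X, M)$ in terms of Yoneda extensions, and then move such extensions between $k$ and $K$. Base change for the left-hand side alone is not available, since $\pi_1^\alg(X_K)$ need not be the base change of $\pi_1^\alg(X)$. We work with Yoneda Ext in both abelian categories, which is legitimate since both are abelian and the Ext groups may be computed as Yoneda Ext. A class $c \in \Ext^i_{\Dcat(X)}(\mathcal{O}_X, M)$ is then in the image exactly when it is represented by an exact sequence
\[ 0 \to M \to E_1 \to \cdots \to E_i \to \mathcal{O}_X \to 0 \]
with all $E_j$ in $\Rcat(X)$. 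Because $\mathcal{O}_X$ and $M$ lie in $\MIC^\rs(X)$, and objects of $\Rcat(X)$ are filtered unions of objects of $\MIC^\rs(X)$, I would first show that the $E_j$ may be taken in $\MIC^\rs(X)$. This goes by the usual compactness argument: pull back and push out along finite-rank subobjects, working inductively from the $\mathcal{O}_X$ end.

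Next I would record the behaviour of the target under base change. Since $X$ is affine, $H^i_\dR(X, M) = \Ext^i_{\Dcat(X)}(\mathcal{O}_X, M)$ is computed by the complex of global sections of $M \otimes \Omega^\bullet_{X/k}$. Base change $-\otimes_k K$ is exact, so it gives $H^i_\dR(X_K, M_K) \cong H^i_\dR(X, M) \otimes_k K$. The Yoneda class of a sequence base-changes to the Yoneda class of the base-changed sequence; I would check this by computing with a flat resolution or Čech cocycles, which commute with $\otimes_k K$. Finally, base change carries $\MIC^\rs(X)$ into $\MIC^\rs(X_K)$, since regular singularity is defined via a compactification and exponents, both compatible with field extension.

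The direction from $k$ to $K$ is then straightforward. Given surjectivity over $k$, every class in $H^i_\dR(X,M)$ is represented by a regular singular Yoneda sequence, and its base change represents the corresponding class over $K$. The image of $\Ext^i_{\Rcat(X_K)}$ is a $K$-subspace, because Yoneda Ext is $K$-linear and the comparison map is linear. Since this subspace contains $H^i_\dR(X,M)\otimes 1$, which spans over $K$, the map over $K$ is surjective.

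For the direction from $K$ to $k$, take $c \in H^i_\dR(X,M)$ and represent $c \otimes 1$ by a sequence of objects $E_j$ in $\MIC^\rs(X_K)$. Spread the whole datum out over a finitely generated $k$-subalgebra $A \subseteq K$: the bundles, the integrable connections, the maps, and the exactness. Also spread out the identification of its Yoneda class with $c \otimes 1$, for example as an explicit cocycle equation in the de Rham complex over $A$. Then specialize at a $k$-point of $\Spec A$, which exists because $k$ is algebraically closed; this yields an exact sequence over $k$ representing $c$.

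The main obstacle is that regular singularity must survive the specialization. I would handle it by also spreading out a good compactification $\bar X_A$ with normal crossings boundary, together with lattice extensions of the $E_j$ having logarithmic poles, as in Deligne's criterion. Since all this data is defined over $A$, after shrinking $\Spec A$ every specialization is again regular singular.
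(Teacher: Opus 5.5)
Your proposal is correct and follows essentially the same route as the paper. In both, surjectivity is characterized by Yoneda extensions with middle terms in $\MIC^\rs$, one passes upward via $H^i_\dR(X_K,M_K)\cong H^i_\dR(X,M)\otimes_k K$, and for the converse one spreads everything out over a finitely generated $k$-subalgebra of $K$, together with an SNC compactification and logarithmic lattices, and then specializes at a closed point. The differences are minor. You prove the reduction to $\MIC^\rs$ terms by hand rather than citing Kashiwara--Schapira. You also spread out the cocycle identity for each class separately, instead of fixing a basis and localizing until $H^i_\dR(X_R/R,M_R)$ is free on the spread-out classes, which slightly streamlines the descent step and avoids using finite-dimensionality.
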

\begin{proof}
    The assertion for $i = 0$ is immediate, so fix $i > 0$. Recall the natural isomorphism
    \[ \Ext^i_{\Dcat(X)}(\mathcal{O}_X, -) \cong H^i_\dR(X, -). \]

    \addvspace{\medskipamount}\noindent\textbf{($\Rightarrow$)}
    Suppose that the map over $X$ is surjective. Choose a basis $e_0, \ldots, e_{m-1}$ of $H^i_\dR(X, M)$ and a preimage $\widetilde{e}_j$ of each $e_j$. Since $\Rcat(X) = \Ind(\MIC^\rs(X))$, each $\widetilde{e}_j$ can be represented by a Yoneda $i$-extension with middle terms in $\MIC^\rs(X)$ \cite[Theorem~15.3.1(i)]{KashiwaraSchapira}. After base change to $K$, these extensions have regular singular middle terms, and their classes form a basis of
    \[ H^i_\dR(X_K, M_K) \cong H^i_\dR(X, M) \otimes_k K. \]
    Hence, the map over $X_K$ is surjective.

    \addvspace{\medskipamount}\noindent\textbf{($\Leftarrow$)}
    Assume that the map over $X_K$ is surjective. Fix a basis $e_0, \ldots, e_{m-1}$ of $H^i_\dR(X_K, M_K)$ and a preimage $\widetilde{e}_j$ of each $e_j$. Since $\Rcat(X_K) = \Ind(\MIC^\rs(X_K))$, each $\widetilde{e}_j$ can be represented by a Yoneda $i$-extension
    \[ 0 \to M_K \to E^{(j)}_{i-1} \to \cdots \to E^{(j)}_0 \to \mathcal{O}_{X_K} \to 0, \]
    where $E^{(j)}_n \in \MIC^\rs(X_K)$ \cite[Theorem~15.3.1(i)]{KashiwaraSchapira}.

    Choose a smooth compactification $\overline{X}_K$ of $X_K$ such that
    \[ D := \overline{X}_K \setminus X_K = \bigcup_{\alpha \in I} D_\alpha \]
    is a simple normal crossings divisor with smooth irreducible components $D_\alpha$. For every $j$ and $n$, choose a logarithmic extension $\overline{E}^{(j)}_n$ of $E^{(j)}_n$ to $\overline{X}_K$. Choose a finitely generated $k$-subalgebra $R \subseteq K$ over which the compactification, the boundary components, the logarithmic extensions, and the morphisms in the Yoneda $i$-extensions are defined.

    By \cite[Proposition~17.7.8(ii)]{EGA4_4}, after localizing $R$, we may assume that $\overline{X}_R$ is smooth over $R$ and that
    \[ D_{J,R} := \bigcap_{\alpha \in J} D_{\alpha,R} \]
    is smooth over $R$ of relative codimension $|J|$ for every nonempty $J \subseteq I$. After further localizing $R$, we may assume that every logarithmic extension $\overline{E}^{(j)}_{n,R}$ is locally free of finite rank. Its restriction $E^{(j)}_{n,R}$ to $X_R$ is then locally free of finite rank. Since $X_R$ is affine, the cohomology sheaves of the spread-out Yoneda $i$-extensions correspond to finitely generated modules. Their generic fibers vanish because the extensions become exact after the faithfully flat base change $\Frac(R) \to K$. After further localizing $R$, we may therefore assume that all the spread-out Yoneda $i$-extensions are exact.

    Since $R$ is flat over $k$, base change of the de Rham complex gives
    \[ H^i_\dR(X_R/R, M_R) \cong H^i_\dR(X, M) \otimes_k R. \]
    Let $e_{j,R} \in H^i_\dR(X_R/R, M_R)$ be the class of the spread-out Yoneda extension indexed by $j$. Since its base change to $K$ is $e_j$, faithful flatness of $\Frac(R) \to K$ shows that $e_{0,R}, \ldots, e_{m-1,R}$ form a basis after tensoring with $\Frac(R)$. After further localizing $R$, we may assume that $H^i_\dR(X_R/R, M_R)$ is a free $R$-module of rank $m$ with basis $e_{0,R}, \ldots, e_{m-1,R}$.

    Choose a closed point $s \in \Spec R$. Since $k$ is algebraically closed, its residue field is $k$. The spread-out Yoneda $i$-extensions remain exact after base change to $s$ because their terms are locally free. Moreover, their middle terms are regular singular, since each $\overline{E}^{(j)}_{n,R}$ specializes to a logarithmic connection on $\overline{X}_s$. The specialized classes $e_{0,s}, \ldots, e_{m-1,s}$ form a basis of $H^i_\dR(X, M)$. Hence, the map over $X$ is surjective.
\end{proof}

\begin{lemma}\label{lem:alg_ext_surjectivity_artin}
    Let $X$ be an Artin neighborhood over an algebraically closed field $k$ of characteristic $0$, and let $M \in \MIC^\rs(X)$. Then the natural map
    \[ \Ext^i_{\Rcat(X)}\!\left(\mathcal{O}_X, M\right) \to \Ext^i_{\Dcat(X)}\!\left(\mathcal{O}_X, M\right) \]
    is surjective for all $i \ge 0$.
\end{lemma}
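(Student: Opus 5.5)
The plan is to reduce to the case $k = \mathbb{C}$ by spreading out and base change, and then use \Cref{lem:alg_kpi1_C} together with \Cref{lem:alg_ext_surjectivity_base_change}. The case $i = 0$ is trivial, so fix $i > 0$.

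First, descend all the data to a small field. An Artin neighborhood $X$ over $k$ is defined by finitely many equations, together with the data of an elementary fibration tower with sections and finite étale boundary. The object $M \in \MIC^\rs(X)$ is likewise given by finitely many data. So all of this is defined over an algebraically closed subfield $k_0 \subseteq k$ of finite transcendence degree over $\mathbb{Q}$: take the algebraic closure of the field generated by the finitely many coefficients. Write $X = X_0 \otimes_{k_0} k$ and $M = M_0 \otimes_{k_0} k$. I will check two things here. The Artin neighborhood structure (the tower of elementary fibrations) descends to $X_0$ after possibly enlarging $k_0$ by finitely many elements. Regular singularity also descends: it can be tested with logarithmic extensions on a good compactification, which also descend after enlarging $k_0$.

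Next, since $k_0$ has finite transcendence degree over $\mathbb{Q}$, choose an embedding $k_0 \hookrightarrow \mathbb{C}$. The base change $X_{0,\mathbb{C}}$ is an Artin neighborhood over $\mathbb{C}$, because elementary fibrations are preserved by base change, and $M_{0,\mathbb{C}} \in \MIC^\rs(X_{0,\mathbb{C}})$. By \Cref{lem:alg_kpi1_C}, $X_{0,\mathbb{C}}$ is an algebraic $K(\pi,1)$ variety. By \Cref{rem:alg_dR_edge_maps}, the map
\[ \Ext^i_{\Rcat(X_{0,\mathbb{C}})}(\mathcal{O}, M_{0,\mathbb{C}}) \to \Ext^i_{\Dcat(X_{0,\mathbb{C}})}(\mathcal{O}, M_{0,\mathbb{C}}) \]
is therefore an isomorphism, and in particular surjective.

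Now apply \Cref{lem:alg_ext_surjectivity_base_change} twice. Artin neighborhoods are affine, so its hypotheses hold. Applied to $k_0 \subseteq \mathbb{C}$ in the direction ($\Leftarrow$), it gives surjectivity over $X_0$. Applied to $k_0 \subseteq k$ in the direction ($\Rightarrow$), it gives surjectivity over $X$.

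The main obstacle is the descent step: ensuring that $X_0$ is itself an Artin neighborhood and that $M_0$ is regular singular over $k_0$. If descending the Artin neighborhood structure is awkward, I would avoid it. Only affineness of $X_0$ is needed for the base change lemma, so it suffices that the base change $X_{0,\mathbb{C}}$ is an Artin neighborhood over $\mathbb{C}$. That follows directly from $X_{0,\mathbb{C}}$ being obtained from $X$ by the chain of extensions $k \supseteq k_0 \subseteq \mathbb{C}$, with the defining fibration data defined over $k_0$.
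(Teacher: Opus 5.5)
Your proposal is correct and matches the paper's proof: you descend $X$ (with its Artin-neighborhood presentation) and a logarithmic extension of $M$ to an algebraically closed $k_0$ of finite transcendence degree over $\mathbb{Q}$, embed $k_0$ into $\mathbb{C}$ to get surjectivity there from \Cref{lem:alg_kpi1_C} and \Cref{rem:alg_dR_edge_maps}, and then apply \Cref{lem:alg_ext_surjectivity_base_change} in the ($\Leftarrow$) direction for $k_0 \subseteq \mathbb{C}$ and the ($\Rightarrow$) direction for $k_0 \subseteq k$. Your closing fallback does not actually avoid the descent step, since it still needs the fibration data to be defined over $k_0$, but this is harmless because the main argument already goes through.
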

\begin{proof}
    Choose an algebraically closed subfield $k_0 \subseteq k$ of finite transcendence degree over $\mathbb{Q}$ such that an Artin-neighborhood presentation of $X$ and a logarithmic extension of $M$ descend to $k_0$. Let $X_0$ and $M_0$ denote the descended objects. Then $X_0$ is an Artin neighborhood, $M_0 \in \MIC^\rs(X_0)$, and
    \[ X \cong X_0 \otimes_{k_0} k \quad\text{and}\quad M \cong M_0 \otimes_{k_0} k. \]

    Since $k_0$ embeds into $\mathbb{C}$, \Cref{rem:alg_dR_edge_maps,lem:alg_kpi1_C} show that the natural map
    \[ \Ext^i_{\Rcat((X_0)_\mathbb{C})}\!\left(\mathcal{O}_{(X_0)_\mathbb{C}}, (M_0)_\mathbb{C}\right) \to \Ext^i_{\Dcat((X_0)_\mathbb{C})}\!\left(\mathcal{O}_{(X_0)_\mathbb{C}}, (M_0)_\mathbb{C}\right) \]
    is surjective for all $i \ge 0$. Applying \Cref{lem:alg_ext_surjectivity_base_change} first to $k_0 \subseteq \mathbb{C}$ and then to $k_0 \subseteq k$, we conclude that the map over $X$ is surjective for all $i \ge 0$.
\end{proof}

\Cref{thm:alg_kpi1} now follows from \Cref{rem:alg_dR_edge_maps,lem:alg_kpi1_surjective,lem:alg_ext_surjectivity_artin}.

\section{Unified Theory}\label{sec:unified}

The goal of this section is to provide a uniform construction of \'etale and algebraic de Rham cohomology, over an algebraically closed field $k$ of characteristic $0$. Let $X$ be a smooth variety over $k$, and choose $x \in X(k)$. Then $\pi_1^\et(X, x)$ is the profinite completion of $\pi_1^\alg(X, x)$, so $\Pi_1^\alg(X)$ already encodes all the information of $\Pi_1^\et(X)$. Thus, $\Pi_1^\alg(X)$ alone should recover \'etale cohomology. However, the coefficient category used in \Cref{sec:algebraic} is simply too small to recover \'etale cohomology, in particular its torsion classes.

In this section, we enlarge the coefficient category to the category $\FC$ of commutative formal groups over $k$ and show that the resulting cohomology theory simultaneously recovers both \'etale and algebraic de Rham cohomology. Let $\AC$ denote the category of commutative affine group schemes over $k$. Cartier duality gives anti-equivalences \cite[Ch.~II, \S4]{Demazure}
\[ (-)^\dual \colon \AC^\op \xrightarrow{\sim} \FC \quad\text{and}\quad (-)^\dual \colon \FC^\op \xrightarrow{\sim} \AC. \]
By \cite[Ch.~II, \S6, Theorem~3]{Demazure} and \cite[\href{https://stacks.math.columbia.edu/tag/01ZB}{Tag~01ZB}]{StacksProject}, via Cartier duality, $\FC$ is abelian (\ref{axiom:SC4}) and locally finitely presentable (\ref{axiom:SC2}), with the Cartier duals of commutative algebraic group schemes forming a strong generating set consisting of finitely presentable objects. Hence, it is a suitable coefficient category for our cohomology theory.

In this section, for an affine groupoid scheme $G$ acting on a scheme $S$, we use the notation of \Cref{sec:groupoids_via_structure_maps}. That is, the structure maps are denoted
\[ s, t \colon G \to S, \quad e \colon S \to G, \quad i \colon G \to G, \quad c \colon G^{(2)} \to G, \]
where $G^{(2)} \coloneqq G \times_{s, S, t} G$, with projections $\pr_0, \pr_1 \colon G^{(2)} \to G$.

The first two subsections are developed over an arbitrary field $k$. From the third subsection onwards, we additionally assume $\charac k = 0$ and $k = \bar{k}$. For background on formal groups and Cartier duality, we refer to \cite{Demazure}.

\subsection{Groupoid Module Theory with Formal Group Schemes}\label{sec:groupoid_formal}

The goal of this subsection is to define the category $\FMod(G)$ of formal $G$-modules, which are $G$-modules with coefficients in $\FC$. We also define related categories and discuss their basic properties. A direct definition presents a technical difficulty because $G$ is a scheme, whereas an object of $\FC$ is a formal scheme. We therefore encode the desired action indirectly, following the contravariant pattern of Cartier duality.

Consider the classical situation where $G$ is an abstract group and $M$ is a finite-dimensional $G$-representation. The dual vector space $M^\dual$ has a natural right $G$-representation structure, given by
\[ (\varphi \cdot g)(m) \coloneqq \varphi(g \cdot m) \]
for $\varphi \in M^\dual$, $g \in G$, and $m \in M$. This yields a canonical equivalence
\[ \FRep_k(G) \simeq \FRep_k(G^\op)^\op. \]
Cartier duality suggests the same categorical pattern. Accordingly, we first develop the category $\AMod(G)$ of affine $G$-modules and then define $\FMod(G)$ as $\AMod(G^\op)^\op$. For the terminal groupoid $\mathds{1}$, we obtain an equivalence $\FC \simeq \FMod(\mathds{1})$.

\begin{definition}\label{def:asch}
    An \emph{affine $G$-scheme} is a pair $(M, \rho)$, where $M$ is an affine scheme over $S$ and
    \[ \rho \colon s^* M \xrightarrow{\sim} t^* M \]
    is an isomorphism of affine schemes over $G$ satisfying
    \[ e^* \rho = \id_M \quad\text{and}\quad c^* \rho = \pr_0^* \rho \circ \pr_1^* \rho. \]
    A morphism $(M, \rho) \to (N, \sigma)$ is a morphism $\eta \colon M \to N$ of affine schemes over $S$ satisfying
    \[ t^* \eta \circ \rho = \sigma \circ s^* \eta. \]
    We denote by $\ASch(G)$ the category of affine $G$-schemes and often write $M$ for $(M, \rho)$ when no confusion arises.
\end{definition}

\begin{remark}\label{rem:asch_functor}
    Let $M$ and $N$ be affine schemes over $S$. Giving $M$ the structure of an affine $G$-scheme is equivalent to giving $M(A)$ the structure of a $G(A)$-set for every test scheme $A$, functorially in $A$. Similarly, a morphism $M \to N$ of affine $G$-schemes corresponds to a collection of $G(A)$-equivariant maps $M(A) \to N(A)$, functorial in $A$. Thus, $\ASch(G)$ is the scheme-theoretic analogue of the category of groupoid sets for a fixed abstract groupoid.
\end{remark}

\begin{definition}\label{def:amod}
    An \emph{affine $G$-module} is an abelian group object in $\ASch(G)$. Equivalently, it is a pair $(M, \rho)$ as in \Cref{def:asch} such that $M$ is a commutative affine group scheme over $S$ and $\rho \colon s^* M \xrightarrow{\sim} t^* M$ is an isomorphism of commutative group schemes over $G$. We denote by $\AMod(G) \coloneqq \AbObj(\ASch(G))$ the category of affine $G$-modules.

    We say that $M \in \AMod(G)$ is \emph{pro-unipotent} (resp. \emph{of multiplicative type}) if the geometric fiber of $M$ over every geometric point of $S$ is pro-unipotent (resp. of multiplicative type). We denote by $\UMod(G)$ (resp. $\MMod(G)$) the full subcategory of $\AMod(G)$ spanned by pro-unipotent affine $G$-modules (resp. affine $G$-modules of multiplicative type).
\end{definition}

Throughout the remainder of this section, all points are scheme-theoretic $A$-points, with the test scheme $A$ possibly implicit. We call $A$-points of $S$ \emph{objects} and $A$-points of $G$ \emph{arrows}. We write $f \colon x \to y$ to mean $s(f) = x$ and $t(f) = y$, and for $f \colon x \to y$ and $g \colon y \to z$, we set $\id_x \coloneqq e(x)$, $f^{-1} \coloneqq i(f)$, and $g \circ f \coloneqq c(g, f)$. For affine $G$-schemes $(M, \rho)$ and $(N, \sigma)$ and a morphism $\eta \colon (M, \rho) \to (N, \sigma)$, we further set $M_x \coloneqq x^* M$, $\rho_f \coloneqq f^* \rho$, and $\eta_x \coloneqq x^* \eta$.

\begin{remark}\label{rem:asch_pts}
    We explain the meaning of \Cref{def:asch} from the viewpoint of scheme-theoretic points. Fix objects $x, y, z \in S(A)$ and arrows $f \colon x \to y$ and $g \colon y \to z$. Giving $\rho$ is equivalent to giving, for each arrow $f \colon x \to y$, an isomorphism
    \[ \rho_f \colon M_x \xrightarrow{\sim} M_y \]
    of affine schemes over $A$, functorially in $A$, subject to
    \[ \rho_{\id_x} = \id_{M_x} \quad\text{and}\quad \rho_{g \circ f} = \rho_g \circ \rho_f. \]
    A morphism of affine $G$-schemes is then a collection of morphisms $\eta_x \colon M_x \to N_x$, functorially in $A$, satisfying
    \[ \eta_y \circ \rho_f = \sigma_f \circ \eta_x. \]
    Conversely, taking $A = S$ and $x = \id_S$ recovers the identity condition. Taking $A = G^{(2)}$, $f = \pr_1$, and $g = \pr_0$ recovers the composition condition. Taking $A = G$, $f = \id_G$, $x = s$, and $y = t$ recovers the condition on a morphism.
\end{remark}

\begin{definition}\label{def:asch_res}
    Let $(\varphi, u) \colon (H, T) \to (G, S)$ be a morphism of affine groupoid schemes. The \emph{restriction functor}
    \[ \Res_H^G \colon \ASch(G) \to \ASch(H) \]
    is defined by sending $(M, \rho)$ to $(u^* M, \varphi^* \rho)$ and a morphism $\eta$ to $u^* \eta$.
\end{definition}

\begin{remark}
    The conditions $s_G \circ \varphi = u \circ s_H$ and $t_G \circ \varphi = u \circ t_H$ yield canonical isomorphisms $s_H^* u^* M \cong \varphi^* s_G^* M$ and $\varphi^* t_G^* M \cong t_H^* u^* M$. Thus, $\varphi^* \rho$ may be viewed as an isomorphism
    \[ \varphi^* \rho \colon s_H^* u^* M \xrightarrow{\sim} t_H^* u^* M. \]
    For each test scheme $A$, the functor $\Res_H^G$ evaluated at $A$ is merely the restriction $\Res_{H(A)}^{G(A)}$ of $G(A)$-sets to $H(A)$-sets, functorial in $A$. Thus, by \Cref{rem:asch_functor}, $\Res_H^G$ is a well-defined functor.
\end{remark}

Given a morphism $u \colon T \to S$, the fiber product $G_T \coloneqq T \times_{u,S,s} G \times_{t,S,u} T$ is a groupoid scheme acting on $T$, with
\[ e_T(x) \coloneqq (x,\, e(u(x)),\, x), \ (x, f, y)^{-1} \coloneqq (y,\, f^{-1},\, x), \ \text{and} \ (y, g, z) \circ (x, f, y) \coloneqq (x,\, g \circ f,\, z). \]
The anchor map $(s_T, t_T) \colon G_T \to T \times T$ is given by the first and third projections and is affine. The natural groupoid morphism $G_T \to G$ is the second projection.

\begin{definition}\label{def:asch_base_change}
    The \emph{base change functor} along $u \colon T \to S$ is defined as the restriction
    \[ (-)_T \coloneqq \Res_{G_T}^G \colon \ASch(G) \to \ASch(G_T). \]
\end{definition}

As in \cite[Remarque~1.8, (3.5.1)]{Deligne}, base change of the object scheme does not change the category of affine $G$-schemes when $G$ is transitive and $T \neq \emptyset$. The proof is somewhat technical, and we defer it to \Cref{sec:base_change}.

\begin{theorem}[Base Change Theorem]\label{thm:base_change}
    Let $G$ be a transitive affine groupoid scheme acting on $S$, and let $u \colon T \to S$ be a morphism of schemes with $T \neq \emptyset$. Then the base change functor
    \[ (-)_T \colon \ASch(G) \to \ASch(G_T) \]
    is an equivalence of categories.
\end{theorem}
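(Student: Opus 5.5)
The plan is to prove the Base Change Theorem by descent. The base change functor is fully faithful and essentially surjective because $G$-equivariant data on $S$ amounts to descent data along a faithfully flat map, and $G_T$-equivariant data on $T$ amounts to the same descent data along a refined map.

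Here transitivity means that the anchor map $(s,t)\colon G \to S \times S$ is faithfully flat (fpqc), as in \cite[1.11]{Deligne}. I would use the standing convention that the anchor is affine, which is recorded above for $G_T$.

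First I would reformulate $\ASch(G)$ as a descent category. Consider the projection $P \coloneqq G \times_{t,S,u} T \to S$ via $s$. Pulling back $(M,\rho)$ to $T$ and using $\rho$ produces canonical identifications along $P$. The key observation is that $t\colon P \to T$, or more symmetrically $P \to S$, is fpqc. This follows because the anchor $G \to S\times S$ is fpqc and $T \to \Spec k$ is faithfully flat over a field once $T\neq\emptyset$.

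Next I would construct a quasi-inverse. Given $(N,\tau)\in\ASch(G_T)$, I form the affine scheme $\mathrm{pr}_T^* N$ over $P$, where $P = G\times_{t,S,u}T$ maps to $S$ by $s$. On $P\times_S P$ I equip it with a descent datum. A point of $P\times_S P$ is a pair $(f\colon x\to u(a),\, g\colon x\to u(b))$, and the isomorphism $N_a \to N_b$ is given by $\tau_{(a,\,g f^{-1},\,b)}$. The cocycle condition for $\tau$ gives the cocycle condition of this datum. Fpqc descent for affine schemes \cite[\href{https://stacks.math.columbia.edu/tag/0244}{Tag~0244}]{StacksProject} then yields an affine scheme $M$ over $S$. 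The action $\rho_h$ for $h\colon x\to x'$ is induced by precomposition $f\mapsto f h^{-1}$ on $P$, which is compatible with the descent datum. Checking the identity and composition conditions is then pointwise, as in \Cref{rem:asch_pts}.

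Finally I would verify that the two constructions are mutually inverse. For $(M,\rho)$, pulling back to $P$ and using $\rho_f\colon M_x\cong M_{u(a)}$ identifies $s^*M|_P$ with the descended object built from $(M,\rho)_T$; effectivity of descent makes this isomorphism natural. In the other direction, restricting the descended $M$ to $T$ along $u$ recovers $N$. To see this, I would pull the descent back along the section $T\to P$ given by $a\mapsto (\id_{u(a)},a)$ and check that $\tau$ corresponds under this identification. Full faithfulness could alternatively be proved directly, since morphisms also descend along the fpqc map $P\to S$.

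The main obstacle is the faithful flatness of $P\to S$ together with the bookkeeping for the descent datum. Transitivity gives flatness only of the anchor $G\to S\times S$, so I would write $P \cong G\times_{S\times S}(S\times T)$ with $S\times T\to S\times S$ given by $\id\times u$. Then $P\to S\times T$ is fpqc by base change, and $S\times T\to S$ is fpqc because $T\neq\emptyset$ over the field $k$. The composite $P\to S$ is therefore fpqc, which is exactly where the hypothesis $T\neq\emptyset$ is used.
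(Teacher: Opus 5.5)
Your proposal is correct and is essentially the paper's proof. The paper likewise pulls the $G_T$-scheme back to $T\times_{u,S,s}G$ (your $P$, up to inverting arrows) and descends it along the fpqc map to $S$, with descent datum given by the action on the arrows $f_1^{-1}\circ f_0$, matching your $\tau_{(a,\,gf^{-1},\,b)}$. It then descends the action and morphisms the same way, and compares with the original objects via the section $x\mapsto (x,e_{u(x)})$ and the isomorphism built from the $\rho_f$. Your factorization $P\cong G\times_{S\times S}(S\times T)$ is a useful addition, since it justifies the fpqc property and the role of $T\neq\emptyset$, which the paper only asserts.
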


\begin{remark}\label{rem:asch_spec_k}
    Suppose that $G$ is transitive and that there is a rational point $x \in S(k)$. Then by \Cref{thm:base_change},
    \[ \ASch(G) \simeq \ASch(G_x). \]
    Here, $G_x$ is an affine group scheme over $k$. By \Cref{rem:asch_functor}, an object of $\ASch(G_x)$ is an affine scheme $M$ over $k$ equipped with a $G_x(A)$-set structure on $M(A)$, functorially in the test scheme $A$.
\end{remark}

\begin{remark}\label{rem:base_change_variants}
    \Cref{thm:base_change} and \Cref{rem:asch_spec_k} apply analogously to $\AMod(G)$, $\MMod(G)$, and $\UMod(G)$.
\end{remark}

We now define the notion of a $G$-module with coefficients in $\FC$. The \emph{opposite groupoid scheme} $G^\op$ is the groupoid scheme with the same underlying scheme as $G$ but with source and target interchanged and composition reversed.

\begin{definition}\label{def:fmod}
    The category of \emph{formal $G$-modules} is defined as
    \[ \FMod(G) \coloneqq \left(\AMod(G^\op)\right)^\op. \]
    For $M \in \AMod(G^\op)$, the \emph{Cartier dual} $M^\dual$ of $M$ is the corresponding object in $\FMod(G)$.
\end{definition}

When $G = \mathds{1}$ is the terminal groupoid, $\AMod(G^\op) \simeq \AC$. Hence, Cartier duality identifies $\FMod(\mathds{1})$ with $\FC$, and $(-)^\dual$ agrees with classical Cartier duality.

\begin{remark}\label{rem:amod_triv}
    For a morphism $H \to G$ in $\AfGpd$, \Cref{def:asch_res} induces a restriction functor
    \[ \Res_H^G \colon \FMod(G) \to \FMod(H). \]
    In particular, the unique morphism $G \to \mathds{1}$ induces the \emph{trivial module functor}
    \[ \Delta_G \coloneqq \Res_G^{\mathds{1}} \colon \FC \to \FMod(G). \]
\end{remark}

The fibration $\FMod_\AfGpd^\op \to \AfGpd$ is defined by $\FMod_\AfGpd(G) \coloneqq \FMod(G)$, with these restriction functors. We further define the fibration $(\FMod^\alg_X)^\op \to \Open(X)$ as its pullback along $\Pi_1^\alg \colon \Open(X) \to \AfGpd$.

\subsection{The Base Change Theorem}\label{sec:base_change}

Throughout this subsection, let $G$ be a transitive affine groupoid scheme acting on $S$, and let $u \colon T \to S$ be a morphism with $T \neq \emptyset$. We prove that the base change functor
\[ (-)_T \colon \ASch(G) \to \ASch(G_T) \]
is an equivalence of categories. The proof proceeds in six steps using fpqc descent for relative affine schemes and their morphisms \cite[\href{https://stacks.math.columbia.edu/tag/023V}{Tag~023V}, \href{https://stacks.math.columbia.edu/tag/0245}{Tag~0245}, \href{https://stacks.math.columbia.edu/tag/040L}{Tag~040L}]{StacksProject}.

Let $(M, \rho)$ be an object and $\eta$ be a morphism in $\ASch(G_T)$. In Steps 1--3, we use fpqc descent to construct $M^{\natural}$, $\rho^{\natural}$, and $\eta^{\natural}$ over $S$, respectively, thereby defining a functor
\[ (-)^{\natural} \colon \ASch(G_T) \to \ASch(G). \]
In Steps 4--6, we prove that $(-)_T$ is essentially surjective, full, and faithful, establishing an equivalence $\ASch(G) \simeq \ASch(G_T)$.

Let $P \coloneqq T \times_{u,S,s} G$. Then a point $(x, f)$ of $P$ satisfies $u(x) = s(f)$. Define morphisms
\[ \begin{aligned} a \colon P &\to T \\ (x, f) &\mapsto x \end{aligned} \quad\text{and}\quad \begin{aligned} b \colon P &\to S \\ (x, f) &\mapsto t(f). \end{aligned} \]
Since $G$ is transitive, both $a$ and $b$ are fpqc.

\subsubsection*{Step 1}
We construct $M^{\natural}$ over $S$ by descending $a^* M \to P$ along $b \colon P \to S$. Take a point $(p_0, p_1)$ of $P \times_S P$ with $p_i = (x_i, f_i)$. Since $t(f_0) = t(f_1)$, we have an arrow
\[ f_1^{-1} \circ f_0 \colon u(x_0) \to u(x_1) \]
in $G$, hence $(x_0, f_1^{-1} \circ f_0, x_1) \in G_T$. The required descent datum is an isomorphism
\[ \alpha_M \colon \pr_0^* a^* M \xrightarrow{\sim} \pr_1^* a^* M \]
over $P \times_S P$ with projections $\pr_0$ and $\pr_1$. We define it pointwise by
\[ (\alpha_M)_{(p_0,\, p_1)} \coloneqq \rho_{(x_0,\, f_1^{-1} \circ f_0,\, x_1)} \colon M_{x_0} \xrightarrow{\sim} M_{x_1}. \]
The composition rule for $\rho$ immediately gives the cocycle condition for $\alpha_M$. By fpqc descent for affine schemes, $(a^* M, \alpha_M)$ descends to an affine scheme $M^{\natural} \to S$.

\subsubsection*{Step 2}
We construct $\rho^{\natural} \colon s^* M^{\natural} \xrightarrow{\sim} t^* M^{\natural}$ by descending a pullback $q^* \rho$ along an fpqc morphism $r$. Here, $r \colon G_P \to G$ is the middle projection and $q \colon G_P \to G_T$ is defined by
\[ q((x, f), g, (y, h)) = (x,\, h^{-1} \circ g \circ f,\, y), \]
where $G_P \coloneqq P \times_{b,S,s} G \times_{t,S,b} P$. Then $s_P^* a^* M = q^* s_T^* M$ and $t_P^* a^* M = q^* t_T^* M$, so we have an isomorphism
\[ q^* \rho \colon s_P^* a^* M \xrightarrow{\sim} t_P^* a^* M \]
of affine schemes over $G_P$.

We check that $q^* \rho$ is compatible with the effective descent data of $s_P^* a^* M$ and $t_P^* a^* M$. Over $G_P \times_G G_P$, with projections $\pr_0$ and $\pr_1$, the compatibility condition is
\[ (t_P \times t_P)^* \alpha_M \circ \pr_0^*(q^* \rho) = \pr_1^*(q^* \rho) \circ (s_P \times s_P)^* \alpha_M. \]
At a point $(p_0, p_1)$ of $G_P \times_G G_P$ with $p_i = ((x_i, f_i), g, (y_i, h_i))$, this condition becomes
\[ \rho_{(y_0,\, h_1^{-1} \circ h_0,\, y_1)} \circ \rho_{(x_0,\, h_0^{-1} \circ g \circ f_0,\, y_0)} = \rho_{(x_1,\, h_1^{-1} \circ g \circ f_1,\, y_1)} \circ \rho_{(x_0,\, f_1^{-1} \circ f_0,\, x_1)}, \]
and both sides are equal to $\rho_{(x_0,\, h_1^{-1} \circ g \circ f_0,\, y_1)}$. Hence, by fpqc descent for morphisms, $q^* \rho$ descends uniquely to an isomorphism
\[ \rho^{\natural} \colon s^* M^{\natural} \xrightarrow{\sim} t^* M^{\natural}. \]
The two axioms $e^* \rho^{\natural} = \id_{M^{\natural}}$ and $c^* \rho^{\natural} = \pr_0^* \rho^{\natural} \circ \pr_1^* \rho^{\natural}$ may be checked after pullback along suitable fpqc coverings, where they follow from the corresponding axioms for $q^* \rho$. Hence, $(M^{\natural}, \rho^{\natural}) \in \ASch(G)$.

\subsubsection*{Step 3}
We descend $a^* \eta \colon a^* M \to a^* N$ along $b \colon P \to S$. The compatibility condition with the descent data over $P \times_S P$ is
\[ \alpha_N \circ \pr_0^*(a^* \eta) = \pr_1^*(a^* \eta) \circ \alpha_M. \]
At a point $(p_0, p_1)$ of $P \times_S P$ with $p_i = (x_i, f_i)$, this condition becomes
\[ \sigma_{(x_0,\, f_1^{-1} \circ f_0,\, x_1)} \circ \eta_{x_0} = \eta_{x_1} \circ \rho_{(x_0,\, f_1^{-1} \circ f_0,\, x_1)}, \]
which holds since $\eta$ is a morphism in $\ASch(G_T)$. By fpqc descent, $a^* \eta$ descends uniquely to
\[ \eta^{\natural} \colon M^{\natural} \to N^{\natural}. \]
The condition that $\eta^{\natural}$ be a morphism in $\ASch(G)$ may likewise be checked after pullback along $r$, where it follows from the corresponding condition for $\eta$. Hence, we have defined a functor
\[ (-)^{\natural} \colon \ASch(G_T) \to \ASch(G). \]

\begin{remark}\label{rem:descent_iso}
    By construction, every $(M, \rho) \in \ASch(G_T)$ has a descent isomorphism
    \[ \beta_M \colon b^* M^{\natural} \xrightarrow{\sim} a^* M. \]
    These isomorphisms are compatible with actions and morphisms. Concretely, using $r^* s^* M^{\natural} = s_P^* b^* M^{\natural}$ and $r^* t^* M^{\natural} = t_P^* b^* M^{\natural}$, the diagrams
    \[ \begin{tikzcd}
        r^* s^* M^{\natural} \arrow[r, "r^* \rho^{\natural}"] \arrow[d, "s_P^* \beta_M"'] & r^* t^* M^{\natural} \arrow[d, "t_P^* \beta_M"] \\
        s_P^* a^* M \arrow[r, "q^* \rho"'] & t_P^* a^* M
    \end{tikzcd} \quad\text{and}\quad \begin{tikzcd}
        b^* M^{\natural} \arrow[r, "b^* \eta^{\natural}"] \arrow[d, "\beta_M"'] & b^* N^{\natural} \arrow[d, "\beta_N"] \\
        a^* M \arrow[r, "a^* \eta"'] & a^* N
    \end{tikzcd} \]
    commute. Equivalently,
    \begin{align}
        t_P^* \beta_M \circ r^* \rho^{\natural} &= q^* \rho \circ s_P^* \beta_M, \quad\text{and} \label{eq:descent_action_compatibility} \\
        \beta_N \circ b^* \eta^{\natural} &= a^* \eta \circ \beta_M. \label{eq:descent_morphism_compatibility}
    \end{align}
\end{remark}

\subsubsection*{Step 4}
We show that $(-)_T$ is essentially surjective. Given $(M, \rho) \in \ASch(G_T)$, it suffices to construct an isomorphism $(M^{\natural}, \rho^{\natural})_T \xrightarrow{\sim} (M, \rho)$, where $(M^{\natural}, \rho^{\natural})_T \coloneqq (u^* M^{\natural}, u^* \rho^{\natural})$. Consider the section $d$ of $a$ given by
\begin{align*}
    d \colon T &\to P \\
    x &\mapsto (x, e_{u(x)}).
\end{align*}
Since $a \circ d = \id_T$ and $b \circ d = u$, pulling back $\beta_M$ along $d$ yields an isomorphism
\[ \mu_M \coloneqq d^* \beta_M \colon u^* M^{\natural} \xrightarrow{\sim} M \]
of affine schemes over $T$.

It remains to check that $\mu_M$ respects the actions $u^* \rho^{\natural}$ and $\rho$. For an arrow $(x, g, y)$ of $G_T$, consider the arrow
\[ p \coloneqq \bigl((x, e_{u(x)}), g, (y, e_{u(y)})\bigr) \]
of $G_P$. The identity \eqref{eq:descent_action_compatibility} gives
\begin{align*}
    (t_P^* \beta_M)_p \circ (r^* \rho^{\natural})_p &= (q^* \rho)_p \circ (s_P^* \beta_M)_p \\
    (\beta_M)_{(y,\, e_{u(y)})} \circ \rho^{\natural}_g &= \rho_{(x,\, g,\, y)} \circ (\beta_M)_{(x,\, e_{u(x)})} \\
    (\mu_M)_y \circ (u^* \rho^{\natural})_{(x,\, g,\, y)} &= \rho_{(x,\, g,\, y)} \circ (\mu_M)_x.
\end{align*}
Hence, $\mu_M \colon (M^{\natural}, \rho^{\natural})_T \xrightarrow{\sim} (M, \rho)$ is an isomorphism in $\ASch(G_T)$, and $(-)_T$ is essentially surjective.

\subsubsection*{Step 5}
We show that $(-)_T$ is full. Given $(M, \rho), (N, \sigma) \in \ASch(G)$ and a morphism $\eta \colon (M, \rho)_T \to (N, \sigma)_T$ in $\ASch(G_T)$, it suffices to find a morphism $\psi$ in $\ASch(G)$ such that $\eta = \psi_T$. We first construct a canonical isomorphism
\[ \nu_M \colon (M, \rho)_T^{\natural} \xrightarrow{\sim} (M, \rho) \]
in $\ASch(G)$. Define
\[ \lambda_M \colon a^* u^* M \xrightarrow{\sim} b^* M \]
pointwise by
\[ (\lambda_M)_{(x,\, f)} \coloneqq \rho_f \colon M_{u(x)} \xrightarrow{\sim} M_{b(x,f)}. \]
Then $\lambda_M$ is an isomorphism in $\ASch(G_P)$. Indeed, for every arrow $p \coloneqq \bigl((x, f), g, (y, h)\bigr)$ of $G_P$, we have
\begin{align*}
    (\lambda_M)_{(y,h)} \circ (a^* u^* \rho)_p &= (b^* \rho)_p \circ (\lambda_M)_{(x,f)} \\
    \rho_h \circ \rho_{h^{-1} \circ g \circ f} &= \rho_g \circ \rho_f.
\end{align*}
The compatibility condition with the descent data over $P \times_S P$ is
\[ \pr_1^*(\lambda_M) \circ \alpha_{u^* M} = \pr_0^*(\lambda_M). \]
At a point $(p_0, p_1)$ of $P \times_S P$ with $p_i = (x_i, f_i)$, this condition becomes
\[ \rho_{f_1} \circ \rho_{f_1^{-1} \circ f_0} = \rho_{f_0}. \]
Hence, fpqc descent yields the isomorphism $\nu_M$ in $\ASch(G)$. By construction, $(\nu_M)_T = \mu_{M_T} = d^*\beta_{M_T}$.

Now define
\[ \psi \coloneqq \nu_N \circ \eta^{\natural} \circ \nu_M^{-1} \colon (M, \rho) \to (N, \sigma). \]
Pulling back \eqref{eq:descent_morphism_compatibility} along $d$ gives
\begin{align*}
    \beta_{N_T} \circ b^* \eta^\natural &= a^* \eta \circ \beta_{M_T} \\
    \mu_{N_T} \circ (\eta^\natural)_T &= \eta \circ \mu_{M_T}.
\end{align*}
It follows that
\[ \psi_T = (\nu_N)_T \circ (\eta^\natural)_T \circ (\nu_M)_T^{-1} = \mu_{N_T} \circ (\eta^\natural)_T \circ \mu_{M_T}^{-1} = \eta, \]
so $(-)_T$ is full.

\subsubsection*{Step 6}
We show that $(-)_T$ is faithful. Given morphisms $\eta, \psi \colon (M, \rho) \to (N, \sigma)$ in $\ASch(G)$ with $\eta_T = \psi_T$, it suffices to prove that $\eta = \psi$. For a point $p \coloneqq (x, f)$ of $P$, we have $f \colon u(x) \to b(p)$ in $G$. The equivariance of $\eta$ and $\psi$, together with $\eta_T = \psi_T$, gives
\[ \eta_{b(p)} \circ \rho_f = \sigma_f \circ \eta_{u(x)} = \sigma_f \circ \psi_{u(x)} = \psi_{b(p)} \circ \rho_f. \]
Since $\rho_f$ is an isomorphism, we obtain $\eta_{b(p)} = \psi_{b(p)}$, so $b^* \eta = b^* \psi$. As $b$ is an fpqc cover, $\eta = \psi$, proving that $(-)_T$ is faithful.

We have thus shown that
\[ (-)_T : \ASch(G) \rightleftarrows \ASch(G_T) : (-)^\natural \]
is an equivalence of categories.

\subsection{Decomposition of Group Modules}\label{sec:decomposition_group}

For the remainder of this section, we assume that $k$ is algebraically closed of characteristic $0$. In this subsection, let $G$ be an affine group scheme over $k$. We will show that the functor
\begin{align*}
    \MMod(G) \times \UMod(G) &\to \AMod(G) \\
    (M_\mt, M_\un) &\mapsto M_\mt \times M_\un
\end{align*}
is an equivalence of categories. Thus, every affine $G$-module decomposes into a multiplicative-type part and a pro-unipotent part.

Let $\widehat{G} \coloneqq \varprojlim G/N$ denote the profinite completion of $G$, where the limit ranges over all finite quotient group schemes $G/N$ of $G$. Since $k$ is algebraically closed of characteristic $0$, these group schemes are constant, and we regard $\widehat{G}$ as the corresponding profinite group. We will also establish the corresponding equivalence
\[ \Mod(\widehat{G}) \times \Rep(G) \xrightarrow{\sim} \FMod(G). \]

In the present setting, the notion of affine $G$-module from \Cref{def:amod} can be described equivalently as follows. It is a pair $(M, \alpha)$, where $M$ is a commutative affine group scheme over $k$ and
\[ \alpha \colon G \times M \to M \]
is a morphism that makes $M(R)$ into a left $G(R)$-module for every $k$-algebra $R$.

For a $k$-algebra $A$, we write $g \cdot m \coloneqq \alpha(g, m)$ for $g \in G(A)$ and $m \in M(A)$. Under the description in \Cref{def:asch}, the corresponding groupoid action $\rho \colon G \times M \to G \times M$ is given by $(g, m) \mapsto (g, g \cdot m)$. Given an abelian group $D$ and a $k$-algebra $R$, we denote by $R(D)$ the group ring of $D$ over $R$.

\begin{definition}\label{def:vdiag}
    Let $D$ be an abelian group and $V$ a $k$-vector space. We write
    \[ \diag D \coloneqq \Spec(k(D)) \quad\text{and}\quad \vect V \coloneqq \Spec(\Sym V) \]
    for the \emph{diagonalizable group scheme} associated to $D$ and the \emph{vector group scheme} associated to $V$, respectively.
\end{definition}

Since $\charac k = 0$, every commutative affine group scheme $M$ over $k$ admits a canonical decomposition
\[ M \cong M_\mt \times M_\un, \]
where $M_\mt$ is of multiplicative type and $M_\un$ is pro-unipotent \cite[Ch.~IV, \S3, 1.1]{DemazureGabriel}. Since $k$ is algebraically closed, $M_\mt$ and $M_\un$ are respectively of the form
\[ M_\mt \cong \diag D \quad\text{and}\quad M_\un \cong \vect V \]
for some abelian group $D$ \cite[Ch.~IV, \S1, 3.2]{DemazureGabriel} and $k$-vector space $V$ \cite[Ch.~IV, \S2, 4.2]{DemazureGabriel}.

We show that any $G$-module structure on $M$ restricts to $G$-module structures on $M_\mt$ and $M_\un$. Under the identifications above, these structures correspond to $G^\op$-actions on $D$ and $V$, respectively. The action on $D$ will be shown below to correspond to a continuous $\widehat{G}^\op$-action. For a $k$-algebra $R$, let $\AC(R)$ denote the category of commutative affine group schemes over $R$.

\begin{lemma}\label{lem:orth_un_mt}
    With the notation above,
    \[ \Hom_{\AC(R)}((M_\mt)_R, (M_\un)_R) = 0. \]
\end{lemma}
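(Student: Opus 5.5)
We reduce to explicit Hopf-algebra computations using the identifications $M_\mt \cong \diag D$ and $M_\un \cong \vect V$ recalled above. Base change commutes with both constructions: $(\diag D)_R = \Spec R(D)$ and $(\vect V)_R = \Spec(\Sym V \otimes_k R)$. A homomorphism $(M_\mt)_R \to (M_\un)_R$ of commutative affine group schemes over $R$ is therefore the same as a Hopf $R$-algebra map
\[ \phi \colon \Sym_R(V_R) \to R(D). \]
Since $\Sym_R(V_R)$ is generated as an $R$-algebra by the primitive elements $v \in V$, the map $\phi$ is determined by the images $\phi(v)$, and each $\phi(v)$ must be a primitive element of the Hopf algebra $R(D)$. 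So it suffices to show that $R(D)$ has no nonzero primitive elements.

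To prove this, write a primitive element as $p = \sum_{d} r_d\, d$, a finite sum with $r_d \in R$. The comultiplication is $\Delta(d) = d \otimes d$. The condition $\Delta p = p \otimes 1 + 1 \otimes p$ becomes
\[ \sum_d r_d\, d \otimes d = \sum_d r_d\, (d \otimes 1 + 1 \otimes d) \]
in $R(D) \otimes_R R(D) = R(D \times D)$, which is free over $R$ with basis $\{(d, d')\}$. Compare coefficients of the basis element $(d, d)$ for each $d \neq 0$. The left side gives $r_d$. On the right side, $(d, d)$ can only appear as $(d, 0)$ or $(0, d)$, which is impossible since $d \neq 0$, so its coefficient is $0$. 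Hence $r_d = 0$ for all $d \neq 0$. Comparing coefficients of $(0, 0)$ then gives $r_0 = 2r_0$, so $r_0 = 0$. This holds over any ring $R$, so $p = 0$ and $\phi$ kills every $v$.

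Since $\phi$ is a Hopf algebra map, it also respects the counit, so $\phi$ must equal the counit composed with the unit. This is exactly the zero homomorphism of group schemes, so the Hom group vanishes.

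The main point requiring care is the first reduction: we need that a Hopf algebra map out of $\Sym_R(V_R)$ is determined by where it sends the generators in $V$, and that these generators must go to primitives. Both hold because $V$ consists of primitive generators and $\phi$ is an algebra map compatible with $\Delta$. If $V$ is infinite-dimensional the same argument applies, since it uses only the individual generators $v \in V$. Apart from this, the argument is routine coefficient comparison and does not depend on $\charac k = 0$ beyond the decomposition already recalled.
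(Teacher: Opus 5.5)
Your proof is correct and takes essentially the same approach as the paper. You translate a homomorphism into a Hopf algebra map $\Sym V_R \to R(D)$, note that each $v$ must go to a primitive element, and compare coefficients in $R(D \times D)$ to get $r_d = 0$ for $d \neq 0$ and $r_0 = 2r_0$; your explicit remark that compatibility with the counit then forces the map to be trivial is a small step the paper leaves implicit.
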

\begin{proof}
    A homomorphism $(M_\mt)_R \to (M_\un)_R$ corresponds contravariantly to a morphism
    \[ f \colon \Sym V_R \to R(D) \]
    of Hopf algebras. The comultiplication of $v \in V_R$ is $\mu(v) = v \otimes 1 + 1 \otimes v$. Write
    \[ f(v) = \sum_{d \in D} a_d \cdot d \]
    with $a_d \in R$. Since $\mu(d) = d \otimes d$ in $R(D)$ and $f$ preserves comultiplication, we obtain
    \[ \sum_{d \in D} a_d \cdot (d \otimes d) = \sum_{d \in D} a_d \cdot (d \otimes 1 + 1 \otimes d). \]
    Comparing coefficients gives $a_d = 0$ for $d \neq 0_D$ and $a_{0_D} = 2a_{0_D}$, hence $a_{0_D} = 0$. Thus, $f(v) = 0$ for every $v \in V_R$, so $f$ is the trivial morphism.
\end{proof}

\begin{lemma}\label{lem:orth_mt_un}
    Let $R$ be a reduced $k$-algebra. With the notation above,
    \[ \Hom_{\AC(R)}((M_\un)_R, (M_\mt)_R) = 0. \]
\end{lemma}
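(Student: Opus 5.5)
Dually to the proof of \Cref{lem:orth_un_mt}, a homomorphism $(M_\un)_R \to (M_\mt)_R$ corresponds contravariantly to a morphism of Hopf algebras
\[ f \colon R(D) \to \Sym V_R \]
over $R$. I would determine $f$ on group-like elements. Each $d \in D$ is group-like in $R(D)$, so $f(d)$ satisfies $\mu(f(d)) = f(d) \otimes f(d)$, $\varepsilon(f(d)) = 1$, and $f(d)$ is a unit with inverse $f(-d)$. The claim then reduces to showing that the only group-like element of $\Sym V_R$ is $1$ when $R$ is reduced. Once this holds, $f(d) = 1$ for all $d$, which is the counit composed with the unit, i.e.\ the trivial homomorphism.

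To prove that the group-like element is trivial, I would first use that units of a polynomial ring over a reduced ring are constant. Since $\Sym V_R$ is a polynomial ring $R[x_\lambda]_{\lambda \in \Lambda}$ on a basis of $V$, and $f(d)$ involves only finitely many variables, I may work in $R[x_1, \dots, x_n]$. A unit of a polynomial ring over a reduced commutative ring lies in $R^\times$: its non-constant coefficients are nilpotent, and reduced means they vanish. Hence $f(d) = c \in R^\times$ is constant. The counit condition $\varepsilon(c) = c = 1$ then forces $f(d) = 1$.

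As an alternative check that avoids units, I could argue directly with the coproduct. Write $f(d) = \sum_\beta c_\beta x^\beta$. Then $\mu(x^\beta) = \sum_{\gamma \le \beta} \binom{\beta}{\gamma} x^\gamma \otimes x^{\beta-\gamma}$. Comparing this with $f(d) \otimes f(d)$ in the bidegree of the top-degree terms gives $c_\beta c_{\beta'}$ relations showing that top-degree coefficients square to zero in positive degree. In characteristic $0$ these coefficients are then nilpotent, hence zero because $R$ is reduced. I expect the unit route to be cleaner, so that is the one I would write.

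The only real obstacle is the reducedness input, namely that units of $R[x_1, \dots, x_n]$ are constants modulo nilpotents. This is standard, by reduction modulo each prime to an integral domain, where units of a polynomial ring are constants. The hypothesis is genuinely needed: over $R = k[\epsilon]/(\epsilon^2)$, the element $1 + \epsilon x$ is a nontrivial group-like element, giving a nonzero map $\mathbb{G}_{a,R} \to \mathbb{G}_{m,R}$.
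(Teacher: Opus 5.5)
Your proposal is correct and matches the paper's proof: dualize to a Hopf algebra map $f \colon R(D) \to \Sym V_R$, note that each $d$ maps to a unit, use reducedness of $R$ to get $(\Sym V_R)^\times = R^\times$, and conclude $f(d) = \varepsilon(f(d)) = \varepsilon(d) = 1$. The coproduct-based alternative you sketch is not needed, and the unit route you chose is exactly the paper's argument.
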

\begin{proof}
    A homomorphism $(M_\un)_R \to (M_\mt)_R$ corresponds contravariantly to a morphism
    \[ g \colon R(D) \to \Sym V_R \]
    of Hopf algebras. Every $d \in D$ is a unit in $R(D)$, so $g(d)$ is a unit in $\Sym V_R$. Since $R$ is reduced, $(\Sym V_R)^\times = R^\times$; hence $g(d) \in R^\times$. Writing $\varepsilon$ for the counits gives
    \[ g(d) = \varepsilon(g(d)) = \varepsilon(d) = 1. \]
    Thus, $g$ is the trivial morphism.
\end{proof}

Since $\charac k = 0$, every affine group scheme over $k$ is reduced \cite[Proposition~4.2.8]{Perrin}. Let $\RAlg$ denote the category of reduced $k$-algebras, and let
\begin{align*}
    \underline{\Aut}(M) \colon \RAlg &\to \mathbf{Grp} \\
    R &\mapsto \Aut_R(M_R).
\end{align*}
A $G$-module structure on $M$ is then equivalent to a homomorphism
\[ G \to \underline{\Aut}(M) \]
of group functors on $\RAlg$.

\begin{theorem}\label{thm:decomposition_group}
    The functor
    \begin{align*}
        \MMod(G) \times \UMod(G) &\to \AMod(G) \\
        (M_\mt, M_\un) &\mapsto M_\mt \times M_\un
    \end{align*}
    is an equivalence of categories.
\end{theorem}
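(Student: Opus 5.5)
We prove full faithfulness and essential surjectivity separately, reducing both to the canonical decomposition $M \cong M_\mt \times M_\un$ of commutative affine group schemes in characteristic $0$ and to the orthogonality statements \Cref{lem:orth_un_mt,lem:orth_mt_un}.

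\textbf{Full faithfulness.} Take $(A_\mt, A_\un)$ and $(B_\mt, B_\un)$ in $\MMod(G) \times \UMod(G)$. A morphism of group schemes
\[ A_\mt \times A_\un \to B_\mt \times B_\un \]
is a $2\times 2$ matrix of homomorphisms. Its off-diagonal entries $A_\mt \to B_\un$ and $A_\un \to B_\mt$ vanish by \Cref{lem:orth_un_mt} and \Cref{lem:orth_mt_un} with $R = k$. Hence
\[ \Hom_{\AC}(A_\mt \times A_\un, B_\mt \times B_\un) = \Hom_{\AC}(A_\mt, B_\mt) \times \Hom_{\AC}(A_\un, B_\un). \]
A diagonal morphism is $G$-equivariant if and only if each diagonal entry is, since the action on the product is componentwise. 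This gives full faithfulness.

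\textbf{Essential surjectivity.} Let $M \in \AMod(G)$ with action given by a homomorphism $G \to \underline{\Aut}(M)$ of group functors on $\RAlg$. For a reduced $k$-algebra $R$, the decomposition is compatible with base change, so $M_R \cong (M_\mt)_R \times (M_\un)_R$. Since $R$ is reduced, both \Cref{lem:orth_un_mt} and \Cref{lem:orth_mt_un} apply over $R$. Therefore every automorphism of $M_R$ is diagonal, and we obtain
\[ \underline{\Aut}(M) = \underline{\Aut}(M_\mt) \times \underline{\Aut}(M_\un) \]
as group functors on $\RAlg$. The action homomorphism thus splits into homomorphisms $G \to \underline{\Aut}(M_\mt)$ and $G \to \underline{\Aut}(M_\un)$. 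Because $G$ is reduced, these are equivalent to actions of $G$ on $M_\mt$ and $M_\un$ as schemes, so they make $M_\mt$ and $M_\un$ into affine $G$-modules. The identification $M \cong M_\mt \times M_\un$ is then $G$-equivariant by construction. Multiplicative type and pro-unipotence are properties of the underlying group scheme, so $M_\mt \in \MMod(G)$ and $M_\un \in \UMod(G)$.

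\textbf{Main obstacle.} The delicate step is the passage between actions and homomorphisms into $\underline{\Aut}$ on reduced algebras. An action $G \times M \to M$ is determined by its values on $R$-points for all $k$-algebras $R$, whereas the preceding discussion restricts to reduced $R$. We must check that the restricted actions $G \times M_\mt \to M_\mt$ and $G \times M_\un \to M_\un$ are genuine morphisms of schemes. For this we take $R$ to be the coordinate ring of $G$ itself, which is reduced, and the universal point: the action on $M_\mt$ is the composite
\[ G \times M_\mt \hookrightarrow G \times M \to M \to M_\mt. \]
This composite is a morphism of schemes. It agrees with the split action on reduced points, so it is the desired action, and the decomposition of the action is compatible with it.
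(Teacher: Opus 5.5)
Your proposal is correct and follows essentially the same route as the paper. Full faithfulness comes from \Cref{lem:orth_un_mt,lem:orth_mt_un} at $R = k$, and essential surjectivity comes from splitting $\underline{\Aut}(M)$ over reduced $k$-algebras and using that $G$ is reduced. Your extra paragraph on passing from reduced points to scheme morphisms, via the universal point of the reduced coordinate ring of $G$, spells out the identification of $G$-module structures with homomorphisms of group functors on $\RAlg$ that the paper asserts just before the theorem.
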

\begin{proof}
    By \Cref{lem:orth_un_mt,lem:orth_mt_un} with $R = k$, the only morphisms between groups of multiplicative type and pro-unipotent groups in either direction are the trivial morphisms. Thus, the functor is fully faithful. For essential surjectivity, let $M \in \AMod(G)$ and write its underlying group scheme as
    \[ M \cong M_\mt \times M_\un. \]
    Applying \Cref{lem:orth_un_mt,lem:orth_mt_un} over every reduced $k$-algebra gives an isomorphism
    \[ \underline{\Aut}(M) \cong \underline{\Aut}(M_\mt) \times \underline{\Aut}(M_\un) \]
    of group functors on $\RAlg$. Under this isomorphism, the homomorphism $G \to \underline{\Aut}(M)$ corresponds to $G$-actions on $M_\mt$ and $M_\un$. Thus, $M$ lies in the essential image.
\end{proof}

Given a set (resp.\ group) $A$, denote by $\underline{A} \coloneqq \coprod_A \Spec k$ the associated constant $k$-scheme (resp.\ $k$-group scheme).

\begin{lemma}\label{lem:mod_mmod}
    The assignment $D \mapsto \diag D$ defines an equivalence
    \[ \Mod(\widehat{G}) \xrightarrow{\sim} \MMod(G^\op)^\op. \]
\end{lemma}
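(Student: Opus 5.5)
Since $k$ is algebraically closed, every group of multiplicative type over $k$ has the form $\diag D$, and the functor $D \mapsto \diag D = \Spec k(D)$ gives an anti-equivalence between abelian groups and commutative group schemes of multiplicative type, with quasi-inverse the character group $\Hom_{\AC}(-, \mathbb{G}_m)$. The work is therefore to match group actions on both sides. An object of $\MMod(G^\op)$ is a diagonalizable $M = \diag D$ together with a right action of $G$, functorial in reduced $R$, that is, a homomorphism of group functors $G^\op \to \underline{\Aut}(\diag D)$ on $\RAlg$. I will identify this with a continuous left $\widehat{G}$-action on $D$.

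\textbf{Step 1: the automorphism functor is locally constant.} First I show that for a reduced $k$-algebra $R$ with connected spectrum, $\Aut_R((\diag D)_R) \cong \Aut(D)$. A Hopf-algebra automorphism of $R(D)$ must send group-like elements to group-like elements. Over a connected reduced $R$ the group-like elements of $R(D)$ are exactly the elements of $D$: write the element as $\sum a_d d$; the equation $\mu(x)=x\otimes x$ gives $a_d a_{d'}=\delta_{dd'}a_d$, and the counit gives $\sum a_d = 1$. So the $a_d$ are orthogonal idempotents summing to $1$, and connectedness forces exactly one of them to equal $1$. For a general reduced $R$, $\underline{\Aut}(\diag D)(R)$ is then the group of locally constant maps $\Spec R \to \Aut(D)$. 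In other words, $\underline{\Aut}(\diag D)$ is the constant ind-scheme / sheaf $\underline{\Aut(D)}$ restricted to $\RAlg$.

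\textbf{Step 2: homomorphisms from $G$ to a discrete group.} A homomorphism of group functors $G^\op \to \underline{\Aut(D)}$ is the same as a family of local actions. Pick $d \in D$. The orbit map $G \to \underline{D}$, $g \mapsto d\cdot g$, is a morphism from the affine scheme $G$ to a constant scheme, so it factors through the finitely many connected components of $G$ that it hits; equivalently it is given by idempotents in $\mathcal{O}(G)$. The stabilizer $G_d$ is then an open and closed subgroup of finite index, hence contains $G^\circ$ and is the preimage of a subgroup of a finite quotient $G/N$. Since $D$ is the union of such $d$, the action factors through $\pi_0(G)=\varprojlim G/N = \widehat{G}$ and has open stabilizers, so $D$ becomes a discrete $\widehat{G}^\op$-module, equivalently a discrete left $\widehat{G}$-module via inverses. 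Conversely, a discrete $\widehat{G}$-module $D$ gives, for each finite $\widehat{G}$-stable subset, an action of a finite constant quotient $G/N$. These assemble, via $G \to G/N$ and Step 1, into a homomorphism $G^\op \to \underline{\Aut}(\diag D)$.

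\textbf{Step 3: morphisms and the variance bookkeeping.} Morphisms of $\MMod(G^\op)$ are equivariant homomorphisms $\diag D' \to \diag D$. These correspond contravariantly to homomorphisms $D \to D'$, and equivariance translates directly into $\widehat{G}$-linearity. This gives the stated equivalence $\Mod(\widehat{G}) \simeq \MMod(G^\op)^\op$.

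The main obstacle is Step 2: making rigorous that a functorial action on reduced points of $G$ yields continuity, meaning open stabilizers. I would argue this by applying the action to the universal point $\id_G \in G(\mathcal{O}(G))$, using Step 1 to decompose $\Spec\mathcal{O}(G)$ into clopen pieces, and noting that $G$ is reduced in characteristic $0$, so the restriction to $\RAlg$ loses no information.
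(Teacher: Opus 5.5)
This is essentially the paper's argument. The paper quotes \cite[Exp.~VIII, Cor.~1.4]{SGA3} for the anti-isomorphism $\underline{\Aut}(\diag D)^\op \cong \underline{\Aut}(\underline{D})$, which your group-like computation reproves by hand. Then, as in your Step~2, it uses quasi-compactness of $G$ to see that each orbit $G\cdot d$ is finite, so the action on it factors through a finite quotient of $G$ and hence through $\widehat{G}$, and it glues over $d \in D$. (Since $\phi \mapsto \phi^*|_D$ reverses composition, a right $G$-action on $\diag D$ is directly a left action on $D$, so no detour through inverses is needed.)

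One statement must be corrected: the last two sentences of Step~1 are false when $D$ is not finitely generated. For arbitrary $R$, your idempotent computation shows that the group-like elements of $R(D)$ are exactly $\underline{D}(R)$, the locally constant maps $\Spec R \to D$. Hence $\underline{\Aut}(\diag D)(R)$ is anti-isomorphic to the automorphism group of the constant group scheme $\underline{D}_R$. When $\Spec R$ has infinitely many connected components, such automorphisms need not come from locally constant maps $\Spec R \to \Aut(D)$ with $\Aut(D)$ discrete.

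For example, take $D = \bigoplus_{n \ge 1} (\mathbb{Z}/2)e_n$ and let $R$ be the ring of locally constant $k$-valued functions on $\mathbb{N} \cup \{\infty\}$. The automorphism acting at the point $n$ by $e_n \mapsto e_n + e_{n+1}$ (other $e_m$ fixed) and trivially at $\infty$ takes infinitely many values on the compact space $\Spec R$.

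This bites exactly at the universal point in your last paragraph. If $\pi_0(G)$ is infinite, the false description applied to $\id_G \in G(\mathcal{O}(G))$ would force every action to factor through a single finite quotient of $G$. That fails for any discrete $\widehat{G}$-module on which $\widehat{G}$ acts with infinite image.

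The remedy is to work one $d$ at a time, as your Step~2 in fact does. The universal point sends $d$ to a single group-like element of the group ring of $D$ over $\mathcal{O}(G)$, that is, to a finite clopen partition of $G$. In the converse direction, define the automorphism over $R$ by sending $d$ to the group-like element given by $\Spec R \to G \to G/N_d \to \underline{D}$, rather than via a map $\Spec R \to \Aut(D)$.
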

\begin{proof}
    By \cite[Expos\'e~VIII, Corollaire~1.4]{SGA3}, we have an isomorphism
    \[ \underline{\Aut}(\diag D)^\op \cong \underline{\Aut}(\underline{D}) \]
    of group functors. Therefore, a $G^\op$-module structure on $\diag D$ corresponds to a $G$-module structure $\rho \colon G \times \underline{D} \to \underline{D}$.

    For $d \in D$, let $\rho_d \colon G \times \underline{\{d\}} \to \underline{D}$ denote the restriction of $\rho$. Since $G$ is quasi-compact, the image of $\rho_d$ is $\underline{G \cdot d}$, where $G \cdot d \subset D$ is finite. Then the induced action on $\underline{G \cdot d}$ gives a homomorphism
    \[ G \to \underline{\Sym(G \cdot d)}, \]
    where $\Sym(G \cdot d)$ denotes the permutation group of $G \cdot d$.
    Since $\Sym(G \cdot d)$ is finite, this induces
    \[ \widehat{G} \to \Sym(G \cdot d). \]
    Gluing the resulting maps $\widehat\rho_d \colon \widehat{G} \times \{d\} \to G \cdot d$ yields a $\widehat{G}$-module structure $\widehat\rho \colon \widehat{G} \times D \to D$. The two constructions are easily seen to be mutually inverse.
\end{proof}

\begin{lemma}\label{lem:rep_umod}
    The assignment $V \mapsto \vect V$ defines an equivalence
    \[ \Rep(G) \xrightarrow{\sim} \UMod(G^\op)^\op. \]
\end{lemma}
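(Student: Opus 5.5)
The plan is to mirror \Cref{lem:mod_mmod}, with the role of the automorphism functor of a diagonalizable group played by that of a vector group. First I would check that the assignment makes sense on objects and morphisms. For $V \in \Rep(G)$, the group scheme $\vect V = \Spec(\Sym V)$ is pro-unipotent, since it is a filtered limit of the vector groups $\vect W$ over finite-dimensional quotients $W$ of $V$. A $k$-linear map $V \to V'$ induces a Hopf algebra map $\Sym V \to \Sym V'$, hence a homomorphism $\vect V' \to \vect V$, so the assignment is contravariant. Conversely, by \cite[Ch.~IV, \S2, 4.2]{DemazureGabriel}, every pro-unipotent commutative affine group over $k$ has the form $\vect V$, and in characteristic $0$ we have $\Hom_{\AC}(\vect V', \vect V) \cong \Hom_k(V, V')$. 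This is because Hopf maps $\Sym V \to \Sym V'$ are determined by their values on the primitive elements, and the primitives of $\Sym V$ are exactly $V$ when $\charac k = 0$. Thus, ignoring the group actions, $V \mapsto \vect V$ is an anti-equivalence from $\Vect(k)$ to the pro-unipotent objects of $\AC$.

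Next I would match the actions. By the discussion before \Cref{thm:decomposition_group}, a $G^\op$-module structure on $\vect V$ is a homomorphism of group functors $G^\op \to \underline{\Aut}(\vect V)$ on $\RAlg$. The key identification is
\[ \underline{\Aut}(\vect V)(R) = \Aut_R(\vect V_R) \cong \Aut_{R\text{-lin}}(V_R)^\op, \]
naturally in $R$, where $\Aut_{R\text{-lin}}(V_R)$ denotes the group of $R$-linear automorphisms of $V_R = V \otimes_k R$. To prove this, I would apply the primitive-element argument over $R$: a Hopf algebra map $\Sym_R V_R \to \Sym_R V_R$ must send the primitives, namely $V_R$ (valid since $\mathbb{Q} \subseteq R$), to primitives. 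It is therefore given by an $R$-linear endomorphism of $V_R$, and passing to $\Spec$ reverses composition. Hence a $G^\op$-action on $\vect V$ is the same as a homomorphism $G \to \underline{GL}(V)$ of group functors on $\RAlg$, functorial in $R$. Since $G$ is reduced and is determined by its values on reduced algebras, this is the same as a comodule structure on $V$, that is, an object of $\Rep(G)$. Morphisms correspond because equivariance of $\vect V' \to \vect V$ translates, through the same identification, into $G$-linearity of $V \to V'$.

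The main obstacle is the subtlety with infinite-dimensional $V$. Here $\underline{GL}(V)$ is not representable, and a compatible family of $R$-linear actions on $V \otimes R$ needs to be matched with a comodule map $V \to V \otimes \mathcal{O}(G)$. I would handle this by evaluating at $R = \mathcal{O}(G)$ on the universal point $\id_G$; this is legitimate because $\mathcal{O}(G)$ is reduced. That evaluation yields the coaction, and the group-law axioms give coassociativity and counitality. Alternatively, I would reduce to finite-dimensional subcomodules via \cite[Corollaire~3.9]{Deligne}, together with the corresponding filtered-limit description of $\vect V$. Checking primitives of $\Sym_R V_R$ over a general reduced $R$ of characteristic $0$ is routine: use the grading and the fact that $\mu$ restricted to degree $n \ge 2$ is not primitive-compatible.
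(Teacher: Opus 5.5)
Your proposal is correct and follows the paper's route. The paper's entire proof is the identification $\underline{\Aut}(\vect V)^{\op} \cong \mathbf{GL}(V)$, from which $G^{\op}$-module structures on $\vect V$ correspond to $G$-representations on $V$; your primitive-element argument, the treatment of morphisms via $\Hom(\vect V', \vect V) \cong \Hom_k(V, V')$, and the passage from functorial actions to comodules fill in details the paper leaves implicit. One small slip: since $\Sym V = \varinjlim \Sym W$, the group $\vect V$ is the limit of $\vect W$ over finite-dimensional \emph{subspaces} $W \subseteq V$, not quotients.
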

\begin{proof}
    We have an isomorphism
    \[ \underline{\Aut}(\vect V)^\op \cong \mathbf{GL}(V) \]
    of group functors, where $\mathbf{GL}(V)$ denotes the functor $R \mapsto \Aut_R(V_R)$. Hence, $G^\op$-module structures on $\vect V$ correspond to $G$-representation structures on $V$, proving the result.
\end{proof}

\Cref{lem:mod_mmod,lem:rep_umod} prove the following theorem.

\begin{theorem}\label{thm:decomposition_group_dual}
    The natural functor
    \begin{align*}
        \Mod(\widehat{G}) \times \Rep(G) &\to \FMod(G) \\
        (D, V) &\mapsto (\diag D)^\dual \times (\vect V)^\dual
    \end{align*}
    is an equivalence of categories.
\end{theorem}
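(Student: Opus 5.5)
The plan is to obtain \Cref{thm:decomposition_group_dual} by applying the duality $(-)^{\op}$ to \Cref{thm:decomposition_group} for the opposite group $G^\op$, and then inserting the identifications of \Cref{lem:mod_mmod,lem:rep_umod}. By \Cref{def:fmod}, $\FMod(G) = \AMod(G^\op)^\op$. Since $G^\op$ is again an affine group scheme over $k$, \Cref{thm:decomposition_group} applies to it and gives an equivalence
\[ \MMod(G^\op) \times \UMod(G^\op) \xrightarrow{\sim} \AMod(G^\op), \quad (M_\mt, M_\un) \mapsto M_\mt \times M_\un. \]
Passing to opposite categories, and using that the opposite of a product of categories is the product of the opposites, we get an equivalence
\[ \MMod(G^\op)^\op \times \UMod(G^\op)^\op \xrightarrow{\sim} \FMod(G). \]

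Next, compose on the left with the product of the equivalences $\Mod(\widehat{G}) \xrightarrow{\sim} \MMod(G^\op)^\op$, $D \mapsto \diag D$, from \Cref{lem:mod_mmod}, and $\Rep(G) \xrightarrow{\sim} \UMod(G^\op)^\op$, $V \mapsto \vect V$, from \Cref{lem:rep_umod}. The composite is an equivalence
\[ \Mod(\widehat{G}) \times \Rep(G) \xrightarrow{\sim} \FMod(G), \qquad (D, V) \mapsto \diag D \times \vect V, \]
where the product is taken in $\AMod(G^\op)$ and then regarded as an object of $\FMod(G)$.

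It remains to match this with the functor in the statement, namely $(D,V) \mapsto (\diag D)^\dual \times (\vect V)^\dual$. By \Cref{def:fmod}, $(\diag D)^\dual$ is simply $\diag D$ viewed as an object of $\FMod(G)$, and similarly for $(\vect V)^\dual$. A product in $\AMod(G^\op)$ is a biproduct, since $\AMod(G^\op)$ is additive as the category of abelian group objects. A biproduct in a category remains a biproduct in its opposite, so the product in $\FMod(G)$ of the duals is the dual of $\diag D \times \vect V$. The two functors therefore agree up to canonical isomorphism.

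The only point requiring care is this last identification: checking that the product in $\FMod(G)$ corresponds to the product, rather than merely a coproduct, in $\AMod(G^\op)$. The biproduct property of $\AMod(G^\op)$ settles this, so I expect no serious obstacle. The argument is essentially a formal assembly of \Cref{thm:decomposition_group}, \Cref{lem:mod_mmod} and \Cref{lem:rep_umod}.
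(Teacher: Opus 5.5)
Your proposal is correct and follows the paper's approach, which uses $\FMod(G) = \AMod(G^\op)^\op$ to apply \Cref{thm:decomposition_group} to $G^\op$ and then composes with the equivalences of \Cref{lem:mod_mmod,lem:rep_umod}. The paper leaves implicit your extra check that finite products in $\AMod(G^\op)$ are biproducts, so that the product in $\FMod(G)$ agrees with the one in $\AMod(G^\op)$; making it explicit is a harmless refinement.
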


\subsection{Decomposition of Groupoid Modules}\label{sec:decomposition_groupoid}

Let $G$ be a transitive affine groupoid scheme acting on a scheme $S$ of finite type over $k$, although the transitivity assumption will be dropped later. Since $k$ is algebraically closed, we may fix a $k$-rational point $x \in S(k)$. The goal of this subsection is to obtain a natural equivalence
\[ \FMod(G) \simeq \Mod(\widehat{G}) \times \Rep(G) \]
as in \Cref{thm:decomposition_group_dual}. Here, naturality means independence of the choice of $x$. For example,
\begin{align*}
    \MMod(G) \times \UMod(G) &\to \AMod(G) \\
    (M_\mt, M_\un) &\mapsto M_\mt \times M_\un
\end{align*}
is a natural functor, since it is defined without reference to $x$.
The chosen point $x$, however, allows us to apply \Cref{thm:decomposition_group,thm:base_change} to show that this functor is an equivalence of categories. It now suffices to define the profinite completion $\widehat{G}$ of $G$ and, by analogy with \Cref{lem:mod_mmod,lem:rep_umod}, establish natural equivalences
\[ \MMod(G) \simeq \Mod(\widehat{G}^\op)^\op \quad\text{and}\quad \UMod(G) \simeq \Rep(G^\op)^\op. \]

Throughout this subsection, set $P \coloneqq s^{-1}(x) \subseteq G$, and let
\[ a \coloneqq s|_P \colon P \to \{x\} \quad\text{and}\quad b \coloneqq t|_P \colon P \to S. \]
Then $b$ is fpqc, since it is obtained from $(s, t)$ by base change. The following lemma is an analogue of the fact that $\lambda_M$ in \Cref{sec:base_change} is an isomorphism.

\begin{lemma}\label{lem:trivial_pullback}
    Let $\mathcal{F}$ be an fpqc sheaf on $S$ equipped with an isomorphism
    \[ \rho \colon s^* \mathcal{F} \xrightarrow{\sim} t^* \mathcal{F}. \]
    Then $b^* \mathcal{F}$ is constant over $P$. More precisely,
    \[ b^* \mathcal{F} \cong a^* \mathcal{F}_x. \]
\end{lemma}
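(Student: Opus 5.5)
The plan is to pull $\rho$ back along the inclusion $\iota \colon P = s^{-1}(x) \hookrightarrow G$ and read off the desired isomorphism directly. The key identities are $s \circ \iota = x \circ a$ and $t \circ \iota = b$, where $x$ also denotes the morphism $\Spec k \to S$ and $a \colon P \to \Spec k = \{x\}$ is the structure map. From these we get canonical identifications
\[ \iota^* s^* \mathcal{F} \cong a^* x^* \mathcal{F} = a^* \mathcal{F}_x \quad\text{and}\quad \iota^* t^* \mathcal{F} \cong b^* \mathcal{F}. \]
Here pullback of fpqc sheaves is pseudo-functorial, so these identifications are the canonical ones. Applying $\iota^*$ to $\rho$ therefore gives an isomorphism
\[ \lambda \coloneqq \iota^* \rho \colon a^* \mathcal{F}_x \xrightarrow{\sim} b^* \mathcal{F} \]
of fpqc sheaves on $P$. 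Pointwise, for an arrow $f \colon x \to y$ lying in $P$, this is just $\rho_f \colon \mathcal{F}_x \to \mathcal{F}_y$, exactly as for $\lambda_M$ in Step~5 of \Cref{sec:base_change}. Since $\mathcal{F}_x$ is a sheaf over $\Spec k$, its pullback $a^*\mathcal{F}_x$ is by definition constant over $P$, and this yields both assertions.

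The only real content is checking that the pullbacks are compatible. For fpqc sheaves, and in particular for affine schemes over $S$, which is the case actually used later, the pullback along $s \circ \iota = x \circ a$ is computed by iterated fiber products. The two iterated pullbacks $\iota^* s^*$ and $a^* x^*$ agree up to the canonical isomorphism of fiber products, so no descent argument is needed. The morphisms are arbitrary, and neither the cocycle conditions on $\rho$ nor transitivity of $G$ is used for the bare existence of the isomorphism. Transitivity, via the fpqc property of $b$, matters only afterwards, when one descends along $b$.

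If the later argument needs $\lambda$ to be compatible with the descent datum on $b^*\mathcal{F}$ over $P \times_S P$, I would add that verification. One would check
\[ \pr_1^* \lambda \circ (\text{the datum } \rho_{f_1^{-1}\circ f_0}) = \pr_0^*\lambda, \]
that is, $\rho_{f_1} \circ \rho_{f_1^{-1} \circ f_0} = \rho_{f_0}$. This follows from the composition axiom $c^*\rho = \pr_0^*\rho \circ \pr_1^*\rho$, evaluated on $A$-points as in \Cref{rem:asch_pts}. This compatibility check is the only step requiring care; its main difficulty is keeping the source/target conventions of $\rho_f$ straight, and I expect no genuine obstacle.
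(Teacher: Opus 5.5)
Your proposal is correct and is essentially the paper's proof: the paper's entire argument is that restricting $\rho$ along $P = s^{-1}(x) \hookrightarrow G$ gives $a^*\mathcal{F}_x \xrightarrow{\sim} b^*\mathcal{F}$, which rests on the identities $s \circ \iota = x \circ a$ and $t \circ \iota = b$ that you spell out. Your optional descent-compatibility check is not part of the lemma, and it would need the cocycle condition on $\rho$, which the statement does not assume; it does, however, match the analogous verification for $\lambda_M$ in Step~5 of the base change argument.
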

\begin{proof}
    Restricting $\rho$ to $P$ gives the required isomorphism $a^* \mathcal{F}_x \xrightarrow{\sim} b^* \mathcal{F}$.
\end{proof}

Given abelian fpqc sheaves $\mathcal{F}$ and $\mathcal{G}$ over $S$, we denote their sheaf hom by $\underline{\Hom}_{\Ab(S)}(\mathcal{F}, \mathcal{G})$, which is again an abelian fpqc sheaf. All fpqc sheaves are taken on the big fpqc site over $S$. The subscript $\Ab(S)$ suggests the category of abelian fpqc sheaves on $S$, but the category itself is not needed, so we need not worry about the set-theoretic issues it may raise. Abelian fpqc sheaves on $S$ may be represented by geometric objects, such as commutative group schemes or quasi-coherent sheaves on $S$. We freely identify such geometric objects with the corresponding abelian fpqc sheaves below.

\begin{lemma}\label{lem:umod_rep}
    We have an equivalence of categories
    \[ \UMod(G) \simeq \Rep(G^\op)^\op, \]
    where both directions are given by $\underline{\Hom}_{\Ab(S)}(-, \mathbb{G}_{a,S})$.
\end{lemma}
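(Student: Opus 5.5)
Given $M \in \UMod(G)$, the plan is to set $M^\vee \coloneqq \underline{\Hom}_{\Ab(S)}(M, \mathbb{G}_{a,S})$ and show that it is a quasi-coherent sheaf on $S$ with a $G^\op$-action. The same functor, applied to a $G^\op$-representation (a quasi-coherent sheaf $V$ with $\rho \colon s^*V \xrightarrow{\sim} t^*V$), should give back a pro-unipotent affine $G$-module, namely $\vect V^\dual$. The action transfers formally. Sheaf hom is compatible with pullback along $s$ and $t$ for these representable sheaves; this is checked on fpqc covers where everything is trivialized. Hence $\rho \colon s^*M \xrightarrow{\sim} t^*M$ induces
\[ (\rho^\vee)^{-1} \colon s^*\underline{\Hom}(M,\mathbb{G}_a) \xrightarrow{\sim} t^*\underline{\Hom}(M,\mathbb{G}_a) \]
with the roles of $s$ and $t$ swapped. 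This is exactly a $G^\op$-action, and the cocycle condition is inherited from that of $\rho$. Both constructions are contravariant and functorial, so the only real content is that they are well defined and mutually inverse on underlying objects.

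The key step is local. By \Cref{lem:trivial_pullback}, $b^*M \cong a^*M_x$ over $P$, and $b \colon P \to S$ is fpqc. Over $k$, \Cref{lem:rep_umod} and the fact that $M_x \cong \vect V_x$ give
\[ \underline{\Hom}(\vect V_x, \mathbb{G}_a) \cong V_x \otimes \mathcal{O}. \]
This holds over any $k$-algebra $R$: homomorphisms $\vect V_R \to \mathbb{G}_{a,R}$ are the primitive elements of $\Sym V_R$, which in characteristic $0$ are exactly $V_R$. Hence $b^*\underline{\Hom}(M,\mathbb{G}_a)$ is the quasi-coherent sheaf $V_x \otimes \mathcal{O}_P$. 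Its descent datum along $b$ is also inherited, so by fpqc descent of quasi-coherent sheaves $\underline{\Hom}(M, \mathbb{G}_a)$ is itself quasi-coherent on $S$. Conversely, for a quasi-coherent $V$ with $G^\op$-action, $b^*V$ is constant, $b^*V \cong V_x \otimes \mathcal{O}_P$, by the same lemma. Then $\underline{\Hom}(V, \mathbb{G}_a)$ is representable after pullback by $\vect(V_x)_P$. The representing affine scheme descends along $b$ by fpqc descent for affine schemes, as in \Cref{sec:base_change}, giving an affine commutative group scheme over $S$ whose geometric fibers are vector groups, hence pro-unipotent.

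The double-dual maps $M \to \underline{\Hom}(\underline{\Hom}(M,\mathbb{G}_a),\mathbb{G}_a)$ and $V \to \underline{\Hom}(\underline{\Hom}(V,\mathbb{G}_a),\mathbb{G}_a)$ are the natural evaluation maps. They are compatible with the actions by construction. Whether they are isomorphisms can be checked after the fpqc pullback $b$, where it reduces to the pointwise statements over $k$ (and over $R$) from \Cref{lem:rep_umod}: $\vect(V^{\vee\vee})$ recovers $\vect V$ by the primitive-element computation, and the sheaf of homomorphisms $\vect V \to \mathbb{G}_a$ is $V \otimes \mathcal{O}$. Alternatively, one can skip the descent and use \Cref{thm:base_change} together with \Cref{rem:base_change_variants} to reduce to $G_x$ directly, but the statement asks for naturality, so the descent argument is preferable.

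The main obstacle I expect is the local computation when $V$ is infinite-dimensional. There $\underline{\Hom}(\vect V, \mathbb{G}_a)$ is $V \otimes \mathcal{O}$, but the reverse $\underline{\Hom}(V\otimes\mathcal{O}, \mathbb{G}_a)$ is only the product-type object represented by $\Spec \Sym V$. One must therefore be careful to take the duality in the correct direction, so that the sheaf $\underline{\Hom}_{\Ab(S)}(V\otimes\mathcal{O}, \mathbb{G}_a)$ is identified with $\vect V$, and to verify that these identifications are fpqc-local and commute with base change along $b$.
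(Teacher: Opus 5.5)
This is essentially the paper's proof: you trivialize along the fpqc cover $b\colon P\to S$ using \Cref{lem:trivial_pullback}, compute the local duality between $P\times\vect V$ and $\mathcal{O}_P\otimes_k V$, and descend the quasi-coherent sheaf, the affine group scheme, and the duality along $b$; your primitive-element computation supplies detail the paper leaves implicit, and since $\vect V=\Spec\Sym V$ the infinite-dimensional case is no obstacle. One small slip: the $G^\op$-action on $M^\vee$ is $\rho^\vee\colon t^*M^\vee\to s^*M^\vee$ itself, while $(\rho^\vee)^{-1}\colon s^*M^\vee\to t^*M^\vee$ is the contragredient $G$-action, which is harmless because inversion identifies $G$ with $G^\op$.
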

\begin{proof}
    In this proof, we write $(-)^\vee \coloneqq \underline{\Hom}_{\Ab(S)}(-, \mathbb{G}_{a,S})$. Let $M \in \UMod(G)$. Then $M_x \cong \vect V$ for some $k$-vector space $V$, so $b^* M \cong P \times \vect V$. Moreover,
    \[ b^*(M^\vee) \cong (b^* M)^\vee \cong \underline{\Hom}_{\Ab(P)}(P \times \vect V, \mathbb{G}_{a,P}) \cong \mathcal{O}_P \otimes_k V. \]
    Since quasi-coherence is fpqc local, $M^\vee$ is a quasi-coherent sheaf on $S$. The $G^\op$-action on $M^\vee$ is naturally induced by the $G$-action on $M$, so $M^\vee \in \Rep(G^\op)^\op$.

    Conversely, let $M \in \Rep(G^\op)^\op$. Then $M_x \cong \mathcal{O}_{\Spec k} \otimes_k V$ for some $k$-vector space $V$, so $b^* M \cong \mathcal{O}_P \otimes_k V$. Moreover,
    \[ b^*(M^\vee) \cong (b^* M)^\vee \cong \underline{\Hom}_{\Ab(P)}(\mathcal{O}_P \otimes_k V, \mathbb{G}_{a,P}) \cong P \times \vect V. \]
    Thus, $M^\vee$ is represented by an affine group scheme over $S$ by fpqc descent. The $G$-action is naturally induced by the $G^\op$-action on $M$, and its fibers are commutative pro-unipotent group schemes. Hence, $M^\vee \in \UMod(G)$.

    The functor $\underline{\Hom}_{\Ab(P)}(-, \mathbb{G}_{a,P})$ gives a duality between $P \times \vect V$ and $\mathcal{O}_P \otimes_k V$. The desired duality over $S$ follows by fpqc descent.
\end{proof}

\begin{definition}\label{def:fset}
    An affine $G$-scheme $M \in \ASch(G)$ is a \emph{finite $G$-set} if its geometric fiber over every geometric point of $S$ is a finite constant scheme. We denote by $\FSet(G)$ the full subcategory of $\ASch(G)$ spanned by finite $G$-sets. We set
    \[ \Set(G) \coloneqq \Ind(\FSet(G)) \quad\text{and}\quad \Mod(G) \coloneqq \AbObj(\Set(G)). \]
\end{definition}

\Cref{thm:base_change} and \Cref{rem:asch_spec_k} apply analogously to $\FSet(G)$, $\Set(G)$, and $\Mod(G)$. In particular, the functors
\[ \FSet(G) \xrightarrow{\sim} \FSet(G_x) \xrightarrow{\sim} \FSet(\widehat{G_x}) \]
are equivalences. The category $\FSet(\widehat{G_x})$ is a Galois category, so the same holds for $\FSet(G)$. Every geometric point of $S$ provides a fiber functor on $\FSet(G)$.

\begin{definition}\label{def:profinite_completion}
    The \emph{profinite completion} $\widehat{G}$ of $G$ is the transitive profinite groupoid associated with the Galois category $\FSet(G)$. Its objects are geometric points of $S$, and its morphisms are isomorphisms between the corresponding fiber functors on $\FSet(G)$.
\end{definition}

By construction, the vertex group $\widehat{G}_x$ is naturally isomorphic to the profinite completion of $G_x$. Moreover, there are natural equivalences
\[ \FSet(\widehat{G}) \simeq \FSet(G), \quad \Set(\widehat{G}) \simeq \Set(G), \quad\text{and}\quad \Mod(\widehat{G}) \simeq \Mod(G). \]
We henceforth identify $\Mod(\widehat{G})$ with $\Mod(G)$ through this equivalence.

\begin{lemma}\label{lem:mmod_mod}
    We have an equivalence of categories
    \[ \MMod(G) \simeq \Mod(\widehat{G}^\op)^\op, \]
    where both directions are given by $\underline{\Hom}_{\Ab(S)}(-, \mathbb{G}_{m,S})$.
\end{lemma}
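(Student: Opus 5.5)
We follow the pattern of the proof of \Cref{lem:umod_rep}, now with $\mathbb{G}_{m,S}$ in place of $\mathbb{G}_{a,S}$ and finite \'etale data in place of quasi-coherent sheaves. Write $(-)^\vee \coloneqq \underline{\Hom}_{\Ab(S)}(-, \mathbb{G}_{m,S})$.

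\textbf{From $\MMod(G)$ to $\Mod(\widehat{G}^\op)^\op$.} Let $M \in \MMod(G)$. Its fiber $M_x$ is a commutative group of multiplicative type over the algebraically closed field $k$, so $M_x \cong \diag D$ for some abelian group $D$. By \Cref{lem:trivial_pullback}, $b^* M \cong a^* M_x = P \times \diag D$. Hence
\[ b^*(M^\vee) \cong \underline{\Hom}_{\Ab(P)}(P \times \diag D, \mathbb{G}_{m,P}) \cong P \times \underline{D}. \]
Here we use SGA3 Exp.~VIII (cf.\ \Cref{lem:mod_mmod}): the Cartier dual of a diagonalizable group is the constant group, and this holds over any base $P$.

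Next write $D$ as the filtered union of its finitely generated subgroups. Since $G$ is transitive and quasi-compact, the $G$-action on $M$ induces a $G^\op$-action on $M^\vee$. The argument of \Cref{lem:mod_mmod}, transported to $G_x$, shows that each element of $D$ has a finite $G_x$-orbit. We therefore decompose $M^\vee$ into $G^\op$-stable pieces whose geometric fibers are finite constant schemes.

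By fpqc descent of affine schemes along $b$, each such piece is an affine $G^\op$-scheme. It is in fact a finite $G^\op$-set in the sense of \Cref{def:fset}, i.e.\ an object of $\FSet(G^\op)$. Thus $M^\vee$ is a filtered colimit of abelian objects of $\FSet(G^\op)$, that is, an object of $\Mod(G^\op) \simeq \Mod(\widehat{G}^\op)$. This uses the identification $\widehat{G^\op} = \widehat{G}^\op$, which follows from $\FSet(G^\op) \simeq \FSet(G)$ via inverses.

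\textbf{From $\Mod(\widehat{G}^\op)^\op$ to $\MMod(G)$.} Let $N \in \Mod(G^\op)$ be an ind-object of finite $G^\op$-sets with an abelian structure. Its pullback along $b$ is constant, $P \times \underline{D}$, by \Cref{lem:trivial_pullback} applied to the fpqc sheaf it represents. Then
\[ b^*(N^\vee) \cong P \times \diag D, \]
which is affine. By fpqc descent, $N^\vee$ is represented by an affine commutative group scheme over $S$ whose geometric fibers are of multiplicative type. The $G$-action is induced from the $G^\op$-action on $N$, so $N^\vee \in \MMod(G)$.

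\textbf{Biduality.} The canonical maps $M \to M^{\vee\vee}$ and $N \to N^{\vee\vee}$ become isomorphisms after pullback along the fpqc cover $b$, by classical duality between $\diag D$ and $\underline{D}$ over $P$. They are $G$-equivariant by naturality. Hence they are isomorphisms by fpqc descent of morphisms, exactly as at the end of \Cref{lem:umod_rep}.

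\textbf{Main obstacle.} The delicate step is identifying $M^\vee$ with an \emph{ind}-object of $\FSet(G^\op)$, not merely an fpqc sheaf locally isomorphic to $\underline{D}$. Two issues arise here:
\begin{enumerate}
\item The constant scheme $\underline{D}$ is not quasi-compact, so it is not an affine scheme over $S$ in general. We must therefore work with $G^\op$-stable finite pieces.
\item We must check that these pieces descend to affine $G^\op$-schemes. Finiteness of orbits is not automatic.
\end{enumerate}
To handle both, we reduce via \Cref{thm:base_change} (and its variants for $\FSet$, $\Set$, $\Mod$) to the vertex group $G_x$. There, \Cref{lem:mod_mmod} supplies finiteness of orbits and continuity. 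We then transport back along the equivalences $\FSet(G) \simeq \FSet(G_x)$, which also makes the construction independent of the choice of $x$.
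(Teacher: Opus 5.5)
Your proposal is correct and follows essentially the same route as the paper. Both arguments pull back along the fpqc cover $b$ via \Cref{lem:trivial_pullback}, use the duality between $P \times \diag D$ and the constant group $P \times \underline{D}$, descend along $b$, and identify the resulting $G^\op$-action through \Cref{lem:mod_mmod} at the vertex group $G_x$ together with $\widehat{G_x} \cong \widehat{G}_x$. Your more careful realization of $M^\vee$ as an ind-object of $\FSet(G^\op)$, built from $G_x$-stable finite orbits, spells out what the paper compresses into one sentence; the step writing $D$ as a union of finitely generated subgroups is never used and can be dropped.
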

\begin{proof}
    In this proof, we write $(-)^\vee \coloneqq \underline{\Hom}_{\Ab(S)}(-, \mathbb{G}_{m,S})$. Let $M \in \MMod(G)$. Then $M_x \cong \diag D$ for some abelian group $D$, so $b^* M \cong P \times \diag D$. Moreover,
    \[ b^*(M^\vee) \cong (b^* M)^\vee \cong \underline{\Hom}_{\Ab(P)}(P \times \diag D, \mathbb{G}_{m,P}) \cong P \times D. \]
    The $G$-action on $M$ naturally induces a $G^\op$-action on $M^\vee$. By \Cref{lem:mod_mmod}, the object $M_x \cong \diag D$ corresponds to a $\widehat{G_x}^\op$-module $D$. Since $\widehat{G_x} \cong \widehat{G}_x$, this yields $M^\vee \in \Mod(\widehat{G}^\op)^\op$.

    Conversely, let $M \in \Mod(\widehat{G}^\op)^\op$. Then $M_x \cong \Spec k \times D$ for some abelian group $D$, so $b^* M \cong P \times D$. Moreover,
    \[ b^*(M^\vee) \cong (b^* M)^\vee \cong \underline{\Hom}_{\Ab(P)}(P \times D, \mathbb{G}_{m,P}) \cong P \times \diag D. \]
    Thus, $M^\vee$ is represented by an affine group scheme over $S$ by fpqc descent. The $G$-action is naturally induced by the $G^\op$-action on $M$, and its fibers are group schemes of multiplicative type. Hence, $M^\vee \in \MMod(G)$.

    The functor $\underline{\Hom}_{\Ab(P)}(-, \mathbb{G}_{m,P})$ gives a duality between $P \times \diag D$ and $P \times D$. The desired duality over $S$ follows by fpqc descent.
\end{proof}

\begin{remark}
    Except for \Cref{lem:trivial_pullback}, every lemma in this subsection remains valid without the transitivity assumption, since its proof applies componentwise. The natural equivalence
    \[ \MMod(G) \times \UMod(G) \xrightarrow{\sim} \AMod(G) \]
    also extends componentwise. For $G = \coprod_{C \in \pi_0(G)} C$, we set $\widehat{G} \coloneqq \coprod_{C \in \pi_0(G)} \widehat{C}$.
\end{remark}

Applying \Cref{lem:umod_rep,lem:mmod_mod} to this equivalence, we obtain the following theorem, where $(-)^\dual$ denotes the Cartier dual as in \Cref{def:fmod}.

\begin{theorem}\label{thm:decomposition_groupoid}
    Let $G$ be an affine groupoid scheme acting on a scheme $S$ of finite type over an algebraically closed field $k$ of characteristic $0$. Then the natural functor
    \begin{align*}
        \Xi_G \colon \Mod(\widehat{G}) \times \Rep(G) &\to \FMod(G) \\
        (D, V) &\mapsto \underline{\Hom}_{\Ab(S)}(D, \mathbb{G}_{m,S})^\dual \times \underline{\Hom}_{\Ab(S)}(V, \mathbb{G}_{a,S})^\dual
    \end{align*}
    is an equivalence of categories.
\end{theorem}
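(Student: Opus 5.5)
The plan is to assemble $\Xi_G$ from the pieces already established, working one component of $G$ at a time. The remark preceding the theorem says the lemmas of this subsection hold without transitivity. Both sides of $\Xi_G$ decompose as products over $\pi_0(G)$: $\Mod(\widehat{G})$ by the convention $\widehat{G} = \coprod_C \widehat{C}$, and $\Rep(G)$ and $\FMod(G)$ as in \Cref{prop:groupoid_SC}. So I first reduce to the case where $G$ is transitive. Since $S$ is of finite type over the algebraically closed field $k$, each component has a $k$-rational point $x$. This point is needed only to prove that the functor is an equivalence, not to define it.

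For transitive $G$, I pass through Cartier duality. By \Cref{def:fmod}, $\FMod(G) = \AMod(G^\op)^\op$, so it suffices to show that
\[ \Mod(\widehat{G}) \times \Rep(G) \to \AMod(G^\op)^\op, \quad (D, V) \mapsto \underline{\Hom}_{\Ab(S)}(D, \mathbb{G}_{m,S}) \times \underline{\Hom}_{\Ab(S)}(V, \mathbb{G}_{a,S}) \]
is an equivalence. I treat this as the composite of three functors.

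\begin{enumerate}
\item The first is the product of the equivalences $\Mod(\widehat{G}) \simeq \MMod(G^\op)^\op$ and $\Rep(G) \simeq \UMod(G^\op)^\op$. These come from \Cref{lem:mmod_mod,lem:umod_rep} applied to the groupoid $G^\op$. Here I use $(G^\op)^\op = G$, and also $\widehat{G^\op}^\op \cong \widehat{G}$, which holds because inverting arrows identifies $\FSet(G)$ with $\FSet(G^\op)$.
\item The second is the natural product functor $\MMod(G^\op) \times \UMod(G^\op) \to \AMod(G^\op)$, taken with opposites on both sides.
\end{enumerate}

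It remains to show that the product functor in step 2 is an equivalence. I will base change along $x \colon \Spec k \to S$. By \Cref{thm:base_change} and \Cref{rem:base_change_variants}, the functors $\AMod(G^\op) \to \AMod(G^\op_x)$, $\MMod(G^\op) \to \MMod(G^\op_x)$ and $\UMod(G^\op) \to \UMod(G^\op_x)$ are equivalences. The conditions of being of multiplicative type or pro-unipotent are checked on geometric fibers, so these restricted equivalences match up. Base change also commutes with fiber products, so the product functor is compatible with the base change squares. Over the point, the functor is an equivalence by \Cref{thm:decomposition_group} for the affine group scheme $G^\op_x$. By two-out-of-three for equivalences, the functor over $S$ is an equivalence.

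The main obstacle is checking that the composite really is the displayed $\Xi_G$ and that all the identifications are natural, that is, independent of $x$. The key observation is that the formula for $\Xi_G$ uses only $\underline{\Hom}_{\Ab(S)}$ into $\mathbb{G}_{m,S}$ and $\mathbb{G}_{a,S}$, together with products and Cartier duality. All of these are defined over $S$ without reference to $x$; the point $x$ is used only to verify that each piece is an equivalence. I also need to confirm that the opposite groupoid enters with the correct variance. The $\underline{\Hom}$ construction turns a $G^\op$-module structure on the sheaf into a $G$-action, exactly as in the proof of \Cref{lem:mod_mmod}, so the sources come out as $\Mod(\widehat{G})$ and $\Rep(G)$ as required. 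Finally, essential surjectivity and full faithfulness on each component assemble componentwise into the statement for a general $G$.
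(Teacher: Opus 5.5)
Your proposal is correct and follows the paper's own argument. Like the paper, you prove that the product functor $\MMod(G^\op) \times \UMod(G^\op) \to \AMod(G^\op)$ is an equivalence by base change to a $k$-rational point (\Cref{thm:base_change} and \Cref{rem:base_change_variants}) combined with the group case (\Cref{thm:decomposition_group}), then obtain $\Xi_G$ by applying \Cref{lem:umod_rep,lem:mmod_mod} to $G^\op$ and using Cartier duality, with the non-transitive case handled componentwise. Your explicit bookkeeping of $G^\op$ and of the identification $\widehat{G^\op}^\op \cong \widehat{G}$ only spells out what the paper leaves implicit when it says it is ``applying'' those lemmas.
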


\subsection{Decomposition of Cohomology Groups}\label{sec:decomposition_cohomology}

Finally, we construct unified cohomology, which combines \'etale and algebraic de Rham cohomology. Let $X$ be a smooth connected variety over $k$, although the connectedness assumption will be dropped later. We establish an equivalence
\[ \Sh(X, \FMod^\alg_X) \simeq \Sh(X, \Mod^\et_X) \times \Sh(X, \Rep^\alg_X). \]
Together with the \'etale and algebraic de Rham comparison theorems, this equivalence yields the corresponding decomposition of $H^i_\uni(X, -)$ into $H^i(X_\et, -)$ and $H^i_\dR(X, -)$.

Let $\FtGpd$ denote the full subcategory of $\AfGpd$ consisting of affine groupoid schemes acting on schemes of finite type over $k$. Let $\Rep_\FtGpd$ and $\FMod_\FtGpd$ denote the restrictions of $\Rep_\AfGpd$ and $\FMod_\AfGpd$ to $\FtGpd$, respectively. Define the fibration $\Mod_\FtGpd^\op \to \FtGpd$ as the pullback of $\Mod_\PfGpd^\op \to \PfGpd$ along the profinite completion functor
\[ (\widehat{-}) \colon \FtGpd \to \PfGpd. \]
For every $G \in \FtGpd$, let $\Xi_G$ denote the equivalence of \Cref{thm:decomposition_groupoid}.

\begin{lemma}\label{lem:decomposition_FtGpd}
    The fiberwise assignments $(D, V) \mapsto \Xi_G(D, V)$ for $G \in \FtGpd$ define an equivalence
    \[ \Xi \colon \Mod_\FtGpd \times_\FtGpd \Rep_\FtGpd \xrightarrow{\sim} \FMod_\FtGpd \]
    of fibered categories over $\FtGpd$.
\end{lemma}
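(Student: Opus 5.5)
Since each $\Xi_G$ is already an equivalence of fiber categories by \Cref{thm:decomposition_groupoid}, the plan is to show that the fiberwise functors assemble into a Cartesian functor over $\FtGpd$. Then I would invoke the standard fact that a Cartesian functor between fibered categories which is an equivalence on every fiber is an equivalence of fibered categories. Concretely, for every morphism $\varphi \colon H \to G$ in $\FtGpd$ I need a natural isomorphism
\[ \Res_H^G \circ \Xi_G \cong \Xi_H \circ \bigl(\Res_{\widehat H}^{\widehat G} \times \Res_H^G\bigr), \]
and these isomorphisms must satisfy the cocycle compatibility for composable pairs $K \to H \to G$. This data makes $\Xi$ a functor of total categories sending Cartesian arrows to Cartesian arrows.

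First I treat the two factors of $\Xi_G$ separately. Because $\FMod(G) = \AMod(G^\op)^\op$ and the restriction in \Cref{def:asch_res} is pullback $u^*$ along the object map $u \colon T \to S$, it suffices to produce natural isomorphisms
\[ u^*\underline{\Hom}_{\Ab(S)}(V, \mathbb{G}_{a,S}) \cong \underline{\Hom}_{\Ab(T)}(u^*V, \mathbb{G}_{a,T}) \]
and the analogue with $D$ and $\mathbb{G}_{m}$, compatible with the $G^\op$- and $H^\op$-actions. I would check these fpqc locally: pull back along $b \colon P \to S$ as in \Cref{lem:umod_rep,lem:mmod_mmod}, where everything becomes constant of the form $\mathcal{O}_P \otimes_k V$ or $P \times D$. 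There the isomorphism is the evident base change of internal Hom between a free module (or constant group) and $\mathbb{G}_a$ (or $\mathbb{G}_m$). Since both sides are defined by a universal formula, the comparison maps are canonical, so equivariance and the cocycle condition follow from the functoriality of base change $u^*$ and the uniqueness of descended morphisms. Cartier duality $(-)^\dual$ and the product are likewise compatible with $u^*$.

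The step I expect to be the main obstacle is the profinite factor. The identification $\Mod(\widehat G) \simeq \Mod(G)$ passes through the Galois category $\FSet(G)$ of \Cref{def:profinite_completion}. I must check that restriction along $\widehat\varphi \colon \widehat H \to \widehat G$ corresponds, under this identification, to pullback $u^*$ of finite $G$-sets viewed as affine $G$-schemes. I would prove this on $\FSet$ first. The profinite completion functor is defined so that $\widehat\varphi$ is induced by the pullback functor $\FSet(G) \to \FSet(H)$ together with the fiber functors at geometric points, and the fiber of $u^*E$ at a geometric point $y$ of $T$ is the fiber of $E$ at $u(y)$. Both identifications are therefore canonical and commute up to a canonical isomorphism. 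I would then extend to $\Set = \Ind(\FSet)$, using that both restrictions commute with filtered colimits, and to $\Mod = \AbObj(\Set)$ by functoriality. After that, the $\mathbb{G}_m$-duality step is handled as in the previous paragraph. Combining the two factors gives the Cartesian structure on $\Xi$, and the fiberwise equivalences then yield the claimed equivalence of fibered categories.
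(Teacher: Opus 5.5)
Your proposal is correct and follows the same route as the paper, whose proof simply says that each $\Xi_G$ is compatible with restriction along every morphism $H \to G$ in $\FtGpd$ ``by construction,'' so that $\Xi$ is a well-defined functor of fibered categories, and then cites \Cref{thm:decomposition_groupoid} for the fiberwise equivalence. Your factor-by-factor check of the Cartesian structure spells out what the paper leaves implicit: base change of the internal Homs into $\mathbb{G}_a$ and $\mathbb{G}_m$, and matching restriction along $\widehat{\varphi}$ with pullback of finite $G$-sets.
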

\begin{proof}
    By construction, $\Xi_G$ is compatible with the restriction maps along every morphism $H \to G$ in $\FtGpd$. Thus, $\Xi$ is well-defined, and it is fiberwise an equivalence by \Cref{thm:decomposition_groupoid}.
\end{proof}

\begin{corollary}\label{cor:FModFtGpd_SC5}
    The fibration $\FMod_\FtGpd^\op \to \FtGpd$ is an SC5 fibration.
\end{corollary}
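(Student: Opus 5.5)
The plan is to transport the SC5 property across the equivalence $\Xi$ of \Cref{lem:decomposition_FtGpd}. The axioms (\ref{axiom:SC1})--(\ref{axiom:SC5}) are invariant under equivalence of fibered categories over $\FtGpd$, since such an equivalence identifies the fibers and intertwines the restriction functors up to natural isomorphism. It therefore suffices to show that $\Mod_\FtGpd \times_\FtGpd \Rep_\FtGpd$ is an SC5 fibration. The fibers of this product are $\Mod(\widehat{G}) \times \Rep(G)$, and its restriction functors are $\Res_{\widehat{H}}^{\widehat{G}} \times \Res_H^G$.

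The key step is to check each axiom factorwise. The first factor $\Mod_\FtGpd$ is the pullback of $\Mod_\PfGpd$ along the profinite completion functor, so it is an SC5 fibration by \Cref{prop:ModPfGpd_SC5}. The second factor $\Rep_\FtGpd$ is the restriction of $\Rep_\AfGpd$ to $\FtGpd$, so it is SC5 by \Cref{prop:RepAfGpd_SC5}. The product then inherits each property.
\begin{enumerate}
    \item (\ref{axiom:SC1}): the product functor $\Res \times \Res$ has right adjoint $\Coind \times \Coind$.
    \item (\ref{axiom:SC3}), (\ref{axiom:SC5}): limits and filtered colimits in a product category are computed componentwise, so left exactness and commutation with filtered colimits hold because they hold in each factor.
    \item (\ref{axiom:SC4}): a product of two abelian categories is abelian.
    \item (\ref{axiom:SC2}): a product of two locally finitely presentable categories is locally finitely presentable.
\end{enumerate}

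The only point needing care is (\ref{axiom:SC2}) for the product. Colimits exist componentwise. For a strong generating set, take $\{(A, 0)\} \cup \{(0, B)\}$, with $A$ ranging over a strong generating set of finitely presentable objects of $\Mod(\widehat{G})$ and $B$ over one for $\Rep(G)$. Here the zero objects are available because both factors are additive. The object $(A,0)$ is finitely presentable because
\[ \Hom((A,0), (M,N)) = \Hom(A, M), \]
and filtered colimits are componentwise. Conservativity of the resulting $\lambda$ follows from conservativity in each factor.

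Finally, transporting the adjoints, exactness properties, and generators along $\Xi$ gives the SC5 property for $\FMod_\FtGpd$. I expect no real obstacle. The mild subtlety is making sure that $\Xi$ commutes with restriction up to coherent isomorphism, so that the transported right adjoints serve as coinduction functors. This compatibility is exactly what \Cref{lem:decomposition_FtGpd} provides.
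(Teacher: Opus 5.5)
Your proposal is correct and follows the paper's own argument. The paper's proof transports the SC5 property of $\Mod_\PfGpd$ (pulled back along profinite completion, \Cref{prop:ModPfGpd_SC5}) and of $\Rep_\AfGpd$ (restricted to $\FtGpd$, \Cref{prop:RepAfGpd_SC5}) across the equivalence $\Xi$ of \Cref{lem:decomposition_FtGpd}; you have simply spelled out the routine factorwise checks that the paper leaves implicit, including the product strong generating set for (\ref{axiom:SC2}).
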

\begin{proof}
    This follows from \Cref{prop:ModPfGpd_SC5,prop:RepAfGpd_SC5} via the equivalence $\Xi$ of \Cref{lem:decomposition_FtGpd}.
\end{proof}

We adjoin a terminal space $* \coloneqq \Spec k$ to $\Open(X)$ to form $\Open(X/*)$. The functor $\Pi_1^\alg \colon \Open(X/*) \to \AfGpd$ takes values in $\FtGpd$. Hence, \Cref{cor:FModFtGpd_SC5} shows that $\FMod_{X/*}^\alg$ is an SC5 category, so the following cohomology is well-defined.

\begin{definition}\label{def:uni_cohomology}
    We define the \emph{$i$-th unified cohomology functor} by
    \[ H^i_\uni(X, -) \coloneqq R^i\!\left(\Gamma(X, -)^{\Pi_1^\alg(X)}\right) \colon \Sh(X, \FMod^\alg_X) \to \FC. \]
\end{definition}

We now show that the sheaf category $\Sh(X, \FMod^\alg_X)$ decomposes. First, we identify the profinite completion of $\Pi_1^\alg(X)$ with $\Pi_1^\et(X)$. Every object $M \in \FSet(\Pi_1^\alg(X))$ has an underlying finite \'etale $X$-scheme, by \Cref{lem:trivial_pullback} and \cite[\href{https://stacks.math.columbia.edu/tag/02LA}{Tag~02LA},~\href{https://stacks.math.columbia.edu/tag/02VN}{Tag~02VN}]{StacksProject}. Via the standard equivalence $\EtCov(X) \simeq \FSet(\Pi_1^\et(X))$ \cite[Exp.~V, \S7]{SGA1}, this construction defines an exact functor
\[ \FSet(\Pi_1^\alg(X)) \to \FSet(\Pi_1^\et(X)) \]
compatible with the fiber functors at every geometric point of $X$. We will show that this functor is an equivalence. This implies that the induced morphism
\[ \Pi_1^\et(X) \to \widehat{\Pi_1^\alg(X)} \]
is an isomorphism. The proof requires an intermediate groupoid scheme.

\begin{definition}[cf.\ {\cite[Lemma~3.1]{Nori}}]\label{def:weil_finite}
    Let $\mathcal{T}$ be a Tannakian category. An object $M \in \mathcal{T}$ is \emph{finite} if $f(M) \cong g(M)$ for distinct polynomials $f, g \in \mathbb{N}[T]$, where addition and multiplication are interpreted as $\oplus$ and $\otimes$, respectively.
\end{definition}

By \cite[Corollary~7.10]{BorneVistoli}, the finite objects of $\mathcal{T}$ form a Tannakian subcategory when the base field has characteristic $0$. Moreover, the Tannakian subcategory generated by any finite object has a finite Tannaka group \cite[Proposition~2.20(a)]{DeligneMilne}.

\begin{definition}[Esnault--Hai, {\cite[Definition~2.4]{EsnaultHai2}}]\label{def:fconn}
    We denote by $\Pi_1^\fin(X)$ the Tannaka groupoid of
    \[ \FConn(X) \coloneqq \{ \text{vector bundles with finite integrable connections on } X \} \]
    with respect to the forgetful fiber functor over $X$. For a rational point $x \in X(k)$, we write $\pi_1^\fin(X, x) \coloneqq \Pi_1^\fin(X)_x$.
\end{definition}

Every finite connection becomes trivial after a finite \'etale cover. For a smooth curve, it is therefore regular at infinity by \cite[Proposition~1.13~(iii)]{Deligne4}. For arbitrary $X$, its restriction to every smooth curve is regular at infinity, so \cite[Proposition~4.4~(ii)]{Deligne4} and the remark following \cite[D\'efinition~4.5]{Deligne4} show that it is regular at infinity. Hence, $\FConn(X)$ is a full subcategory of $\MIC^\rs(X)$. The inclusion induces a morphism of groupoid schemes
\[ \Pi_1^\alg(X) \to \Pi_1^\fin(X). \]
Since $\FConn(X)$ is the full subcategory of finite objects in $\MIC^\rs(X)$, the induced map $\pi_1^\alg(X, x) \to \pi_1^\fin(X, x)$ is the profinite completion for every closed point $x \in X$. Consequently, the natural functor
\[ \FSet(\Pi_1^\fin(X)) \to \FSet(\Pi_1^\alg(X)) \]
is an equivalence.

\begin{lemma}\label{lem:galois_equiv}
    Let $u\colon \mathcal{C} \to \mathcal{D}$ be an exact functor of Galois categories, and let $F_{\mathcal{C}}$ and $F_{\mathcal{D}}$ be fiber functors with corresponding fundamental pro-objects $P_{\mathcal{C}}$ and $P_{\mathcal{D}}$, respectively. Denote by $\widehat{u}\colon \Pro(\mathcal{C}) \to \Pro(\mathcal{D})$ the functor induced by $u$. Suppose that
    \[ F_{\mathcal{C}} \cong F_{\mathcal{D}} \circ u \quad\text{and}\quad \widehat{u}(P_{\mathcal{C}}) \cong P_{\mathcal{D}}. \]
    Then $u$ is an equivalence.
\end{lemma}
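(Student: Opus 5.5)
The plan is to reduce to the standard dictionary between Galois categories and profinite groups, $\mathcal{C} \simeq \FSet(\pi_1(\mathcal{C}, F_\mathcal{C}))$ with $\pi_1(\mathcal{C}, F_\mathcal{C}) = \Aut(F_\mathcal{C})$. Under this dictionary, an exact functor $u$ together with an isomorphism $F_\mathcal{C} \cong F_\mathcal{D} \circ u$ corresponds to a continuous homomorphism
\[ \varphi \colon \pi_1(\mathcal{D}, F_\mathcal{D}) \to \pi_1(\mathcal{C}, F_\mathcal{C}), \]
and $u$ becomes restriction of finite sets along $\varphi$. It is classical that $u$ is an equivalence if and only if $\varphi$ is an isomorphism of profinite groups. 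So the whole task is to deduce from $\widehat{u}(P_\mathcal{C}) \cong P_\mathcal{D}$ that $\varphi$ is bijective.

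First recall that the fundamental pro-object $P_\mathcal{C} = (P_i)_{i}$ pro-represents $F_\mathcal{C}$. It is the cofiltered system of Galois objects, and there are identifications
\[ \Aut(P_\mathcal{C}) = \varprojlim_i \Aut(P_i) \cong \pi_1(\mathcal{C}, F_\mathcal{C})^{\op}, \quad \Hom_{\Pro(\mathcal{C})}(P_\mathcal{C}, X) = \varinjlim_i \Hom(P_i, X) \cong F_\mathcal{C}(X). \]
Since $u$ is exact and $F_\mathcal{C} \cong F_\mathcal{D} \circ u$, applying $u$ termwise gives the map $\Aut(P_\mathcal{C}) \to \Aut(\widehat{u}(P_\mathcal{C}))$. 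Identifying $\widehat{u}(P_\mathcal{C})$ with $P_\mathcal{D}$ via the hypothesis, this map is $\varphi^{\op}$ up to the choice of isomorphism; the compatibility of this choice with the fiber-functor isomorphism needs a small check.

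The key steps are then as follows.

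\textbf{Injectivity of $\varphi$, i.e.\ essential surjectivity and fullness of $u$.} I use that every connected object $Y$ of $\mathcal{D}$ is a quotient of some $P_{\mathcal{D},j}$. Since $P_\mathcal{D} \cong \widehat{u}(P_\mathcal{C}) = (u(P_i))_i$, the point of $F_\mathcal{D}(Y) = \varinjlim_i \Hom(u(P_i), Y)$ gives a morphism $u(P_i) \to Y$. This morphism is an epimorphism because $Y$ is connected. Hence $Y$ is a quotient of $u(P_i)$ by the action of a subgroup $H \subseteq \Aut(u(P_i))$.

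The main obstacle is to show that $H$ comes from $\Aut(P_i)$, i.e.\ that $\Aut(P_i) \to \Aut(u(P_i))$ is bijective. I would get this from the fact that the pro-isomorphism identifies the Galois towers. Concretely, $|\Aut(P_i)| = |F_\mathcal{C}(P_i)| = |F_\mathcal{D}(u(P_i))|$, and $u(P_i)$ is Galois once it is connected. Connectedness of $u(P_i)$ follows because $u(P_i)$ is a termwise retract-level piece of the pro-representing object $P_\mathcal{D}$. I would argue this via the fact that the transition maps of $P_\mathcal{D}$ are epimorphisms onto connected objects and the components are cofinal. Given this, $Y \cong u(P_i/H)$, since $u$ is exact and preserves quotients by finite groups. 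This gives essential surjectivity on connected objects, hence on all objects via coproducts.

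\textbf{Faithfulness and fullness.} Faithfulness follows from $F_\mathcal{C} \cong F_\mathcal{D} \circ u$, since fiber functors are faithful. For fullness, I would compare
\[ \Hom(X, Y) = F_\mathcal{C}(Y)^{\ldots} \]
through the $\pi_1$-sets $F(X)$ and $F(Y)$. With $\varphi$ now seen to be surjective (from the bijections on $\Aut(P_i)$ in the limit) and injective (from essential surjectivity), $\varphi$ is an isomorphism. Restriction along $\varphi$ is then fully faithful, which concludes the proof.
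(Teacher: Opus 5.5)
Your route is different from the paper's. The paper never looks at individual Galois objects: it extends the fiber functors to pro-objects and reads off the chain $\Aut(P_{\mathcal{C}}) \cong \widehat{F}_{\mathcal{C}}(P_{\mathcal{C}}) \cong \widehat{F}_{\mathcal{D}}(\widehat{u}(P_{\mathcal{C}})) \cong \widehat{F}_{\mathcal{D}}(P_{\mathcal{D}}) \cong \Aut(P_{\mathcal{D}})$. You instead show level by level that each $u(P_i)$ is Galois with $\Aut(P_i) \cong \Aut(u(P_i))$, and then check essential surjectivity and full faithfulness by hand. That skeleton works, but two steps fail as written.

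\textbf{Connectedness of $u(P_i)$.} You rightly single this out as the crux, but the fact you invoke points the wrong way. That the transition maps of $P_{\mathcal{D}}$ are epimorphisms only shows that the components $u(P_i) \to Q_j$ of the pro-isomorphism are epimorphisms. In other words, each $Q_j$ is a quotient of some $u(P_i)$, which is compatible with $u(P_i)$ being disconnected. You need the other direction:
\begin{itemize}
\item The transition maps $u(P_{i'}) \to u(P_i)$ of $\widehat{u}(P_{\mathcal{C}})$ are epimorphisms, because $P_{i'} \to P_i$ is one and $u$ is exact.
\item The inverse pro-isomorphism supplies a morphism $Q_j \to u(P_i)$ through which such a transition map factors.
\item Hence $Q_j \to u(P_i)$ is an epimorphism from a connected object, so $u(P_i)$ is connected.
\end{itemize}
From there your counting argument goes through: $\Aut(P_i) \hookrightarrow \Aut(u(P_i))$ by faithfulness, and $|\Aut(u(P_i))| \le |F_{\mathcal{D}}(u(P_i))| = |\Aut(P_i)|$. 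Your essential-surjectivity argument then also works.

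\textbf{Fullness.} The map $\Aut(P_{\mathcal{C}}) \to \Aut(\widehat{u}(P_{\mathcal{C}})) \cong \Aut(P_{\mathcal{D}})$ is not $\varphi^{\op}$. Since $\Aut(P_{\mathcal{C}}) \cong \pi_1(\mathcal{C}, F_{\mathcal{C}})^{\op}$ and $\Aut(P_{\mathcal{D}}) \cong \pi_1(\mathcal{D}, F_{\mathcal{D}})^{\op}$, it corresponds to a homomorphism $\pi_1(\mathcal{C}, F_{\mathcal{C}}) \to \pi_1(\mathcal{D}, F_{\mathcal{D}})$, which runs opposite to $\varphi$. Under the hypotheses it is a conjugate of $\varphi^{-1}$, which you cannot use before knowing $\varphi$ is invertible. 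So ``$\varphi$ surjective from the bijections on $\Aut(P_i)$ in the limit'' is unjustified, and your fullness step rests on it.

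The repair uses what you already have. The group $\pi_1(\mathcal{D}, F_{\mathcal{D}})$ acts on $F_{\mathcal{D}}(u(P_i)) \cong F_{\mathcal{C}}(P_i) \cong \pi_1(\mathcal{C}, F_{\mathcal{C}})/N_i$ through $\varphi$. So connectedness of every $u(P_i)$ says $\varphi(\pi_1(\mathcal{D}, F_{\mathcal{D}}))\,N = \pi_1(\mathcal{C}, F_{\mathcal{C}})$ for every open normal subgroup $N$. The compact image of $\varphi$ is therefore dense, hence all of $\pi_1(\mathcal{C}, F_{\mathcal{C}})$, and restriction along a surjection is fully faithful. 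Equivalently, $u$ preserves connected objects, since every connected object of $\mathcal{C}$ is a quotient of some $P_i$; this is the criterion in SGA~1, Exp.~V, \S6.

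With faithfulness and essential surjectivity this finishes the proof, and injectivity of $\varphi$ is not needed at all. Your heading also conflates two things: injectivity of $\varphi$ corresponds to every object of $\mathcal{D}$ being a subquotient of an object in the image of $u$, not to fullness.
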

\begin{proof}
    The two hypotheses yield isomorphisms
    \[ \Aut_{\Pro(\mathcal{C})}(P_{\mathcal{C}}) \cong \widehat{F}_{\mathcal{C}}(P_{\mathcal{C}}) \cong \widehat{F}_{\mathcal{D}}(\widehat{u}(P_{\mathcal{C}})) \cong \widehat{F}_{\mathcal{D}}(P_{\mathcal{D}}) \cong \Aut_{\Pro(\mathcal{D})}(P_{\mathcal{D}}) \]
    of profinite sets, and hence of profinite groups. Thus, $u$ induces an isomorphism of fundamental groups and is therefore an equivalence.
\end{proof}

\begin{proposition}\label{prop:profinite_completion}
    The natural morphism of profinite groupoids
    \[ \Pi_1^\et(X) \to \widehat{\Pi_1^\alg(X)} \]
    is an isomorphism.
\end{proposition}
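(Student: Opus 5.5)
The plan is to show that the exact functor $u \colon \FSet(\Pi_1^\alg(X)) \to \FSet(\Pi_1^\et(X))$ constructed above is an equivalence, by applying \Cref{lem:galois_equiv}. Since $\FSet(\widehat{\Pi_1^\alg(X)}) \simeq \FSet(\Pi_1^\alg(X))$ by \Cref{def:profinite_completion}, an equivalence of Galois categories compatible with fiber functors gives the isomorphism $\Pi_1^\et(X) \to \widehat{\Pi_1^\alg(X)}$ on all vertex groups, and hence of profinite groupoids. Compatibility of $u$ with the fiber functor at a geometric point $x$ was noted in the construction. So the remaining hypothesis of \Cref{lem:galois_equiv} is $\widehat{u}(P_{\mathcal{C}}) \cong P_{\mathcal{D}}$, that is, that the fundamental pro-objects correspond.

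The intermediate groupoid enters here. We have the chain
\[ \FSet(\Pi_1^\fin(X)) \xrightarrow{\sim} \FSet(\Pi_1^\alg(X)) \xrightarrow{u} \FSet(\Pi_1^\et(X)), \]
with the first functor an equivalence. It therefore suffices to prove that the composite is an equivalence. Concretely, this means every pointed connected finite \'etale cover $(E,y) \to (X,x)$ should arise from a finite $\Pi_1^\fin(X)$-set. Equivalently, every finite quotient of $\pi_1^\et(X,x)$ should be a quotient of $\pi_1^\fin(X,x)$, with the resulting map on pro-objects an isomorphism.

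For this I will use the standard construction. Given a Galois finite \'etale cover $f \colon E \to X$ with group $\Gamma$, the bundle $f_*\mathcal{O}_E$ carries the canonical Gauss--Manin (trivial) connection. It is finite, since $f_*\mathcal{O}_E \otimes f_*\mathcal{O}_E \cong (f_*\mathcal{O}_E)^{\oplus |\Gamma|}$, and hence lies in $\FConn(X)$. It is moreover an algebra object in $\FConn(X)$ whose spectrum, viewed as a $\Pi_1^\fin(X)$-set via \Cref{lem:trivial_pullback}, is exactly $E$. Thus the composite sends this object back to $E$. This shows essential surjectivity onto connected covers, so the fundamental pro-object $P_{\mathcal{D}}$, namely the system of pointed Galois covers, is hit.

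Conversely, the Tannakian subcategory generated by a finite object has finite Tannaka group $\Gamma'$, by \cite[Proposition~2.20(a)]{DeligneMilne}. The corresponding $\Gamma'$-torsor over $X$ is finite \'etale in characteristic $0$, and the object is trivialized on it. This gives the inverse identification of the projective systems, and hence $\widehat{u}(P_{\mathcal{C}}) \cong P_{\mathcal{D}}$.

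The main obstacle is the precise matching of pro-objects, namely checking that the pro-object of $\FSet(\Pi_1^\fin(X))$ defined by Tannakian torsors maps isomorphically rather than merely surjectively onto the pro-object of $\FSet(\Pi_1^\et(X))$. I would handle this by checking it at finite level: Galois objects go to Galois objects with the same automorphism groups, since $\Aut$ of the algebra $f_*\mathcal{O}_E$ with its connection equals $\Aut_X(E)$ by full faithfulness of $\Spec$ on finite \'etale algebras.
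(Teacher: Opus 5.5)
Your overall frame matches the paper. You pass to the composite $\FSet(\Pi_1^\fin(X)) \to \FSet(\Pi_1^\alg(X)) \to \FSet(\Pi_1^\et(X))$, use that the first functor is an equivalence, and apply the Galois-category lemma, so everything rests on matching the fundamental pro-objects. Your construction $E \mapsto (f_*\mathcal{O}_E, \nabla_{\mathrm{can}})$ correctly shows that every Galois \'etale cover lifts. That gives injectivity of $\pi_1^\et(X,x) \to \pi_1^\fin(X,x)$.

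\textbf{Gap: surjectivity is not proved.} You also need surjectivity of this map. Equivalently, the composite must be full, or the underlying $X$-scheme of each Galois object of $\FSet(\Pi_1^\fin(X))$ must be connected. The proposal does not establish this.

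In your ``conversely'' paragraph, the $\Gamma'$-torsor $Q \to X$ attached to a finite object is finite \'etale and trivializes the object. That does not make $Q$ connected. A torsor splitting into several components (in the extreme, $\Gamma' \times X$) trivializes the object just as well. In that case $\pi_1^\et(X,x)$ would only map onto a proper subgroup of $\Gamma'$.

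Your final paragraph does not close this. Computing $\Aut$ of $(f_*\mathcal{O}_E,\nabla)$ for $E$ already a connected Galois cover only concerns objects coming from the \'etale side. It is therefore circular for the torsors $Q$ in question. Moreover, agreement of automorphism groups alone would not force connectedness; for instance, $X \sqcup X$ with $\Gamma' = \mathbb{Z}/2$ has the right automorphism group.

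\textbf{How the paper handles it, and a direct fix.} The paper handles exactly this step by citing Esnault--Hai (Lemma~4.3). That lemma identifies the fundamental pro-object $\Spec k \times_{x,X,s} \Pi_1^\fin(X)$ with the universal profinite \'etale cover, connectedness included.

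For a self-contained argument, here is a short fix.
\begin{itemize}
\item Under Tannaka duality, the algebra $g_*\mathcal{O}_Q$ corresponds to the regular representation $\mathcal{O}(\Gamma')$. Hence its horizontal global sections are $\mathcal{O}(\Gamma')^{\Gamma'} = k$, using that $X$ is connected.
\item Multiplication on $\mathcal{O}_Q$ is a morphism of representations, so the connection satisfies the Leibniz rule.
\item For an idempotent $e$ of $\Gamma(Q,\mathcal{O}_Q)$, Leibniz gives $\nabla e = 2e\nabla e$. Multiplying by $e$ gives $e\nabla e = 0$, and then $\nabla e = 0$.
\item So every idempotent is a horizontal global section, hence lies in $k$. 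Thus $Q$ has no nontrivial idempotents and is connected.
\end{itemize}
With this step added, your route becomes a genuine, more elementary alternative to the citation.
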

\begin{proof}
    Choose a point $x \in X(k)$. The fundamental pro-object of $\FSet(\Pi_1^\fin(X))$ is
    \[ P \coloneqq \Spec k \times_{x, X, s} \Pi_1^\fin(X). \]
    By \cite[Lemma~4.3]{EsnaultHai2}, $P$ is the universal profinite \'etale cover of $X$ based at $x$. Hence, \Cref{lem:galois_equiv} shows that the composite
    \[ \FSet(\Pi_1^\fin(X)) \longrightarrow \FSet(\Pi_1^\alg(X)) \longrightarrow \FSet(\Pi_1^\et(X)) \]
    is an equivalence. Since the first functor is an equivalence, so is the second.
\end{proof}

\begin{remark}
    The connectedness assumption can now be dropped. For a smooth variety $X = \coprod_{C \in \pi_0(X)} C$, apply \Cref{prop:profinite_completion} to each connected component $C$. Since the fundamental groupoids and profinite completion decompose componentwise, taking finite coproducts gives
    \[ \Pi_1^\et(X) \xrightarrow{\sim} \widehat{\Pi_1^\alg(X)}. \]
    Thus, \Cref{prop:profinite_completion} and the subsequent equivalences hold for arbitrary smooth varieties.
\end{remark}

\Cref{prop:profinite_completion} gives $\Pi_1^\et(U) \cong \widehat{\Pi_1^\alg(U)}$, naturally in $U \in \Open(X)$. Together with \Cref{lem:decomposition_FtGpd}, these isomorphisms yield the following corollary.

\begin{corollary}\label{cor:decomposition_X}
    The fiberwise assignments $(D, V) \mapsto \Xi_{\Pi_1^\alg(U)}(D, V)$ for $U \in \Open(X)$ define an equivalence
    \[ \Xi_X \colon \Mod^\et_X \times_{\Open(X)} \Rep^\alg_X \xrightarrow{\sim} \FMod^\alg_X \]
    of fibered categories over $\Open(X)$.
\end{corollary}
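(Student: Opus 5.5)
The statement follows by pulling back the equivalence $\Xi$ of \Cref{lem:decomposition_FtGpd} along $\Pi_1^\alg \colon \Open(X) \to \FtGpd$. The only extra work is replacing the profinite-completion pullback $\Mod_\FtGpd$ by $\Mod^\et_X$ using \Cref{prop:profinite_completion}.

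\textbf{Step 1: pull back along $\Pi_1^\alg$.} By definition, $\FMod^\alg_X$ is the pullback of $\FMod_\AfGpd$ along $\Pi_1^\alg$, and $\Rep^\alg_X$ is the pullback of $\Rep_\AfGpd$ along the same functor. Since $\Pi_1^\alg$ takes values in $\FtGpd$, both are pullbacks of the restricted fibrations $\FMod_\FtGpd$ and $\Rep_\FtGpd$. Pullback of fibered categories commutes with fiber products over the base, and it preserves fiberwise equivalences that are Cartesian functors. Hence pulling back $\Xi$ gives an equivalence
\[ (\Pi_1^\alg)^*\Mod_\FtGpd \times_{\Open(X)} \Rep^\alg_X \xrightarrow{\sim} \FMod^\alg_X, \]
given on the fiber over $U$ by $\Xi_{\Pi_1^\alg(U)}$. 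Here $(\Pi_1^\alg)^*\Mod_\FtGpd$ is the fibration with fibers $\Mod(\widehat{\Pi_1^\alg(U)})$, whose restriction functors are induced by the profinite completions of the maps $\Pi_1^\alg(V) \to \Pi_1^\alg(U)$.

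\textbf{Step 2: identify the first factor with $\Mod^\et_X$.} \Cref{prop:profinite_completion}, together with the subsequent remark dropping connectedness, gives isomorphisms $\Pi_1^\et(U) \xrightarrow{\sim} \widehat{\Pi_1^\alg(U)}$ for each $U \in \Open(X)$. These must be natural in $U$: for $V \subseteq U$, the square of profinite groupoids built from these isomorphisms and the inclusion-induced maps must commute. I would verify this on finite sets. The functor $\FSet(\Pi_1^\alg(U)) \to \EtCov(U)$ sends an object to its underlying finite étale $U$-scheme. This operation commutes with restriction to $V$ on both sides, since restriction along $\Pi_1^\alg(V) \to \Pi_1^\alg(U)$ is pullback of the underlying scheme along $V \hookrightarrow U$, and the comparison is compatible with fiber functors at geometric points. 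By Galois theory, the maps on profinite groupoids are then determined and commute. Consequently the natural isomorphisms induce an equivalence of fibered categories $\Mod^\et_X \simeq (\Pi_1^\alg)^*\Mod_\FtGpd$ over $\Open(X)$, namely a Cartesian functor that is a fiberwise equivalence.

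\textbf{Step 3: compose.} Composing the equivalences of Steps 1 and 2 gives $\Xi_X$. A Cartesian functor between fibered categories that is an equivalence on each fiber is an equivalence of fibered categories, and the resulting functor is the stated fiberwise assignment $(D, V) \mapsto \Xi_{\Pi_1^\alg(U)}(D, V)$.

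\textbf{Main obstacle.} The main obstacle is the naturality check in Step 2. \Cref{prop:profinite_completion} is stated for each $X$ separately, so one must confirm that the comparison functor $\FSet(\Pi_1^\alg(-)) \to \FSet(\Pi_1^\et(-))$ is compatible with restriction to $V$. I would handle this through the "underlying finite étale scheme" description given before \Cref{def:weil_finite}, which visibly commutes with pullback to $V$.
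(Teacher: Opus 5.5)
Your proposal is correct and follows the paper's own route. The paper pulls back the equivalence $\Xi$ of \Cref{lem:decomposition_FtGpd} along $\Pi_1^\alg$ and uses the isomorphisms $\Pi_1^\et(U) \cong \widehat{\Pi_1^\alg(U)}$ of \Cref{prop:profinite_completion} to replace the profinite-completion factor by $\Mod^\et_X$; your naturality check via underlying finite \'etale schemes spells out what the paper simply asserts as ``naturally in $U$''.
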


Consequently, the equivalences $\Xi_U$ for $U \in \Open(X)$ induce an equivalence
\begin{align*}
    \Xi_{\Sh(X)} \colon \Sh(X, \Mod^\et_X) \times \Sh(X, \Rep^\alg_X) &\xrightarrow{\sim} \Sh(X, \FMod^\alg_X) \\
    (\mathcal{D}, \mathcal{V}) &\mapsto \left(U \mapsto \Xi_U(\mathcal{D}(U), \mathcal{V}(U))\right).
\end{align*}

\begin{definition}\label{def:VD}
    Let $\mathbb{G}_a^\vee$ denote the formal additive group and $\underline{\mathbb{Z}}$ the constant formal group associated to $\mathbb{Z}$. For $M \in \FC$, define its discrete and vector space parts by
    \[ \mathrm{D}(M) \coloneqq \Hom_\FC(\underline{\mathbb{Z}}, M) \quad\text{and}\quad \mathrm{V}(M) \coloneqq \Hom_\FC(\mathbb{G}_a^\vee, M). \]
\end{definition}

For the terminal groupoid $\mathds{1}$, the equivalence $\Xi_\mathds{1}$ and its inverse are given by
\[
\begin{aligned}
    \Xi_\mathds{1} \colon \Ab \times \Vect(k) &\to \FC \\
    (D, V) &\mapsto (\diag D)^\dual \times (\vect V)^\dual
\end{aligned}
\quad\text{and}\quad
\begin{aligned}
    \Xi_\mathds{1}^{-1} \colon \FC &\to \Ab \times \Vect(k) \\
    M &\mapsto (\mathrm{D}(M), \mathrm{V}(M)).
\end{aligned}
\]
These equivalences give the following diagram, which commutes up to a natural isomorphism.
\[\begin{tikzcd}[column sep=4em]
    \Sh(X, \Mod^\et_X) \times \Sh(X, \Rep^\alg_X) \arrow[r, "\Xi_{\Sh(X)}"] \arrow[d, "\Gamma(X{,} -)^{\Pi_1^\et(X)} \times \Gamma(X{,} -)^{\Pi_1^\alg(X)}"'] & \Sh(X, \FMod^\alg_X) \arrow[d, "\Gamma(X{,} -)^{\Pi_1^\alg(X)}"] \\
    \Ab \times \Vect(k) \arrow[r, "\Xi_\mathds{1}"'] & \FC
\end{tikzcd}\]
Since $\Xi_{\Sh(X)}$ and $\Xi_\mathds{1}$ are exact equivalences, they preserve injective objects. Passing to right derived functors gives natural isomorphisms
\begin{equation}\label{eq:unified_cohomology_decomposition}
    H^i_\uni\bigl(X, \Xi_{\Sh(X)}(\mathcal{D}, \mathcal{V})\bigr) \xrightarrow{\sim} \Xi_\mathds{1}\bigl(H^i_\fet(X, \mathcal{D}), H^i_\alg(X, \mathcal{V})\bigr).
\end{equation}
Applying this to the objects arising from the \'etale and algebraic de Rham comparison functors yields the main theorem below.

\begin{theorem}[Comparison Theorem]\label{thm:uni_comparison}
    Let $X$ be a smooth variety over an algebraically closed field $k$ of characteristic $0$. Let $\mathcal{D}$ be a filtered colimit of locally constant \'etale sheaves of finite abelian groups, and let $\mathcal{V}$ be an ind-regular singular $\mathcal{D}_X$-module. Set
    \[ \mathcal{M} \coloneqq \Xi_{\Sh(X)}\bigl(\Phi(\delta_* \mathcal{D}), \delta_* \mathcal{V}\bigr). \]
    Then the natural map
    \[ H^i_\uni(X, \mathcal{M}) \to (\diag H^i(X_\et, \mathcal{D}))^\dual \times (\vect H^i_\dR(X, \mathcal{V}))^\dual \]
    is an isomorphism for all $i \ge 0$. Equivalently, the natural maps
    \[ \mathrm{D}(H^i_\uni(X, \mathcal{M})) \to H^i(X_\et, \mathcal{D}) \quad\text{and}\quad \mathrm{V}(H^i_\uni(X, \mathcal{M})) \to H^i_\dR(X, \mathcal{V}) \]
    are isomorphisms for all $i \ge 0$.
\end{theorem}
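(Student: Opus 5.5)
The proof is a direct assembly of earlier results. Starting from the decomposition isomorphism \eqref{eq:unified_cohomology_decomposition}, I would apply it with $\mathcal{D}' \coloneqq \Phi(\delta_*\mathcal{D}) \in \Sh(X, \Mod^\et_X)$ and $\mathcal{V}' \coloneqq \delta_*\mathcal{V} \in \Sh(X, \Rep^\alg_X)$. By \Cref{lem:delta_sheaf}, $\delta_*\mathcal{V}$ is indeed a sheaf, and $\Phi(\delta_*\mathcal{D})$ is a sheaf by \Cref{thm:fet_sheaf_equivalence} with $\Ab$ coefficients. This yields a natural isomorphism
\[ H^i_\uni(X, \mathcal{M}) \xrightarrow{\sim} \Xi_\mathds{1}\bigl(H^i_\fet(X, \Phi(\delta_*\mathcal{D})), H^i_\alg(X, \delta_*\mathcal{V})\bigr) = (\diag H^i_\fet(X, \Phi(\delta_*\mathcal{D})))^\dual \times (\vect H^i_\alg(X, \delta_*\mathcal{V}))^\dual. \]

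Next, I would substitute the two comparison theorems. \Cref{thm:profinite_etale_comparison} applies because $X$ is a smooth variety in characteristic $0$ and $\mathcal{D}$ is a filtered colimit of locally constant sheaves of finite abelian groups; it gives $H^i_\fet(X, \Phi(\delta_*\mathcal{D})) \cong H^i(X_\et, \mathcal{D})$. \Cref{thm:alg_dR_comparison} applies because $\mathcal{V}$ is ind-regular singular; it gives $H^i_\alg(X, \delta_*\mathcal{V}) \cong H^i_\dR(X, \mathcal{V})$. Since $\Xi_\mathds{1}$ is a functor, applying it to the pair of these isomorphisms and composing with the previous display produces the first asserted isomorphism. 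The map is \emph{natural}: I would check that each constituent map is the canonical edge map, so the composite is the natural map of the statement. This is bookkeeping rather than a genuine difficulty.

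For the equivalent formulation, I would apply $\Xi_\mathds{1}^{-1} = (\mathrm{D}, \mathrm{V})$. Because $\Xi_\mathds{1}$ is an equivalence with this inverse, we get $\mathrm{D}((\diag A)^\dual \times (\vect W)^\dual) \cong A$ and $\mathrm{V}((\diag A)^\dual \times (\vect W)^\dual) \cong W$ naturally. An isomorphism into $\Xi_\mathds{1}(A, W)$ is therefore the same as a pair of isomorphisms $\mathrm{D}(-) \to A$ and $\mathrm{V}(-) \to W$, which gives the equivalence of the two statements.

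The only point needing care, which I expect to be the main (and mild) obstacle, is the non-connected case. \Cref{cor:decomposition_X} and $\Xi_{\Sh(X)}$ were set up using \Cref{prop:profinite_completion}, and the remark after it extends that proposition componentwise. I would simply note that $\Xi_{\Sh(X)}$ is defined for arbitrary smooth $X$ by that remark, so no separate reduction to connected $X$ is needed.
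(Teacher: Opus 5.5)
Your proposal is correct and follows the paper's argument: apply the decomposition isomorphism \eqref{eq:unified_cohomology_decomposition} to the pair $(\Phi(\delta_*\mathcal{D}), \delta_*\mathcal{V})$, then substitute \Cref{thm:profinite_etale_comparison} and \Cref{thm:alg_dR_comparison}. The equivalent $\mathrm{D}$/$\mathrm{V}$ formulation then follows from $\Xi_\mathds{1}^{-1} = (\mathrm{D}, \mathrm{V})$, and the non-connected case is handled by the remark extending \Cref{prop:profinite_completion} componentwise.
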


\section{Further Questions}\label{sec:further_remarks}

Throughout this section, let $X$ be a variety over a field $k$, with further hypotheses imposed as needed. We have treated singular, \'etale, and algebraic de Rham cohomology using their corresponding fundamental groupoids and coefficient categories. We conclude with several possible extensions of this framework and questions for further research.

\subsection*{\texorpdfstring{Toward $p$-Adic Cohomology}{Toward p-Adic Cohomology}}

The most natural question is whether this framework also yields $p$-adic cohomology over a perfect field $k$ of characteristic $p > 0$. We expect that the Nori fundamental groupoid $\Pi_1^\N$, with coefficients in the category $\FC$ of commutative formal groups, yields $p$-adic cohomology alongside \'etale cohomology. This would provide a uniform description of the four major cohomology theories of arithmetic geometry.

Let $X$ be a reduced variety over a perfect field $k$. For $U \in \Open(X)$, let
\[ \FTor(U) \coloneqq \{ \text{torsors over $U$ under finite $k$-group schemes} \}. \]
Define the fibration $(\FMod^\N_X)^\op \to \Open(X)$ by $\FMod^\N_X(U) \coloneqq \AbObj(\Ind(\FTor(U)))$, with restrictions induced by pullback of torsors. If $U$ admits a $k$-point $x$, Nori's construction \cite{Nori} yields an equivalence $\FMod^\N_X(U) \simeq \FMod(\pi_1^\N(U, x))$, justifying this definition. One may show that $\FMod^\N_X$ is an SC4 category. Hence, for $i \ge 0$, we obtain cohomology functors
\[ H^i_\N(X, -) \colon \Sh(X, \FMod^\N_X) \to \FC. \]

Let $\mathrm{CW} \in \FC$ denote the formal $p$-group representing Fontaine's Witt covectors \cite[Ch.~II, \S1, and Ch.~III, \S I.1]{Fontaine}. Let $\W$ denote the affine group scheme representing the $p$-typical Witt vectors.

\padicCohomologyConjecture

The Nori fundamental groupoid detects finite torsors of degree divisible by $p$ that are invisible to the \'etale fundamental groupoid. For example, let $A$ be an ordinary elliptic curve over an algebraically closed field $k$ of characteristic $p > 0$. Then
\[ \pi_1^\et(A, 0) \cong \mathbb{Z}_p \times \prod_{\ell \neq p} \mathbb{Z}_\ell^2 \quad\text{while}\quad \pi_1^\N(A, 0) \cong \varprojlim_n \mu_{p^n} \times \underline{\mathbb{Z}_p} \times \prod_{\ell \neq p} \underline{\mathbb{Z}_\ell}^2. \]
On the coefficient side, Fontaine identifies $\Hom_\FC(M, \mathrm{CW})$ with the Dieudonn\'e module of a commutative formal $p$-group $M$ \cite[Ch.~III, Th\'eor\`eme~1]{Fontaine}. This plays the role of first $p$-adic group cohomology of $M$. We expect this behavior to extend to the $p$-adic cohomology of $X$.

One caveat is that $H^i_\N(X, \underline{\mathrm{CW}})$ may contain summands with no counterpart in classical crystalline cohomology. Besides $\mathrm{CW}$ and $\W_n$, possible factors include $\mathbb{Z}/p\mathbb{Z}$, $\boldsymbol{\alpha}_p$, and $\boldsymbol{\mu}_p$, although it is unclear whether these factors actually occur. Here, $\W_n \in \FC$ denotes the formal group representing the truncated $p$-typical Witt vector group functor of length $n$.

\subsection*{\texorpdfstring{Toward $p$-Adic Group Cohomology}{Toward p-Adic Group Cohomology}}

By \Cref{def:sheaf_cohomology}, each sheaf cohomology theory considered here comes with a corresponding groupoid cohomology theory. For $H^\bullet_\bt$, $H^\bullet_\fet$, and $H^\bullet_\alg$, these are $H^\bullet_\grp$, $H^\bullet_\cnt$, and $H^\bullet_\rat$, respectively. Likewise, $H^\bullet_\uni$ and $H^\bullet_\N$ come with the groupoid cohomology $H^\bullet_\fm$ of affine groupoid schemes with coefficients in $\FC$. This theory serves as a prototype for the corresponding cohomology of varieties and exhibits analogous phenomena. For example, given an affine groupoid scheme $G$ over $k$ of characteristic $p > 0$, the inclusion $\underline{\mathbb{Q}_p/\mathbb{Z}_p} \hookrightarrow \mathrm{CW}$ in $\FC$ induces a map
\[ H^i_\fm(G, \underline{\mathbb{Q}_p/\mathbb{Z}_p}) \to H^i_\fm(G, \mathrm{CW}). \]
Thus, $H^\bullet_\fm$ deserves investigation in its own right.

The coefficient category $\FC$ has a characteristic-dependent injective cogenerator. Let $G_k$ be the absolute Galois group of $k$, and let $\E \in \FC$ denote the \'etale descent of the $G_k$-module $\Map_\cnt(G_k, \mathbb{Q}/\mathbb{Z})$. If $\charac k = 0$, then $\E \oplus \mathbb{G}_a^\vee$ is an injective cogenerator of $\FC$. If, moreover, $k$ is algebraically closed, we showed in \Cref{sec:decomposition_cohomology} that its two summands recover \'etale and algebraic de Rham cohomology, respectively. If $k$ is perfect of characteristic $p > 0$, then $\E \oplus \mathrm{CW}$ is an injective cogenerator by \cite[Ch.~III, Th\'eor\`eme~2]{Fontaine}. We expect its two summands to recover \'etale and $p$-adic cohomology, respectively.

\subsection*{Algebraic Poincar\'e Lemma}

Let $X$ be a smooth variety over a field $k$ of characteristic $0$. We conjecture that the Poincar\'e lemma holds in $\Sh(X, \Rep^\alg_X)$. The category $\Sh(X_\bt, \Ab)$ carries the usual theory of quasi-coherent sheaves, and, via \Cref{thm:fet_sheaf_equivalence}, $\Sh(X, \Mod^\et_X)$ carries an analogous theory. We expect this theory to extend to $\Sh(X, \Rep^\alg_X)$. Let $\underline{k}$ denote the constant sheaf associated with the one-dimensional trivial representation of $\Pi_1^\alg(X)$. If $k$ is algebraically closed, the comparison theorem \Cref{thm:alg_dR_comparison} gives
\[ H^i_\alg(X, \underline{k}) \xrightarrow{\sim} H^i_\dR(X) \]
for every $i \ge 0$. This comparison suggests that, under the expected extension of quasi-coherent sheaves, the de Rham complex
\[ 0 \to \underline{k} \to \mathcal{O}_X \to \Omega^1_X \to \cdots \]
is exact in $\Sh(X, \Rep^\alg_X)$.

\subsection*{Beyond Fundamental Groupoids}

Being an SC4 category is a relatively mild requirement, and many fibered categories should satisfy it without being constructed from fundamental groupoids. For example, $\QCoh_X$ is an SC5 category by \Cref{prop:QCoh_SC5}, yet does not arise from a fundamental groupoid. More generally, suitable categories $\mathcal{C}(U)$ may produce SC4 fibers by applying $\AbObj$, $\Ind$, or both. For example, the fibers used in this paper are $\AbObj(\Cov(U))$, $\AbObj(\Ind(\EtCov(U)))$, $\Ind(\MIC^\rs(U))$, and $\AbObj(\Ind(\FTor(U)))$.

This flexibility suggests three directions. First, unconventional choices of $\mathcal{C}(U)$ may produce genuinely exotic cohomology theories. Second, suitable fibered categories may provide an easy way to define the existing cohomology theories over a more general base scheme. Finally, one may seek an SC4 category $\mathcal{T}$ which, at least conjecturally, produces the universal cohomology theory.

If $\mathcal{T}$ produces the universal cohomology theory, its coefficient category $\mathcal{A} \coloneqq \mathcal{T}(\Spec k)$ would have to be large. We expect $\mathcal{A}$ to contain every commutative algebraic group and every abelian variety over $k$. Moreover, $\mathcal{A}$ may contain objects that are not ind-representable by group schemes, including objects encoding intermediate cohomology groups $H^i$ of an $n$-dimensional variety for $1 < i < 2n - 1$. Nevertheless, $\mathcal{A}$ may still be a familiar category, such as the category of abelian fppf sheaves over $\Spec k$. Ideally, the constant sheaf
\[ \underline{k^\times} \coloneqq \bigl(\Delta_{\mathfrak{G}(X)} \mathbb{G}_m\bigr)_X \]
associated with $\mathbb{G}_m$ would serve as the universal coefficient. Then for every smooth projective variety $X$, the resulting cohomology would satisfy $H^1_\mathcal{T}(X, \underline{k^\times}) \cong \mathbf{Pic}^\tau X$.

\subsection*{Other Fundamental Groupoids}

Other fundamental groupoids may be used in the same construction. In characteristic $0$, one may use the Tannaka groupoids $\Pi_1^\un$ and $\Pi_1^\diff$ associated with unipotent integrable connections and all integrable connections, respectively \cite{Deligne1}. Over a perfect field of positive characteristic, one may use the crystalline fundamental groupoid $\Pi_1^\crys$, whose vertex groups are defined in \cite[Definition~4.1.5]{Shiho}.

\section*{Acknowledgements}

I thank Jaehyeok Lee for reading an earlier version of this paper. His suggestion to formalize the construction led me to formulate conditions (\ref{axiom:SC1})--(\ref{axiom:SC5}).

\appendix
\crefalias{section}{appendix}

\section{Groupoids}\label{sec:appendix_groupoids}

This appendix collects basic background material on three types of groupoids used throughout the paper. We first treat classical groupoids as categories, then describe them via structure maps. We also discuss profinite groupoids and affine groupoid schemes. The material is well known and we claim no originality. For the first two subsections, we refer to \cite{IbortRodriguez} and \cite[Ch.~6]{Brown}.

\subsection{Groupoids as Categories}\label{sec:groupoids_as_categories}

We regard a groupoid as a category in which every morphism is an isomorphism. Unless otherwise stated, all groupoids are assumed to be small. A morphism of groupoids $H \to G$ is a functor, and groupoids together with their morphisms form a category $\Gpd$. For a groupoid $G$, we define the categories of $G$-sets, $G$-modules, and $G$-representations as
\[
\Set(G) \coloneqq \Fun(G, \Set), \quad
\Mod(G) \coloneqq \Fun(G, \Ab), \quad\text{and}\quad
\Rep(G) \coloneqq \Fun(G, \Vect(\mathbb{C})),
\]
respectively. The full subcategories $\FSet(G) \subseteq \Set(G)$ and $\FRep(G) \subseteq \Rep(G)$ of finite $G$-sets and finite-dimensional $G$-representations are defined similarly. For any coefficient category $\mathcal{A}$, a morphism in $\Fun(G, \mathcal{A})$ is in general a natural transformation of functors. In particular, the morphisms in $\Set(G)$, $\Mod(G)$, and $\Rep(G)$ are natural transformations.

A groupoid $G$ is \emph{transitive} if it is nonempty and any two objects of $G$ are isomorphic. A \emph{connected component} of $G$ is the full subcategory spanned by an isomorphism class of objects. We denote the set of connected components of $G$ by $\pi_0(G)$. Then $G$ decomposes canonically as
\[ G = \coprod_{C \in \pi_0(G)} C \]
into transitive groupoids $C$.

Let $S$ denote the set of objects of $G$, and let $u \colon T \to S$ be a map. Define the groupoid $G_T$ to have object set $T$, where a morphism from $x$ to $y$ in $G_T$ is a morphism $u(x) \to u(y)$ in $G$. The groupoid morphism $G_T \to G$ is induced by $x \mapsto u(x)$. We regard a group as a one-object groupoid whose automorphism group is the given group. For an object $x$ of $G$, the \emph{vertex group} at $x$ is the group $G_x \coloneqq G_{\{x\}}$.

If $G$ is transitive and $T$ is nonempty, then precomposition with $G_T \to G$ is an equivalence
\[ \Fun(G, \mathcal{A}) \xrightarrow{\sim} \Fun(G_T, \mathcal{A}) \]
for every coefficient category $\mathcal{A}$. In particular, if $x$ is an object of $G$, then
\[ \Set(G) \simeq \Set(G_x), \quad \Mod(G) \simeq \Mod(G_x), \quad\text{and}\quad \Rep(G) \simeq \Rep(G_x). \]

Given a groupoid morphism $\varphi \colon H \to G$, the \emph{restriction functor}
\[ \Res_H^G \colon \Set(G) \to \Set(H) \]
is defined by $\Res_H^G M \coloneqq M \circ \varphi$ and is exact.

\begin{lemma}\label{lem:coind_exists}
The restriction functor $\Res_H^G$ fits into an adjunction
\[ \Res_H^G : \Set(G) \rightleftarrows \Set(H) : \Coind_H^G. \]
\end{lemma}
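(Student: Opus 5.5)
The plan is to write down the right Kan extension of $N$ along $\varphi \colon H \to G$ explicitly, since $\Set(G) = \Fun(G, \Set)$ and $\Set(H) = \Fun(H, \Set)$. For $N \in \Set(H)$ and an object $y$ of $G$, I define
\[ (\Coind_H^G N)_y \coloneqq \lim_{(x,\, g) \in (y \downarrow \varphi)} N_x, \]
where the comma category $(y \downarrow \varphi)$ has as objects pairs $(x, g)$ with $x \in H$ and $g \colon y \to \varphi(x)$ in $G$. Concretely, this is the set of families $(n_{(x,g)})$ with $n_{(x,g)} \in N_x$ such that $N(h)(n_{(x,g)}) = n_{(x', \varphi(h) g)}$ for every $h \colon x \to x'$ in $H$. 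Equivalently, it is $\Hom_{\Set(H)}(\Hom_G(y, \varphi(-)), N)$. Since $H$ is small, this limit is a set.

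The action of an arrow $k \colon y \to y'$ of $G$ sends $(n_{(x,g)})$ to the family $(n_{(x, g k)})_{(x,g) \in (y' \downarrow \varphi)}$. Functoriality in $y$ and in $N$ is immediate from this formula.

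Next I verify the adjunction by exhibiting a unit and a counit.
\begin{enumerate}
\item The counit $\varepsilon_N \colon \Res_H^G \Coind_H^G N \to N$ at $x \in H$ sends a family to its component $n_{(x, \id_{\varphi(x)})}$.
\item The unit $\eta_M \colon M \to \Coind_H^G \Res_H^G M$ at $y \in G$ sends $m \in M_y$ to the family $(M(g)(m))_{(x,g)}$.
\end{enumerate}
The two triangle identities then follow by direct evaluation. Alternatively, one can check that $\Hom_{\Set(G)}(M, \Coind_H^G N) \cong \Hom_{\Set(H)}(M \circ \varphi, N)$ naturally. This is the standard pointwise formula for right Kan extensions into the complete category $\Set$ (Mac Lane, X.3).

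The only real checks are two points. First, the action is well defined, that is, the reindexed family still satisfies the compatibility condition. Second, the bijection of Hom-sets is natural. Both are routine diagram chases, so I do not expect a genuine obstacle.

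The same construction works for $\Mod(G)$ and $\Rep(G)$. The reason is that limits in $\Ab$ and $\Vect(\mathbb{C})$ are computed on underlying sets, so the limit above inherits the algebraic structure. This yields the coinduction functors used throughout the paper.
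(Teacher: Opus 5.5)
Your proposal is correct and takes essentially the paper's route. Both define $\Coind_H^G$ as the right Kan extension along $\varphi$ via its pointwise formula, and your limit over $(y \downarrow \varphi)$ is the same set as the paper's end $\int_{h \in H} N(h)^{\Hom_G(y,\varphi(h))} \cong \Hom_{\Set(H)}(\Hom_G(y,\varphi(-)), N)$; your explicit unit, counit and triangle identities simply carry out the verification that the paper leaves to Mac Lane.
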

\begin{proof}
The \emph{coinduction functor} $\Coind_H^G$ is the right Kan extension along $\varphi$. By the end formula \cite[Ch.~X, \S4]{MacLane}, for $N \in \Set(H)$ and an object $y$ of $G$,
\[
(\Coind_H^G N)(y) \cong \int_{h \in H} N(h)^{\Hom_G(y, \varphi(h))} \cong \Hom_{\Set(H)}(\Hom_G(y, \varphi(-)), N).
\]
This gives an explicit formula for the right adjoint.
\end{proof}
The same construction applies to $\Mod(G)$ and $\Rep(G)$, where $N$ is regarded as an $H$-set via the forgetful functor. For composable groupoid morphisms $K \to H \to G$, the restriction functors satisfy the cocycle condition
\[ \Res_K^G = \Res_K^H \circ \Res_H^G. \]
Thus, as right adjoints, the coinduction functors satisfy
\[ \Coind_K^G \cong \Coind_H^G \circ \Coind_K^H. \]

Let $\mathds{1}$ denote the terminal groupoid, which has a single object and a single morphism. The trivial $G$-set functor and the $G$-invariants functor are given by
\[ \Delta_G \coloneqq \Res_G^{\mathds{1}} \quad\text{and}\quad (-)^G \coloneqq \Coind_G^{\mathds{1}}, \]
respectively. Let $\mathbf{1}_\Set$ be the singleton set in $\Set = \Set(\mathds{1})$, so that $\Hom_\Set(\mathbf{1}_\Set, -) \cong \id_\Set$, and let $\mathbf{1}_{\Set(G)} \coloneqq \Delta_G(\mathbf{1}_\Set)$. By the adjunction $\Delta_G \dashv (-)^G$,
\[ \Hom_{\Set(G)}(\mathbf{1}_{\Set(G)}, -) \cong \Hom_\Set(\mathbf{1}_\Set, (-)^G) \cong (-)^G. \]
The same applies to $\Mod(G)$ with $\mathbf{1}_{\Mod(G)} = \mathbb{Z}$ and $\Rep(G)$ with $\mathbf{1}_{\Rep(G)} = \mathbb{C}$.

\begin{example}
The \emph{fundamental groupoid} $\Pi_1(X)$ of a topological space $X$ is a central example for this paper. Its objects are the points of $X$, and its morphisms from $x$ to $y$ are homotopy classes of paths from $x$ to $y$ in $X$. The vertex group at $x$ is the fundamental group $\pi_1(X, x)$. If $X$ is path-connected, locally path-connected, and semilocally simply connected, then for each $x \in X$, let $P_x$ be the set of homotopy classes of paths in $X$ starting at $x$. It carries a natural topology under which $P_x \to X$ is the universal cover of $X$ based at $x$.
\end{example}

\subsection{Groupoids via Structure Maps}\label{sec:groupoids_via_structure_maps}

This subsection reformulates the definitions of groupoids and their representations from \Cref{sec:groupoids_as_categories} in terms of structure maps. This formulation is less elegant but better suited for the algebro-geometric setting in \Cref{sec:affine_groupoids}.

\begin{definition}\label{def:groupoid_structure_maps}
Let $G$ and $S$ be sets with structure maps
\[ s, t \colon G \to S, \quad e \colon S \to G, \quad i \colon G \to G, \quad c \colon G^{(2)} \to G, \]
where $G^{(2)} \coloneqq G \times_{s, S, t} G$ with projections $\pr_0, \pr_1 \colon G^{(2)} \to G$. We call $G$ a \emph{groupoid} acting on $S$ if the following axioms are satisfied.
\begin{enumerate}
    \item $s \circ e = t \circ e = \id_S$
    \item $s \circ i = t$ and $t \circ i = s$
    \item $s \circ c = s \circ \pr_1$ and $t \circ c = t \circ \pr_0$
    \item $c \circ (e \circ t, \id_G) = c \circ (\id_G, e \circ s) = \id_G$
    \item $c \circ (i, \id_G) = e \circ s$ and $c \circ (\id_G, i) = e \circ t$
    \item $c \circ (\id_G \times_S c) = c \circ (c \times_S \id_G)$
\end{enumerate}
\end{definition}

\begin{remark}\label{rem:groupoid_categorical}
In terms of the categorical description of \Cref{sec:groupoids_as_categories}, the elements of $S$ and $G$ are the objects and morphisms of the groupoid. Fix objects $x, y, z \in S$ and morphisms $f, g, h \in G$. We set $\id_x \coloneqq e(x)$ and $f^{-1} \coloneqq i(f)$. We say $f \colon x \to y$ if and only if $s(f) = x$ and $t(f) = y$. If $f \colon x \to y$ and $g \colon y \to z$, we write $g \circ f \coloneqq c(g, f)$. The axioms above then translate as follows.
\begin{enumerate}
    \item $\id_x \colon x \to x$
    \item $f^{-1} \colon y \to x$
    \item $g \circ f \colon x \to z$
    \item $\id_y \circ f = f = f \circ \id_x$
    \item $f^{-1} \circ f = \id_x$ and $f \circ f^{-1} = \id_y$
    \item $(h \circ g) \circ f = h \circ (g \circ f)$
\end{enumerate}
\end{remark}

When multiple groupoids are present, we distinguish their structure maps by subscripts.

\begin{definition}\label{def:groupoid_morphism_structure_maps}
Let $H$ and $G$ be groupoids acting on $T$ and $S$, respectively. A \emph{morphism of groupoids} from $H$ to $G$ is a pair $(\varphi, u)$, where $\varphi \colon H \to G$ and $u \colon T \to S$ are maps satisfying the following.
\begin{enumerate}
    \item $s_G \circ \varphi = u \circ s_H$ and $t_G \circ \varphi = u \circ t_H$
    \item $\varphi \circ e_H = e_G \circ u$
    \item $\varphi \circ i_H = i_G \circ \varphi$
    \item $\varphi \circ c_H = c_G \circ (\varphi \times_u \varphi)$
\end{enumerate}
\end{definition}

\begin{remark}\label{rem:groupoid_morphism_categorical}
For morphisms $f \colon x \to y$ and $g \colon y \to z$ in $H$, the conditions above translate as follows.
\begin{enumerate}
    \item $\varphi(f) \colon u(x) \to u(y)$
    \item $\varphi(\id_x) = \id_{u(x)}$
    \item $\varphi(f^{-1}) = \varphi(f)^{-1}$
    \item $\varphi(g \circ f) = \varphi(g) \circ \varphi(f)$
\end{enumerate}
\end{remark}

\begin{definition}\label{def:gset_structure_maps}
A \emph{$G$-set} is a pair $(M, \rho)$ with a map $M \to S$, where
\[ \rho \colon s^* M \xrightarrow{\sim} t^* M \]
is a bijection of sets over $G$ satisfying
\[ e^* \rho = \id_M \quad\text{and}\quad c^* \rho = \pr_0^* \rho \circ \pr_1^* \rho. \]
A morphism $(M, \rho) \to (N, \sigma)$ is a map $\eta \colon M \to N$ of sets over $S$ satisfying
\[ t^* \eta \circ \rho = \sigma \circ s^* \eta. \]
\end{definition}

\begin{remark}\label{rem:gset_categorical}
For composable morphisms $f \colon x \to y$ and $g \colon y \to z$ in $G$, the conditions above translate as follows. The map
\[ \rho(f) \colon M_x \xrightarrow{\sim} M_y \]
is a bijection satisfying
\[ \rho(\id_x) = \id_{M_x} \quad\text{and}\quad \rho(g \circ f) = \rho(g) \circ \rho(f). \]
A morphism $\eta \colon M \to N$ satisfies
\[ \eta_y \circ \rho(f) = \sigma(f) \circ \eta_x. \]
\end{remark}

\begin{definition}\label{def:restriction_structure_maps}
Given a morphism of groupoids $(\varphi, u) \colon (H, T) \to (G, S)$ and a $G$-set $(M, \rho)$, the \emph{restriction} of $(M, \rho)$ along $(\varphi, u)$ is the $H$-set $\Res_H^G(M, \rho) \coloneqq (u^* M, \varphi^* \rho)$, where
\[ \varphi^* \rho \colon s_H^* u^* M \xrightarrow{\sim} t_H^* u^* M \]
is induced by $\rho$ via $u \circ s_H = s_G \circ \varphi$ and $u \circ t_H = t_G \circ \varphi$. 
\end{definition}

\begin{remark}\label{rem:restriction_categorical}
For a morphism $f \colon x \to y$ in $H$, we have an isomorphism
\[ (\varphi^* \rho)(f) = \rho(\varphi(f)) \colon M_{u(x)} \xrightarrow{\sim} M_{u(y)}. \]
\end{remark}

\subsection{Profinite Groupoids}\label{sec:profinite_groupoids}

The main reference for this subsection is \cite[Exp.~V]{SGA1}. We define a \emph{transitive profinite groupoid} as a groupoid arising from a Galois category $\mathcal{C}$ with a nonempty collection of chosen fiber functors. Its objects are the chosen fiber functors, its morphisms are isomorphisms of fiber functors, and its vertex groups carry the profinite topology.

Let $G$ and $H$ be transitive profinite groupoids arising from Galois categories $\mathcal{C}$ and $\mathcal{D}$, respectively. A morphism $\varphi \colon H \to G$ is an exact functor
\[ \Res_H^G \colon \mathcal{C} \to \mathcal{D} \]
such that for every object $F$ of $H$, the fiber functor $F \circ \Res_H^G$ is an object of $G$. The induced groupoid morphism is given on objects by $F \mapsto F \circ \Res_H^G$. Transitive profinite groupoids then form a category.

For a transitive profinite groupoid $G$ arising from $\mathcal{C}$, we define the categories of \emph{finite discrete $G$-sets}, \emph{discrete $G$-sets}, and \emph{discrete $G$-modules} by
\[ \FSet(G) \coloneqq \mathcal{C}, \quad \Set(G) \coloneqq \Ind(\mathcal{C}), \quad\text{and}\quad \Mod(G) \coloneqq \AbObj(\Ind(\mathcal{C})), \]
respectively. For every object $x$ of $G$, restriction to the vertex group $G_x$, regarded as a profinite group, induces equivalences
\[ \FSet(G) \xrightarrow{\sim} \FSet(G_x), \quad \Set(G) \xrightarrow{\sim} \Set(G_x), \quad\text{and}\quad \Mod(G) \xrightarrow{\sim} \Mod(G_x). \]

The restriction functor
\[ \Res_H^G \colon \FSet(G) \to \FSet(H) \]
naturally extends to restriction functors
\[ \Res_H^G \colon \Set(G) \to \Set(H) \quad\text{and}\quad \Res_H^G \colon \Mod(G) \to \Mod(H). \]
Reduction to vertex groups shows that both restriction functors are exact and admit right adjoints, called the \emph{coinduction functors}. This gives adjunctions
\begin{align*}
\Res_H^G : \Set(G) &\rightleftarrows \Set(H) : \Coind_H^G \quad\text{and} \\
\Res_H^G : \Mod(G) &\rightleftarrows \Mod(H) : \Coind_H^G.
\end{align*}

\begin{example}
For a nonempty connected scheme $X$, the \emph{\'etale fundamental groupoid} $\Pi_1^\et(X)$ \cite[Exp.~V, \S7]{SGA1} is a central example in this paper.\footnote{SGA~1 requires $X$ to be locally Noetherian, but this condition is unnecessary by \cite[\href{https://stacks.math.columbia.edu/tag/0BNB}{Tag~0BNB}]{StacksProject}.} The category $\EtCov(X)$ of finite \'etale covers of $X$ is a Galois category, and every geometric point $x$ of $X$ defines a fiber functor $E \mapsto E_x$. The \'etale fundamental groupoid $\Pi_1^\et(X)$ is the transitive profinite groupoid arising from $\EtCov(X)$ and the collection of these fiber functors. We may therefore regard its objects as the geometric points of $X$ and its morphisms as isomorphisms between the corresponding fiber functors. The vertex group at a geometric point $x$ is the \'etale fundamental group $\pi_1^\et(X, x)$. Although this makes $\Pi_1^\et(X)$ a large category, the essential smallness of $\EtCov(X)$ ensures that no set-theoretic difficulties arise in this paper.
\end{example}

A \emph{profinite groupoid} is a formal finite coproduct
\[ G = \coprod_{C \in \pi_0(G)} C \]
of transitive profinite groupoids. A morphism $\varphi \colon H \to G$ consists of a map
\[ \sigma \colon \pi_0(H) \to \pi_0(G) \]
together with a morphism $D \to \sigma(D)$ of transitive profinite groupoids for every $D \in \pi_0(H)$. These objects and morphisms form the category $\PfGpd$ of profinite groupoids. Given $G \in \PfGpd$, we define
\[ \FSet(G) \coloneqq \prod_{C \in \pi_0(G)} \FSet(C). \]
We then set $\Set(G) \coloneqq \Ind(\FSet(G))$ and $\Mod(G) \coloneqq \AbObj(\Ind(\FSet(G)))$.

Let $X$ be a scheme with finitely many connected components. We define its \emph{\'etale fundamental groupoid} by
\[ \Pi_1^\et(X) \coloneqq \coprod_{Z \in \pi_0(X)} \Pi_1^\et(Z). \]

\subsection{Affine Groupoid Schemes}\label{sec:affine_groupoids}

The main reference for this subsection is \cite{Deligne}; see also \cite[\href{https://stacks.math.columbia.edu/tag/022L}{Tag~022L}]{StacksProject}. A \emph{groupoid scheme} is a groupoid object in the category of schemes. Equivalently, it is a pair of schemes $(G, S)$ equipped with structure maps as in \Cref{def:groupoid_structure_maps}. Morphisms of groupoid schemes are defined as in \Cref{def:groupoid_morphism_structure_maps}. We say that $G$ is a \emph{transitive affine groupoid scheme} acting on $S$ if $S$ is nonempty and $(s, t) \colon G \to S \times S$ is faithfully flat and affine. These objects form a category. They are precisely the groupoids arising from Tannaka duality \cite[Th\'eor\`eme~1.12]{Deligne}.

\begin{definition}[see {\cite[1.6]{Deligne}}]\label{def:rep_groupoid_scheme}
A \emph{representation} of $G$ is a pair $(M, \rho)$, where $M$ is a quasi-coherent sheaf on $S$ and
\[ \rho \colon s^* M \xrightarrow{\sim} t^* M \]
is an isomorphism of quasi-coherent sheaves on $G$ satisfying
\[ e^* \rho = \id_M \quad\text{and}\quad c^* \rho = \pr_0^* \rho \circ \pr_1^* \rho. \]
A morphism $(M, \rho) \to (N, \sigma)$ is a morphism $\eta \colon M \to N$ of quasi-coherent sheaves on $S$ satisfying
\[ t^* \eta \circ \rho = \sigma \circ s^* \eta. \]
We denote the category of representations of $G$ by $\Rep(G)$, and its full subcategory consisting of representations that are locally free of finite rank on $S$ by $\FRep(G)$.
\end{definition}

Let $G$ be the Tannaka groupoid of a Tannakian category $\mathcal{S}$ associated with a fiber functor over a scheme $S$. Then
\[ \FRep(G) \simeq \mathcal{S} \quad\text{and}\quad \Rep(G) \simeq \Ind(\mathcal{S}). \]
The first equivalence follows from \cite[Th\'eor\`eme~1.12]{Deligne}, and the second from the lemma below. The lemma is a folklore fact, but we do not know a convenient reference.

\begin{lemma}\label{lem:rep_ind}
Let $G$ be a transitive affine groupoid scheme acting on a scheme $S$. Then the natural functor
\[ \Ind(\FRep(G)) \to \Rep(G) \]
is an equivalence of categories.
\end{lemma}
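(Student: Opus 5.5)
We reduce to the case of an affine group scheme and then argue with comodules. By the base change theorem in the form of Deligne \cite[Remarque~1.8, (3.5.1)]{Deligne}, or by running the descent argument of \Cref{sec:base_change} with quasi-coherent sheaves in place of affine schemes, we may pass to a situation where $S$ is affine. The cleanest case is when $S$ has a rational point $x$. Then restriction to the vertex group gives compatible equivalences
\[ \Rep(G) \simeq \Rep(G_x) \quad\text{and}\quad \FRep(G) \simeq \FRep(G_x). \]
In general we use the coring description \cite[1.14, 1.15]{Deligne}: writing $S = \Spec B$ and $G = \Spec L$, the category $\Rep(G)$ is the category of $L$-comodules over the $B$-coring $L$, and $\FRep(G)$ is the full subcategory of comodules that are finitely generated projective over $B$.

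The natural functor $\Ind(\FRep(G)) \to \Rep(G)$ sends a formal filtered colimit to its actual colimit. This colimit exists because the forgetful functor from comodules to $B$-modules creates colimits, and $L$ is flat over $B$ since $(s,t)$ is faithfully flat. By \cite[Proposition~6.3.4 / Corollary~6.3.5]{KashiwaraSchapira}, the functor is an equivalence provided the following three conditions hold.
\begin{enumerate}
    \item Every object of $\FRep(G)$ is finitely presentable in $\Rep(G)$.
    \item $\FRep(G)$ is essentially small.
    \item Every object of $\Rep(G)$ is a filtered colimit of objects of $\FRep(G)$.
\end{enumerate}
Full faithfulness follows from the first condition, and essential surjectivity from the third. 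Essential smallness is clear by bounding ranks.

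To verify finite presentability, take $V \in \FRep(G)$. Then $\Hom_{\Rep(G)}(V, -)$ is the equalizer of
\[ \Hom_B(V, -) \rightrightarrows \Hom_B(V, - \otimes_B L). \]
Since $V$ is finitely generated projective over $B$, the functor $\Hom_B(V, -)$ commutes with filtered colimits, and so does $-\otimes_B L$. Finite limits commute with filtered colimits in $\Set$, so $\Hom_{\Rep(G)}(V, -)$ commutes with filtered colimits.

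The main obstacle is the third condition: every representation is the filtered union of its subrepresentations lying in $\FRep(G)$. For a group scheme over a field this is the classical local finiteness of comodules. Over a general base the finitely generated subcomodules need not be projective, so this is exactly where transitivity is needed. We would invoke \cite[Corollaire~3.9]{Deligne}, which states that every representation of a transitive affine groupoid scheme is a filtered colimit of representations that are locally free of finite rank; the paper already uses this result in \Cref{lem:alg_kpi1_surjective}.

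Two points must still be checked. First, this filtered colimit is computed in $\Rep(G)$. This holds because colimits are created on underlying quasi-coherent sheaves. Second, the indexing system can be taken to be filtered. To see this, take the category of all maps from objects of $\FRep(G)$ into $M$. It is filtered because $\FRep(G)$ is closed under finite direct sums and cokernels are handled by the finite presentability established above. Its colimit is $M$ by Corollaire~3.9 together with a cofinality argument.
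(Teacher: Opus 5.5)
Your proposal is correct and follows essentially the same route as the paper: you reduce to affine $S$ via Deligne's Remarque~1.8, then verify the hypotheses of an Ind-recognition criterion (the paper cites ACMU, Theorem~A.4, where you cite Kashiwara--Schapira), obtaining finite presentability from the Hom-equalizer description and the filtered-union property from Deligne's Corollaire~3.9. Your extra comma-category argument for filteredness is not needed, since Corollaire~3.9 already expresses each representation as a filtered union of objects of $\FRep(G)$.
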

\begin{proof}
By \cite[Remarque~1.8, (3.5.1)]{Deligne}, we may assume that $S$ is a nonempty affine scheme. Since $\FRep(G)$ is essentially small, we may apply \cite[Theorem~A.4]{ACMU}. It suffices to verify the following three conditions. First, $\Rep(G)$ has filtered colimits. These are computed on the underlying quasi-coherent sheaves because $s^*$ and $t^*$ commute with filtered colimits. Second, every $N \in \Rep(G)$ is a filtered union of objects in $\FRep(G)$ by \cite[Corollaire~3.9]{Deligne}. Third, every $(M, \rho) \in \FRep(G)$ is finitely presentable in $\Rep(G)$. For $(N, \sigma) \in \Rep(G)$,
\[ \Hom_{\Rep(G)}(M, N) \cong \Eq\!\left(\Hom_{\QCoh(S)}(M, N) \rightrightarrows \Hom_{\QCoh(G)}(s^* M, t^* N)\right), \]
where the two maps send $f \colon M \to N$ to $t^* f \circ \rho$ and $\sigma \circ s^* f$, respectively. Since $S$ and $G$ are affine, and $M$ and $s^* M$ are locally free of finite rank, \cite[\href{https://stacks.math.columbia.edu/tag/07U7}{Tag~07U7}]{StacksProject} shows that $\Hom_{\QCoh(S)}(M, -)$ and $\Hom_{\QCoh(G)}(s^* M, -)$ commute with filtered colimits. Moreover, $t^*$ commutes with filtered colimits, and finite limits commute with filtered colimits in $\Set$. Hence, $\Hom_{\Rep(G)}(M, -)$ commutes with filtered colimits.
\end{proof}

Pullback along a morphism
\[ (\varphi, u) \colon (H, T) \to (G, S) \]
of transitive affine groupoid schemes defines restriction functors
\[ \Res_H^G \colon \FRep(G) \to \FRep(H) \quad\text{and}\quad \Res_H^G \colon \Rep(G) \to \Rep(H). \]
Under Tannaka duality, the first restriction functor is an exact tensor functor, and the second is its extension to the ind-categories. Therefore, $\Res_H^G \colon \Rep(G) \to \Rep(H)$ is exact by \cite[Corollary~8.6.8]{KashiwaraSchapira}.

Let $u \colon T \to S$ be a morphism with $T$ nonempty. The fiber product $G_T \coloneqq T \times_{u, S, s} G \times_{t, S, u} T$ carries the natural structure of a transitive affine groupoid scheme acting on $T$ \cite[1.6]{Deligne}. The projection $\varphi \colon G_T \to G$ onto the middle factor, together with $u$, defines a morphism
\[ (\varphi, u) \colon (G_T, T) \to (G, S) \]
of transitive affine groupoid schemes. The induced restriction functors
\[ (-)_T \colon \FRep(G) \xrightarrow{\sim} \FRep(G_T) \quad\text{and}\quad (-)_T \colon \Rep(G) \xrightarrow{\sim} \Rep(G_T) \]
are equivalences \cite[Remarque~1.8, (3.5.1)]{Deligne}.

If $x \in S$ is a rational point, the vertex group scheme $G_x \coloneqq G_{\{x\}}$ is an affine group scheme over $k$. The categories $\FRep(G_x)$ and $\Rep(G_x)$ are the usual categories of finite-dimensional and arbitrary representations of $G_x$, respectively. Although $S$ need not have a rational point, one may always choose $T$ to be affine.

\begin{example}
The \emph{pro-algebraic fundamental groupoid} $\Pi_1^\alg(X)$ of a smooth geometrically connected variety $X$ over a field $k$ of characteristic $0$ is a central example in this paper. It is the Tannaka groupoid of the Tannakian category
\[ \MIC^\rs(X) \coloneqq \{ \text{vector bundles with regular singular integrable connections on } X \} \]
with respect to the fiber functor over $X$ that forgets the connection \cite[10.26 and 10.27(i)]{Deligne1}. For a rational point $x \in X(k)$, its vertex group
\[ \pi_1^\alg(X, x) \coloneqq \Pi_1^\alg(X)_x \]
is the \emph{pro-algebraic fundamental group} of $X$ at $x$.
\end{example}

\begin{definition}\label{def:proalg_completion}
The \emph{pro-algebraic completion} $G^\alg$ of an abstract group $G$ is the Tannaka group of $\FRep(G)$ with respect to the forgetful fiber functor
\[ \FRep(G) \to \Vect(k). \]
\end{definition}

Over $\mathbb{C}$, there are equivalences of categories \cite[10.25]{Deligne1}, given by
\begin{align*}
\MIC^\rs(X) &\simeq \{ \text{analytic vector bundles with flat connections on } X^\an \} \\
&\simeq \{ \text{finite-dimensional representations of } \Pi_1(X) \}.
\end{align*}
Therefore, for every $x \in X(\mathbb{C})$, the pro-algebraic fundamental group $\pi_1^\alg(X, x)$ is the pro-algebraic completion of $\pi_1(X, x)$.

\begin{example}
The \emph{Nori fundamental groupoid} $\Pi_1^\N(X)$ of a geometrically connected and geometrically reduced variety $X$ over a field $k$ is another central example in this paper. For a rational point $x \in X(k)$, its vertex group
\[ \pi_1^\N(X, x) \coloneqq \Pi_1^\N(X)_x \]
is the \emph{Nori fundamental group scheme} of $X$ at $x$. It may be described as the inverse limit of the structure groups of finite torsors on $X$ based at $x$, as first suggested by Nori \cite{Nori}. When $\charac k = p > 0$, it also detects finite non-\'etale torsors whose degrees are divisible by $p$, which are invisible to the \'etale fundamental group.

The groupoid $\Pi_1^\N(X)$ arises as the Tannaka groupoid of a suitable Tannakian category. For proper $X$, this category was constructed by Nori \cite[p.~82]{Nori}. For smooth, not necessarily proper, $X$, it was constructed by Esnault--Hai in characteristic $0$ \cite[Lemma~4.3]{EsnaultHai2} and by Esnault--Hogadi in positive characteristic \cite[Corollary~4.9]{EsnaultHogadi}.
\end{example}

An \emph{affine groupoid scheme} is a finite disjoint union of transitive affine groupoid schemes. We write $\pi_0(G)$ for the set of its connected components, so
\[ G = \coprod_{C \in \pi_0(G)} C. \]
Affine groupoid schemes form the category $\AfGpd$. For $G \in \AfGpd$, restriction to the connected components induces natural equivalences
\[ \FRep(G) \xrightarrow{\sim} \prod_{C \in \pi_0(G)} \FRep(C) \quad\text{and}\quad \Rep(G) \xrightarrow{\sim} \prod_{C \in \pi_0(G)} \Rep(C). \]

Let $X$ be a smooth variety over a field $k$ of characteristic $0$ with geometrically connected components. We define its pro-algebraic fundamental groupoid by
\[ \Pi_1^\alg(X) \coloneqq \coprod_{Z \in \pi_0(X)} \Pi_1^\alg(Z). \]
Similarly, for a variety $X$ over $k$ with geometrically connected and geometrically reduced components, we define its Nori fundamental groupoid by
\[ \Pi_1^\N(X) \coloneqq \coprod_{Z \in \pi_0(X)} \Pi_1^\N(Z). \]

\raggedbottom
\bibliographystyle{plain}
\bibliography{references}

\end{document}